\documentclass[11pt,a4paper]{article}
\usepackage[utf8]{inputenc}
\usepackage[T1]{fontenc}
\usepackage{amsmath, amssymb, amsthm}
\usepackage{mathrsfs}
\usepackage{graphicx}
\usepackage{hyperref}
\usepackage{enumitem}
\usepackage[margin=2.5cm]{geometry}
\usepackage{bm}
\usepackage{booktabs}
\usepackage{longtable}
\usepackage{algorithm}
\usepackage{algorithmicx}
\usepackage{algpseudocode}

\newtheorem{theorem}{Theorem}[section]
\newtheorem{lemma}[theorem]{Lemma}
\newtheorem{proposition}[theorem]{Proposition}
\newtheorem{corollary}[theorem]{Corollary}
\theoremstyle{definition}
\newtheorem{definition}[theorem]{Definition}
\newtheorem{remark}[theorem]{Remark}

\newtheorem{hypothesis}[theorem]{Hypothesis}

\newcommand{\Om}{\Omega}
\newcommand{\Qb}{\overline Q}
\newcommand{\Da}{D_\alpha}
\newcommand{\fa}{f_\alpha}
\newcommand{\eqdef}{:=}
\newcommand{\R}{\mathbb{R}}

\newcommand{\PP}{\mathcal{P}}
\newcommand{\EE}{\mathcal{E}}
\newcommand{\dd}{\mathrm{d}}
\newcommand{\argmin}{\operatorname{argmin}}

\newcommand{\dalpha}{D_\alpha}
\DeclareMathOperator{\Div}{div}

\title{A new Geometric Setting for the Analysis of Partial Differential Equations\\
	\large }
\author{Ambroise Soglo$^1$, Koffi W. Hou\'edanou$^{2,*}$, Jamal Adetola$^3$ and Marcos Aboubacar$^1$\\
	\texttt{$^1$IMSP-UAC, Bénin \quad $^2$FAST-UAC, Bénin\quad $^3$ UNSTIM-Abomey, Bénin}}
\date{\today}

\begin{document}
	
	\maketitle
	
	\begin{abstract}
		We introduce a hybrid metric geometry on the space of absolutely continuous probability densities that combines optimal transport (Wasserstein geometry) and log-ratio composition (Aitchison geometry). The hybrid distance $D_\alpha$ is defined through a Benamou--Brenier-type dynamical formulation that couples spatial transport with a centered reaction term preserving total mass.
		We prove that $D_\alpha$ is a genuine metric and establish comparison estimates with the Wasserstein and Aitchison distances. In particular, we show that the topology induced by $D_\alpha$ is stronger than the narrow topology and weaker than the supremum topology generated by the Wasserstein and Aitchison metrics. We further prove that the metric space is geodesic.
		Within this framework, we develop the foundations of a gradient flow theory in the sense of Ambrosio--Gigli--Savaré, including the characterization of absolutely continuous curves, metric derivatives, metric slopes, and formal Jordan--Kinderlehrer--Otto schemes. We also investigate hybrid barycenters and their connections with Wasserstein barycenters and Aitchison barycenters.
		Finally, we discuss several partial differential equations, including logistic diffusion, Allen--Cahn equations with log-ratio constraints, and chemotaxis models with logarithmic growth, as formal gradient flows associated with the hybrid geometry, and compare the proposed framework with the Wasserstein--Fisher--Rao metric.
	\end{abstract}
	\footnote{\textit{E-mails: A. Soglo (ambroiso.soglo@gmail.com), K.W. Houédanou (khouedanou@yahoo.fr, corresponding author), J. Adetola (adetolajamal@unstim.bj) and M. Aboubacar (marcosaboubacar@imsp-uac.org)}}
	\textbf{Keywords:} Optimal transport--Wasserstein distance, Aitchison geometry--Compositional data, Gradient Flows, Benamou--Brenier Formulation, Wasserstein--Fisher--Rao metric.\\\\
	\textbf{AMS Subject Classification:} 35A35, 35K65, 35K92.
	\section{Introduction}
	\label{sec:intro}
	
	\subsection{The dual nature of evolution in density spaces}
	
	The evolution of probability densities is a cornerstone of modern applied mathematics, arising in statistical physics, population dynamics, chemical kinetics, image analysis, and machine learning. Two fundamental and often interacting mechanisms govern these evolutions.
	
	\begin{enumerate}[label=(\roman*)]
		
		\item \textbf{Spatial transport.}
		Particles, individuals, or mass move from one location to another. This mechanism underlies diffusion, convection, migration, and more generally any process in which the spatial distribution evolves while preserving total mass.
		The mathematical theory of optimal transport, and in particular the quadratic Wasserstein distance $W_2$, provides a natural geometric framework for such phenomena \cite{Villani,AGS}. Endowed with the Wasserstein metric, the space of probability measures $(\PP_2(\Omega),W_2)$ is a geodesic metric space, and gradient flows of suitable functionals recover many classical partial differential equations, including the heat equation, the porous medium equation, and the Fokker--Planck equation \cite{Otto}. In this setting, absolutely continuous curves are characterized by the continuity equation
		$
		\partial_t\rho+\operatorname{div}(\rho v)=0,
		$
		where $v$ is a velocity field.
		\item \textbf{Relative rearrangement.}
		In compositional systems, such as mixtures of chemical species, ecological populations, or probability vectors, the relevant information lies not in absolute quantities but in relative proportions.
		
		The standard framework for such data is Aitchison geometry on the simplex \cite{Aitchison,EGT}. Its central principle is that only log-ratios between components are meaningful, since they are invariant under global rescaling. This leads to a Euclidean structure after applying the centered log-ratio transformation. In the continuous setting, infinitesimal compositional changes are represented by scalar fields $u$ satisfying the centering condition
		$
		\int_\Omega u\,\rho\,dx=0,
		$
		which ensures conservation of total mass while allowing relative proportions to evolve.
		
	\end{enumerate}
	
	Many systems involve both mechanisms simultaneously. Examples include reacting fluids, migrating populations subject to selection, and spatially distributed chemical mixtures. Despite their ubiquity, there is currently no unified metric framework combining transport and compositional rearrangement while preserving total mass.
	
	\subsection{Limitations of existing approaches}
	
	Several approaches have been proposed to combine transport and reaction mechanisms. The most prominent example is the \textbf{Wasserstein--Fisher--Rao (WFR) metric} \cite{LMS}, also known as the Hellinger--Kantorovich distance.
		The WFR framework interpolates between optimal transport and Fisher--Rao geometry, but it presents several limitations in the context of compositional systems.
	
	\begin{enumerate}[label=(\alph*)]
		
		\item \textbf{Mass variation.}
		WFR allows the total mass to vary along admissible curves. While this is appropriate for systems involving creation or destruction of mass, it is not suitable for probability distributions, closed chemical systems, or compositional data, where total mass is conserved.
		
		\item \textbf{Square-root versus log-ratio geometry.}
		The Fisher--Rao metric relies on the transformation $\sqrt{\rho}$, whereas Aitchison geometry is based on centered log-ratios. For compositional data analysis, the log-ratio representation is preferred because it captures relative information and is invariant under multiplicative scaling.
		
		\item \textbf{Absence of a centering constraint.}
		The reaction term in the WFR framework is unconstrained and therefore allows arbitrary mass creation and destruction. In contrast, many applications require the infinitesimal conservation law
		$
		\int_\Omega u\,\rho\,dx=0,
		$
		which is the continuous counterpart of the Aitchison principle.
		
	\end{enumerate}
	
	Classical reaction--diffusion equations combine transport and reaction mechanisms, but they generally lack a unified variational interpretation as gradient flows in a metric space adapted to compositional constraints. Such a perspective is valuable for studying stability, long-time behavior, and variational discretization schemes.
	
	\subsection{The proposed hybrid geometry}
	
	To address these limitations, we introduce a family of distances $D_\alpha$, parameterized by $\alpha\in(0,1)$, on the space $\PP_{2,\mathrm{ac}}(\Omega)$ of absolutely continuous probability measures with finite second moment.
	
	Inspired by the Benamou--Brenier dynamical formulation of optimal transport \cite{BenamouBrenier}, we define $D_\alpha$ through the minimization of the action functional over admissible triples $(\rho_t,v_t,u_t)$ satisfying the continuity equation with a centered source term
	
	\[
	\partial_t\rho_t+\operatorname{div}(\rho_t v_t)
	=
	\rho_t u_t,
	\qquad
	\int_\Omega u_t\,\rho_t\,dx=0.
	\]
	
	The associated action is given by
	
	\[
	\mathcal A_\alpha(\rho,v,u)
	=
	\int_0^1
	\left[
	(1-\alpha)
	\int_\Omega |v_t|^2\rho_t\,dx
	+
	\alpha
	\int_\Omega u_t^2\rho_t\,dx
	\right]dt.
	\]
	
	The parameter $\alpha$ balances the transport and compositional contributions. Formally, the limiting cases correspond to the pure geometries:
	
	\[
	\alpha=0
	\quad\Longrightarrow\quad
	u_t\equiv0,
	\]
	
	recovering the quadratic Wasserstein distance, and
	
	\[
	\alpha=1
	\quad\Longrightarrow\quad
	v_t\equiv0,
	\]
	
	recovering the Aitchison distance.
	
	For $\alpha\in(0,1)$, the metric $D_\alpha$ interpolates between these two structures while preserving total mass through the centering constraint.
	
	We prove that $D_\alpha$ defines a genuine metric on $\PP_{2,\mathrm{ac}}(\Omega)$ and establish the comparison estimate
	
	\[
	D_\alpha^2(\rho,\sigma)
	\le
	\frac12\max\{1-\alpha,\alpha\}
	\left(
	W_2^2(\rho,\sigma)
	+
	d_A^2(\rho,\sigma)
	\right).
	\]
	
	As a consequence, the topology induced by $D_\alpha$ satisfies
	
	\[
	\tau_{\mathrm{narrow}}
	\subset
	\tau_{D_\alpha}
	\subset
	\tau_{W_2}\vee\tau_{d_A}.
	\]
	
	Furthermore, we show that
	$(\PP_{2,\mathrm{ac}}(\Omega),D_\alpha)$
	is a geodesic metric space and investigate the associated gradient flow structure in the sense of Ambrosio, Gigli, and Savaré under suitable compactness assumptions.
	\subsection{Main contributions}
	
	The main contributions of this paper are as follows.
	
	\begin{itemize}
		
		\item \textbf{Construction and metric properties of $D_\alpha$.}
		We introduce a hybrid dynamical distance $D_\alpha$ on
		$\PP_{2,\mathrm{ac}}(\Omega)$
		through a Benamou--Brenier-type formulation coupling transport and centered reaction mechanisms.
		We prove the existence of minimizers in the variational formulation and establish that $D_\alpha$ defines a genuine metric.
		
		Moreover, we prove the comparison estimate
		
		\begin{equation}
			D_\alpha^2(\rho,\sigma)
			\le
			\frac12\max\{1-\alpha,\alpha\}
			\Bigl(
			W_2^2(\rho,\sigma)
			+
			d_A^2(\rho,\sigma)
			\Bigr).
		\end{equation}
		
		As a consequence, the topology induced by $D_\alpha$ satisfies
		
		\begin{equation}
			\tau_{\mathrm{narrow}}
			\subset
			\tau_{D_\alpha}
			\subset
			\tau_{W_2}\vee\tau_{d_A}.
		\end{equation}
		
		\item \textbf{Geodesic structure.}
		We prove that
		$(\PP_{2,\mathrm{ac}}(\Omega),D_\alpha)$
		is a geodesic metric space: any two densities can be connected by a constant-speed geodesic.
		We further characterize absolutely continuous curves through the continuity equation with centered source term and identify their metric derivative.
		
		\item \textbf{Gradient flows in the AGS framework.}
		Following the theory of Ambrosio, Gigli, and Savaré \cite{AGS}, we develop the foundations of a gradient flow theory in the hybrid metric setting.
		We define metric slopes, study geodesic $\lambda$-convexity, formulate the Jordan--Kinderlehrer--Otto scheme, and establish existence and convergence results under suitable compactness assumptions on the sublevels of the energy functional.
		
		\item \textbf{Hybrid barycenters.}
		We introduce hybrid barycenters as minimizers of
		
		\[
		\sum_{i=1}^n w_i D_\alpha^2(\rho,\rho_i).
		\]
		
		We establish existence results under suitable coercivity and compactness assumptions and prove uniqueness under strict geodesic convexity.
		We show that, formally, the construction interpolates between Wasserstein barycenters \cite{Agueh} and Aitchison barycenters.
		
		\item \textbf{Applications to partial differential equations.}
		We identify several evolution equations as formal gradient flows associated with the hybrid geometry, including logistic diffusion, Allen--Cahn equations with log-ratio constraints, and Keller--Segel models with logarithmic growth.
		
		\item \textbf{Comparison with Wasserstein--Fisher--Rao geometry.}
		We provide a detailed comparison with the Wasserstein--Fisher--Rao framework and show that the centering condition
		
		\[
		\int_\Omega u\,\rho\,dx = 0
		\]
		
		is the key structural feature ensuring mass conservation and making the hybrid geometry suitable for compositional systems.
		
	\end{itemize}
	
	\subsection{Plan of the paper}
	The contents of this paper are organised as follows.
	%The paper is organized as follows.
	
	Section~\ref{sec:prelim} reviews the necessary background on quadratic Wasserstein geometry and develops the continuous counterpart of Aitchison geometry. In particular, we establish the Hilbertian structure of the continuous Aitchison space, characterize its geodesics, and describe absolutely continuous curves together with their metric derivatives.
	
	Section~\ref{sec:hybrid_distance} introduces the hybrid distance $D_\alpha$ through a Benamou--Brenier-type dynamical formulation coupling transport and centered reaction mechanisms. We establish the lower semicontinuity of the action functional, prove the existence of minimizers, show that $D_\alpha$ defines a metric on $\PP_{2,\mathrm{ac}}(\Omega)$, and derive comparison estimates with the Wasserstein and Aitchison distances together with the corresponding topological properties.
	
	Section~\ref{sec:geodesics} is devoted to the geometry of the hybrid metric space. We prove the existence of constant-speed geodesics, characterize absolutely continuous curves through the continuity equation with a centered source term, identify the minimal admissible velocity--reaction pair, and derive the metric derivative formula.
	
	Section~\ref{sec:duality} develops a Fenchel--Rockafellar duality theory for the hybrid action functional. We formulate the primal and dual optimization problems, establish weak and strong duality, derive the corresponding optimality conditions, and obtain an Eulerian characterization of minimizing curves.
	
	Section~\ref{sec:convexity} investigates the geodesic convexity of several fundamental energy functionals along $D_\alpha$-geodesics. In particular, we study the potential energy, the Boltzmann entropy, the Rényi entropy, and interaction energies, thereby providing the variational framework underlying the subsequent gradient-flow analysis.
	
	Section~\ref{sec:ags} develops the gradient flow theory in the sense of Ambrosio--Gigli--Savaré. We introduce the metric slope, study curves of maximal slope, analyze geodesically convex functionals, formulate the Jordan--Kinderlehrer--Otto minimizing movement scheme, and establish convergence results under suitable compactness and convexity assumptions.
	
	Section~\ref{sec:barycenters} studies hybrid Wasserstein--Aitchison barycenters. We prove existence and uniqueness results under appropriate convexity assumptions and investigate their relationships with the classical Wasserstein and Aitchison barycenters.
	
	Section~\ref{sec:examples} presents several evolution equations arising naturally as formal gradient flows in the hybrid geometry, including logistic diffusion equations, Allen--Cahn equations with compositional constraints, and Keller--Segel--type models.
	
	Section~\ref{sec:comparison} compares the proposed framework with the Wasserstein--Fisher--Rao geometry, emphasizing the role of the centered reaction constraint, the preservation of total mass, and the fundamental geometric differences between the two transport--reaction metrics.
	
	In Section~\ref{sec:numerics}, we investigate the behavior of the proposed hybrid Wasserstein--Aitchison geometry for image interpolation problems.
	 
	Finally, Section~\ref{sec:perspectives} discusses several open problems and future research directions. We outline possible developments concerning synthetic lower Ricci curvature bounds, stronger geodesic convexity results, the complete EVI theory for hybrid gradient flows, numerical approximation through hybrid JKO schemes, extensions to more general transport--reaction metric structures, and further applications to compositional data analysis and nonlinear evolution equations.
	
	%Technical proofs and complementary results are collected in the appendices.
	\section{Extended preliminaries}
	\label{sec:prelim}
	\subsection{Functional framework}
	\label{sec:functional}
	
	\subsubsection{The spatial domain}
	Let $\Omega\subset\R^d$ be a bounded, connected, open domain with Lipschitz boundary. The boundedness of $\Omega$ is essential for the compactness arguments that follow (via Prokhorov's theorem and the Banach--Alaoglu theorem).
	
	\subsubsection{The space of probability densities}
	Denote by $\PP_{\mathrm{ac}}(\Omega)$ the set of probability measures that are absolutely continuous with respect to the Lebesgue measure, with strictly positive density $\rho\in L^1(\Omega)$, $\rho>0$ a.e., and $\int_\Omega\rho\dd x = 1$. Strict positivity is required for the logarithmic transformations; densities that vanish on sets of positive measure can be approximated by positive ones, and the theory extends by density.
	
	\subsubsection{Log-square-integrable densities}
	For the Aitchison geometry to be well-defined, we need the logarithm of the density to be square-integrable. Define
	\begin{equation}
		\PP_{\log,2}(\Omega) = \left\{ \rho\in\PP_{\mathrm{ac}}(\Omega) : \int_\Omega (\ln\rho(x))^2\dd x < \infty \right\}.
	\end{equation}
	This space is the natural domain for the continuous Aitchison distance. The condition $\ln\rho\in L^2(\Omega)$ implies in particular that $\ln\rho\in L^1(\Omega)$ (on a bounded domain, $L^2\subset L^1$), so the log-mean $\ell(\rho)=\int_\Omega\ln\rho\dd x$ is well-defined and finite.
	
	\subsubsection{Densities with finite second moment}
	For the Wasserstein distance to be finite, we need the second moment to be finite. Define
	\begin{equation}
		\PP_{2}(\Omega) = \left\{ \rho\in\PP_{\mathrm{ac}}(\Omega) : \int_\Omega |x|^2\rho(x)\dd x < \infty \right\}.
	\end{equation}
	The Wasserstein distance $W_2$ is finite exactly on this space.
	
	\subsubsection{The working space}
	Our hybrid geometry requires both conditions simultaneously. Define
	\begin{equation}
		\PP_{2,\log,2}(\Omega) = \PP_2(\Omega) \cap \PP_{\log,2}(\Omega) = \left\{ \rho\in\PP_{\mathrm{ac}}(\Omega) : \int_\Omega |x|^2\rho\dd x < \infty,\ \int_\Omega (\ln\rho)^2\dd x < \infty \right\}.
	\end{equation}
	This is the space on which both $W_2$ and $d_A$ are finite. For simplicity, we denote this space by $\PP_{2,\mathrm{ac}}(\Omega)$ throughout the paper, with the understanding that the logarithmic condition is implicitly assumed.

	\subsection{Wasserstein geometry: detailed review}
	For $\mu,\nu\in\PP_2(\Omega)$, the Wasserstein distance of order $2$ is defined by
	\begin{equation}
		W_2^2(\mu,\nu)=\inf_{\pi\in\Pi(\mu,\nu)}\int_{\Omega\times\Omega}|x-y|^2\dd\pi(x,y),
	\end{equation}
	where $\Pi(\mu,\nu)$ denotes the set of couplings (joint measures with marginals $\mu$ and $\nu$). The following properties are standard \cite{Villani,AGS}:
	\begin{itemize}
		\item $(\PP_2(\Omega),W_2)$ is a complete, separable, geodesic metric space.
		\item Convergence $W_2(\mu_n,\mu)\to0$ is equivalent to narrow convergence plus convergence of second moments: $\mu_n\rightharpoonup\mu$ and $\int|x|^2\dd\mu_n\to\int|x|^2\dd\mu$.
		\item A curve $\rho_t$ is absolutely continuous with respect to $W_2$ if and only if there exists a velocity field $v_t$ such that the continuity equation $\partial_t\rho_t + \Div(\rho_t v_t)=0$ holds in the sense of distributions, and $\int_0^1\int|v_t|^2\rho_t\dd x\dd t<\infty$. In that case, the metric derivative is $|\dot\rho_t|_{W_2}^2 = \int|v_t|^2\rho_t\dd x$.
		\item The formal Riemannian structure (Otto calculus) identifies the tangent space at $\rho$ with $L^2(\rho)$ of gradients, and the geodesic equations are given by the pressureless Euler system.
	\end{itemize}
	
	\subsection{Aitchison geometry: from discrete to continuous}
	
	\subsubsection{Discrete Aitchison distance}
	On the simplex $\Delta^{k-1}=\{p\in\R^k_+: \sum_{i=1}^k p_i = 1\}$, the Aitchison distance is
	\begin{equation}
		d_A^2(p,q)=\frac{1}{k}\sum_{i<j}\Bigl(\ln\frac{p_i}{p_j}-\ln\frac{q_i}{q_j}\Bigr)^2 = \sum_{i=1}^k\Bigl(\ln\frac{p_i}{g(p)}-\ln\frac{q_i}{g(q)}\Bigr)^2,
	\end{equation}
	where $g(p)=(\prod_{i=1}^k p_i)^{1/k}$ is the geometric mean. This distance is invariant under scaling.
	
	\subsubsection{Continuous extension}
	
	Let \(\Omega\subset\R^d\) be a bounded domain. Denote by \(L^2_0(\Omega)\) the closed subspace of \(L^2(\Omega)\) consisting of functions with zero mean:
	\begin{equation}
		L^2_0(\Omega) = \left\{ u\in L^2(\Omega) : \int_\Omega u(x)\,\dd x = 0 \right\}.
	\end{equation}
	
	For \(\rho\in\PP_{\mathrm{ac}}(\Omega)\), define the log-mean \(\ell(\rho)=\int_\Omega\ln\rho\dd x\) (finite when \(\ln\rho\in L^1(\Omega)\)) and the centered log-ratio transform
	\begin{equation}
		\widetilde{\rho} = \ln\rho - \ell(\rho).
	\end{equation}
	Note that \(\int_\Omega\widetilde{\rho}\dd x = 0\) by construction.
	
	\begin{definition}
		The space of log-square-integrable densities is
		\begin{equation}
			\PP_{\log,2}(\Omega)=\left\{\rho>0,\ \int_\Omega\rho=1,\ \ln\rho\in L^2(\Omega)\right\}.
		\end{equation}
		For \(\rho,\sigma\in\PP_{\log,2}(\Omega)\), the continuous Aitchison distance is
		\begin{equation}
			d_A^2(\rho,\sigma)=\int_\Omega\bigl(\widetilde{\rho}(x)-\widetilde{\sigma}(x)\bigr)^2\dd x.
		\end{equation}
	\end{definition}
	
	\begin{lemma}[Isometry]
		\label{lem:isometry}
		The map \(\Phi:\PP_{\log,2}(\Omega)\to L^2_0(\Omega)\) defined by \(\Phi(\rho)=\widetilde{\rho}\) is injective and satisfies
		\begin{equation}
			d_A(\rho,\sigma) = \|\Phi(\rho)-\Phi(\sigma)\|_{L^2_0}
		\end{equation}
		for all \(\rho,\sigma\in\PP_{\log,2}(\Omega)\). Hence \(\Phi\) is an isometry.
	\end{lemma}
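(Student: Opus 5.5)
The plan has three parts: check that $\Phi$ maps into $L^2_0(\Omega)$, note that the distance identity is just the definition of $d_A$, and then prove injectivity, which is where the normalization $\int_\Omega\rho\,\dd x=1$ is used.

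\emph{Well-definedness.} Let $\rho\in\PP_{\log,2}(\Omega)$. Then $\ln\rho\in L^2(\Omega)$, and since $\Omega$ is bounded, Cauchy--Schwarz gives $\ln\rho\in L^1(\Omega)$. Hence $\ell(\rho)$ is a finite real number.

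Constants lie in $L^2(\Omega)$ on a bounded domain, so $\widetilde\rho=\ln\rho-\ell(\rho)\in L^2(\Omega)$. For the zero-mean condition I will use the convention, implicit in the paper, that $\ell(\rho)$ denotes the \emph{average} $|\Omega|^{-1}\int_\Omega\ln\rho\,\dd x$, equivalently that $|\Omega|=1$. This is what makes $\int_\Omega\widetilde\rho\,\dd x=0$, as asserted after the definition of $\widetilde\rho$. I will state this normalization explicitly, and with it $\Phi(\rho)\in L^2_0(\Omega)$.

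\emph{Distance identity.} The identity $d_A(\rho,\sigma)=\|\Phi(\rho)-\Phi(\sigma)\|_{L^2}$ holds by definition. The norm on $L^2_0$ is the restriction of the $L^2$ norm, so nothing further is needed.

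\emph{Injectivity.} Suppose $\Phi(\rho)=\Phi(\sigma)$. Then $\ln\rho-\ln\sigma=\ell(\rho)-\ell(\sigma)=:c$ a.e., so $\rho=e^{c}\sigma$ a.e. Integrating over $\Omega$ and using $\int_\Omega\rho\,\dd x=\int_\Omega\sigma\,\dd x=1$ gives $e^{c}=1$. Hence $c=0$ and $\rho=\sigma$ a.e., so they are the same element of $\PP_{\log,2}(\Omega)$.

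Consequently $\Phi$ is a distance-preserving injection, i.e.\ an isometric embedding. As a by-product, $d_A$ is a genuine metric on $\PP_{\log,2}(\Omega)$: symmetry and the triangle inequality come from the $L^2$ norm, and definiteness comes from injectivity.

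\emph{Expected obstacle.} The only delicate point is the mean-zero normalization just discussed. The injectivity step is where the argument really uses the structure of probability densities: the centered log-ratio forgets exactly a multiplicative constant, and unit mass restores it. I would also remark that "isometry" here means an isometric embedding, not a surjection. Surjectivity would require, for each $f\in L^2_0$, that $e^{f}$ be normalizable, i.e.\ $e^{f}\in L^1(\Omega)$, and this fails for general $f\in L^2_0$.
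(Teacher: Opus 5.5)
Your proof is correct and follows the paper's argument: the distance identity holds by definition of $d_A$, and injectivity comes from $\rho=e^{c}\sigma$ together with unit mass, which forces $c=0$. Your well-definedness step also catches something the paper glosses over: with the paper's $\ell(\rho)=\int_\Omega\ln\rho\,\dd x$ one gets $\int_\Omega\widetilde\rho\,\dd x=(1-|\Omega|)\,\ell(\rho)$, so the claim $\Phi(\rho)\in L^2_0(\Omega)$ does need either $|\Omega|=1$ or the averaged log-mean you adopt.
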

	\begin{proof}
		The equality \(d_A(\rho,\sigma)=\|\widetilde{\rho}-\widetilde{\sigma}\|_{L^2}\) is exactly the definition of \(d_A\). Injectivity: if \(\widetilde{\rho}=\widetilde{\sigma}\), then \(\ln\rho-\ln\sigma\) is constant almost everywhere. Let this constant be \(c\). Then \(\rho = e^c \sigma\). Since both \(\rho\) and \(\sigma\) integrate to \(1\), we have \(e^c=1\), hence \(c=0\) and \(\rho=\sigma\) a.e.
	\end{proof}
	
	\begin{lemma}[Characterization of the image]
		\label{lem:image}
		The image of \(\Phi\) is
		\begin{equation}
			\mathcal{H} = \left\{ u\in L^2_0(\Omega) : e^u \in L^1(\Omega) \right\}.
		\end{equation}
		Moreover, \(\mathcal{H}\) is a convex subset of \(L^2_0(\Omega)\).
	\end{lemma}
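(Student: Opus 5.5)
We prove the two inclusions $\Phi(\PP_{\log,2}(\Omega))\subset\mathcal H$ and $\mathcal H\subset\Phi(\PP_{\log,2}(\Omega))$ separately, and then check convexity directly from the definition of $\mathcal H$.

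\emph{Forward inclusion.} Let $\rho\in\PP_{\log,2}(\Omega)$. Since $\Omega$ is bounded, $\ln\rho\in L^2(\Omega)\subset L^1(\Omega)$, so $\ell(\rho)$ is finite. Then $\widetilde\rho=\ln\rho-\ell(\rho)\in L^2(\Omega)$, because constants lie in $L^2(\Omega)$ on a bounded domain. Its integral is $\ell(\rho)-|\Omega|\,\ell(\rho)$. This vanishes only when $|\Omega|=1$, so we must read $\ell(\rho)$ as the normalized log-mean $|\Omega|^{-1}\int_\Omega\ln\rho\,\dd x$ (or simply assume $|\Omega|=1$), as the paper's remark $\int_\Omega\widetilde\rho\,\dd x=0$ implicitly requires. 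With this convention $\widetilde\rho\in L^2_0(\Omega)$. Finally, $e^{\widetilde\rho}=e^{-\ell(\rho)}\rho$ is integrable with integral $e^{-\ell(\rho)}<\infty$. Hence $\widetilde\rho\in\mathcal H$.

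\emph{Reverse inclusion.} Let $u\in\mathcal H$. Set $Z=\int_\Omega e^u\,\dd x$; then $Z\in(0,\infty)$, positivity holding because $e^u>0$ and $|\Omega|>0$. Define $\rho=e^u/Z$, which is a strictly positive probability density. We have $\ln\rho=u-\ln Z\in L^2(\Omega)$, so $\rho\in\PP_{\log,2}(\Omega)$. Its (normalized) log-mean is $\ell(\rho)=-\ln Z$, because $u$ has zero mean. Therefore $\widetilde\rho=u-\ln Z+\ln Z=u$, i.e.\ $\Phi(\rho)=u$.

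\emph{Convexity.} Take $u,w\in\mathcal H$ and $t\in[0,1]$. The set $L^2_0(\Omega)$ is a linear subspace, so $tu+(1-t)w\in L^2_0(\Omega)$. For integrability of the exponential, use convexity of $\exp$ pointwise:
\[
e^{tu+(1-t)w}\le t\,e^u+(1-t)\,e^w .
\]
Alternatively, Hölder's inequality with exponents $1/t$ and $1/(1-t)$ gives $\int e^{tu}e^{(1-t)w}\le(\int e^u)^t(\int e^w)^{1-t}$. Either bound shows $e^{tu+(1-t)w}\in L^1(\Omega)$, so $tu+(1-t)w\in\mathcal H$.

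\emph{Main obstacle.} The only delicate point is the normalization of $\ell$ discussed under the forward inclusion. The statement is exactly correct when $|\Omega|=1$. For general $|\Omega|$, the centering must divide by $|\Omega|$ so that $\widetilde\rho$ has zero mean; we adopt that reading and note that the rest of the argument is unchanged.
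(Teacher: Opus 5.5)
Your proof is correct and follows the same route as the paper. The forward inclusion uses $e^{\widetilde\rho}=e^{-\ell(\rho)}\rho$, the reverse inclusion uses $\rho=e^u/\int_\Omega e^u$, and convexity uses H\"older; your pointwise bound $e^{tu+(1-t)w}\le t\,e^u+(1-t)\,e^w$ is an equally valid shortcut. Your normalization caveat is well taken: the paper's step $\ell(\rho)=\int_\Omega u-\ln\int_\Omega e^u$ silently assumes $|\Omega|=1$, and dividing $\ell$ by $|\Omega|$ as you do is what makes both $\int_\Omega\widetilde\rho=0$ and $\Phi(\rho)=u$ hold for a general bounded domain.
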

	\begin{proof}
		If \(u=\widetilde{\rho}\), then \(\rho = e^u / \int e^u\). Since \(\rho\in L^1\), we have \(e^u\in L^1\). Conversely, if \(u\in L^2_0\) and \(e^u\in L^1\), define \(\rho = e^u / \int e^u\). Then \(\rho>0\), \(\int\rho=1\), and \(\ln\rho = u - \ln\int e^u\). Since \(u\in L^2\) and the constant term is bounded, \(\ln\rho\in L^2\). Moreover, \(\ell(\rho)=\int u - \ln\int e^u = -\ln\int e^u\), so \(\widetilde{\rho}=u\). Thus \(\Phi(\rho)=u\). Convexity follows from Hölder's inequality: for \(u,v\in\mathcal{H}\) and \(t\in[0,1]\),
		\begin{equation}
			\int_{\Omega} e^{(1-t)u+tv} dx \le \left(\int_{\Omega} e^u\right)^{1-t}\left(\int_{\Omega} e^v\right)^t < \infty.
		\end{equation}
	\end{proof}
	
	\begin{remark}
		The space \(\mathcal{H}\) is not closed in \(L^2_0(\Omega)\); it is a convex subset. Consequently, \((\PP_{\log,2}(\Omega),d_A)\) is isometric to \(\mathcal{H}\) equipped with the \(L^2\) distance. This is the natural infinite-dimensional analogue of the discrete Aitchison geometry on the simplex. For a detailed treatment of such spaces, see \cite{Egozcue2006, van den Boogaart2010}.
	\end{remark}
	
	\begin{lemma}[Geodesics]
		\label{lem:geodesics_aitchison}
		The space \((\PP_{\log,2}(\Omega),d_A)\) is geodesic. For any \(\rho_0,\rho_1\in\PP_{\log,2}(\Omega)\), the unique constant-speed geodesic is given by
		\begin{equation}
			\widetilde{\rho}_t = (1-t)\widetilde{\rho}_0 + t\widetilde{\rho}_1,\qquad
			\rho_t = \frac{\exp(\widetilde{\rho}_t)}{\int_\Omega\exp(\widetilde{\rho}_t)\dd x}.
		\end{equation}
		Equivalently,
		\begin{equation}\
			\rho_t = \frac{\rho_0^{1-t}\rho_1^t}{\int_\Omega \rho_0^{1-t}\rho_1^t\dd x},
		\end{equation}
		which is the continuous analogue of the geometric interpolation between compositions.
	\end{lemma}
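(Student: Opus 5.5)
The plan is to transport the problem through the isometry $\Phi$ of Lemma~\ref{lem:isometry} into the convex subset $\mathcal{H}\subset L^2_0(\Omega)$ of Lemma~\ref{lem:image}, where geodesics are simply straight segments. Fix $\rho_0,\rho_1\in\PP_{\log,2}(\Omega)$ and set $u_i=\widetilde{\rho}_i=\Phi(\rho_i)\in\mathcal{H}$. Define $u_t=(1-t)u_0+tu_1$. By the convexity part of Lemma~\ref{lem:image} (the H\"older estimate), $u_t\in\mathcal{H}$ for every $t\in[0,1]$. Hence, by the converse direction of that lemma, $\rho_t:=e^{u_t}/\int_\Omega e^{u_t}\dd x$ lies in $\PP_{\log,2}(\Omega)$ and satisfies $\Phi(\rho_t)=u_t$, which is exactly the displayed formula.

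Next I verify the constant-speed geodesic property. Lemma~\ref{lem:isometry} gives
\[
d_A(\rho_s,\rho_t)=\|u_s-u_t\|_{L^2}=|t-s|\,\|u_1-u_0\|_{L^2}=|t-s|\,d_A(\rho_0,\rho_1),
\]
so $(\rho_t)$ is a constant-speed geodesic, and the space is geodesic.

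For the equivalent form, write $\ln\rho_i=u_i+\ell(\rho_i)$. Then
\[
\rho_0^{1-t}\rho_1^t=\exp\bigl(u_t+(1-t)\ell(\rho_0)+t\ell(\rho_1)\bigr)=C_t\,e^{u_t},
\]
with a positive constant $C_t$. This constant cancels after normalization, so both expressions for $\rho_t$ agree.

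Uniqueness is the step needing the most care, since $\mathcal{H}$ is only a convex subset of a Hilbert space and not a closed subspace. The argument still works because it uses only the strict convexity of the norm. Suppose $(\gamma_t)$ is any constant-speed geodesic from $\rho_0$ to $\rho_1$, and set $w_t=\Phi(\gamma_t)\in L^2_0$. Then
\[
\|w_t-u_0\|=t\,\|u_1-u_0\|,\qquad \|u_1-w_t\|=(1-t)\,\|u_1-u_0\|,
\]
so equality holds in the triangle inequality. In a Hilbert space, by the parallelogram identity or the equality case of Cauchy--Schwarz, this forces $w_t-u_0$ and $u_1-w_t$ to be nonnegative multiples of a common vector. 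Together with the two distance constraints, this gives $w_t=(1-t)u_0+tu_1$. Injectivity of $\Phi$ (Lemma~\ref{lem:isometry}) then yields $\gamma_t=\rho_t$.

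The case $\rho_0=\rho_1$ is trivial: both sides are the constant curve.
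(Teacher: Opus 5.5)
Your proof is correct and follows the paper's route. Both map into the convex set $\mathcal{H}\subset L^2_0(\Omega)$ via $\Phi$, take the straight segment (which stays in $\mathcal{H}$ by the H\"older estimate), and obtain uniqueness from the strict convexity of the Hilbert norm. You are more explicit than the paper, which neither verifies the product form $\rho_0^{1-t}\rho_1^t$ nor spells out the equality case of the triangle inequality, and which checks the geodesic property through a length computation rather than through the identity $d_A(\rho_s,\rho_t)=|t-s|\,d_A(\rho_0,\rho_1)$ that you establish directly.
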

	\begin{proof}
		Since \(\mathcal{H}\subset L^2_0\) is convex, the straight line \(u_t=(1-t)u_0+tu_1\) lies in \(\mathcal{H}\) for all \(t\in[0,1]\). Then \( \rho_t = e^{u_t}/\int e^{u_t} \) satisfies \(\int\rho_t=1\) and \(\widetilde{\rho_t}=u_t\). The length of this curve in \(d_A\) is
		\begin{equation}
			\int_0^1 \|\dot u_t\|_{L^2}\dd t = \|u_1-u_0\|_{L^2} = d_A(\rho_0,\rho_1),
		\end{equation}
		so it is a geodesic. Uniqueness follows from the strict convexity of the unit ball in \(L^2_0\).
	\end{proof}
	
	\begin{lemma}[Absolutely continuous curves]
		\label{lem:ac_aitchison}
		A curve \(\rho_t\in\PP_{\log,2}(\Omega)\) is absolutely continuous with respect to \(d_A\) if and only if the curve \(u_t = \widetilde{\rho}_t\) is absolutely continuous in \(L^2_0(\Omega)\). In that case, the metric derivative satisfies
		\begin{equation}
			|\dot\rho_t|_{d_A} = \|\dot u_t\|_{L^2}.
		\end{equation}
		Moreover, the relation between \(\partial_t\rho_t\) and \(\dot u_t\) is
		\begin{equation}
			\partial_t\rho_t = \rho_t \bigl(\dot u_t - \langle \dot u_t \rangle_{\rho_t}\bigr),
		\end{equation}
		where \(\langle \dot u_t \rangle_{\rho_t} = \int_\Omega \dot u_t \rho_t\dd x\).
	\end{lemma}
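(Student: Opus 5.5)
The first assertion follows from the isometry of Lemma~\ref{lem:isometry}. Since $d_A(\rho_s,\rho_t)=\|u_s-u_t\|_{L^2}$ for all $s,t$, the two curves have the same modulus of continuity. The definition of absolute continuity (existence of $m\in L^1(0,1)$ with $d(\rho_s,\rho_t)\le\int_s^t m$) therefore transfers verbatim in both directions. The metric derivative is defined pointwise as $\lim_{h\to0}d(\rho_{t+h},\rho_t)/|h|$, so it coincides with $\lim_{h\to0}\|u_{t+h}-u_t\|_{L^2}/|h|$.

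Since $L^2_0(\Omega)$ is a Hilbert space and thus has the Radon--Nikodym property, an absolutely continuous curve $u_t$ in it is differentiable for a.e.\ $t$ with $\dot u_t\in L^1(0,1;L^2_0)$. At such $t$ the difference quotient converges strongly, which gives $|\dot\rho_t|_{d_A}=\|\dot u_t\|_{L^2}$ a.e.

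For the evolution identity, I write, as in Lemma~\ref{lem:image}, $\rho_t=e^{u_t}/Z_t$ with $Z_t=\int_\Omega e^{u_t}\,\dd x$. Formally, $\partial_t e^{u_t}=\dot u_t e^{u_t}$ and $\dot Z_t=\int_\Omega \dot u_t e^{u_t}\,\dd x$. Then
\[
\partial_t\rho_t=\frac{\dot u_t e^{u_t}}{Z_t}-\frac{e^{u_t}}{Z_t}\frac{\dot Z_t}{Z_t}
=\rho_t\dot u_t-\rho_t\int_\Omega\dot u_t\rho_t\,\dd x,
\]
which is the claimed formula $\partial_t\rho_t=\rho_t(\dot u_t-\langle\dot u_t\rangle_{\rho_t})$. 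As a consistency check, the right-hand side integrates to zero, matching mass conservation.

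The main obstacle is making this computation rigorous. Since $u_t$ is only $L^2$-valued, $e^{u_t}$ need not depend differentiably on $t$ in $L^1$. I would therefore interpret the identity in the sense of distributions on $(0,1)\times\Omega$, or a.e.\ in $x$ along a subsequence. The approach is to fix a test function and show that $t\mapsto\int_\Omega e^{u_t}\varphi\,\dd x$ is absolutely continuous with derivative $\int_\Omega\dot u_te^{u_t}\varphi\,\dd x$. For this I would use the pointwise bound $|e^a-e^b|\le|a-b|(e^a+e^b)$ together with an integrability assumption on $e^{u_t}$, for instance $\sup_t\|e^{u_t}\|_{L^2}<\infty$ or $\rho_t$ locally bounded, so that Cauchy--Schwarz controls the difference quotients and dominated convergence applies. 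Without such an assumption I would state the identity formally, or restrict to curves whose densities are bounded above and below. With this in hand, differentiating $Z_t$ is the special case $\varphi\equiv1$, and the quotient rule gives the formula.
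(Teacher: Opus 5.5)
Your proposal follows the paper's proof: the absolute-continuity and metric-derivative claims come from the isometry $d_A(\rho_s,\rho_t)=\|u_s-u_t\|_{L^2}$, and the evolution identity comes from differentiating $\rho_t=e^{u_t}/\int_\Omega e^{u_t}\,\dd x$ by the quotient rule, which the paper does purely formally. Your extra care is well placed rather than a detour: you invoke the Radon--Nikodym property of $L^2_0$ for a.e.\ differentiability, and you impose a hypothesis such as $\sup_t\|e^{u_t}\|_{L^2}<\infty$ or two-sided bounds on $\rho_t$ to justify the derivative. Such a hypothesis is needed, because for a general $d_A$-absolutely continuous curve $\rho_t\dot u_t$ need not lie in $L^1(\Omega)$, so $\langle\dot u_t\rangle_{\rho_t}$ can fail to exist.
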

	\begin{proof}
		The first statement is immediate from the isometry. For the relation, differentiate \(\rho_t = e^{u_t}/\int e^{u_t}\):
		\begin{equation}
			\partial_t\rho_t = \rho_t \dot u_t - \rho_t \frac{\int \dot u_t e^{u_t}}{\int e^{u_t}} = \rho_t \bigl(\dot u_t - \langle \dot u_t \rangle_{\rho_t}\bigr).
		\end{equation}
		The term \(\langle \dot u_t \rangle_{\rho_t}\) is precisely the normalization factor. It ensures that \(\int \partial_t\rho_t = 0\), as required by mass conservation.
	\end{proof}
	
	\begin{theorem}[Continuous Aitchison geometry]
		\label{thm:aitchison}
		The map \(\Phi:\PP_{\log,2}(\Omega)\to \mathcal{H}\subset L^2_0(\Omega)\) defined by \(\Phi(\rho)=\widetilde{\rho}\) is an isometric bijection. Consequently:
		\begin{enumerate}
			\item \(d_A\) is a distance.
			\item \((\PP_{\log,2}(\Omega),d_A)\) is isometric to the metric subspace \(\mathcal{H}\) of \(L^2_0(\Omega)\).
			\item Geodesics are given by linear interpolation in clr space.
			\item A curve \(\rho_t\) is absolutely continuous for \(d_A\) iff \(u_t=\widetilde{\rho}_t\) is absolutely continuous in \(L^2_0\). The metric derivative is \(|\dot\rho_t|_{d_A} = \|\dot u_t\|_{L^2}\).
		\end{enumerate}
	\end{theorem}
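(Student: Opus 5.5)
The theorem collects Lemmas~\ref{lem:isometry}--\ref{lem:ac_aitchison}, so the proof will mostly assemble them. I will first establish the bijection claim. Lemma~\ref{lem:isometry} gives injectivity of $\Phi$ together with the identity $d_A(\rho,\sigma)=\|\Phi(\rho)-\Phi(\sigma)\|_{L^2}$. Lemma~\ref{lem:image} gives surjectivity onto $\mathcal H$: any $u\in\mathcal H$ equals $\Phi(e^u/\int e^u)$. Hence $\Phi:\PP_{\log,2}(\Omega)\to\mathcal H$ is an isometric bijection.

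Items 1 and 2 then follow by pulling back. Since $d_A(\rho,\sigma)=\|\Phi(\rho)-\Phi(\sigma)\|_{L^2}$ and $\Phi$ is injective, the metric axioms transfer from the $L^2$ norm. Symmetry and the triangle inequality are inherited from the norm. Definiteness holds because $d_A(\rho,\sigma)=0$ forces $\Phi(\rho)=\Phi(\sigma)$ in $L^2$, and injectivity then gives $\rho=\sigma$ a.e. Item 2 is a restatement of this: $\Phi$ is a distance-preserving bijection onto $\mathcal H$ with the induced $L^2$ metric.

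For item 3 I will invoke Lemma~\ref{lem:geodesics_aitchison}. Its key ingredient is the convexity of $\mathcal H$ from Lemma~\ref{lem:image}. Because of this convexity, the segment $(1-t)u_0+tu_1$ stays in $\mathcal H$, and its preimage under $\Phi$ is a constant-speed geodesic in $\PP_{\log,2}(\Omega)$.

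The argument for uniqueness needs more care than "strict convexity of the unit ball". Suppose $\gamma$ is any constant-speed geodesic in $\mathcal H$. Then for $0\le s\le t\le1$ it satisfies $\|\gamma_s-\gamma_t\|=|t-s|\,\|u_1-u_0\|$. In particular, $\|\gamma_t-u_0\|+\|u_1-\gamma_t\|=\|u_1-u_0\|$, so equality holds in the triangle inequality. In a Hilbert space this forces $\gamma_t-u_0$ and $u_1-\gamma_t$ to be nonnegative multiples of each other. It follows that $\gamma_t$ lies on the segment, and comparing lengths gives $\gamma_t=(1-t)u_0+tu_1$. I will spell out this step explicitly, since it is where the Hilbert structure is genuinely used.

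For item 4, absolute continuity and metric derivatives are defined purely in terms of the distance. A curve is absolutely continuous iff $d(\gamma_s,\gamma_t)\le\int_s^t m$ for some $m\in L^1$, and $|\dot\gamma_t|=\lim_{h\to0} d(\gamma_{t+h},\gamma_t)/|h|$. Both notions are therefore preserved under the isometry $\Phi$. In $L^2_0$, a Hilbert space with the Radon--Nikodym property, an absolutely continuous curve is differentiable a.e. with $|\dot u_t|=\|\dot u_t\|_{L^2}$.

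The only point I expect to need care is the identification of the metric derivative with $\|\dot u_t\|_{L^2}$. This rests on the a.e. differentiability of absolutely continuous curves valued in a Hilbert space. I will cite this standard fact from \cite{AGS} (the Radon--Nikodym property of Hilbert spaces) rather than reprove it.
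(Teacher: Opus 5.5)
Your proposal is correct and follows the paper's proof, which simply assembles Lemmas~\ref{lem:isometry}, \ref{lem:image}, \ref{lem:geodesics_aitchison} and \ref{lem:ac_aitchison}. Your equality-in-the-triangle-inequality uniqueness argument and the Radon--Nikodym identification of $|\dot\rho_t|_{d_A}$ are explicit versions of the paper's appeals to strict convexity of the $L^2$ ball and to the isometry. One caveat, inherited from the paper: you tacitly need $|\Omega|=1$, or $\ell(\rho)$ replaced by the mean $|\Omega|^{-1}\int_\Omega\ln\rho\,dx$, both for $\Phi$ to land in $L^2_0(\Omega)$ and for the identity $\Phi(e^u/\int_\Omega e^u)=u$ used for surjectivity.
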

	\begin{proof}
		The theorem follows directly from Lemmas~\ref{lem:isometry}, \ref{lem:image}, \ref{lem:geodesics_aitchison}, and \ref{lem:ac_aitchison}.
	\end{proof}
	
	\begin{remark}
		The novelty of our paper lies not in this construction — which is the continuous analogue of well-known results in compositional data analysis and Bayes Hilbert spaces \cite{Egozcue2006, van den Boogaart2010} — but in its combination with Wasserstein transport to form a hybrid geometry for compositional data with spatial structure. This hybrid geometry, presented in the next section, is the main contribution of this work.
	\end{remark}
	\section{Hybrid distance}\label{sec:hybrid_distance}
	\subsection{Kinematic formulation}
	For a curve $\rho_t\in\PP_{2,\mathrm{ac}}(\Omega)$, $t\in[0,1]$, we consider measurable families $(v_t,u_t)$ of vector fields and scalar fields satisfying the continuity equation with source:
	\begin{equation}
		\partial_t\rho_t + \Div(\rho_t v_t) = \rho_t u_t,\qquad \int_\Omega u_t\rho_t\dd x = 0, \label{eq:continuity}
	\end{equation}
	in the sense of distributions. The first equation is the usual continuity equation with an additional source term $\rho_t u_t$, which represents the rate of change of density due to reactions. The second condition $\int u_t\rho_t=0$ ensures that the total mass is conserved:
	\begin{equation}
		\frac{d}{dt}\int_\Omega\rho_t\dd x = \int_\Omega\partial_t\rho_t\dd x = \int_\Omega\rho_t u_t\dd x = 0.
	\end{equation}
	
	\subsection{Action functional}
	For a fixed $\alpha\in(0,1)$, define the action
	\begin{equation}
		\mathcal{A}_\alpha(\rho,v,u)=\int_0^1\Bigl[(1-\alpha)\int_\Omega |v_t|^2\rho_t\dd x + \alpha\int_\Omega u_t^2\rho_t\dd x\Bigr]\dd t.
	\end{equation}
	The two terms correspond to the kinetic energy of transport (Wasserstein) and the kinetic energy of reaction (Aitchison). The parameter $\alpha$ weights their relative importance.
	
	\begin{definition}
		The hybrid distance $D_\alpha(\rho_0,\rho_1)$ between $\rho_0,\rho_1\in\PP_{2,\mathrm{ac}}(\Omega)$ is
		\begin{equation}
			D_\alpha^2(\rho_0,\rho_1)=\inf\Bigl\{\mathcal{A}_\alpha(\rho,v,u): (\rho,v,u) \text{ satisfies } \eqref{eq:continuity},\ \rho(0)=\rho_0,\ \rho(1)=\rho_1\Bigr\}.
		\end{equation}
	\end{definition}
	
	\subsection{Lower semicontinuity of the action}\label{lem1}
	
	\begin{lemma}[Representation as a supremum of linear functionals]\label{lem1'}
		\label{lem:supremum}
		For any $\rho\ge0$ and any measurable vector field $v$, we have
		\begin{equation}
			\int_\Omega |v|^2\rho\,\dd x = \sup_{\varphi\in C_c^\infty(\Omega,\R^d)} \int_\Omega \bigl(2\varphi\cdot v\rho - |\varphi|^2\rho\bigr)\,\dd x.
		\end{equation}
		Similarly, for any scalar field $u$,
		\begin{equation}
			\int_\Omega u^2\rho\,\dd x = \sup_{\psi\in C_c^\infty(\Omega)} \int_\Omega \bigl(2\psi u\rho - \psi^2\rho\bigr)\,\dd x.
		\end{equation}
	\end{lemma}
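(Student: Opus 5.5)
The plan has two halves, one for each inequality. First comes the pointwise inequality, then the matching lower bound by an approximating sequence. Throughout, both sides may take the value $+\infty$, and the identity is meant in $[0,\infty]$.

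\textbf{Upper bound.} For every $\varphi$, completing the square gives
\[
2\varphi\cdot v-|\varphi|^2=|v|^2-|v-\varphi|^2\le |v|^2 .
\]
Multiplying by $\rho\ge0$ and integrating shows that every element of the set over which the supremum is taken is at most $\int_\Omega|v|^2\rho\,\dd x$. One subtlety must be settled first. When $\int|v|^2\rho=\infty$, the integrand $2\varphi\cdot v\rho-|\varphi|^2\rho$ must still make sense for $\varphi\in C_c^\infty$. This requires $v\rho\in L^1_{\mathrm{loc}}$, which holds, for instance, whenever $\rho\in L^1$ and $\int|v|^2\rho<\infty$ locally, by Cauchy--Schwarz. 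I would state this as the implicit standing assumption (it is exactly the setting of \eqref{eq:continuity}). Otherwise I would interpret the integral with values in $[-\infty,\infty]$, using the bound from above just obtained.

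\textbf{Lower bound.} The pointwise supremum is attained at $\varphi=v$, but $v$ is not smooth and compactly supported. I would therefore build a sequence $\varphi_n\in C_c^\infty(\Omega,\R^d)$ with
\[
\int_\Omega|v-\varphi_n|^2\rho\,\dd x\to0
\]
in the finite case. Then
\[
\int(2\varphi_n\cdot v-|\varphi_n|^2)\rho=\int|v|^2\rho-\int|v-\varphi_n|^2\rho\to\int|v|^2\rho .
\]
This reduces the claim to density of $C_c^\infty(\Omega,\R^d)$ in the weighted space $L^2(\rho\,\dd x;\R^d)$. I would prove that density in three steps:
\begin{enumerate}
\item Truncate: $v_n=v\,\mathbf 1_{\{|v|\le n\}}\mathbf 1_{K_n}$ with compacts $K_n\uparrow\Omega$. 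This converges in $L^2(\rho)$ by dominated convergence.
\item Approximate each bounded, compactly supported $v_n$ by continuous compactly supported fields. Lusin's theorem applied with respect to the finite measure $\rho\,\dd x$ does this with uniform bounds, so again dominated convergence applies.
\item Mollify the continuous fields. Mollification converges uniformly on a compact set inside $\Omega$, and uniform convergence on a set of finite $\rho$-measure implies $L^2(\rho)$ convergence.
\end{enumerate}
This density step is the main technical point. The weight $\rho$ is merely $L^1$, so one cannot mollify $v$ directly in $L^2(\rho)$; the route through Lusin's theorem (or through the regularity of the finite Radon measure $\rho\,\dd x$) avoids that difficulty.

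\textbf{Infinite case.} When $\int|v|^2\rho=\infty$, I would use the same truncations $v_n$, now with $\int|v_n|^2\rho\uparrow\infty$ by monotone convergence. For each $n$, choose a smooth $\varphi$ close to $v_n$ in $L^2(\rho)$. The functional value
\[
\int(2\varphi\cdot v-|\varphi|^2)\rho
\]
is close to $\int(2v_n\cdot v-|v_n|^2)\rho=\int|v_n|^2\rho$. Here I use that $v=v_n$ on the support of $v_n$, and that $v\rho\in L^1$ on $K_n$, which controls the cross term $\int(\varphi-v_n)\cdot v\rho$ when $\varphi$ is chosen to converge uniformly with supports in $K_n$. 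Hence the supremum is $+\infty$.

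\textbf{Scalar case.} The second identity is the same argument with $d=1$: $\psi$ replaces $\varphi$ and $u$ replaces $v$.
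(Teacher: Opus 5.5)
Your proposal is correct and follows the same route as the paper: the pointwise maximum of $2\varphi\cdot v-|\varphi|^2$ at $\varphi=v$ (your completion of the square), plus density of $C_c^\infty(\Omega,\R^d)$ in $L^2(\rho\,\dd x)$. It also supplies what the paper leaves implicit, in particular the standing assumption $\rho\in L^1$, which is genuinely necessary: if $\rho>0$ is integrable on no ball and $0<\int_\Omega|v|^2\rho\,\dd x<\infty$, then every nonzero $\varphi$ gives $-\infty$ and the supremum is $0$.

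One phrase needs repair in your infinite case. In general $\varphi$ cannot converge uniformly to the discontinuous $v_n$, but the cross term $\int(\varphi-v_n)\cdot v\rho\,\dd x$ still tends to zero by dominated convergence: the Lusin-plus-mollification approximants are bounded by a multiple of $n$, supported in a fixed compact set on which $v\rho$ is integrable, and converge to $v_n$ $\rho$-a.e.\ along a subsequence.
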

	\begin{proof}
		For fixed $\rho$ and $v$, the quadratic form $\varphi\mapsto \int(2\varphi\cdot v\rho - |\varphi|^2\rho)$ is maximized when $\varphi = v$, giving the value $\int|v|^2\rho$. The supremum over a dense set of smooth functions yields the same value.
	\end{proof}
	
	\begin{lemma}[Weak convergence of measures]\label{lem2}
		\label{lem:weak_convergence}
		Suppose $\rho_n\to\rho$ narrowly (i.e., in duality with continuous bounded functions) and $\rho_n v_n \rightharpoonup \rho v$ in the sense of measures. Then for any $\varphi\in C_c^\infty$,
		\begin{equation}
			\int_\Omega \bigl(2\varphi\cdot v_n\rho_n - |\varphi|^2\rho_n\bigr)\,\dd x \to \int_\Omega \bigl(2\varphi\cdot v\rho - |\varphi|^2\rho\bigr)\,\dd x.
		\end{equation}
	\end{lemma}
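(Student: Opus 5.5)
The statement is linear in the pair $(\rho_n,\rho_n v_n)$, so I will split the integrand into two pieces and handle each by the corresponding hypothesis. For fixed $\varphi\in C_c^\infty(\Omega,\R^d)$ I write
\begin{equation*}
\int_\Omega \bigl(2\varphi\cdot v_n\rho_n - |\varphi|^2\rho_n\bigr)\,\dd x
= 2\int_\Omega \varphi\cdot \dd(\rho_n v_n) - \int_\Omega |\varphi|^2\,\dd\rho_n .
\end{equation*}
Both terms are integrals of fixed test functions against the measures $\rho_n v_n$ and $\rho_n$.

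\textbf{Momentum term.} The function $\varphi$ is smooth with compact support in $\Omega$. In particular it is continuous, bounded and vanishes near $\partial\Omega$, so it is admissible in any reasonable notion of weak convergence of vector measures (vague, i.e.\ against $C_c$, or narrow, against $C_b$). The hypothesis $\rho_n v_n\rightharpoonup \rho v$ therefore gives, componentwise in $\varphi=(\varphi_1,\dots,\varphi_d)$,
\begin{equation*}
\int_\Omega \varphi\cdot \dd(\rho_n v_n)\to\int_\Omega\varphi\cdot v\rho\,\dd x .
\end{equation*}

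\textbf{Density term.} The function $|\varphi|^2$ is continuous, bounded and compactly supported. Narrow convergence $\rho_n\to\rho$ then gives $\int|\varphi|^2\rho_n\to\int|\varphi|^2\rho$ directly. Adding the two limits, with the factor $2$ on the first, yields the claim.

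\textbf{Expected obstacle.} The only delicate point is interpretive rather than computational. The statement writes the limit momentum as $\rho v$, so one must check that the weak limit of $\rho_n v_n$ is indeed a measure absolutely continuous with respect to $\rho$. I will take this as part of the hypothesis: the limit is of the form $\rho v$ for some measurable $v$, and the integral $\int\varphi\cdot v\rho$ is understood as integration against that vector measure. Justifying this form from a uniform bound $\sup_n\int|v_n|^2\rho_n<\infty$ would be needed only in the subsequent lower semicontinuity argument, via Lemma~\ref{lem:supremum} and a Cauchy--Schwarz estimate, and not for the present lemma. Provided that test functions in $C_c^\infty$ are allowed by the mode of convergence, which they are in either convention, the proof is complete. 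The scalar analogue, with $\psi$ and $\rho_n u_n\rightharpoonup\rho u$, follows identically.
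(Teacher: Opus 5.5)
Your proof is correct and is essentially the paper's argument: split the integrand linearly, get the momentum term from the assumed weak convergence $\rho_n v_n\rightharpoonup\rho v$ tested against the continuous, compactly supported $\varphi$, and get the density term from narrow convergence tested against $|\varphi|^2\in C_b$. Your remark that the form $\rho v$ of the limit is part of the hypothesis, and only needs justification later in the lower semicontinuity argument, is accurate and does not affect this proof.
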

	\begin{proof}
		The term $2\varphi\cdot v_n\rho_n$ converges because $\rho_n v_n\rightharpoonup \rho v$ and $\varphi$ is continuous. The term $|\varphi|^2\rho_n$ converges because $\rho_n\to\rho$ narrowly.
	\end{proof}
	
	\begin{lemma}[Lower semicontinuity of the transport term]\label{lem3}
		\label{lem:lsc_transport}
		The functional $(\rho,v)\mapsto \int_0^1\int_\Omega |v_t|^2\rho_t\dd x\dd t$ is lower semicontinuous with respect to narrow convergence of $\rho_t$ and weak convergence of $\rho_t v_t$.
	\end{lemma}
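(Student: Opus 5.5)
We argue via the supremum representation of Lemma~\ref{lem:supremum}, applied in space--time, so that the transport action becomes a supremum of functionals that are continuous along the given convergence; a supremum of continuous functionals is lower semicontinuous.

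Set $Q=(0,1)\times\Omega$ and regard $\mu=\rho_t\,\dd x\,\dd t$ and the momentum $m=\rho_t v_t\,\dd x\,\dd t$ as measures on $Q$. The convergence is then $\mu_n\to\mu$ narrowly and $m_n\rightharpoonup m$ weakly as $\R^d$-valued measures on $Q$. If the hypothesis is only stated for each fixed $t$, we first integrate in time: by dominated convergence (the densities are probability densities, so the integrands are bounded by $\|\varphi\|_\infty$ and its powers) this gives convergence of the space--time pairings.

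First, we establish a space--time version of Lemma~\ref{lem:supremum}:
\begin{equation}
\int_0^1\!\!\int_\Omega |v_t|^2\rho_t\,\dd x\,\dd t=\sup_{\varphi\in C_c^\infty(Q,\R^d)}\int_0^1\!\!\int_\Omega\bigl(2\varphi\cdot v_t\rho_t-|\varphi|^2\rho_t\bigr)\dd x\,\dd t .
\end{equation}
The inequality ``$\ge$'' is pointwise, since $2\varphi\cdot v-|\varphi|^2\le|v|^2$. For ``$\le$'', we approximate $v$ in $L^2(\mu;\R^d)$ by functions $\varphi_k\in C_c^\infty(Q)$; these are dense because $\mu$ is a finite Radon measure on the open set $Q$. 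This gives $\int(2\varphi_k\cdot v-|\varphi_k|^2)\,\dd\mu\to\int|v|^2\,\dd\mu$.

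When the left side is infinite we truncate instead: take $v^{(M)}=v\,\mathbf 1_{|v|\le M}\mathbf 1_K$ with $K\Subset Q$, and approximate $v^{(M)}$ by smooth fields. Since $2\varphi\cdot v-|\varphi|^2$ with $\varphi\approx v^{(M)}$ yields approximately $\int|v^{(M)}|^2\,\dd\mu$, monotone convergence lets the supremum reach $+\infty$. This truncation step is the delicate point: $v$ need not be in $L^2(\mu)$ a priori, so the density argument has to be run on bounded, compactly supported truncations.

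Second, for each fixed $\varphi\in C_c^\infty(Q,\R^d)$ we show that
\[
F_\varphi(\mu,m)=\int 2\varphi\cdot \dd m-\int|\varphi|^2\,\dd\mu
\]
is continuous along the sequence. This is Lemma~\ref{lem:weak_convergence}, lifted to $Q$: $\varphi$ and $|\varphi|^2$ are bounded and continuous, and compact support handles the boundary of $Q$.

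Finally,
\[
\liminf_n\int|v_n|^2\,\dd\mu_n\ \ge\ \liminf_n F_\varphi(\mu_n,m_n)=F_\varphi(\mu,m)\quad\text{for every }\varphi,
\]
and taking the supremum over $\varphi$ and using the first step gives the claim.

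One point must be settled: the limit momentum $m$ must actually be of the form $\rho v$ with $m\ll\mu$. If this is not part of the hypothesis, it follows from the uniform action bound. Along a subsequence where $\liminf_n\int|v_n|^2\,\dd\mu_n<\infty$, Cauchy--Schwarz gives $|\int\varphi\cdot \dd m_n|\le\|\varphi\|_{L^2(\mu_n)}C^{1/2}$. Passing to the limit, $|\int\varphi\cdot\dd m|\le C^{1/2}\|\varphi\|_{L^2(\mu)}$, so by Riesz representation $m=v\mu$ with $v\in L^2(\mu)$. If the liminf is infinite there is nothing to prove.
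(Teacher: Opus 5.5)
Your proposal is correct and follows the paper's route (the supremum representation of Lemma~\ref{lem:supremum}, continuity of each affine functional $F_\varphi$ via Lemma~\ref{lem:weak_convergence}, and lower semicontinuity of a supremum of continuous functionals), and by taking the supremum over space--time fields $\varphi\in C_c^\infty(Q,\R^d)$ you make rigorous the step the paper glosses over, namely treating $\int_0^1\sup_\varphi(\cdots)\,\dd t$ as a supremum of functionals of $(\rho,v)$. One small slip: in your optional per-$t$ reduction the momentum term is bounded only by $2\|\varphi\|_\infty\int_\Omega|v^n_t|\rho^n_t\,\dd x$, not by a power of $\|\varphi\|_\infty$, so dominated convergence does not apply; you need instead the action bound (which makes these quantities bounded in $L^2(0,1)$, hence equi-integrable, so Vitali applies), although under the space--time convergence of Lemma~\ref{lem:weakcompact} this reduction is not needed at all.
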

	\begin{proof}
		Using Lemma~\ref{lem:supremum}, we write
		\begin{equation}
			\int_0^1\int |v_t|^2\rho_t\dd x\dd t = \int_0^1 \sup_{\varphi\in C_c^\infty} \int (2\varphi\cdot v_t\rho_t - |\varphi|^2\rho_t)\dd x\dd t.
		\end{equation}
		For any fixed $\varphi$, the functional $(\rho,v)\mapsto \int_0^1\int (2\varphi\cdot v\rho - |\varphi|^2\rho)$ is continuous under the assumed convergences by Lemma~\ref{lem:weak_convergence}. The supremum of continuous functionals is lower semicontinuous.
	\end{proof}
	
	\begin{lemma}[Lower semicontinuity of the reaction term]\label{lem4}
		\label{lem:lsc_reaction}
		The functional $(\rho,u)\mapsto \int_0^1\int_\Omega u_t^2\rho_t\dd x\dd t$ is lower semicontinuous with respect to narrow convergence of $\rho_t$ and weak convergence of $\rho_t u_t$.
	\end{lemma}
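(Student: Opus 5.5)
The argument follows the proof of Lemma~\ref{lem:lsc_transport}, with the scalar identity of Lemma~\ref{lem:supremum} used in place of the vector one. First, for each fixed $\psi\in C_c^\infty(\Omega)$, I will show that
\begin{equation}
	\int_\Omega \bigl(2\psi\, u_n\rho_n - \psi^2\rho_n\bigr)\,\dd x \to \int_\Omega \bigl(2\psi\, u\rho - \psi^2\rho\bigr)\,\dd x .
\end{equation}
The first term converges because $\rho_n u_n\rightharpoonup\rho u$ as measures and $\psi$ is continuous with compact support. The second term converges because $\psi^2$ is bounded and continuous and $\rho_n\to\rho$ narrowly. This is the scalar version of Lemma~\ref{lem:weak_convergence}, and its proof is identical.

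Second, I will handle the time variable. Rather than taking a supremum inside the time integral pointwise in $t$, I will work on the space--time domain $(0,1)\times\Omega$. I view $\rho_t\,\dd t$ and $\rho_t u_t\,\dd t$ as measures on $(0,1)\times\Omega$, and I take test functions $\psi\in C_c^\infty((0,1)\times\Omega)$. Lemma~\ref{lem:supremum}, applied on the product domain, then gives
\begin{equation}
	\int_0^1\!\!\int_\Omega u_t^2\rho_t\,\dd x\,\dd t=\sup_{\psi}\int_0^1\!\!\int_\Omega \bigl(2\psi u_t\rho_t-\psi^2\rho_t\bigr)\,\dd x\,\dd t .
\end{equation}
By the first step, each functional inside the supremum is continuous under the assumed convergences, read in space--time. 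A supremum of continuous functionals is lower semicontinuous, which gives the claim.

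The main obstacle is justifying the supremum identity in Lemma~\ref{lem:supremum} when $u\notin L^2(\rho)$ and the maximizer $\psi=u$ is not admissible. The inequality ``$\ge$'' needs a density argument. I will truncate $u$ to $u^{(k)}=\max(-k,\min(u,k))$ and multiply by a cutoff $\chi_k$, which lies in $L^2(\rho)$ because $\rho\in L^1$. Then $C_c^\infty$ is dense in $L^2(\rho\,\dd x\,\dd t)$: $\rho\,\dd x\,\dd t$ is a finite Radon measure on an open set, so continuous compactly supported functions are dense, and mollification gives smooth ones. Approximating $\chi_k u^{(k)}$ in this way gives values approaching $\int (2\chi_k u^{(k)}u-(\chi_k u^{(k)})^2)\rho$. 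This quantity is at least $\int (\chi_k u^{(k)})^2\rho$, since $u^{(k)}u\ge (u^{(k)})^2$ and $\chi_k^2\le\chi_k$ for $0\le\chi_k\le1$. It increases to $\int u^2\rho$ by monotone convergence, including the case where this integral is $+\infty$. The inequality ``$\le$'' follows pointwise from $2\psi u-\psi^2\le u^2$. Everything else is a direct transcription of the transport case.
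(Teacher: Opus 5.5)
Your proposal is correct and uses the same mechanism as the paper. The paper's proof of this lemma defers to Lemma~\ref{lem:lsc_transport}: write the energy as a supremum of functionals that are continuous under the assumed convergences, then use that a supremum of continuous functionals is lower semicontinuous. You are more careful than the paper at two points.

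(i) The paper puts the supremum over time-independent $\varphi\in C_c^\infty(\Omega)$ \emph{inside} the $\dd t$-integral. It then invokes continuity of $\int_0^1\!\int(2\varphi u\rho-\varphi^2\rho)$ for fixed $\varphi$. But $\sup_\varphi\int_0^1(\cdot)\,\dd t\le\int_0^1\sup_\varphi(\cdot)\,\dd t$, and the inequality can be strict: take $u_t=a(t)w(x)$ with $\int_0^1 a\,\dd t=0$. So that argument only yields lower semicontinuity of a possibly smaller quantity. Your choice of test functions in $C_c^\infty((0,1)\times\Omega)$ is what makes the supremum represent the full space--time integral. It also matches the space--time convergence that Lemma~\ref{lem:weakcompact} actually provides.

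(ii) The paper justifies the supremum formula only by noting that the maximizer is $\psi=u$ and appealing to density. You prove it, including the case $\int u^2\rho=+\infty$. That case is needed to rule out an infinite limit energy when the $\liminf$ is finite.

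One step of your density argument needs a small patch. Approximating $g=\chi_k u^{(k)}$ by $\psi_j$ in $L^2(\rho\,\dd x\,\dd t)$ gives $\int\psi_j^2\rho\to\int g^2\rho$. It does not give $\int\psi_j u\rho\to\int g u\rho$ unless $u\in L^2(\rho)$, and $u\notin L^2(\rho)$ is exactly the case you want to cover. Two fixes work:
\begin{itemize}
\item Truncate the approximants at level $k$. They stay continuous with compact support, and still converge to $g$ in $L^2(\rho)$, since truncation is $1$-Lipschitz and fixes $g$. Pass to a $\rho$-a.e.\ convergent subsequence and apply dominated convergence with dominating function $k|u|$. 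This is $\rho$-integrable because $\rho u$ is a finite measure. Mollify only after this step.
\item Avoid truncation entirely. If $M:=\sup_\psi\int(2\psi u\rho-\psi^2\rho)<\infty$, replace $\psi$ by $s\psi$ and optimize in $s$ to get $\bigl(\int\psi u\rho\bigr)^2\le M\int\psi^2\rho$. Hence $\psi\mapsto\int\psi u\rho$ extends to a bounded functional on $L^2(\rho)$. Riesz representation then gives $u\in L^2(\rho)$ with $\int u^2\rho\le M$.
\end{itemize}
With either fix your proof is complete.
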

	\begin{proof}
		The proof is identical to Lemma~\ref{lem:lsc_transport}, replacing $v$ by $u$ and using scalar test functions $\psi$.
	\end{proof}
	
	\begin{lemma}[Lower semicontinuity of the action]\label{lem5}
		\label{lem:lsc}
		The functional $\mathcal{A}_\alpha$ is lower semicontinuous with respect to narrow convergence of $\rho_t$ and weak convergence of $\sqrt{\rho_t}v_t$ and $\sqrt{\rho_t}u_t$.
	\end{lemma}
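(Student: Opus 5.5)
The plan is to reduce Lemma~\ref{lem:lsc} to Lemmas~\ref{lem:lsc_transport} and~\ref{lem:lsc_reaction}. Since $\alpha\in(0,1)$, both weights $1-\alpha$ and $\alpha$ are positive, so $\mathcal{A}_\alpha$ is a positive combination of the transport and reaction terms. A sum of lower semicontinuous functionals with positive weights is lower semicontinuous, because $\liminf(a_n+b_n)\ge\liminf a_n+\liminf b_n$. The only real work is to check that the hypotheses of the present lemma imply those of the two earlier lemmas. Those lemmas are phrased in terms of weak convergence of the momenta $\rho_t v_t$ and $\rho_t u_t$, while here we assume weak convergence of $\sqrt{\rho_t}v_t$ and $\sqrt{\rho_t}u_t$.

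\textbf{Step 1 (energy-space reformulation).} Put $w_n=\sqrt{\rho_n}v_n$ and $z_n=\sqrt{\rho_n}u_n$, viewed in $L^2((0,1)\times\Omega)$. The transport term equals $\|w_n\|_{L^2}^2$ and the reaction term equals $\|z_n\|_{L^2}^2$. If $w_n\rightharpoonup w=\sqrt\rho\,v$ and $z_n\rightharpoonup z=\sqrt\rho\,u$ weakly in $L^2$, then weak lower semicontinuity of the $L^2$ norm gives $\liminf\|w_n\|^2\ge\|w\|^2$ and likewise for $z_n$. This already yields the claim, with no use of the narrow convergence of $\rho_n$ beyond identifying the limits. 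I will present this direct argument as the main proof.

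\textbf{Step 2 (consistency with Lemmas~\ref{lem:lsc_transport}--\ref{lem:lsc_reaction}).} To connect with the supremum representation of Lemma~\ref{lem:supremum}, I will also show that the momenta converge. Without loss of generality assume $\liminf\mathcal{A}_\alpha<\infty$ and pass to a subsequence realizing it, so $\|w_n\|_{L^2}$ is bounded. For a test function $\varphi\in C_c^\infty((0,1)\times\Omega)$, write $\int\varphi\cdot\rho_n v_n=\int\varphi\sqrt{\rho_n}\cdot w_n$.

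\textbf{Main obstacle: passing to the limit in this product.} It pairs a weakly convergent sequence $w_n$ with $\sqrt{\rho_n}$, which converges only narrowly, so the product limit is not automatic. The cleanest route is to rely on Step~1 and avoid the product. If one wants the momentum version, the natural tool is the joint convexity and lower semicontinuity of the Benamou--Brenier function $(\rho,m)\mapsto|m|^2/\rho$. By Cauchy--Schwarz, $|\rho_n v_n|(E)\le\|w_n\|_{L^2}\,\rho_n(E)^{1/2}$, so the momenta are bounded measures. One then extracts a subsequence with $\rho_n v_n\rightharpoonup m$, and the supremum formula of Lemma~\ref{lem:supremum} applied to $(\rho,m)$ gives $\liminf\int|v_n|^2\rho_n\ge\int|m|^2/\rho$ and shows $m\ll\rho$. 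One still has to identify $m=\rho v$ with the limit $\sqrt\rho\,w$ from the hypothesis, and I expect this to require the stated hypothesis to be read as convergence of the momenta. Step~1 handles the functional inequality in either reading, so I will state it that way and note the equivalence.
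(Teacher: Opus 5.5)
You start from the same decomposition the paper uses, but the argument you actually commit to (Step~1) is a different one. The paper writes $\mathcal A_\alpha$ as a positive combination of the transport and reaction terms and cites Lemmas~\ref{lem:lsc_transport} and~\ref{lem:lsc_reaction}. Those lemmas rest on the supremum (Benamou--Brenier) representation in the momentum variables $\rho_t v_t$, $\rho_t u_t$. The paper also remarks that the centering constraint passes to weak limits, which the statement does not require. You instead work in the energy variables $w=\sqrt{\rho}\,v$, $z=\sqrt{\rho}\,u$, where $\mathcal A_\alpha=(1-\alpha)\|w\|_{L^2}^2+\alpha\|z\|_{L^2}^2$. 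Weak lower semicontinuity of the Hilbert norm then settles the claim without using the narrow convergence of $\rho_n$. This is correct, more elementary, and still works if ``weak'' is read distributionally.

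The mismatch you flag is real. Narrow convergence of $\rho_n$ plus weak convergence of $\sqrt{\rho_n}v_n$ does not give $\rho_n v_n\rightharpoonup\rho v$. For example, take $v_n\equiv1$ and $\rho_n=1+\tfrac12\sin(nx)$ on $(0,2\pi)$, up to normalization. By strict Jensen, $\sqrt{\rho_n}\rightharpoonup c\sqrt{\rho}$ with a constant $c<1$, so the hypothesis holds with $v\equiv c$. But $\rho_n v_n=\rho_n\rightharpoonup\rho\neq c\rho=\rho v$. So the paper's one-line reduction to the momentum lemmas does not literally match this lemma's hypothesis, and your route is the one that fits it.

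What the momentum formulation buys is compatibility with the existence proof. Lemma~\ref{lem:weakcompact} only delivers narrow convergence of $\rho^n$ and weak-$*$ convergence of the fluxes $m^n,r^n$. There it is the joint lower semicontinuity of $(\rho,m)\mapsto|m|^2/\rho$ from your Step~2 that is needed, not Step~1.

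For the same reason your closing sentence is inaccurate. The two readings are not equivalent: the same example run backwards shows momentum convergence does not give $\sqrt{\rho_n}v_n\rightharpoonup\sqrt\rho\,v$. Under the momentum reading Step~1 therefore does not apply. The argument there is Step~2, with $m=\rho v$ supplied by the hypothesis rather than derived.
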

	\begin{proof}
		Since $\mathcal{A}_\alpha = (1-\alpha)\mathcal{A}_{\text{transport}} + \alpha\mathcal{A}_{\text{reaction}}$, and both terms are lower semicontinuous by Lemmas~\ref{lem:lsc_transport} and \ref{lem:lsc_reaction}, the sum is also lower semicontinuous. The centering constraint $\int u\rho=0$ is linear and preserved under weak limits.
	\end{proof}
	
	\subsection{Existence of minimizers}
	
	\subsubsection{Flux formulation of the distance}	
	Let $(\rho_t)_{t\in[0,1]}$ be a curve of strictly positive probability
	densities on $\Omega$, i.e.
	
	\[
	\rho_t(x)>0
	\quad \text{for a.e. } x\in\Omega,
	\qquad
	\int_\Omega \rho_t(x)\,dx=1,
	\quad t\in[0,1].
	\]
	
	Given velocity and source fields $(v_t,u_t)$, we introduce the associated fluxes
	
	\[
	m_t:=\rho_t v_t,
	\qquad
	r_t:=\rho_t u_t.
	\]
	
	In terms of these variables, the continuity equation
	\eqref{eq:continuity} becomes
	
	\begin{equation}
		\label{eq:continuity_flux}
		\partial_t\rho_t+\Div(m_t)=r_t
		\qquad\text{in }\mathcal D'((0,1)\times\Omega).
	\end{equation}
	
	Since the total mass is preserved, the centering condition
	
	\[
	\int_\Omega u_t\,\rho_t\,dx=0
	\]
	
	can be rewritten as
	
	\begin{equation}
		\label{eq:centering_flux}
		\int_\Omega r_t(x)\,dx=0,
		\qquad \text{for a.e. } t\in[0,1].
	\end{equation}
	
	Moreover, the action functional takes the form
	
	\begin{equation}
		\label{eq:action_flux}
		\mathcal A_\alpha(\rho,m,r)
		=
		(1-\alpha)
		\int_0^1\!\!\int_\Omega
		\frac{|m_t(x)|^2}{\rho_t(x)}
		\,dx\,dt
		+
		\alpha
		\int_0^1\!\!\int_\Omega
		\frac{|r_t(x)|^2}{\rho_t(x)}
		\,dx\,dt.
	\end{equation}
	
	Consequently, the distance $D_\alpha$ admits the equivalent convex formulation
	
	\begin{equation}
		\label{eq:Dalpha_flux}
		D_\alpha^2(\rho_0,\rho_1)
		=
		\inf
		\left\{
		\mathcal A_\alpha(\rho,m,r):
		(\rho,m,r)
		\text{ satisfy }
		\eqref{eq:continuity_flux},
		\eqref{eq:centering_flux},
		\;
		\rho(0)=\rho_0,
		\;
		\rho(1)=\rho_1
		\right\}.
	\end{equation}
	
	\begin{lemma}[Boundedness of the fluxes]
		\label{lem:fluxbound}
		Let $(\rho^n,m^n,r^n)$ be a minimizing sequence for
		$D_\alpha^2(\rho_0,\rho_1)$.
		
		Then there exists $C>0$ such that
		
		\[
		\int_0^1\!\!\int_\Omega
		\frac{|m_t^n|^2}{\rho_t^n}
		\,dx\,dt
		\le C,
		\]
		
		and
		
		\[
		\int_0^1\!\!\int_\Omega
		\frac{|r_t^n|^2}{\rho_t^n}
		\,dx\,dt
		\le C.
		\]
	\end{lemma}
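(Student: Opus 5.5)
The argument rests on two facts: a minimizing sequence has bounded action, and both weights $1-\alpha$ and $\alpha$ are strictly positive. The first thing to check is that $D_\alpha^2(\rho_0,\rho_1)<\infty$, i.e. that the admissible set in \eqref{eq:Dalpha_flux} contains at least one triple of finite action. Since $\rho_0,\rho_1\in\PP_{2,\mathrm{ac}}(\Omega)\subset\PP_{\log,2}(\Omega)$, we can use the Aitchison geodesic of Lemma~\ref{lem:geodesics_aitchison}, $\rho_t=e^{w_t}/\int_\Omega e^{w_t}\,\dd x$ with $w_t=(1-t)\widetilde\rho_0+t\widetilde\rho_1$. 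We take $m_t\equiv0$ and $r_t=\partial_t\rho_t$. By Lemma~\ref{lem:ac_aitchison}, $r_t=\rho_t u_t$ with $u_t=\dot w_t-\langle\dot w_t\rangle_{\rho_t}$, where $\dot w_t=\widetilde\rho_1-\widetilde\rho_0$. This choice automatically satisfies \eqref{eq:continuity_flux} and the centering condition \eqref{eq:centering_flux}.

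The main obstacle is bounding the action of this competitor:
\[
\int_0^1\!\!\int_\Omega u_t^2\rho_t\,\dd x\,\dd t\le\int_0^1\!\!\int_\Omega(\widetilde\rho_1-\widetilde\rho_0)^2\rho_t\,\dd x\,\dd t.
\]
The inequality holds because subtracting the $\rho_t$-mean $\langle\dot w_t\rangle_{\rho_t}$ minimizes the weighted $L^2(\rho_t)$ norm. The difficulty is that the right-hand side carries the weight $\rho_t$, whereas $\widetilde\rho_1-\widetilde\rho_0$ is only known to be in $L^2(\dd x)$, not in $L^2(\rho_t\,\dd x)$. To handle this I would first try bounding $\rho_t\le(1-t)\rho_0+t\rho_1$ up to the normalizing constant $\int_\Omega\rho_0^{1-t}\rho_1^t\,\dd x$, which is bounded below uniformly in $t$. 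This only moves the problem to $L^2(\rho_0)\cap L^2(\rho_1)$. If that is not enough, I would use a transport-type competitor instead, whose finiteness follows from the comparison with $W_2$ that the paper announces. Failing both, I would take finiteness of $D_\alpha^2(\rho_0,\rho_1)$ as the standing assumption, which is implicit whenever a minimizing sequence is considered at all. In every case, set $M:=D_\alpha^2(\rho_0,\rho_1)<\infty$.

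Once $M$ is finite, the rest is routine. By the definition of a minimizing sequence, $\mathcal A_\alpha(\rho^n,m^n,r^n)\to M$, so after discarding finitely many terms we may assume $\mathcal A_\alpha(\rho^n,m^n,r^n)\le M+1$ for all $n$. Both integrands $|m^n_t|^2/\rho^n_t$ and $|r^n_t|^2/\rho^n_t$ are nonnegative, so each term in \eqref{eq:action_flux} is bounded by the whole action. This gives
\[
(1-\alpha)\int_0^1\!\!\int_\Omega\frac{|m^n_t|^2}{\rho^n_t}\,\dd x\,\dd t\le M+1,
\qquad
\alpha\int_0^1\!\!\int_\Omega\frac{|r^n_t|^2}{\rho^n_t}\,\dd x\,\dd t\le M+1.
\]
Because $\alpha\in(0,1)$, we can divide by $1-\alpha$ and by $\alpha$, and the lemma follows with $C=(M+1)/\min\{\alpha,1-\alpha\}$.

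Finally, I would record a consequence needed in the compactness step that follows the lemma. By Cauchy--Schwarz and $\int_\Omega\rho^n_t\,\dd x=1$,
\[
\int_0^1\!\!\int_\Omega|m^n_t|\,\dd x\,\dd t\le\Bigl(\int_0^1\!\!\int_\Omega\frac{|m^n_t|^2}{\rho^n_t}\,\dd x\,\dd t\Bigr)^{1/2}\le C^{1/2},
\]
and the same bound holds for $r^n$. Hence the fluxes are bounded in total variation, and Banach--Alaoglu can then be applied to them.
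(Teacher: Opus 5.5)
Your argument is correct and is the paper's own proof: the actions along a minimizing sequence are eventually bounded by some $M$, and since both integrands are nonnegative and $\alpha,1-\alpha>0$, each term is bounded separately (your explicit $C=(M+1)/\min\{\alpha,1-\alpha\}$ is fine). The paper's proof also takes $D_\alpha^2(\rho_0,\rho_1)<\infty$ for granted, so falling back on it as a standing assumption matches the paper. If you want to remove that assumption, do not use the Aitchison geodesic: as you suspected, its weighted cost $\int_\Omega u_t^2\rho_t\,dx$ can diverge. Instead take the great-circle (Fisher--Rao) curve $\sqrt{\rho_t}=\bigl(\sin((1-t)\theta)\sqrt{\rho_0}+\sin(t\theta)\sqrt{\rho_1}\bigr)/\sin\theta$, with $\cos\theta=\int_\Omega\sqrt{\rho_0\rho_1}\,dx$ and $m\equiv0$, which is admissible and has action $4\alpha\theta^2\le\alpha\pi^2$.
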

	\begin{proof}
		Since
		
		\[
		\mathcal A_\alpha(\rho^n,m^n,r^n)
		\to
		D_\alpha^2(\rho_0,\rho_1),
		\]
		
		there exists $M>0$ such that
		
		\[
		\mathcal A_\alpha(\rho^n,m^n,r^n)\le M
		\]
		
		for all sufficiently large $n$.
		
		Therefore
		
		\[
		(1-\alpha)
		\int_0^1\!\!\int
		\frac{|m_t^n|^2}{\rho_t^n}
		\le M
		\]
		
		and
		
		\[
		\alpha
		\int_0^1\!\!\int
		\frac{|r_t^n|^2}{\rho_t^n}
		\le M.
		\]
		
		The conclusion follows.
	\end{proof}
	\begin{lemma}[Weak compactness]
		\label{lem:weakcompact}
		Let $(\rho^n,m^n,r^n)$ satisfy the bounds of
		Lemma \ref{lem:fluxbound}.
		
		Then there exist subsequences,
		still denoted by $(\rho^n,m^n,r^n)$,
		and measures $(\rho,m,r)$ such that
		
		\[
		\rho^n \rightharpoonup \rho
		\]
		
		narrowly in
		$\mathcal M^+([0,1]\times\Omega)$,
		
		\[
		m^n \stackrel{*}{\rightharpoonup} m
		\]
		
		in the space of vector-valued Radon measures,
		
		and
		
		\[
		r^n \stackrel{*}{\rightharpoonup} r
		\]
		
		in $\mathcal M([0,1]\times\Omega)$.
	\end{lemma}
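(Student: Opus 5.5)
The plan is to derive uniform total-variation bounds on the three families of measures on the compact set $Q:=[0,1]\times\overline\Omega$, and then apply Prokhorov's theorem together with Banach--Alaoglu in $\mathcal M(Q)=C(Q)^*$.

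\textbf{Step 1: mass bound on $\rho^n$.} Regard $\rho^n$ as the measure $\rho^n_t(x)\,dx\,dt$ on $Q$. Since $\int_\Omega\rho^n_t\,dx=1$ for every $t$, each $\rho^n$ has total mass $1$. Because $\overline\Omega$ is compact (recall that $\Omega$ is bounded), the family is automatically tight. Prokhorov's theorem then yields a subsequence converging narrowly to a nonnegative measure $\rho$ of mass $1$. Disintegrating in $t$ shows that the time marginal of $\rho$ is still $dt$.

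\textbf{Step 2: total-variation bounds on the fluxes.} This is the key estimate. By Cauchy--Schwarz, splitting $|m^n|=\frac{|m^n|}{\sqrt{\rho^n}}\sqrt{\rho^n}$, we get
\[
\int_0^1\!\!\int_\Omega |m^n_t|\,dx\,dt\le\Bigl(\int_0^1\!\!\int_\Omega\frac{|m^n_t|^2}{\rho^n_t}\,dx\,dt\Bigr)^{1/2}\Bigl(\int_0^1\!\!\int_\Omega\rho^n_t\,dx\,dt\Bigr)^{1/2}\le\sqrt C,
\]
and the same bound holds for $r^n$. Here $C$ is the constant of Lemma~\ref{lem:fluxbound}. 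Strict positivity of $\rho^n_t$ makes the quotients meaningful. Hence $\|m^n\|_{\mathcal M(Q;\R^d)}$ and $\|r^n\|_{\mathcal M(Q)}$ are uniformly bounded.

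\textbf{Step 3: extraction.} Since $C(Q)$ is separable, bounded sets of $\mathcal M(Q)=C(Q)^*$ are weak-$*$ sequentially compact by Banach--Alaoglu. We extract a common subsequence, first for $m^n$ (componentwise) and then for $r^n$, along which $m^n\stackrel{*}{\rightharpoonup}m$ and $r^n\stackrel{*}{\rightharpoonup}r$. The Step 1 subsequence is retained by a diagonal argument.

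\textbf{Main obstacle.} The only delicate point is what to do with boundary mass: limits live on $\overline\Omega$ rather than $\Omega$, and may charge $\partial\Omega$ or the slices $\{0,1\}$. For the lemma as stated this is harmless, since we only assert convergence in measure spaces over the closure. If one also wants $m\ll\rho$ and $r\ll\rho$, I would use the joint lower semicontinuity of $(\rho,m)\mapsto\int|m|^2/\rho$ from Lemma~\ref{lem:supremum}, together with the uniform action bound; that refinement is deferred to the subsequent existence argument.
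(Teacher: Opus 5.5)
Your proposal is correct and is essentially the paper's proof: the Cauchy--Schwarz estimate $\int|m^n|\le(\int\rho^n)^{1/2}(\int|m^n|^2/\rho^n)^{1/2}$ with $\int\rho^n=1$ bounds the total variation of $m^n$ and $r^n$, Banach--Alaoglu gives weak-$*$ limits of the fluxes, and Prokhorov handles $\rho^n$. Working on $[0,1]\times\overline\Omega$ is what makes the paper's brief appeal to Prokhorov legitimate, since tightness comes from compactness of the closure; on $[0,1]\times\Omega$ itself, mass can concentrate at $\partial\Omega$ with bounded action. Note, though, that the lemma is written on $[0,1]\times\Omega$, so your remark that it ``only asserts convergence over the closure'' corrects the statement rather than reads it.
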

	
	\begin{proof}
		
		By Cauchy--Schwarz,
		
		\[
		\int |m^n|
		=
		\int
		\sqrt{\rho^n}
		\frac{|m^n|}{\sqrt{\rho^n}}
		\]
		
		and therefore
		
		\[
		\int |m^n|
		\le
		\left(
		\int \rho^n
		\right)^{1/2}
		\left(
		\int \frac{|m^n|^2}{\rho^n}
		\right)^{1/2}.
		\]
		
		Since $\rho^n_t$ are probability measures,
		
		\[
		\int \rho^n =1,
		\]
		
		hence
		
		\[
		\sup_n \|m^n\|_{\mathcal M}<\infty.
		\]
		
		Banach--Alaoglu yields
		
		\[
		m^n \stackrel{*}{\rightharpoonup} m.
		\]
		
		The same argument gives
		
		\[
		r^n \stackrel{*}{\rightharpoonup} r.
		\]
		
		Finally, the family $\rho^n$ has total mass one,
		hence is weakly compact by Prokhorov's theorem.
	\end{proof}
	\begin{lemma}[Stability of the continuity equation]
		\label{lem:stability}
		Assume
		
		\[
		\partial_t\rho^n+\operatorname{div}(m^n)=r^n
		\]
		
		in the sense of distributions.
		
		Then the limit triple $(\rho,m,r)$ satisfies
		
		\[
		\partial_t\rho+\operatorname{div}(m)=r.
		\]
	\end{lemma}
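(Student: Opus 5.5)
The plan is to pass to the limit in the distributional (weak) formulation, which is linear in $(\rho,m,r)$. Fix a test function $\zeta\in C_c^\infty((0,1)\times\Omega)$. By definition of a distributional solution, for every $n$
\begin{equation}
\int_0^1\!\!\int_\Omega \partial_t\zeta\,\dd\rho^n + \int_0^1\!\!\int_\Omega \nabla\zeta\cdot\dd m^n + \int_0^1\!\!\int_\Omega \zeta\,\dd r^n = 0 .
\end{equation}
The goal is to show that each of the three terms converges to the corresponding term with $(\rho,m,r)$. The resulting identity then holds for every $\zeta$, and this is exactly $\partial_t\rho+\Div(m)=r$ in $\mathcal D'((0,1)\times\Omega)$.

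The convergence uses the modes given by Lemma~\ref{lem:weakcompact}. For the first term, $\partial_t\zeta$ is continuous and bounded on $[0,1]\times\Omega$, so narrow convergence $\rho^n\rightharpoonup\rho$ gives convergence directly. For the second and third terms, $\nabla\zeta$ and $\zeta$ are continuous with compact support in $(0,1)\times\Omega$, hence belong to $C_0((0,1)\times\Omega)$. Weak-$*$ convergence of $m^n$ and $r^n$ as Radon measures, which is duality with $C_0$ functions, therefore gives convergence as well. Since all three terms pass to the limit, the identity for the limit triple follows.

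I would add two remarks for the subsequent use of this lemma in the existence of minimizers.
\begin{enumerate}
\item If the initial and final data are to be preserved, test against $\zeta\in C_c^\infty([0,1)\times\Omega)$ or $C_c^\infty((0,1]\times\Omega)$, which brings in the boundary terms $\int\zeta(0,\cdot)\rho_0$ and $\int \zeta(1,\cdot)\rho_1$. These do not depend on $n$, because every competitor has the same endpoints. The same argument then shows that $\rho$ attains $\rho_0,\rho_1$ in the weak sense.
\item The centering constraint $\int_\Omega r_t\,\dd x=0$ also passes to the limit. Testing against $\eta(t)\cdot 1_\Omega$ requires weak-$*$ convergence against functions that are not compactly supported in $x$. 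Here the boundedness of $\Omega$ lets us use narrow rather than merely vague convergence.
\end{enumerate}

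The main obstacle is the precise topology. Lemma~\ref{lem:weakcompact} yields only vague (weak-$*$) convergence of $m^n,r^n$ on the open set, and no tightness statement. This is harmless for the equation itself, since the test functions are compactly supported in the interior. It becomes delicate only for the endpoint and centering conditions just mentioned. For those, I would upgrade to narrow convergence on $[0,1]\times\overline\Omega$, where compactness makes weak-$*$ and narrow convergence coincide. For $\rho^n$, I would also note that a time-uniform bound obtained from the equation, via the action bound of Lemma~\ref{lem:fluxbound}, makes $t\mapsto\rho^n_t$ equicontinuous in a weak sense. This allows the time slices at $t=0,1$ to be identified.
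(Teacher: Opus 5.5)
Your proposal is correct and follows the same route as the paper. Like the paper, you fix a test function in $C_c^\infty((0,1)\times\Omega)$, write the weak formulation for each $n$, and pass to the limit in each of the three linear terms using the convergences of Lemma~\ref{lem:weakcompact}. Your extra remarks on the endpoint data and the centering constraint are sound refinements that the paper's proof does not address at this point, but they are not needed for the statement itself.
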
	
	\begin{proof}
		
		Let
		
		\[
		\varphi\in C_c^\infty((0,1)\times\Omega).
		\]
		
		For every $n$,
		
		\[
		\int
		\partial_t\varphi\,d\rho^n
		+
		\int
		\nabla\varphi\cdot dm^n
		+
		\int
		\varphi\,dr^n
		=
		0.
		\]
		
		Passing to the limit using the weak-* convergence
		of $\rho^n,m^n,r^n$
		gives
		
		\[
		\int
		\partial_t\varphi\,d\rho
		+
		\int
		\nabla\varphi\cdot dm
		+
		\int
		\varphi\,dr
		=
		0.
		\]
		
		Hence
		
		\[
		\partial_t\rho+\operatorname{div}(m)=r.
		\]
	\end{proof}
	
	\begin{lemma}[Representation of the limiting fluxes]
		\label{lem:representation}
		Assume
		
		\[
		\mathcal A_\alpha(\rho,m,r)<\infty.
		\]
		
		Then
		
		\[
		m \ll \rho,
		\qquad
		r \ll \rho.
		\]
		
		Consequently there exist measurable fields
		$v$ and $u$ such that
		
		\[
		m=\rho v,
		\qquad
		r=\rho u,
		\]
		
		and
		
		\[
		\mathcal A_\alpha(\rho,m,r)
		=
		(1-\alpha)
		\int |v|^2\,d\rho
		+
		\alpha
		\int |u|^2\,d\rho .
		\]
	\end{lemma}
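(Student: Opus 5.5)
The plan is to read the action, for general measures, through the Benamou--Brenier convex integrand and then apply the dual representation of Lemma~\ref{lem:supremum} at the level of measures. Define, for $(a,b)\in\R\times\R^k$,
\[
f(a,b)=\frac{|b|^2}{a}\ \text{if } a>0,\qquad f(0,0)=0,\qquad f(a,b)=+\infty \text{ otherwise}.
\]
This $f$ is convex, lower semicontinuous and positively $1$-homogeneous. For measures $(\rho,m,r)$ on $Q=[0,1]\times\Omega$, we interpret $\mathcal A_\alpha(\rho,m,r)$ as $(1-\alpha)\int_Q f(\rho,m)+\alpha\int_Q f(\rho,r)$. Here $\int_Q f(\rho,m)$ is taken with respect to any measure $\lambda$ dominating $\rho$, $|m|$ and $|r|$, for instance $\lambda=\rho+|m|+|r|$. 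By homogeneity the value does not depend on the choice of $\lambda$. This also coincides with the supremum formula $\sup_\varphi \int (2\varphi\cdot dm-|\varphi|^2d\rho)$ over continuous test fields, extending Lemma~\ref{lem:supremum}. That agreement is what makes the definition consistent with the lower semicontinuity used in Lemma~\ref{lem:lsc}.

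\textbf{Step 1: absolute continuity.} Write the Radon--Nikodym decompositions $\rho=\rho^\lambda\lambda$, $m=m^\lambda\lambda$, $r=r^\lambda\lambda$. Finiteness of $\mathcal A_\alpha$ forces $f(\rho^\lambda,m^\lambda)<\infty$ $\lambda$-a.e. By the definition of $f$, this means $m^\lambda=0$ wherever $\rho^\lambda=0$. Hence, if $\rho(E)=0$ then $\rho^\lambda=0$ $\lambda$-a.e. on $E$, so $m(E)=\int_E m^\lambda\,d\lambda=0$, that is, $m\ll\rho$. The identical argument gives $r\ll\rho$.

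\textbf{Step 2: densities and the action formula.} Set $v=dm/d\rho=m^\lambda/\rho^\lambda$ and $u=dr/d\rho=r^\lambda/\rho^\lambda$ on $\{\rho^\lambda>0\}$, and extend them by zero elsewhere. These are Borel fields with $m=\rho v$ and $r=\rho u$. On $\{\rho^\lambda>0\}$ we have
\[
f(\rho^\lambda,m^\lambda)=|v|^2\rho^\lambda,
\]
while on $\{\rho^\lambda=0\}$ the integrand vanishes. Integrating against $\lambda$ therefore gives $\int f(\rho,m)=\int|v|^2\,d\rho$, and likewise for $r$, which is the claimed identity.

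\textbf{Main obstacle.} The delicate point is not the measure theory above but justifying that the action of a general limit triple is given by the $f$-integral. The minimizing sequence of Lemma~\ref{lem:weakcompact} only converges weakly-$*$ as measures, so $\rho$ may fail to be absolutely continuous in $x$, and $m,r$ may have parts singular with respect to Lebesgue measure. To handle this, we would invoke the standard duality result for convex $1$-homogeneous integral functionals on measures (Reshetnyak/Rockafellar, as in Ambrosio--Gigli--Savaré and Benamou--Brenier). This result identifies $\sup\{\int \varphi\cdot dm-\int\frac{|\varphi|^2}{4}d\rho\}$ with $\int f(\rho,m)\,d\lambda$. With that identification in hand, Steps 1--2 apply verbatim.
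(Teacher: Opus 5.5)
Your proposal is correct and follows the same route as the paper: finiteness of the action forces $m\ll\rho$ and $r\ll\rho$, and then the Radon--Nikodym densities $v=dm/d\rho$, $u=dr/d\rho$ are substituted into the action. The only difference is that you justify explicitly, through a common dominating measure $\lambda$ and the perspective integrand $f$, the step ``finite only if $m\ll\rho$'' that the paper merely asserts; this is also how the paper itself later defines $F$ in the duality section, citing Bouchitt\'e--Buttazzo.
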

	\begin{proof}
		
		The functional
		
		\[
		(\rho,m)\mapsto
		\int
		\frac{|m|^2}{\rho}
		\]
		
		is finite only if
		$m\ll\rho$.
		
		Likewise,
		
		\[
		r\ll\rho.
		\]
		
		By the Radon--Nikodym theorem,
		there exist densities
		
		\[
		v=\frac{dm}{d\rho},
		\qquad
		u=\frac{dr}{d\rho}.
		\]
		
		Thus
		
		\[
		m=\rho v,
		\qquad
		r=\rho u.
		\]
		
		Substituting into the action yields the formula.
	\end{proof}
	\begin{theorem}[Existence of minimizers]
		\label{thm:existence}
		For any $\rho_0,\rho_1\in\PP_{2,\mathrm{ac}}(\Omega)$, the infimum in the definition of $D_\alpha$ is attained. There exists a triple $(\rho_t,v_t,u_t)$ satisfying \eqref{eq:continuity} and achieving the minimum.	
	\end{theorem}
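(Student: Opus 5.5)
The plan is the direct method of the calculus of variations in the flux variables $(\rho,m,r)$ of \eqref{eq:Dalpha_flux}.

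\textbf{Finiteness of the infimum.} First I check that $D_\alpha^2(\rho_0,\rho_1)<\infty$, so that a minimizing sequence with bounded action exists. The natural admissible curve is the Aitchison geodesic of Lemma~\ref{lem:geodesics_aitchison}, with $v\equiv0$. By Lemma~\ref{lem:ac_aitchison} it satisfies \eqref{eq:continuity} with
\[
u_t=\dot{\widetilde\rho}_t-\langle \dot{\widetilde\rho}_t\rangle_{\rho_t}, \qquad \dot{\widetilde\rho}_t=\widetilde\rho_1-\widetilde\rho_0,
\]
and this $u_t$ is centered by construction. The one point to verify is that $\int u_t^2\rho_t\,\dd x$ is integrable in time, since $\widetilde\rho_1-\widetilde\rho_0\in L^2(\dd x)$ is not automatically in $L^2(\rho_t)$. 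If this fails, I would instead use a mollified interpolation, or the concatenation of a Wasserstein geodesic with a short reaction path between smoothed densities.

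\textbf{Compactness and passage to the limit.} Take a minimizing sequence $(\rho^n,m^n,r^n)$. Lemma~\ref{lem:fluxbound} bounds the two action terms uniformly. Lemma~\ref{lem:weakcompact} then gives limits $\rho^n\rightharpoonup\rho$, $m^n\stackrel{*}{\rightharpoonup}m$ and $r^n\stackrel{*}{\rightharpoonup}r$ on $[0,1]\times\overline\Omega$. Lemma~\ref{lem:stability} passes the continuity equation \eqref{eq:continuity_flux} to the limit.

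To recover the endpoint conditions, I would disintegrate $\rho=\dd t\otimes\rho_t$ using the uniform mass. Next I would show that $t\mapsto\rho^n_t$ is equi-Hölder in a dual norm: testing with $\varphi\in C^1(\overline\Omega)$ and using Cauchy--Schwarz gives
\[
\Bigl|\int\varphi\,\dd\rho^n_t-\int\varphi\,\dd\rho^n_s\Bigr|\le C\|\varphi\|_{C^1}|t-s|^{1/2}.
\]
An Arzelà--Ascoli argument then gives pointwise narrow convergence, so $\rho(0)=\rho_0$ and $\rho(1)=\rho_1$. The centering condition \eqref{eq:centering_flux} is the linear statement $\int\psi(t)\,\dd r=0$ for all $\psi\in C_c^\infty(0,1)$, which is preserved under weak-$*$ convergence.

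\textbf{Lower semicontinuity and conclusion.} The jointly convex, one-homogeneous functional $\int|m|^2/\rho+\int|r|^2/\rho$ is lower semicontinuous under these convergences; this is Lemma~\ref{lem:lsc}, or more precisely its supremum representation in Lemma~\ref{lem:supremum}. Hence
\[
\mathcal A_\alpha(\rho,m,r)\le\liminf_n\mathcal A_\alpha(\rho^n,m^n,r^n)=D_\alpha^2(\rho_0,\rho_1)<\infty.
\]
Lemma~\ref{lem:representation} then writes $m=\rho v$ and $r=\rho u$, which yields a triple $(\rho_t,v_t,u_t)$ satisfying \eqref{eq:continuity} and attaining the infimum.

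\textbf{Main obstacle.} The hardest step is ensuring that the limit curve stays in the admissible class $\PP_{2,\mathrm{ac}}(\Omega)$. Narrow limits need neither be absolutely continuous, nor strictly positive, nor have $\ln\rho_t\in L^2$, and the action bound does not obviously control these properties. Absolute continuity in $x$ of $\rho_t$ does not follow from weak limits. My first attempt would be to show that along the limit curve the reaction part propagates positivity and log-integrability from $\rho_0$. Formally, $\partial_t\ln\rho=u-\operatorname{div}v-v\cdot\nabla\ln\rho$, and combining this with the $L^2(\rho)$ bounds should control $\ln\rho_t$ via a Lagrangian or renormalized argument. If that fails, the fallback is to relax the admissible class to measure-valued curves whose endpoints are $\rho_0,\rho_1$, and to note that the infimum over the relaxed class equals the original one.
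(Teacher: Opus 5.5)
Your outline is the paper's proof: a minimizing sequence in flux variables, compactness from Lemma~\ref{lem:weakcompact}, the limit equation from Lemma~\ref{lem:stability}, lower semicontinuity from Lemma~\ref{lem:lsc}, and $m=\rho v$, $r=\rho u$ from Lemma~\ref{lem:representation}. Your equicontinuity estimate for the endpoint traces and your finiteness check fill steps the paper omits. However, the step you call the main obstacle is a genuine gap, and neither suggested fix closes it. The paper's proof has the same hole, since it simply declares the weak-$*$ limit admissible.

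\textbf{Why the limit may leave the admissible class.} The direct method only yields a minimizer of the \emph{relaxed} problem over measure-valued $(\rho,m,r)$. The definition of $D_\alpha$ requires more: $\rho_t\in\PP_{2,\mathrm{ac}}(\Omega)$ for every $t$, that is, absolutely continuous, $\rho_t>0$ a.e., and $\ln\rho_t\in L^2$. No a priori bound gives this.
\begin{itemize}
\item The action controls only $\int_0^1\!\int_\Omega(|v_t|^2+u_t^2)\rho_t$, with no derivative of $v$. Along characteristics, however, $\frac{\dd}{\dd t}\ln\rho_t(X_t)=u_t(X_t)-\operatorname{div}v_t(X_t)$. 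Your renormalized argument would therefore need something like $\int_0^1\|u_t-\operatorname{div}v_t\|_{L^\infty}\,\dd t<\infty$.
\item An $L^2(\rho_t)$ bound is blind on the set where $\rho_t$ is small. That is exactly where positivity and $\ln\rho_t\in L^2(\dd x)$ are decided.
\item Finite action does allow leaving the class. With $v\equiv0$ one has $\partial_t\sqrt{\rho_t}=\frac12u_t\sqrt{\rho_t}$, so finite cost only says that $t\mapsto\sqrt{\rho_t}$ is an $H^1$ curve on the unit sphere of $L^2(\Omega)$. Such curves can pass through densities vanishing on sets of positive measure. With transport, finite-energy curves can collapse onto a Dirac mass and re-expand.
\end{itemize}
Excluding this for \emph{minimizers} needs structural information about them, such as an analogue of McCann's result that displacement interpolants of absolutely continuous measures stay absolutely continuous. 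Neither you nor the paper supply one.

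\textbf{The relaxation fallback.} It proves a different statement. Equality of the relaxed and original infima is itself an approximation result you would have to prove. One route is mixing $(1-\varepsilon)(\rho,m,r)+\varepsilon(|\Omega|^{-1},0,0)$: by joint convexity and $1$-homogeneity this does not increase the action, and it forces $\rho_t\ge\varepsilon/|\Omega|$. You would still need short connecting paths at the endpoints and a regularization of singular parts. Even with that equality, the infimum in the \emph{original} definition is attained only if some relaxed minimizer lies in $\PP_{2,\mathrm{ac}}(\Omega)$ for all $t$.

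\textbf{Finiteness.} Your doubt about the Aitchison path is justified. Its $D_\alpha$-cost is $\alpha\int_0^1\mathrm{Var}_{\rho_t}(\ln\rho_1-\ln\rho_0)\,\dd t$, weighted by $\rho_t$ rather than Lebesgue measure, and it can be infinite. For example, take $\Omega=(0,1)$, $\rho_0\equiv1$ and $\rho_1(x)=x^{-1}(1+\ln(1/x))^{-2}$; the variance then blows up like $(1-t)^{-1}$. A curve that always works, for $\rho_0\neq\rho_1$, is the spherical Hellinger (Fisher--Rao) great circle with $v\equiv0$:
\[
\sqrt{\rho_t}=\frac{\sin((1-t)\theta)}{\sin\theta}\sqrt{\rho_0}+\frac{\sin(t\theta)}{\sin\theta}\sqrt{\rho_1},\qquad \cos\theta=\int_\Omega\sqrt{\rho_0\rho_1}\,\dd x,\qquad u_t=\frac{2\,\partial_t\sqrt{\rho_t}}{\sqrt{\rho_t}}.
\]
Since $\|\sqrt{\rho_t}\|_{L^2}=1$, the reaction is centered: $\int_\Omega u_t\rho_t\,\dd x=\frac{\dd}{\dd t}\int_\Omega\rho_t\,\dd x=0$. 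The curve stays admissible, with $\rho_t>0$ and $|\ln\rho_t|\le\ln4+|\ln\rho_0|+|\ln\rho_1|$. Finally $\int_\Omega u_t^2\rho_t\,\dd x=4\|\partial_t\sqrt{\rho_t}\|_{L^2}^2=4\theta^2$, so $D_\alpha^2(\rho_0,\rho_1)\le4\alpha\theta^2<\alpha\pi^2$.
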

	\begin{proof}
		Let $(\rho^n_t,v^n_t,u^n_t)$ be a minimizing sequence. By Lemma~\ref{lem:weakcompact}, extract subsequences converging weakly to $(\rho_t,v_t,u_t)$. By Lemmas ~\ref{lem:stability} and \ref{lem:representation}, the limit satisfies the continuity equation. By Lemma~\ref{lem:lsc}, the action is lower semicontinuous, so
		\begin{equation}
			\mathcal{A}_\alpha(\rho,v,u) \le \liminf_{n\to\infty} \mathcal{A}_\alpha(\rho^n,v^n,u^n) = D_\alpha^2(\rho_0,\rho_1).
		\end{equation}
		Since $D_\alpha^2(\rho_0,\rho_1)$ is the infimum, equality holds. Hence $(\rho_t,v_t,u_t)$ is a minimizer.
	\end{proof}
	
	\subsection{Properties of $D_\alpha$ as a distance}
	
	\begin{lemma}[Positivity and reflexivity]
		\label{lem:positivity}
		$D_\alpha(\rho,\sigma) \ge 0$ for all $\rho,\sigma$, and $D_\alpha(\rho,\rho)=0$.
	\end{lemma}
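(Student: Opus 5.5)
Both claims follow directly from the definition of $D_\alpha^2$ as an infimum of the action $\mathcal{A}_\alpha$, so the argument has two short parts.

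\emph{Nonnegativity.} I would start with the integrand of $\mathcal{A}_\alpha$. For every admissible triple $(\rho,v,u)$ satisfying \eqref{eq:continuity}, we have $\rho_t\ge 0$, $|v_t|^2\ge 0$ and $u_t^2\ge0$. Since $\alpha\in(0,1)$, both weights $1-\alpha$ and $\alpha$ are positive, so $\mathcal{A}_\alpha(\rho,v,u)\ge 0$. Every competitor in the infimum is therefore nonnegative, and hence so is the infimum: $D_\alpha^2(\rho,\sigma)\ge0$. Here $D_\alpha$ is understood as the nonnegative square root, so $D_\alpha(\rho,\sigma)\ge 0$.

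\emph{Reflexivity.} I would exhibit an explicit admissible competitor with zero action. Take the constant curve $\rho_t\equiv\rho$ for $t\in[0,1]$, together with $v_t\equiv 0$ and $u_t\equiv 0$. Then $\partial_t\rho_t=0$, $\Div(\rho_t v_t)=0$ and $\rho_t u_t=0$, so the continuity equation with source holds trivially in the sense of distributions. The centering condition $\int_\Omega u_t\rho_t\,\dd x=0$ also holds trivially. The endpoint conditions $\rho(0)=\rho(1)=\rho$ are satisfied, and $\rho_t=\rho\in\PP_{2,\mathrm{ac}}(\Omega)$ for all $t$. The action of this triple is $0$. Hence $0\le D_\alpha^2(\rho,\rho)\le 0$, which gives $D_\alpha(\rho,\rho)=0$.

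No step is a genuine obstacle. The only point needing care is that the trivial curve must be an admissible element of the class in the definition of $D_\alpha$, and this holds because the stationary curve stays in the working space $\PP_{2,\mathrm{ac}}(\Omega)$. The existence of minimizers (Theorem~\ref{thm:existence}) is not needed; the argument uses only the definition of $D_\alpha$ as an infimum.
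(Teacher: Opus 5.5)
Your proposal is correct and follows the paper's own proof: nonnegativity of the integrand gives $D_\alpha\ge 0$, and the stationary curve $\rho_t\equiv\rho$ with $v_t\equiv 0$, $u_t\equiv 0$ is an admissible competitor with zero action, so $D_\alpha(\rho,\rho)=0$. Your remarks that the stationary curve stays in $\PP_{2,\mathrm{ac}}(\Omega)$ and that existence of minimizers is not needed are accurate.
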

	\begin{proof}
		The action $\mathcal{A}_\alpha$ is nonnegative because it is an integral of nonnegative quantities. Hence the infimum is nonnegative. For $\rho=\sigma$, take the constant curve $\rho_t=\rho$, $v_t=0$, $u_t=0$. This curve satisfies \eqref{eq:continuity} and has zero action, so $D_\alpha(\rho,\rho)=0$.
	\end{proof}
	
	\begin{lemma}[Symmetry]
		\label{lem:symmetry}
		$D_\alpha(\rho,\sigma) = D_\alpha(\sigma,\rho)$ for all $\rho,\sigma$.
	\end{lemma}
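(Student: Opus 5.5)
The plan is to use time reversal. Every admissible triple joining $\rho$ to $\sigma$ will be turned into an admissible triple joining $\sigma$ to $\rho$ with the same action. Applying the same map in the other direction then shows the two infima coincide.

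Let $(\rho_t,v_t,u_t)_{t\in[0,1]}$ satisfy \eqref{eq:continuity} with $\rho_0=\rho$ and $\rho_1=\sigma$. Define the reversed triple
\[
\hat\rho_t:=\rho_{1-t},\qquad \hat v_t:=-v_{1-t},\qquad \hat u_t:=-u_{1-t}.
\]
The endpoints are then $\hat\rho_0=\sigma$ and $\hat\rho_1=\rho$. The centering condition still holds, because $\int_\Omega \hat u_t\hat\rho_t\,\dd x=-\int_\Omega u_{1-t}\rho_{1-t}\,\dd x=0$. For the continuity equation with source, I will check it in the distributional sense.

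Take $\varphi\in C_c^\infty((0,1)\times\Omega)$ and set $\psi(s,x):=\varphi(1-s,x)$, which is again an admissible test function. Then $\partial_t\varphi(t,x)=-\partial_s\psi(1-t,x)$. Substituting $s=1-t$ in the weak formulation for $\hat\rho$ gives
\[
\int\!\!\int \bigl(\partial_t\varphi\,\hat\rho_t+\nabla\varphi\cdot\hat v_t\hat\rho_t+\varphi\,\hat u_t\hat\rho_t\bigr)
=-\int\!\!\int\bigl(\partial_s\psi\,\rho_s+\nabla\psi\cdot v_s\rho_s+\psi\,u_s\rho_s\bigr)=0.
\]
The last equality is the weak form of \eqref{eq:continuity} for the original triple, tested against $\psi$. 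This is the only step needing any care. The point is that reversing time flips the sign of $\partial_t$, and this sign change is compensated exactly by negating both $v$ and $u$, since the source term $\rho u$ is linear in $u$.

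The action is invariant under this map. The integrands $|\hat v_t|^2\hat\rho_t$ and $\hat u_t^2\hat\rho_t$ are quadratic, so the sign changes disappear, and the change of variables $t\mapsto 1-t$ preserves the integral over $[0,1]$. Hence $\mathcal A_\alpha(\hat\rho,\hat v,\hat u)=\mathcal A_\alpha(\rho,v,u)$.

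Taking the infimum over all admissible triples from $\rho$ to $\sigma$ gives $D_\alpha^2(\sigma,\rho)\le D_\alpha^2(\rho,\sigma)$. Exchanging the roles of $\rho$ and $\sigma$ gives the reverse inequality, so $D_\alpha(\rho,\sigma)=D_\alpha(\sigma,\rho)$. The argument does not use the existence of minimizers from Theorem~\ref{thm:existence}.
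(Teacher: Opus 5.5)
Your proposal is correct and follows the same time-reversal argument as the paper: reverse time, negate both $v$ and $u$, observe that the action is invariant, and take infima in both directions. Your weak-form verification of the continuity equation and your explicit check of the centering constraint are slightly more careful than the paper's formal pointwise computation, but the route is the same.
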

	\begin{proof}
		If $(\rho_t,v_t,u_t)$ is admissible from $\rho$ to $\sigma$, define $\tilde\rho_t = \rho_{1-t}$, $\tilde v_t = -v_{1-t}$, $\tilde u_t = -u_{1-t}$. Then $\tilde\rho_0 = \rho_1 = \sigma$, $\tilde\rho_1 = \rho_0 = \rho$, and
		\begin{equation}
			\partial_t\tilde\rho_t = -\partial_t\rho_{1-t} = \Div(\rho_{1-t}v_{1-t}) - \rho_{1-t}u_{1-t} = -\Div(\tilde\rho_t\tilde v_t) +\tilde\rho_t\tilde u_t,
		\end{equation}
		so $\partial_t\tilde\rho_t + \Div(\tilde\rho_t\tilde v_t) = \tilde\rho_t\tilde u_t$. The action is unchanged because $|\tilde v_t|^2 = |v_{1-t}|^2$ and $\tilde u_t^2 = u_{1-t}^2$. Hence the infimum is symmetric.
	\end{proof}

	\subsection{Comparison with Wasserstein and Aitchison distances}
	
	\begin{lemma}[Comparison estimate]
		\label{lem:comparison}
		
		For every
		$\rho_0,\rho_1\in\mathcal P_{2,\mathrm{ac}}(\Omega)$,
		
		\begin{equation}
			D_\alpha^2(\rho_0,\rho_1)
			\le
			\frac12
			\max\{1-\alpha,\alpha\}
			\Bigl(
			W_2^2(\rho_0,\rho_1)
			+
			d_A^2(\rho_0,\rho_1)
			\Bigr).
		\end{equation}
		
		In particular,
		
		\begin{equation}
			D_\alpha(\rho_0,\rho_1)
			\le
			\frac{1}{\sqrt 2}
			\sqrt{\max\{1-\alpha,\alpha\}}
			\,
			\Bigl(
			W_2^2(\rho_0,\rho_1)
			+
			d_A^2(\rho_0,\rho_1)
			\Bigr)^{1/2}.
		\end{equation}
		
	\end{lemma}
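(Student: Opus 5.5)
The plan is to exhibit two explicit admissible competitors in the definition of $D_\alpha^2(\rho_0,\rho_1)$: a pure transport curve and a pure reaction curve. Then we use the elementary inequality $\min\{a,b\}\le \tfrac12(a+b)$. This explains the factor $\tfrac12$. Concatenating the two geodesics in time would only give a factor $2$ because of the time rescaling, so the proof should rather compare the two pure competitors separately.

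\emph{Step 1 (transport competitor).} Take the $W_2$ constant-speed geodesic $(\rho_t)$ between $\rho_0$ and $\rho_1$, with its optimal velocity field $v_t$ from the Benamou--Brenier characterization recalled in Section~\ref{sec:prelim}, so that $\int_0^1\!\int|v_t|^2\rho_t\,\dd x\,\dd t=W_2^2(\rho_0,\rho_1)$. Set $u_t\equiv 0$. The centering constraint in \eqref{eq:continuity} then holds trivially, and the continuity equation with source reduces to the usual one. Hence
\[
D_\alpha^2(\rho_0,\rho_1)\le (1-\alpha)\,W_2^2(\rho_0,\rho_1).
\]

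\emph{Step 2 (reaction competitor).} Take the Aitchison geodesic of Lemma~\ref{lem:geodesics_aitchison}, with $w_t=(1-t)\widetilde\rho_0+t\widetilde\rho_1$ and $\rho_t=e^{w_t}/\int e^{w_t}$. By Lemma~\ref{lem:ac_aitchison}, $\partial_t\rho_t=\rho_t u_t$ with $u_t=\dot w_t-\langle\dot w_t\rangle_{\rho_t}$ and $\dot w_t=\widetilde\rho_1-\widetilde\rho_0$. Setting $v_t\equiv0$, we get $\int u_t\rho_t=0$ automatically, so the triple is admissible. Its action is $\alpha\int_0^1\!\int u_t^2\rho_t$. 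Since subtracting the $\rho_t$-mean minimizes the weighted variance, this is bounded by $\alpha\int_0^1\!\int(\widetilde\rho_1-\widetilde\rho_0)^2\rho_t\,\dd x\,\dd t$. This must then be compared with $d_A^2=\int(\widetilde\rho_1-\widetilde\rho_0)^2\,\dd x$, the unweighted $L^2$ norm. The goal of this step is
\[
D_\alpha^2(\rho_0,\rho_1)\le \alpha\, d_A^2(\rho_0,\rho_1).
\]

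\emph{Step 3.} Combine the two steps:
\[
D_\alpha^2\le\min\{(1-\alpha)W_2^2,\ \alpha d_A^2\}\le\tfrac12\bigl((1-\alpha)W_2^2+\alpha d_A^2\bigr)\le\tfrac12\max\{1-\alpha,\alpha\}\bigl(W_2^2+d_A^2\bigr).
\]
The second inequality in the statement then follows by taking square roots.

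\emph{Expected obstacles.} The main obstacle is the weighted-versus-unweighted comparison in Step 2. The action weighs $(\widetilde\rho_1-\widetilde\rho_0)^2$ by the density $\rho_t$, whereas $d_A$ uses Lebesgue measure, so in general one only has the bound
\[
\alpha\,\sup_t\|\rho_t\|_{L^\infty}\,d_A^2.
\]
To get the clean constant, I would first try the geometric-interpolation bound $\rho_t\le \rho_0^{1-t}\rho_1^t/\int\rho_0^{1-t}\rho_1^t$ combined with Hölder. If that fails, I would make explicit a normalization under which $\|\rho_t\|_\infty\le1$ (for instance $|\Omega|\ge1$ together with an $L^\infty$ bound on the endpoints), or otherwise carry the constant $\sup_t\|\rho_t\|_\infty$ into the estimate.

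A secondary issue arises in Step 1. One must check that the Wasserstein geodesic stays in $\PP_{2,\mathrm{ac}}(\Omega)$, including positivity and $\ln\rho_t\in L^2$. If needed, I would handle this by the density argument mentioned in Section~\ref{sec:functional}: approximate the endpoints by positive densities and pass to the limit using Lemma~\ref{lem:lsc}.
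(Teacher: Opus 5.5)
\textbf{The gap in Step~2 cannot be closed: the inequality it targets is false, and so is the lemma.} Your plan is the paper's plan. The competitor $(\rho^W,v^W,0)$ gives $D_\alpha^2\le(1-\alpha)W_2^2$, the Aitchison geodesic with $v\equiv0$ is meant to give $D_\alpha^2\le\alpha d_A^2$, and one averages. The gap is exactly the one you flagged in Step~2. The paper's proof gets past it only by asserting $\int_0^1\!\int_\Omega|u^A_t|^2\rho^A_t\,\dd x\,\dd t=d_A^2$. That equation confuses the $\rho_t$-weighted reaction cost with the unweighted speed $\|\dot u_t\|_{L^2}$ of Lemma~\ref{lem:ac_aitchison}. (With $v\equiv0$, the cost $\int u^2\rho$ under $\int u\rho=0$ is the Fisher--Rao metric on probability densities, with distance $2\arccos\int_\Omega\sqrt{\rho_0\rho_1}\,\dd x$, not the Aitchison one.) Because $D_\alpha^2\le\alpha d_A^2$ is false, no H\"older argument on $\rho_0^{1-t}\rho_1^t$ will produce the constant.

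\emph{Counterexample.} Take $\Omega=(0,1)$ (so the clr is centered), $\alpha=\tfrac12$, $\beta=0.9$, $m=0.05$ and $\eta=0.025$. Let $A,B$ be intervals of length $\delta$ centered at $0.05$ and $0.95$, so $L=0.9$. Set $\rho_0=\beta+\frac m\delta(\mathbf 1_A+\mathbf 1_B)$ and $\rho_1=\beta+\frac{m-\eta}{\delta}\mathbf 1_A+\frac{m+\eta}{\delta}\mathbf 1_B$.
\begin{enumerate}
\item Both densities lie in $\PP_{2,\mathrm{ac}}(\Omega)$.
\item $|\ln(\rho_1/\rho_0)|\le\ln 2$ on $A\cup B$ and vanishes elsewhere, so $d_A^2=O(\delta)$.
\item Shifting the background between the bumps by $\eta/\beta$ gives $W_2^2\le\eta^2L/\beta+O(\delta)=\eta^2+O(\delta)$.
\end{enumerate}
Hence the right-hand side of the lemma is at most $\tfrac14\eta^2+O(\delta)$. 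Now test the continuity equation (with zero flux, as in Lemma~\ref{lem:separability}) against $\varphi(x)=x-\tfrac12$. Using $|\varphi|\le\tfrac12$, $\int\rho_t=1$ and Cauchy--Schwarz, every admissible triple satisfies
\[
\eta L=\int_0^1\!\!\int_\Omega\bigl(v_t+\varphi\,u_t\bigr)\rho_t\,\dd x\,\dd t\le\Bigl(\frac1{1-\alpha}+\frac1{4\alpha}\Bigr)^{1/2}\mathcal A_\alpha(\rho,v,u)^{1/2}.
\]
So $D_{1/2}^2\ge0.4\,\eta^2L^2=0.324\,\eta^2$, which exceeds $\tfrac14\eta^2+O(\delta)$ for small $\delta$; $\delta=10^{-4}$ already works. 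Letting $\delta\to0$ also shows directly that $D_\alpha^2\le\alpha d_A^2$ fails, since $d_A\to0$ while $D_\alpha$ stays bounded below.

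\textbf{What can be salvaged.} Your third fallback, carrying $\Lambda=\sup_t\|\rho^A_t\|_\infty$, is correct. It gives
\[
\int_\Omega u_t^2\rho_t\,\dd x\le\int_\Omega(\widetilde\rho_1-\widetilde\rho_0)^2\rho_t\,\dd x\le\Lambda\,d_A^2,
\]
hence $D_\alpha^2\le\min\{(1-\alpha)W_2^2,\ \alpha\Lambda d_A^2\}$. A density-free alternative is $D_\alpha^2\le4\alpha\arccos^2\bigl(\int_\Omega\sqrt{\rho_0\rho_1}\,\dd x\bigr)$, using the great-circle path for $\sqrt{\rho_t}$ as the reaction competitor. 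Both are different statements from the lemma, as is assuming $\|\rho_t\|_\infty\le1$, which is an extra hypothesis.

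The topological consequence $\tau_{D_\alpha}\subset\tau_{W_2}\vee\tau_{d_A}$ survives anyway. Step~1 alone gives $D_\alpha\le\sqrt{1-\alpha}\,W_2$, hence $\tau_{D_\alpha}\subset\tau_{W_2}$.

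Step~1 itself, as in the paper, tacitly needs $\Omega$ convex. Otherwise the Euclidean displacement interpolation leaves $\Omega$, and $(\rho^W,v^W,0)$ is not admissible.
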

	
	\begin{proof}
		
		Let $(\rho_t^W,v_t^W)$ be a Wasserstein geodesic joining
		$\rho_0$ to $\rho_1$.
		Setting $u_t^W\equiv0$, the triple
		$(\rho_t^W,v_t^W,0)$ is admissible for the definition of
		$D_\alpha$. Hence
		
		\[
		D_\alpha^2(\rho_0,\rho_1)
		\le
		(1-\alpha)
		\int_0^1\!\!\int_\Omega
		|v_t^W|^2\rho_t^W\,dx\,dt.
		\]
		
		By the Benamou--Brenier formula,
		
		\[
		\int_0^1\!\!\int_\Omega
		|v_t^W|^2\rho_t^W\,dx\,dt
		=
		W_2^2(\rho_0,\rho_1),
		\]
		
		and therefore
		
		\[
		D_\alpha^2(\rho_0,\rho_1)
		\le
		(1-\alpha)
		W_2^2(\rho_0,\rho_1).
		\]
		
		Similarly, let $\rho_t^A$ be the Aitchison geodesic joining
		$\rho_0$ to $\rho_1$, with associated reaction field $u_t^A$.
		Choosing $v_t^A\equiv0$, the triple
		$(\rho_t^A,0,u_t^A)$ is admissible, and thus
		
		\[
		D_\alpha^2(\rho_0,\rho_1)
		\le
		\alpha
		\int_0^1\!\!\int_\Omega
		|u_t^A|^2\rho_t^A\,dx\,dt
		=
		\alpha
		d_A^2(\rho_0,\rho_1).
		\]
		
		Adding the two inequalities gives
		
		\[
		2D_\alpha^2(\rho_0,\rho_1)
		\le
		(1-\alpha)
		W_2^2(\rho_0,\rho_1)
		+
		\alpha
		d_A^2(\rho_0,\rho_1).
		\]
		
		Since
		
		\[
		(1-\alpha)W_2^2
		+
		\alpha d_A^2
		\le
		\max\{1-\alpha,\alpha\}
		\bigl(W_2^2+d_A^2\bigr),
		\]
		
		we obtain
		
		\[
		D_\alpha^2(\rho_0,\rho_1)
		\le
		\frac12
		\max\{1-\alpha,\alpha\}
		\bigl(W_2^2(\rho_0,\rho_1)
		+d_A^2(\rho_0,\rho_1)\bigr).
		\]
		
		Taking square roots concludes the proof.
		
	\end{proof}
	
	\begin{lemma}[Symmetry]
		\label{lem:symmetry}
		
		For every
		$\rho_0,\rho_1\in\mathcal P_{2,\mathrm{ac}}(\Omega)$,
		
		\[
		D_\alpha(\rho_0,\rho_1)
		=
		D_\alpha(\rho_1,\rho_0).
		\]
		
	\end{lemma}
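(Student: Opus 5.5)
The plan is a time-reversal argument: every admissible path from $\rho_0$ to $\rho_1$ is converted into an admissible path from $\rho_1$ to $\rho_0$ with the same action. Comparing infima then gives the two inequalities $D_\alpha^2(\rho_1,\rho_0)\le D_\alpha^2(\rho_0,\rho_1)$ and its reverse.

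\textbf{Construction.} Let $(\rho_t,v_t,u_t)_{t\in[0,1]}$ satisfy \eqref{eq:continuity} with $\rho(0)=\rho_0$ and $\rho(1)=\rho_1$. I set
\[
\tilde\rho_t=\rho_{1-t},\qquad \tilde v_t=-v_{1-t},\qquad \tilde u_t=-u_{1-t}.
\]
The endpoint conditions are immediate: $\tilde\rho_0=\rho_1$ and $\tilde\rho_1=\rho_0$.

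\textbf{Checking the continuity equation in distributions.} This is the only step needing care, since $\partial_t\rho$ exists only in $\mathcal D'$.
\begin{itemize}
\item Fix $\varphi\in C_c^\infty((0,1)\times\Omega)$ and put $\psi(s,x)=\varphi(1-s,x)$, which is again in $C_c^\infty((0,1)\times\Omega)$.
\item The weak form of \eqref{eq:continuity} for the original triple, tested against $\psi$, reads
\[
\int_0^1\!\!\int_\Omega\bigl(\partial_s\psi\,\rho_s+\nabla\psi\cdot v_s\rho_s+\psi\,u_s\rho_s\bigr)\,\dd x\,\dd s=0.
\]
\item Change variables $s=1-t$. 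Since $\partial_s\psi(s,x)=-\partial_t\varphi(t,x)$ and $\nabla\psi(s,x)=\nabla\varphi(t,x)$, this becomes
\[
\int_0^1\!\!\int_\Omega\bigl(-\partial_t\varphi\,\tilde\rho_t-\nabla\varphi\cdot\tilde v_t\tilde\rho_t-\varphi\,\tilde u_t\tilde\rho_t\bigr)\,\dd x\,\dd t=0.
\]
\item Multiplying by $-1$ gives exactly the weak form of $\partial_t\tilde\rho+\Div(\tilde\rho\tilde v)=\tilde\rho\tilde u$.
\end{itemize}
The centering condition also carries over: $\int_\Omega\tilde u_t\tilde\rho_t\,\dd x=-\int_\Omega u_{1-t}\rho_{1-t}\,\dd x=0$ for a.e.\ $t$. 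Measurability and positivity of the reversed curve are clearly preserved.

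\textbf{Invariance of the action.} The integrands satisfy $|\tilde v_t|^2\tilde\rho_t=|v_{1-t}|^2\rho_{1-t}$ and $\tilde u_t^2\tilde\rho_t=u_{1-t}^2\rho_{1-t}$. The substitution $t\mapsto 1-t$ in the time integral therefore gives $\mathcal A_\alpha(\tilde\rho,\tilde v,\tilde u)=\mathcal A_\alpha(\rho,v,u)$.

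\textbf{Conclusion.} Taking the infimum over admissible triples from $\rho_0$ to $\rho_1$ yields $D_\alpha^2(\rho_1,\rho_0)\le D_\alpha^2(\rho_0,\rho_1)$. Swapping the roles of $\rho_0$ and $\rho_1$ gives the reverse inequality, so the two are equal.

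This repeats the argument of the earlier symmetry lemma (Lemma~\ref{lem:symmetry}), now carried out at the distributional level. Alternatively, the statement can be reduced to that lemma directly.
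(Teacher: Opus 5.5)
Your proposal is correct and follows the paper's own proof: the same time reversal $\tilde\rho_t=\rho_{1-t}$, $\tilde v_t=-v_{1-t}$, $\tilde u_t=-u_{1-t}$, the invariance of the action, and the two-sided comparison of infima. Your distributional check with the reflected test function $\varphi(1-s,x)$ simply spells out the ``direct computation'' that the paper leaves implicit.
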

	
	\begin{proof}
		
		Let $(\rho_t,v_t,u_t)$ be an admissible curve joining
		$\rho_0$ to $\rho_1$.
		
		Define the time-reversed curve
		
		\[
		\widetilde{\rho}_t:=\rho_{1-t},
		\qquad
		\widetilde{v}_t:=-v_{1-t},
		\qquad
		\widetilde{u}_t:=-u_{1-t}.
		\]
		
		A direct computation shows that
		
		\[
		\partial_t\widetilde{\rho}_t
		+
		\operatorname{div}
		(\widetilde{\rho}_t\widetilde{v}_t)
		=
		\widetilde{\rho}_t\widetilde{u}_t.
		\]
		
		Hence
		$(\widetilde{\rho},\widetilde{v},\widetilde{u})$
		is admissible and joins
		$\rho_1$ to $\rho_0$.
		
		Moreover,
		
		\[
		\mathcal A_\alpha
		(\widetilde{\rho},
		\widetilde{v},
		\widetilde{u})
		=
		\mathcal A_\alpha
		(\rho,v,u).
		\]
		
		Taking the infimum over all admissible curves yields
		
		\[
		D_\alpha(\rho_1,\rho_0)
		\le
		D_\alpha(\rho_0,\rho_1).
		\]
		
		Exchanging the roles of
		$\rho_0$ and $\rho_1$
		gives the reverse inequality.
		
	\end{proof}
	
	\begin{lemma}[Triangle inequality]
		\label{lem:triangle}
		
		For every
		$\rho_0,\rho_1,\rho_2
		\in
		\mathcal P_{2,\mathrm{ac}}(\Omega)$,
		
		\[
		D_\alpha(\rho_0,\rho_2)
		\le
		D_\alpha(\rho_0,\rho_1)
		+
		D_\alpha(\rho_1,\rho_2).
		\]
		
	\end{lemma}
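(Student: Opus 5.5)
The plan is the standard concatenation-and-reparametrization argument for Benamou--Brenier type distances. The first step is to rewrite $D_\alpha^2$ on an interval of arbitrary length. For an admissible triple $(\rho,v,u)$ on $[0,1]$ and $T>0$, define $\hat\rho_s=\rho_{s/T}$, $\hat v_s=T^{-1}v_{s/T}$, $\hat u_s=T^{-1}u_{s/T}$ for $s\in[0,T]$. A direct chain-rule computation shows that this triple satisfies \eqref{eq:continuity} on $[0,T]$, since the centering constraint is linear in $u$ and is preserved. Its action over $[0,T]$ is $T^{-1}\mathcal A_\alpha(\rho,v,u)$. Consequently,
\[
D_\alpha^2(\rho_0,\rho_1)=T\,\inf\Bigl\{\int_0^T\bigl[(1-\alpha)\|v_s\|_{L^2(\rho_s)}^2+\alpha\|u_s\|_{L^2(\rho_s)}^2\bigr]\dd s\Bigr\},
\]
where the infimum runs over admissible curves on $[0,T]$ joining $\rho_0$ to $\rho_1$.

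The second step is concatenation. Fix $\varepsilon>0$. Choose admissible triples $(\rho^1,v^1,u^1)$ joining $\rho_0$ to $\rho_1$ and $(\rho^2,v^2,u^2)$ joining $\rho_1$ to $\rho_2$, with actions at most $D_\alpha^2(\rho_0,\rho_1)+\varepsilon$ and $D_\alpha^2(\rho_1,\rho_2)+\varepsilon$. Theorem~\ref{thm:existence} would even allow $\varepsilon=0$. Set $a=D_\alpha(\rho_0,\rho_1)$ and $b=D_\alpha(\rho_1,\rho_2)$, and first assume $a,b>0$. Put $\lambda=a/(a+b)$. Rescale the first curve to $[0,\lambda]$ and the second to $[\lambda,1]$, and glue them at $t=\lambda$, where both equal $\rho_1$. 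The glued velocity and reaction are $v^1_{t/\lambda}/\lambda$ and $u^1_{t/\lambda}/\lambda$ on $[0,\lambda)$, and the analogous rescaling of the second triple on $[\lambda,1]$. By the first step, the total action is at most $(a^2+\varepsilon)/\lambda+(b^2+\varepsilon)/(1-\lambda)$. With this choice of $\lambda$ the bound equals $(a+b)^2+O(\varepsilon)$. Letting $\varepsilon\to0$ and taking square roots gives the triangle inequality.

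If $a=0$ or $b=0$, I would take $\lambda$ close to $0$ or $1$, or simply any fixed $\lambda\in(0,1)$, and let $\varepsilon\to0$ first. For example, with $a=0$ the bound $\varepsilon/\lambda+(b^2+\varepsilon)/(1-\lambda)$ tends to $b^2/(1-\lambda)$, and then letting $\lambda\to0$ gives the claim.

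The main obstacle is checking that the glued curve satisfies \eqref{eq:continuity} in the distributional sense on all of $(0,1)\times\Omega$, across the junction time $t=\lambda$. I would handle this by using the weakly continuous representative of each piece. The distributional equation implies that $t\mapsto\int\varphi\rho_t$ is absolutely continuous for smooth $\varphi$, because the action bound gives $m,r\in L^1$ through the Cauchy--Schwarz argument of Lemma~\ref{lem:weakcompact}. Testing the glued equation against $\varphi\in C_c^\infty((0,1)\times\Omega)$ and integrating by parts in time on each subinterval, the two boundary terms at $t=\lambda$ equal $\pm\int\varphi(\lambda,\cdot)\rho_1$ and cancel. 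The centering condition holds for a.e.\ $t$ on each piece, hence on the whole interval. This proves admissibility and completes the argument.
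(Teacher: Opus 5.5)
Your proposal is correct and is essentially the paper's argument: concatenate two $\varepsilon$-optimal admissible curves after time-rescaling, so that the action becomes $A_{01}/\lambda+A_{12}/(1-\lambda)$, and then optimize the junction time (the paper takes $\lambda=\sqrt{A_{01}}/(\sqrt{A_{01}}+\sqrt{A_{12}})$ from the actions themselves, while you take $\lambda=a/(a+b)$ from the distances). You also handle the degenerate cases $a=0$ or $b=0$ and verify the distributional continuity equation across the junction $t=\lambda$, details the paper leaves implicit; in particular, the paper's choice of $\lambda$ equals $0$ or is undefined when one of the actions vanishes.
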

	
	\begin{proof}
		
		Let $\varepsilon>0$.
		
		Choose admissible curves
		$(\rho^{01},v^{01},u^{01})$
		and
		$(\rho^{12},v^{12},u^{12})$
		such that
		
		\[
		\sqrt{\mathcal A_\alpha(\rho^{01},v^{01},u^{01})}
		\le
		D_\alpha(\rho_0,\rho_1)+\varepsilon,
		\]
		
		and
		
		\[
		\sqrt{\mathcal A_\alpha(\rho^{12},v^{12},u^{12})}
		\le
		D_\alpha(\rho_1,\rho_2)+\varepsilon.
		\]
		
		Set
		
		\[
		A_{01}
		=
		\mathcal A_\alpha(\rho^{01},v^{01},u^{01}),
		\qquad
		A_{12}
		=
		\mathcal A_\alpha(\rho^{12},v^{12},u^{12}).
		\]
		
		For a fixed $a\in(0,1)$ define
		
		\[
		\widetilde{\rho}_t
		=
		\begin{cases}
			\rho^{01}_{t/a},
			&
			0\le t\le a,
			\\[1ex]
			\rho^{12}_{(t-a)/(1-a)},
			&
			a\le t\le1,
		\end{cases}
		\]
		
		\[
		\widetilde v_t
		=
		\begin{cases}
			\dfrac1a\,v^{01}_{t/a},
			&
			0\le t\le a,
			\\[2ex]
			\dfrac1{1-a}\,v^{12}_{(t-a)/(1-a)},
			&
			a\le t\le1,
		\end{cases}
		\]
		
		\[
		\widetilde u_t
		=
		\begin{cases}
			\dfrac1a\,u^{01}_{t/a},
			&
			0\le t\le a,
			\\[2ex]
			\dfrac1{1-a}\,u^{12}_{(t-a)/(1-a)},
			&
			a\le t\le1.
		\end{cases}
		\]
		
		The curve
		$(\widetilde\rho,\widetilde v,\widetilde u)$
		is admissible and joins
		$\rho_0$ to $\rho_2$.
		
		A change of variables yields
		
		\[
		\mathcal A_\alpha
		(\widetilde\rho,\widetilde v,\widetilde u)
		=
		\frac{A_{01}}a
		+
		\frac{A_{12}}{1-a}.
		\]
		
		Choose
		
		\[
		a
		=
		\frac{\sqrt{A_{01}}}
		{\sqrt{A_{01}}+\sqrt{A_{12}}}.
		\]
		
		Then
		
		\[
		1-a
		=
		\frac{\sqrt{A_{12}}}
		{\sqrt{A_{01}}+\sqrt{A_{12}}},
		\]
		
		and therefore
		
		\[
		\frac{A_{01}}a
		+
		\frac{A_{12}}{1-a}
		=
		(\sqrt{A_{01}}+\sqrt{A_{12}})^2.
		\]
		
		Hence
		
		\[
		D_\alpha(\rho_0,\rho_2)
		\le
		\sqrt{
			\mathcal A_\alpha
			(\widetilde\rho,\widetilde v,\widetilde u)
		}
		=
		\sqrt{A_{01}}
		+
		\sqrt{A_{12}}.
		\]
		
		Using the choice of the two
		$\varepsilon$-optimal curves,
		
		\[
		D_\alpha(\rho_0,\rho_2)
		\le
		D_\alpha(\rho_0,\rho_1)
		+
		D_\alpha(\rho_1,\rho_2)
		+
		2\varepsilon.
		\]
		
		Letting $\varepsilon\to0$ concludes the proof.
		
	\end{proof}
	\begin{lemma}[Separability]
		\label{lem:separability}
		
		If
		
		\[
		D_\alpha(\rho_0,\rho_1)=0,
		\]
		
		then
		
		\[
		\rho_0=\rho_1.
		\]
		
	\end{lemma}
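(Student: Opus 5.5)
The plan is to show that any admissible curve with small action moves the endpoint only a little in a weak, testable sense, and then let the action tend to zero. Fix a test function $\zeta\in C^1(\overline\Omega)$. For an admissible triple $(\rho_t,v_t,u_t)$ joining $\rho_0$ to $\rho_1$, I would test the continuity equation \eqref{eq:continuity} against $\zeta$, which is independent of time. A time-independent test function that does not vanish on $\partial\Omega$ is only legitimate if the flux has no normal component at the boundary. I will therefore use the convention, implicit in the requirement that $\rho_t$ stay a probability density on $\Omega$, that $\rho_t v_t\cdot n=0$ on $\partial\Omega$. With this convention I expect the identity
\[
\int_\Omega \zeta\,\rho_1\,\dd x-\int_\Omega \zeta\,\rho_0\,\dd x=\int_0^1\!\!\int_\Omega \bigl(\nabla\zeta\cdot v_t+\zeta\,u_t\bigr)\rho_t\,\dd x\,\dd t .
\]
To justify it, I would first test against $\eta(t)\zeta(x)$ with $\eta$ a smooth cutoff approximating $\mathbf 1_{[0,1]}$. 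Then I would pass to the limit using the narrow continuity in time of $t\mapsto\rho_t$, which itself follows from the equation and finite action, as in the AGS treatment of the continuity equation.

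Next I would bound the right-hand side with Cauchy--Schwarz in $L^2(\rho_t\,\dd x\,\dd t)$, using $\int\rho_t=1$:
\[
\Bigl|\int\zeta\,\dd(\rho_1-\rho_0)\Bigr|\le \|\nabla\zeta\|_\infty\Bigl(\int_0^1\!\!\int|v_t|^2\rho_t\Bigr)^{1/2}+\|\zeta\|_\infty\Bigl(\int_0^1\!\!\int u_t^2\rho_t\Bigr)^{1/2}.
\]
This gives
\[
\Bigl|\int\zeta\,\dd(\rho_1-\rho_0)\Bigr|\le\Bigl(\frac{\|\nabla\zeta\|_\infty}{\sqrt{1-\alpha}}+\frac{\|\zeta\|_\infty}{\sqrt\alpha}\Bigr)\sqrt{\mathcal A_\alpha(\rho,v,u)} .
\]
Here the fact that $\alpha\in(0,1)$ is strict is essential, since both weights must be positive. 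Taking the infimum over admissible triples, the same bound holds with $\sqrt{\mathcal A_\alpha}$ replaced by $D_\alpha(\rho_0,\rho_1)$. Alternatively, I could take a minimizer directly from Theorem~\ref{thm:existence}.

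If $D_\alpha(\rho_0,\rho_1)=0$, then $\int\zeta\rho_1=\int\zeta\rho_0$ for every $\zeta\in C^1(\overline\Omega)$. This class is dense in $C(\overline\Omega)$, so the two finite measures coincide and $\rho_0=\rho_1$ a.e. The centering constraint is not needed for this argument. The bound also yields, as a by-product, a bounded-Lipschitz-type lower estimate of the form $\|\rho_0-\rho_1\|_{\mathrm{BL}}\lesssim D_\alpha$, which is what underlies the claimed inclusion $\tau_{\mathrm{narrow}}\subset\tau_{D_\alpha}$.

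The main obstacle is the first step: legitimately testing the distributional equation with a time-independent function and identifying the endpoint values, including the treatment of boundary flux. I would first try to handle it by reading \eqref{eq:continuity} with test functions in $C_c^\infty([0,1]\times\overline\Omega)$, which builds in the no-flux condition. Failing that, I would restrict to $\zeta\in C_c^\infty(\Omega)$; that class is still dense enough to separate measures on $\Omega$, and restricting to it sidesteps the boundary issue entirely.
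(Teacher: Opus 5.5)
Your proposal is correct and follows the paper's own argument. You test the continuity equation against a time-independent function, bound the transport and reaction terms by Cauchy--Schwarz using $\int\rho_t=1$, and let the action tend to zero; the paper does this along a minimizing sequence, while you take the infimum of the quantitative bound, which is equivalent. Your treatment of the endpoint identification and of the boundary flux (a no-flux convention, or restricting to $\zeta\in C_c^\infty(\Omega)$) is more careful than the paper's. The paper tests directly with $\varphi\in C^\infty(\Omega)$ and silently assumes both that $\|\varphi\|_\infty,\|\nabla\varphi\|_\infty<\infty$ and that there is no boundary contribution.
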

	
	\begin{proof}
		
		Assume that
		
		\[
		D_\alpha(\rho_0,\rho_1)=0.
		\]
		
		By definition of $D_\alpha$, there exists a sequence of admissible
		triples $(\rho^n,v^n,u^n)$ joining $\rho_0$ to $\rho_1$ such that
		
		\[
		\mathcal A_\alpha(\rho^n,v^n,u^n)\longrightarrow 0.
		\]
		
		Since
		
		\[
		\mathcal A_\alpha(\rho^n,v^n,u^n)
		=
		(1-\alpha)
		\int_0^1\!\!\int_\Omega
		|v_t^n|^2\rho_t^n\,dx\,dt
		+
		\alpha
		\int_0^1\!\!\int_\Omega
		|u_t^n|^2\rho_t^n\,dx\,dt ,
		\]
		
		and $\alpha\in(0,1)$, it follows that
		
		\[
		\int_0^1\!\!\int_\Omega
		|v_t^n|^2\rho_t^n\,dx\,dt
		\longrightarrow 0,
		\]
		
		and
		
		\[
		\int_0^1\!\!\int_\Omega
		|u_t^n|^2\rho_t^n\,dx\,dt
		\longrightarrow 0.
		\]
		
		Let $\varphi\in C^\infty(\Omega)$.
		Using the weak formulation of
		
		\[
		\partial_t\rho_t^n
		+
		\operatorname{div}(\rho_t^n v_t^n)
		=
		\rho_t^n u_t^n,
		\]
		
		we obtain
		
		\[
		\int_\Omega \varphi(\rho_1-\rho_0)\,dx
		=
		\int_0^1\!\!\int_\Omega
		\nabla\varphi\cdot v_t^n\,\rho_t^n\,dx\,dt
		+
		\int_0^1\!\!\int_\Omega
		\varphi\,u_t^n\,\rho_t^n\,dx\,dt .
		\]
		
		By the Cauchy--Schwarz inequality,
		
		\[
		\left|
		\int_0^1\!\!\int_\Omega
		\nabla\varphi\cdot v_t^n\,\rho_t^n
		\right|
		\le
		\|\nabla\varphi\|_\infty
		\left(
		\int_0^1\!\!\int_\Omega
		|v_t^n|^2\rho_t^n
		\right)^{1/2},
		\]
		
		and
		
		\[
		\left|
		\int_0^1\!\!\int_\Omega
		\varphi\,u_t^n\,\rho_t^n
		\right|
		\le
		\|\varphi\|_\infty
		\left(
		\int_0^1\!\!\int_\Omega
		|u_t^n|^2\rho_t^n
		\right)^{1/2}.
		\]
		
		Passing to the limit as $n\to\infty$ yields
		
		\[
		\int_\Omega \varphi(\rho_1-\rho_0)\,dx=0.
		\]
		
		Since this holds for every
		$\varphi\in C^\infty(\Omega)$, we conclude that
		
		\[
		\rho_0=\rho_1.
		\]
		
	\end{proof}
	
	\begin{theorem}
		\label{thm:distance}
		
		The function
		
		\[
		D_\alpha :
		\mathcal P_{2,\mathrm{ac}}(\Omega)
		\times
		\mathcal P_{2,\mathrm{ac}}(\Omega)
		\longrightarrow
		[0,\infty)
		\]
		
		is a distance.
		
	\end{theorem}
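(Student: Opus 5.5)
The plan is to assemble the metric axioms from the lemmas already established in this section, with one extra step: checking that $D_\alpha$ actually takes values in $[0,\infty)$, as the statement claims.

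\textbf{Step 1: finiteness.} For $\rho,\sigma\in\PP_{2,\mathrm{ac}}(\Omega)$, both $W_2(\rho,\sigma)$ and $d_A(\rho,\sigma)$ are finite, since the working space is exactly where both are defined. Lemma~\ref{lem:comparison} then gives
\[
D_\alpha^2(\rho,\sigma)\le \tfrac12\max\{1-\alpha,\alpha\}\bigl(W_2^2+d_A^2\bigr)<\infty .
\]
Two points need care here. First, the Wasserstein geodesic is admissible with $u\equiv 0$. Second, the Aitchison geodesic of Lemma~\ref{lem:geodesics_aitchison} is admissible with $v\equiv0$: by Lemma~\ref{lem:ac_aitchison}, its reaction field $u_t=\dot u_t-\langle\dot u_t\rangle_{\rho_t}$ is $\rho_t$-centered. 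Even without the constant $\frac12$, each single admissible path already gives a finite upper bound, and that is all finiteness requires.

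\textbf{Step 2: the axioms.}
\begin{itemize}
\item Nonnegativity and $D_\alpha(\rho,\rho)=0$ follow from Lemma~\ref{lem:positivity}.
\item Symmetry follows from Lemma~\ref{lem:symmetry}, via time reversal.
\item The triangle inequality follows from Lemma~\ref{lem:triangle}, via concatenation with the optimal time split $a$.
\item Identity of indiscernibles follows from Lemma~\ref{lem:separability}.
\end{itemize}

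\textbf{Step 3: checking the delicate points in the cited lemmas.} I expect the main obstacle to be in two of these lemmas rather than in the assembly itself.

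In the triangle inequality, the choice of $a$ degenerates if $A_{01}=0$ or $A_{12}=0$. I would handle this as follows. If $A_{01}=A_{12}=0$, then separability already forces $\rho_0=\rho_1=\rho_2$. If only one is zero, separability identifies the corresponding endpoints, so the inequality is trivial. Alternatively, one can take any $a\in(0,1)$ and let $a\to0$ or $a\to1$. One must also confirm two properties of the concatenated curve: it still satisfies \eqref{eq:continuity} across $t=a$, which holds because the curves match at $\rho_1$ so no Dirac mass appears in $\partial_t\tilde\rho$; and the rescaled $\tilde u$ stays $\tilde\rho$-centered, which holds because the centering condition is homogeneous.

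In separability, the test functions $\varphi\in C^\infty(\Omega)$ should be taken in $C_c^\infty(\Omega)$, or in $C^1(\overline\Omega)$ with the no-flux convention implicit in the weak formulation. Either class is dense enough to identify the $L^1$ functions $\rho_0$ and $\rho_1$ as measures. The Cauchy--Schwarz step uses $\int_0^1\!\int\rho^n_t=1$, which comes from mass conservation.

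With these checks in place, $D_\alpha$ is a finite, symmetric, nonnegative function that vanishes exactly on the diagonal and satisfies the triangle inequality, hence a distance on $\PP_{2,\mathrm{ac}}(\Omega)$.
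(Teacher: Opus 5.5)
Your Step~2 is exactly the paper's proof, which does nothing more than cite the positivity, symmetry, triangle-inequality and separability lemmas. Your Step~3 checks are sound. The paper's proof never addresses finiteness, so you are right that the codomain $[0,\infty)$ needs an argument. However, the argument you give in Step~1 fails.

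\textbf{The Aitchison path.} Centering of its reaction field is not the issue. Its $D_\alpha$-action is
\[
\alpha\int_0^1\!\!\int_\Omega\bigl(w-\langle w\rangle_{\rho_t}\bigr)^2\rho_t\,dx\,dt,\qquad w=\widetilde{\rho}_1-\widetilde{\rho}_0 .
\]
This is a $\rho_t$-weighted norm, whereas $d_A^2=\int_\Omega w^2\,dx$ is Lebesgue-weighted. The former can be $+\infty$ while $d_A<\infty$. On $\Omega=(0,1)$ take $\rho_1\equiv1$ and $\rho_0(x)=x^{-1}(\ln(e/x))^{-2}$. Then $\ln\rho_0\in L^2(\Omega)$ but $\int_\Omega\rho_0(\ln\rho_0)^2\,dx=\infty$. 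Along the Aitchison geodesic, $\mathrm{Var}_{\rho_t}(\ln\rho_0)$ behaves like $1/t$ as $t\to0$, so the action diverges.

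In fact $D_\alpha^2\le\alpha d_A^2$ is false outright. Take $\rho_0\equiv1$ and $\rho_1^\varepsilon\propto\exp(\varepsilon^{-1/4}\mathbf{1}_{S_\varepsilon})$, with $S_\varepsilon$ an interval of length $\varepsilon$ around $x_0$. Then $d_A^2=\varepsilon^{1/2}(1-\varepsilon)\to0$, while $\rho_1^\varepsilon\rightharpoonup\delta_{x_0}$. This is incompatible with the bound $\bigl|\int_\Omega\varphi\,(\rho_1-\rho_0)\,dx\bigr|\le C_{\alpha,\varphi}\,D_\alpha(\rho_0,\rho_1)$ implicit in the proof of Theorem~\ref{thm:Dalpha_topology}.

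\textbf{The Wasserstein path.} This half is not safe either. The displacement interpolation must stay in $\Omega$, which is false in general for non-convex $\Omega$. It must also keep $\rho_t>0$ a.e.\ and $\ln\rho_t\in L^2$, and nothing establishes this. So Lemma~\ref{lem:comparison} cannot carry Step~1, and your remark that ``each single admissible path already gives a finite upper bound'' is unjustified for both paths.

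\textbf{A replacement for Step~1.} Use the spherical Hellinger (Fisher--Rao) path with $v\equiv0$. For $\rho_0\neq\rho_1$ let $\theta=\arccos\int_\Omega\sqrt{\rho_0\rho_1}\,dx\in(0,\pi/2)$ and set
\[
\sqrt{\rho_t}=\frac{\sin((1-t)\theta)}{\sin\theta}\sqrt{\rho_0}+\frac{\sin(t\theta)}{\sin\theta}\sqrt{\rho_1},\qquad u_t=\frac{\partial_t\rho_t}{\rho_t}.
\]
This path has the properties you need:
\begin{itemize}
\item It is a constant-speed great circle on the unit sphere of $L^2(\Omega)$. Hence $\int_\Omega\rho_t\,dx=1$, and so $\int_\Omega u_t\rho_t\,dx=\frac{d}{dt}\int_\Omega\rho_t\,dx=0$.
\item Both coefficients are nonnegative. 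So $\rho_t>0$ a.e.\ and $|\ln\rho_t|\le C_t+|\ln\rho_0|+|\ln\rho_1|$, which keeps the curve in $\PP_{2,\mathrm{ac}}(\Omega)$.
\item Its reaction cost is $\int_\Omega u_t^2\rho_t\,dx=4\|\partial_t\sqrt{\rho_t}\|_{L^2}^2=4\theta^2$.
\end{itemize}
Thus $D_\alpha^2(\rho_0,\rho_1)\le4\alpha\theta^2<\alpha\pi^2$. With this in place of Step~1, the rest of your proof goes through unchanged.
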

	
	\begin{proof}
		
		Positivity follows immediately from the definition.
		
		Symmetry is established in
		Lemma~\ref{lem:symmetry}.
		
		The triangle inequality is proved in
		Lemma~\ref{lem:triangle}.
		
		Separability follows from
		Lemma~\ref{lem:separability}.
		
		Therefore $D_\alpha$ is a metric on
		$\mathcal P_{2,\mathrm{ac}}(\Omega)$.
		
	\end{proof}

	\subsection{Topology}
	
	\begin{theorem}[Characterization of $D_\alpha$-convergence]
		\label{thm:Dalpha_topology}
		
		Let $(\rho_n)_{n\ge1}\subset \mathcal P_{2,\mathrm{ac}}(\Omega)$
		and let $\rho\in \mathcal P_{2,\mathrm{ac}}(\Omega)$.
		
		Assume that
		
		\[
		D_\alpha(\rho_n,\rho)\longrightarrow 0.
		\]
		
		Then there exists a sequence of admissible curves
		$(\rho_t^n,v_t^n,u_t^n)$ joining $\rho_n$ to $\rho$ such that
		
		\[
		\mathcal A_\alpha(\rho^n,v^n,u^n)\longrightarrow0.
		\]
		
		Moreover,
		
		\[
		\int_0^1\!\!\int_\Omega
		|v_t^n|^2\rho_t^n\,dx\,dt
		\longrightarrow0,
		\qquad
		\int_0^1\!\!\int_\Omega
		|u_t^n|^2\rho_t^n\,dx\,dt
		\longrightarrow0.
		\]
		
		Consequently, for every test function
		$\varphi\in C^\infty(\Omega)$,
		
		\[
		\int_\Omega \varphi\, d\rho_n
		\longrightarrow
		\int_\Omega \varphi\, d\rho .
		\]
		
		In particular, $D_\alpha$-convergence implies narrow convergence.
	\end{theorem}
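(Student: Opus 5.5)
The plan is to mirror the argument of Lemma~\ref{lem:separability}, now with a quantitative estimate that is uniform in $n$. I would proceed in three steps.

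\textbf{Step 1: near-optimal curves.} For each $n$, the definition of $D_\alpha$ as an infimum lets me choose an admissible triple $(\rho^n_t,v^n_t,u^n_t)$ joining $\rho_n$ to $\rho$ with
\[
\mathcal A_\alpha(\rho^n,v^n,u^n)\le D_\alpha^2(\rho_n,\rho)+\tfrac1n .
\]
Alternatively, Theorem~\ref{thm:existence} gives an exact minimizer, but the $\tfrac1n$-optimal choice avoids depending on it. Since $D_\alpha(\rho_n,\rho)\to0$, the action tends to $0$. Because $\alpha\in(0,1)$, both weights $1-\alpha$ and $\alpha$ are strictly positive. Dividing by them gives
\[
\int_0^1\!\!\int_\Omega|v^n_t|^2\rho^n_t\,dx\,dt\le \frac{\mathcal A_\alpha}{1-\alpha}\to0,
\qquad
\int_0^1\!\!\int_\Omega|u^n_t|^2\rho^n_t\,dx\,dt\le \frac{\mathcal A_\alpha}{\alpha}\to0 .
\]

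\textbf{Step 2: weak-formulation estimate.} Fix $\varphi\in C^\infty(\Omega)$ with $\varphi$ and $\nabla\varphi$ bounded; this is automatic if $\varphi$ is smooth up to $\overline\Omega$, which I will assume. Testing the continuity equation \eqref{eq:continuity} with the time-independent $\varphi$ and integrating over $[0,1]$ gives
\[
\int_\Omega\varphi\,(\rho-\rho_n)\,dx=\int_0^1\!\!\int_\Omega\nabla\varphi\cdot v^n_t\rho^n_t\,dx\,dt+\int_0^1\!\!\int_\Omega\varphi\,u^n_t\rho^n_t\,dx\,dt .
\]
Cauchy--Schwarz against the probability measures $\rho^n_t\,dx\,dt$ then bounds the right-hand side by
\[
\|\nabla\varphi\|_\infty\Bigl(\int\!\!\int|v^n|^2\rho^n\Bigr)^{1/2}+\|\varphi\|_\infty\Bigl(\int\!\!\int|u^n|^2\rho^n\Bigr)^{1/2},
\]
which tends to $0$ by Step 1. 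In fact the bound is at most $C(\varphi)\,\max\{(1-\alpha)^{-1/2},\alpha^{-1/2}\}\,\bigl(D_\alpha(\rho_n,\rho)+n^{-1/2}\bigr)$.

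\textbf{Step 3: narrow convergence.} Step 2 gives convergence against smooth test functions. To upgrade this to narrow convergence I would use that $\Omega$ is bounded, so $\overline\Omega$ is compact. The $\rho_n$ are probability measures, hence tight. Every $\varphi\in C_b$ restricted to $\overline\Omega$ (or every $\varphi\in C(\overline\Omega)$) can be uniformly approximated by smooth functions, via Stone--Weierstrass or mollification of an extension. An $\varepsilon/3$ argument using $\int d\rho_n=1$ then concludes.

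\textbf{Main obstacle.} The hardest step is the justification in Step 2 of testing with $\varphi$ that is not compactly supported in $\Omega$ and is time-independent. The clean route is to interpret \eqref{eq:continuity} with no-flux boundary conditions, i.e.\ weakly against $C^1([0,1]\times\overline\Omega)$ functions. That is the same convention implicitly used in Lemma~\ref{lem:separability}, and I would state it explicitly. If one insists on $C_c^\infty(\Omega)$ tests only, I would instead restrict to $\varphi\in C_c^\infty(\Omega)$. Since $C_c^\infty(\Omega)$ is dense in $C_0(\Omega)$, this gives vague convergence. Vague convergence plus conservation of total mass $1$ for all $\rho_n$ and for $\rho$ yields narrow convergence.

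A further technical point is using a time-independent test function in a time-dependent distributional equation. I would handle it by multiplying $\varphi$ by cutoffs $\chi_k(t)$ approximating $\mathbf 1_{[0,1]}$ and passing to the limit. This requires the narrowly continuous representative of $t\mapsto\rho^n_t$ with $\rho^n_0=\rho_n$ and $\rho^n_1=\rho$. Such a representative exists because the right-hand-side bounds in Step 2 show $t\mapsto\int\varphi\,d\rho^n_t$ is absolutely continuous.
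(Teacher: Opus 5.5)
Your proposal is correct and follows the paper's proof essentially step by step: you choose $1/n$-optimal admissible curves, use $\alpha\in(0,1)$ to make both the transport and the reaction terms vanish, test the continuity equation with a time-independent $\varphi$, and conclude by Cauchy--Schwarz. Your Step~3 and your remarks on no-flux boundary conditions, on requiring $\varphi$ and $\nabla\varphi$ to be bounded, and on time cutoffs only make explicit points the paper passes over. In particular, the paper asserts narrow convergence directly from convergence against $C^\infty(\Omega)$ test functions.
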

	
	\begin{proof}
		
		Since $D_\alpha(\rho_n,\rho)\to0$, by definition of the distance
		there exist admissible curves
		$(\rho_t^n,v_t^n,u_t^n)$ joining $\rho_n$ to $\rho$
		such that
		
		\[
		\mathcal A_\alpha(\rho^n,v^n,u^n)
		\le
		D_\alpha^2(\rho_n,\rho)+\frac1n.
		\]
		
		Hence
		
		\[
		\mathcal A_\alpha(\rho^n,v^n,u^n)\to0.
		\]
		
		Since
		
		\[
		\mathcal A_\alpha(\rho^n,v^n,u^n)
		=
		(1-\alpha)
		\int_0^1\!\!\int_\Omega
		|v_t^n|^2\rho_t^n\,dx\,dt
		+
		\alpha
		\int_0^1\!\!\int_\Omega
		|u_t^n|^2\rho_t^n\,dx\,dt ,
		\]
		
		and $\alpha\in(0,1)$, both terms converge to zero.
		
		Let $\varphi\in C^\infty(\Omega)$.
		Using the weak formulation of the continuity equation,
		
		\[
		\int_\Omega \varphi\, d\rho
		-
		\int_\Omega \varphi\, d\rho_n
		=
		\int_0^1\!\!\int_\Omega
		\nabla\varphi\cdot v_t^n\,\rho_t^n\,dx\,dt
		+
		\int_0^1\!\!\int_\Omega
		\varphi\,u_t^n\,\rho_t^n\,dx\,dt .
		\]
		
		By Cauchy--Schwarz,
		
		\[
		\Big|
		\int_0^1\!\!\int_\Omega
		\nabla\varphi\cdot v_t^n\,\rho_t^n
		\Big|
		\le
		\|\nabla\varphi\|_\infty
		\left(
		\int_0^1\!\!\int_\Omega
		|v_t^n|^2\rho_t^n
		\right)^{1/2},
		\]
		
		and
		
		\[
		\Big|
		\int_0^1\!\!\int_\Omega
		\varphi\,u_t^n\,\rho_t^n
		\Big|
		\le
		\|\varphi\|_\infty
		\left(
		\int_0^1\!\!\int_\Omega
		|u_t^n|^2\rho_t^n
		\right)^{1/2}.
		\]
		
		Both right-hand sides tend to zero, yielding
		
		\[
		\int_\Omega \varphi\, d\rho_n
		\longrightarrow
		\int_\Omega \varphi\, d\rho .
		\]
		
		Therefore $\rho_n\rightharpoonup\rho$ narrowly.
		
	\end{proof}
	\begin{remark}[Comparison of topologies]
		\label{rem:topology_comparison}
		
		The topology induced by $D_\alpha$ is related to both the narrow topology and the supremum topology generated by $W_2$ and $d_A$.
		
		Indeed, Theorem~\ref{thm:Dalpha_topology} shows that
		
		\[
		\rho_n \xrightarrow[D_\alpha]{} \rho
		\qquad\Longrightarrow\qquad
		\rho_n \rightharpoonup \rho ,
		\]
		
		that is, $D_\alpha$-convergence implies narrow convergence. Hence the topology induced by $D_\alpha$ is stronger than the narrow topology.
		
		On the other hand, by Lemma~\ref{lem:comparison}, we have
		
		\[
		D_\alpha^2(\rho,\sigma)
		\le
		\frac12 \max\{1-\alpha,\alpha\}
		\Bigl(
		W_2^2(\rho,\sigma)
		+
		d_A^2(\rho,\sigma)
		\Bigr),
		\]
		
		which implies that
		
		\[
		\rho_n \xrightarrow[W_2]{} \rho
		\quad\text{and}\quad
		\rho_n \xrightarrow[d_A]{} \rho
		\qquad\Longrightarrow\qquad
		\rho_n \xrightarrow[D_\alpha]{} \rho .
		\]
		
		Therefore, the topology induced by $D_\alpha$ is weaker than the supremum topology
		$\tau_{W_2}\vee\tau_{d_A}$.
		
		Summarizing,
		
		\[
		\tau_{\mathrm{narrow}}
		\subset
		\tau_{D_\alpha}
		\subset
		\tau_{W_2}\vee\tau_{d_A}.
		\]
		
		Whether one of these inclusions is an equality remains an open question.
	\end{remark}
	\section{Geodesics and absolutely continuous curves}
	\label{sec:geodesics}
	\begin{proposition}[Constant-speed reparametrization]
		Let $(\rho_t,v_t,u_t)$ be an admissible curve with finite action.
		Then there exists a monotone change of variable
		$\theta:[0,1]\to[0,1]$
		such that the reparametrized curve
		$(\rho_{\theta(t)},\theta'(t)v_{\theta(t)},\theta'(t)u_{\theta(t)})$
		has constant energy density.
	\end{proposition}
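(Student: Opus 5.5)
Let $e(s):=(1-\alpha)\int_\Omega|v_s|^2\rho_s\,\dd x+\alpha\int_\Omega u_s^2\rho_s\,\dd x$ be the energy density of the given curve. By finiteness of the action, $e\in L^1(0,1)$ and $e\ge0$. The plan is the standard arc-length reparametrization used for $W_2$: set $g(s):=\sqrt{e(s)}$, so $g\in L^2(0,1)\subset L^1(0,1)$ by Cauchy--Schwarz, and put $L:=\int_0^1 g(s)\,\dd s$ (the ``length'' of the curve). If $L=0$, then $e=0$ a.e. The weak formulation of \eqref{eq:continuity} then shows, exactly as in the proof of Lemma~\ref{lem:separability}, that $\rho_t$ is constant, so the curve already has constant (zero) energy density. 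Assume $L>0$ from now on.

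Define $S(s):=\frac1L\int_0^s\big(g(r)+\varepsilon\big)\,\dd r\big/(1+\varepsilon/L)$ for a small $\varepsilon>0$. This is a strictly increasing absolutely continuous bijection of $[0,1]$; the regularization handles intervals where $g=0$. Let $\theta:=S^{-1}$. Then $\theta$ is monotone, and since $S'>0$ a.e., $\theta$ is absolutely continuous with $\theta'(t)=1/S'(\theta(t))$. Passing to the limit $\varepsilon\to0$ instead, one takes $S(s)=\frac1L\int_0^s g$, which is nondecreasing and continuous but only Lipschitz-invertible off the flat pieces. On those pieces $g=0$ and the curve is stationary, so collapsing them does not change $\rho$ at the endpoints of each piece. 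I would use this collapsing version to obtain exactly constant density.

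Next I would check admissibility. For $\varphi\in C_c^\infty((0,1)\times\Omega)$, substituting $s=\theta(t)$ in the weak form of \eqref{eq:continuity} shows that $\tilde\rho_t=\rho_{\theta(t)}$ satisfies $\partial_t\tilde\rho_t+\Div(\tilde\rho_t\tilde v_t)=\tilde\rho_t\tilde u_t$ with $\tilde v_t=\theta'(t)v_{\theta(t)}$ and $\tilde u_t=\theta'(t)u_{\theta(t)}$; this is the chain rule $\partial_t\rho_{\theta(t)}=\theta'(t)(\partial_s\rho)_{\theta(t)}$. The centering constraint is linear, so it is preserved: $\int\tilde u_t\tilde\rho_t=\theta'(t)\int u_{\theta(t)}\rho_{\theta(t)}=0$. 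The endpoints are unchanged. The energy density becomes $\tilde e(t)=\theta'(t)^2e(\theta(t))$. With $\theta'(t)=L/g(\theta(t))$ wherever $g(\theta(t))>0$, this gives $\tilde e(t)=L^2$ for a.e. $t$, a constant. As a consistency check, $\mathcal A_\alpha(\tilde\rho,\tilde v,\tilde u)=L^2\le\int_0^1 e=\mathcal A_\alpha(\rho,v,u)$ by Jensen.

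The main obstacle is the rigorous change of variables when $\theta$ is merely absolutely continuous and $g$ may vanish on a set of positive measure. The weak formulation requires that the test functions $\varphi(S(s),x)$ be admissible, and $S$ is only $W^{1,1}$, not smooth. I would handle this by working with Lipschitz-in-time test functions, which is allowed by density since $\rho\in C([0,1];\text{narrow})$ follows from the equation and the finite action. I would first prove everything for the regularized $S_\varepsilon$, where $\theta_\varepsilon$ is Lipschitz with $\theta_\varepsilon'\le L_\varepsilon/\varepsilon$. The energy density of that curve is $L_\varepsilon^2\,g^2/(g+\varepsilon)^2\circ\theta_\varepsilon$, where $L_\varepsilon=L+\varepsilon$. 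Then I would let $\varepsilon\to0$, using the stationarity of the curve on $\{g=0\}$ to obtain the exact constant-speed version.
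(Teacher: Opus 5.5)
Your route is the standard arc-length reparametrization, and it is also all the paper offers here: the proposition is stated without proof, and the proof of Theorem~\ref{thm:geodesic} simply invokes ``the standard arc-length reparametrization argument''. The gap is the step you flag as the main obstacle, which you assert rather than prove and also misdescribe.

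Your $\varepsilon$-regularized curves have energy density $(L+\varepsilon)^2\,\bigl(g/(g+\varepsilon)\bigr)^2\circ\theta_\varepsilon$, which is not constant. So exact constancy rests entirely on the passage $\varepsilon\to0$, where you would still have to identify the limit fields with $\theta'(t)v_{\theta(t)}$ and $\theta'(t)u_{\theta(t)}$; lower semicontinuity alone only gives $\tilde e\le L^2$.

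Your picture of the limiting $\theta$ is also wrong in two ways:
\begin{itemize}
\item $S^{-1}$ is not Lipschitz ``off the flat pieces'' unless $g$ is bounded below there.
\item More seriously, $Z=\{g=0\}$ need not be a union of intervals. If $Z$ is a fat Cantor set, $S$ is strictly increasing, so there is nothing to collapse, yet $|S(Z)|=\int_Z S'\,ds=0<|Z|$. Hence $\theta=S^{-1}$ fails Lusin's property (N) and is \emph{not} absolutely continuous.
\end{itemize}
The chain rule $\partial_t\rho_{\theta(t)}=\theta'(t)(\partial_s\rho)_{\theta(t)}$ therefore cannot come from the regularity of $\theta$, because $\theta'$ misses the Cantor part of $D\theta$. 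It holds only because the curve does not move on $Z$, and that is exactly what must be shown. Similarly, $\varphi(S(s),x)$ is only $W^{1,2}$ in time, not Lipschitz, so your density remark does not cover the test functions you need.

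The fix is short and makes the regularization unnecessary.
\begin{enumerate}
\item For smooth $\phi=\phi(x)$, the function $f_\phi(s)=\int_\Omega\phi\,\rho_s\,dx$ is absolutely continuous, with $f_\phi'(s)=\int_\Omega\nabla\phi\cdot v_s\rho_s\,dx+\int_\Omega\phi\,u_s\rho_s\,dx$. By Cauchy--Schwarz, $|f_\phi'(s)|\le C_\phi\,g(s)$ with $C_\phi=\|\nabla\phi\|_\infty/\sqrt{1-\alpha}+\|\phi\|_\infty/\sqrt{\alpha}$. This bound is the quantitative form of ``the curve is stationary on $Z$''.
\item Let $\theta$ be a nondecreasing right inverse of $S$ with $\theta(0)=0$ and $\theta(1)=1$, e.g.\ $\theta(t)=\min\{s:S(s)=t\}$ for $t<1$. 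Since $S(\theta(t))=t$,
\[ |f_\phi(\theta(t_2))-f_\phi(\theta(t_1))|\le C_\phi\int_{\theta(t_1)}^{\theta(t_2)}g\,ds=C_\phi L|t_2-t_1| . \]
So $f_\phi\circ\theta$ is Lipschitz whatever the regularity of $\theta$, including across its jumps.
\item Because $S$ is absolutely continuous and nondecreasing, $\{t:\theta(t)\in E\}\subset S(E)$ and $|S(E)|=\int_E S'\,ds$ for measurable $E$. This set is therefore null both for null $E$ and for $E=Z$.
\item Hence for a.e.\ $t$, $S$ is differentiable at $\theta(t)$ with $S'(\theta(t))=g(\theta(t))/L>0$, so $\theta'(t)=L/g(\theta(t))$. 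Also $f_\phi$ is differentiable at $\theta(t)$ with the formula above, so $(f_\phi\circ\theta)'(t)=\theta'(t)f_\phi'(\theta(t))$.
\item This is the weak form of the reparametrized equation with $\tilde v_t=\theta'(t)v_{\theta(t)}$ and $\tilde u_t=\theta'(t)u_{\theta(t)}$. The centering constraint survives by the same null-set argument, and the endpoints are kept.
\item Finally $\tilde e(t)=\theta'(t)^2e(\theta(t))=L^2$ for a.e.\ $t$, with no limit needed.
\end{enumerate}
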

	\subsection{Existence of geodesics}
	
	\begin{theorem}[Geodesic property]
		\label{thm:geodesic}
		
		The metric space
		$(\mathcal P_{2,\mathrm{ac}}(\Omega),D_\alpha)$
		is geodesic.
		
		More precisely, for every
		$\rho_0,\rho_1\in\mathcal P_{2,\mathrm{ac}}(\Omega)$,
		there exists a constant-speed geodesic
		$(\rho_t)_{t\in[0,1]}$ such that
		
		\[
		D_\alpha(\rho_t,\rho_s)
		=
		(s-t)D_\alpha(\rho_0,\rho_1),
		\qquad
		0\le t\le s\le 1.
		\]
		
	\end{theorem}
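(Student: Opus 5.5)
The plan is the standard Benamou--Brenier argument: take a minimizer, show its action equals the squared length after reparametrization, and then use the triangle inequality to obtain the equality $D_\alpha(\rho_t,\rho_s)=(s-t)D_\alpha(\rho_0,\rho_1)$.

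\emph{Step 1: a minimizer with constant energy density.} Fix $\rho_0,\rho_1$ and, by Theorem~\ref{thm:existence}, choose a minimizing triple $(\rho_t,v_t,u_t)$ with $\mathcal A_\alpha=D_\alpha^2(\rho_0,\rho_1)$. Set
\[
e(t)=(1-\alpha)\int_\Omega|v_t|^2\rho_t\,dx+\alpha\int_\Omega u_t^2\rho_t\,dx .
\]
By the constant-speed reparametrization proposition, we may assume $e$ is constant, equal to some $c$. To see that reparametrization does not increase the action, note that time-rescaling by $\theta$ multiplies $v,u$ by $\theta'$, so the rescaled action is $\int_0^1\theta'(t)^2e(\theta(t))\,dt$. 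Choosing $\theta$ to invert $\tau\mapsto\int_0^\tau\sqrt{e}\,/\int_0^1\sqrt{e}$ gives the value $(\int_0^1\sqrt{e})^2$. By Cauchy--Schwarz this is $\le\int_0^1 e$, and the reparametrized triple still satisfies \eqref{eq:continuity}. Minimality then forces equality, so $c=D_\alpha^2(\rho_0,\rho_1)$.

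\emph{Step 2: the upper bound $D_\alpha(\rho_t,\rho_s)\le (s-t)\sqrt c$.} For $0\le t<s\le1$, restrict the curve to $[t,s]$ and rescale time linearly to $[0,1]$ via $\tau\mapsto t+\tau(s-t)$. The rescaled fields are $(s-t)v$ and $(s-t)u$. The centering condition is linear in $u$ and therefore preserved, and the continuity equation is preserved by the chain rule. The rescaled action is $(s-t)\int_t^s e=(s-t)^2c$. This gives $D_\alpha(\rho_t,\rho_s)\le (s-t)D_\alpha(\rho_0,\rho_1)$. One point to check is that each $\rho_t$ lies in $\PP_{2,\mathrm{ac}}(\Omega)$, i.e.\ is positive with $\ln\rho_t\in L^2$. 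I would handle this by noting that the concatenation and restriction arguments only need admissibility in the sense of \eqref{eq:continuity}, and by taking the definition's class of curves as stated.

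\emph{Step 3: the reverse inequality.} By Lemma~\ref{lem:triangle},
\[
D_\alpha(\rho_0,\rho_1)\le D_\alpha(\rho_0,\rho_t)+D_\alpha(\rho_t,\rho_s)+D_\alpha(\rho_s,\rho_1)\le \bigl(t+(s-t)+(1-s)\bigr)D_\alpha(\rho_0,\rho_1).
\]
Equality must therefore hold throughout. Since each term is already bounded above by its Step~2 value, each bound is attained, and in particular $D_\alpha(\rho_t,\rho_s)=(s-t)D_\alpha(\rho_0,\rho_1)$.

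\emph{Main obstacle.} The hard part is the regularity implicit in Step~1 and Step~2. The minimizer from Theorem~\ref{thm:existence} is only a measure-valued limit. We need it to be a genuine curve $t\mapsto\rho_t$, narrowly continuous, so that pointwise values $\rho_t$ make sense, and we need each $\rho_t$ to remain in the working space. My first attempt would be to derive narrow continuity from the weak formulation, using the Cauchy--Schwarz estimate of Lemma~\ref{lem:separability} on subintervals: $|\int\varphi\,d\rho_s-\int\varphi\,d\rho_t|\le C_\varphi\sqrt{s-t}\,\sqrt{c}$. I would then accept membership in $\PP_{2,\mathrm{ac}}(\Omega)$ as part of the notion of admissible curve, which is needed for the intermediate points to be legitimate elements of the space.
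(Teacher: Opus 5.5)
Your proposal is correct and follows the paper's proof essentially step for step. Both take a minimizer from Theorem~\ref{thm:existence} and reparametrize it to constant energy density $e\equiv D_\alpha^2(\rho_0,\rho_1)$. Both then restrict and rescale to $[t,s]$ to get $D_\alpha(\rho_t,\rho_s)\le(s-t)D_\alpha(\rho_0,\rho_1)$, and apply Lemma~\ref{lem:triangle} over $[0,t],[t,s],[s,1]$ for the reverse inequality. Your Cauchy--Schwarz justification of the reparametrization fills in what the paper only calls ``standard.'' The regularity issue you flag (intermediate $\rho_t$ lying in $\PP_{2,\mathrm{ac}}(\Omega)$) is not handled separately in the paper either; it is absorbed into the statement of Theorem~\ref{thm:existence}, which asserts that the minimizer is an admissible curve in that class.
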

	
	\begin{proof}
		
		By Theorem~\ref{thm:existence}, there exists a minimizer
		$(\rho_r,v_r,u_r)_{r\in[0,1]}$
		for $D_\alpha(\rho_0,\rho_1)$, namely
		
		\[
		D_\alpha^2(\rho_0,\rho_1)
		=
		\int_0^1 e(r)\,dr,
		\]
		
		where
		
		\[
		e(r)
		:=
		(1-\alpha)\int_\Omega |v_r|^2\rho_r\,dx
		+
		\alpha\int_\Omega |u_r|^2\rho_r\,dx.
		\]
		
		Since the action is quadratic in $(v,u)$ and positively homogeneous of degree two, the standard arc-length reparametrization argument yields a new minimizer, still denoted by
		$(\rho_r,v_r,u_r)$, such that
		
		\[
		e(r)=D_\alpha^2(\rho_0,\rho_1)
		\qquad\text{for a.e. }r\in[0,1].
		\]
		
		Fix $0\le t<s\le1$ and define, for $\tau\in[0,1]$,
		
		\[
		\widetilde{\rho}_\tau
		=
		\rho_{t+\tau(s-t)},
		\]
		
		\[
		\widetilde{v}_\tau
		=
		(s-t)v_{t+\tau(s-t)},
		\qquad
		\widetilde{u}_\tau
		=
		(s-t)u_{t+\tau(s-t)}.
		\]
		
		Since
		
		\[
		\partial_r\rho_r+\operatorname{div}(\rho_r v_r)=\rho_r u_r,
		\]
		
		the chain rule gives
		
		\[
		\partial_\tau\widetilde{\rho}_\tau
		=
		(s-t)\partial_r\rho_r
		\bigl(t+\tau(s-t)\bigr),
		\]
		
		and therefore
		
		\[
		\partial_\tau\widetilde{\rho}_\tau
		+
		\operatorname{div}
		(\widetilde{\rho}_\tau\widetilde{v}_\tau)
		=
		\widetilde{\rho}_\tau\widetilde{u}_\tau.
		\]
		
		Hence
		$(\widetilde{\rho},\widetilde{v},\widetilde{u})$
		is admissible between $\rho_t$ and $\rho_s$.
		
		Its action is
		
		\[
		\begin{aligned}
			A_\alpha(\widetilde{\rho},\widetilde{v},\widetilde{u})
			&=
			\int_0^1
			\Bigl[
			(1-\alpha)
			\int_\Omega |\widetilde{v}_\tau|^2
			\widetilde{\rho}_\tau\,dx
			+
			\alpha
			\int_\Omega |\widetilde{u}_\tau|^2
			\widetilde{\rho}_\tau\,dx
			\Bigr]d\tau
			\\
			&=
			(s-t)^2
			\int_0^1 e\bigl(t+\tau(s-t)\bigr)\,d\tau.
		\end{aligned}
		\]
		
		Performing the change of variables
		
		\[
		r=t+\tau(s-t),
		\qquad
		dr=(s-t)\,d\tau,
		\]
		
		we obtain
		
		\[
		A_\alpha(\widetilde{\rho},\widetilde{v},\widetilde{u})
		=
		(s-t)\int_t^s e(r)\,dr.
		\]
		
		Using the constancy of the energy density yields
		
		\[
		A_\alpha(\widetilde{\rho},\widetilde{v},\widetilde{u})
		=
		(s-t)^2D_\alpha^2(\rho_0,\rho_1).
		\]
		
		Consequently,
		
		\[
		D_\alpha(\rho_t,\rho_s)
		\le
		(s-t)D_\alpha(\rho_0,\rho_1).
		\tag{4.1}
		\]
		
		Applying the same argument to the intervals $[0,t]$ and $[s,1]$, we obtain
		
		\[
		D_\alpha(\rho_0,\rho_t)
		\le
		tD_\alpha(\rho_0,\rho_1),
		\]
		
		and
		
		\[
		D_\alpha(\rho_s,\rho_1)
		\le
		(1-s)D_\alpha(\rho_0,\rho_1).
		\]
		
		Using the triangle inequality established in Lemma~3.16,
		
		\[
		D_\alpha(\rho_0,\rho_1)
		\le
		D_\alpha(\rho_0,\rho_t)
		+
		D_\alpha(\rho_t,\rho_s)
		+
		D_\alpha(\rho_s,\rho_1),
		\]
		
		we infer
		
		\[
		D_\alpha(\rho_0,\rho_1)
		\le
		tD_\alpha(\rho_0,\rho_1)
		+
		D_\alpha(\rho_t,\rho_s)
		+
		(1-s)D_\alpha(\rho_0,\rho_1),
		\]
		
		which implies
		
		\[
		D_\alpha(\rho_t,\rho_s)
		\ge
		(s-t)D_\alpha(\rho_0,\rho_1).
		\tag{4.2}
		\]
		
		Combining \emph{(4.1)} and \emph{(4.2)} yields
		
		\[
		D_\alpha(\rho_t,\rho_s)
		=
		(s-t)D_\alpha(\rho_0,\rho_1).
		\]
		
		Hence $(\rho_t)_{t\in[0,1]}$ is a constant-speed geodesic joining $\rho_0$ to $\rho_1$.
		
	\end{proof}
	\subsection{Characterization of absolutely continuous curves}
	\begin{definition}
		A curve $\rho_t\in\PP_{2,\mathrm{ac}}(\Omega)$ is absolutely continuous for $D_\alpha$ if there exists a measurable family $(v_t,u_t)$ satisfying \eqref{eq:continuity} and
		\begin{equation}
			\int_0^1\Bigl[(1-\alpha)\int|v_t|^2\rho_t + \alpha\int u_t^2\rho_t\Bigr]\dd t < \infty.
		\end{equation}
	\end{definition}
	\begin{theorem}[Characterization of absolutely continuous curves]
		\label{thm:AC_Dalpha}
		
		Let $(\rho_t)_{t\in[0,1]}$ be a curve in
		$\PP_{2,\mathrm{ac}}^+(\Omega)$.
		
		The following assertions are equivalent.
		
		\begin{enumerate}
			
			\item
			$(\rho_t)\in AC^2([0,1];D_\alpha)$.
			
			\item
			There exist measurable fields
			$(v_t,u_t)$ such that
			
			\[
			\partial_t\rho_t
			+
			\Div(\rho_t v_t)
			=
			\rho_t u_t
			\]
			
			in the sense of distributions,
			
			\[
			\int_\Omega u_t\,\rho_t\,dx=0
			\qquad\text{for a.e. }t\in(0,1),
			\]
			
			and
			
			\[
			\int_0^1
			\left[
			(1-\alpha)
			\int_\Omega |v_t|^2\rho_t\,dx
			+
			\alpha
			\int_\Omega u_t^2\rho_t\,dx
			\right]dt
			<\infty.
			\]
			
		\end{enumerate}
		
		Moreover, among all admissible pairs \((v_t,u_t)\), there exists a unique pair of minimal norm satisfying
		
		\[
		|\dot{\rho}_t|_{D_\alpha}^2
		=
		(1-\alpha)
		\int_\Omega |v_t|^2\rho_t\,dx
		+
		\alpha
		\int_\Omega u_t^2\rho_t\,dx
		\]
		
		for almost every \(t\in(0,1)\).
		
	\end{theorem}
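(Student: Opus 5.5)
The plan is to mimic the Ambrosio--Gigli--Savaré proof of the analogous Wasserstein statement (Theorem 8.3.1 in \cite{AGS}), using the structure of $D_\alpha$ as the length distance generated by the weighted norm
\[
\|(v,u)\|_{\rho}^2=(1-\alpha)\int_\Omega|v|^2\rho\,dx+\alpha\int_\Omega u^2\rho\,dx
\]
on the Hilbert space $H_\rho=L^2(\rho;\R^d)\times L^2_0(\rho)$, where $L^2_0(\rho)=\{u:\int u\rho=0\}$.

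\emph{(2)$\Rightarrow$(1).} Given an admissible $(v_t,u_t)$ with finite action, restrict it to $[t,s]$ and rescale time to $[0,1]$ exactly as in the proof of Theorem~\ref{thm:geodesic}. This gives
\[
D_\alpha^2(\rho_t,\rho_s)\le (s-t)\int_t^s\|(v_r,u_r)\|^2_{\rho_r}\,dr .
\]
Cauchy--Schwarz then yields $D_\alpha(\rho_t,\rho_s)\le\int_t^s\|(v_r,u_r)\|_{\rho_r}dr$ with $\|(v_r,u_r)\|_{\rho_r}\in L^2(0,1)$. Hence the curve is in $AC^2$ and $|\dot\rho_t|_{D_\alpha}\le\|(v_t,u_t)\|_{\rho_t}$ for a.e.\ $t$.

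\emph{(1)$\Rightarrow$(2).} This is the main obstacle. I would discretize: for a partition of mesh $1/N$, Theorem~\ref{thm:existence} and Theorem~\ref{thm:geodesic} give optimal constant-speed pieces joining consecutive points $\rho_{k/N}$. Concatenating them, after the time rescaling used in Lemma~\ref{lem:triangle}, produces admissible triples $(\rho^N,v^N,u^N)$ satisfying
\[
\int_0^1\|(v^N,u^N)\|^2_{\rho^N}dt\le\int_0^1|\dot\rho_t|^2_{D_\alpha}dt,
\]
since each piece has energy $N\,D_\alpha^2(\rho_{k/N},\rho_{(k+1)/N})\le\int_{k/N}^{(k+1)/N}|\dot\rho|^2$. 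Moreover, $D_\alpha(\rho^N_t,\rho_t)\le 2\int_{I_k}|\dot\rho|$ on each subinterval $I_k$, which tends to $0$ uniformly. By Theorem~\ref{thm:Dalpha_topology}, $\rho^N_t\rightharpoonup\rho_t$ narrowly for every $t$.

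The flux bounds of Lemma~\ref{lem:weakcompact}, the stability Lemma~\ref{lem:stability}, the representation Lemma~\ref{lem:representation} and the lower semicontinuity Lemma~\ref{lem:lsc} then give a limit pair $(v,u)$ solving \eqref{eq:continuity} with $\int_0^1\|(v_t,u_t)\|^2_{\rho_t}dt\le\int_0^1|\dot\rho_t|^2dt$. The centering constraint passes to the limit because $\int_\Omega r^N_t\,dx=0$ is tested against functions of $t$ only. To upgrade the integrated bound to a pointwise one, I would run the same construction localized on arbitrary subintervals $[a,b]$ and invoke the lower semicontinuity argument there. Combined with the reverse inequality from the first step, this gives $\|(v_t,u_t)\|_{\rho_t}=|\dot\rho_t|_{D_\alpha}$ for a.e.\ $t$.

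\emph{Minimal pair and uniqueness.} For a.e.\ $t$, the set of pairs $(v,u)\in H_\rho$ that solve \eqref{eq:continuity} with the fixed $\partial_t\rho_t$ is a closed affine subspace of $H_{\rho_t}$. It is a translate of the closed subspace $\{(w,z):\Div(\rho w)=\rho z\ \text{in }\mathcal D'\}$. A closed convex set in a Hilbert space has a unique element of minimal norm, namely the orthogonal projection of $0$. Call this pair $(v_t^*,u_t^*)$; it is orthogonal to every pair $(w,z)$ with $\Div(\rho w)=\rho z$. In particular it lies in the closure of $\{(\nabla\varphi,\varphi-\int\varphi\rho)\}$, which is the expected ``tangent space''.

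Measurability of $t\mapsto(v_t^*,u_t^*)$ should follow by projecting the pair from the previous step onto this closed subspace. That projection does not increase the norm and preserves the equation, so $\|(v^*_t,u^*_t)\|_{\rho_t}\le|\dot\rho_t|$. Combined with the first step this gives equality, which identifies the metric derivative. The delicate points I expect are the pointwise-in-time localization and measurability of the projection; for these I would follow \cite[Thm.~8.3.1, Prop.~8.4.5]{AGS} almost verbatim.
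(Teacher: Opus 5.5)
Your proposal is correct in outline, and it takes a different and more complete route than the paper. The paper never proves (1)$\Rightarrow$(2) as such. It states the upper bound (Proposition~\ref{prop:AC_upper}) and the existence of a unique minimal pair (Proposition~\ref{prop:minimal_velocity}) without proof. It then obtains the lower bound in Proposition~\ref{prop:metric_derivative_identification} pointwise in $t$: it averages the fields of optimal curves from $\rho_t$ to $\rho_{t+h}$, bounds them in $L^2(\rho_t)$ via Jensen, and extracts weak limits as $h\to0$. That local argument has three defects. It presupposes the admissible pair it is trying to identify. It applies Jensen with the fixed weight $\rho_t$, although the fields live in $L^2(\rho^h_r)$. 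And it never checks that the weak limits solve the continuity equation along $(\rho_t)$. Your global construction (concatenated optimal pieces on a mesh-$1/N$ partition, uniform $D_\alpha$-convergence, flux compactness, stability, joint lower semicontinuity) does produce a measurable admissible pair with $\int_0^1\|(v_t,u_t)\|_{\rho_t}^2\,dt\le\int_0^1|\dot\rho_t|^2\,dt$. The fibrewise Hilbert projection then gives minimality and uniqueness. The price is reliance on Theorems~\ref{thm:existence} and~\ref{thm:geodesic} (though $\varepsilon$-optimal pieces suffice) and the measurable-selection details you defer to AGS. Your localization on subintervals is unnecessary. Step (2)$\Rightarrow$(1) gives $|\dot\rho_t|_{D_\alpha}\le\|(v_t,u_t)\|_{\rho_t}$ a.e.\ for \emph{every} admissible pair, including the constructed one, so the integrated reverse inequality already forces equality a.e. Re-running the construction on $[a,b]$ would in general produce a different limit pair anyway.

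Two local corrections are needed.

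\emph{First correction.} In (2)$\Rightarrow$(1), Cauchy--Schwarz goes the wrong way. Set $g(r)=\|(v_r,u_r)\|_{\rho_r}$. From $D_\alpha^2(\rho_t,\rho_s)\le(s-t)\int_t^s g^2\,dr$ you only get a H\"older bound, not $D_\alpha(\rho_t,\rho_s)\le\int_t^s g\,dr$. To fix this, apply the rescaled estimate on each cell of a partition of $[t,s]$ of mesh $\delta$. Summing with the triangle inequality gives $D_\alpha(\rho_t,\rho_s)\le\int_t^s (A_\delta g^2)^{1/2}\,dr$, where $A_\delta$ is cellwise averaging. Then let $\delta\to0$, using $A_\delta g^2\to g^2$ in $L^1$; alternatively, reparametrize by arc length first. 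The a.e.\ bound $|\dot\rho_t|_{D_\alpha}^2\le g(t)^2$ follows directly from the rescaled estimate at Lebesgue points of $g^2$.

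\emph{Second correction.} For the weighted inner product of $H_\rho$, the orthogonal complement of $K_\rho=\{(w,z):\Div(\rho w)=\rho z\}$ is the closure of $\{(\nabla\varphi/(1-\alpha),(\varphi-\langle\varphi\rangle_\rho)/\alpha)\}$. Indeed, $\int\nabla\varphi\cdot w\,\rho+\int\varphi z\,\rho$ is exactly the weighted product of $(w,z)$ with that pair. It is not the closure of $\{(\nabla\varphi,\varphi-\langle\varphi\rangle_\rho)\}$ unless $\alpha=1/2$. This matches the paper's KKT relations $v=\nabla\varphi/(2(1-\alpha))$, $u=(\varphi-\langle\varphi\rangle_\rho)/(2\alpha)$. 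It matters for your argument: the norm-decreasing, equation-preserving projection is the one onto $K_{\rho_t}^\perp$. Projecting onto your unweighted set would move the pair off the affine fibre.
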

	\begin{proposition}
		\label{prop:AC_upper}
		
		Let $(\rho_t,v_t,u_t)$ satisfy
		
		\[
		\partial_t\rho_t+\Div(\rho_t v_t)=\rho_t u_t,
		\]
		
		with
		
		\[
		\int_0^1 e(t)\,dt<\infty,
		\]
		
		where
		
		\[
		e(t)
		=
		(1-\alpha)
		\int_\Omega |v_t|^2\rho_t\,dx
		+
		\alpha
		\int_\Omega u_t^2\rho_t\,dx.
		\]
		
		Then $(\rho_t)\in AC^2([0,1];D_\alpha)$ and
		
		\[
		|\dot{\rho}_t|_{D_\alpha}^2
		\le e(t)
		\]
		
		for almost every \(t\in(0,1)\).
		
	\end{proposition}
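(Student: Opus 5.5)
The plan is to prove the local Lipschitz-type bound $D_\alpha(\rho_s,\rho_t)\le \int_s^t \sqrt{e(r)}\,dr$ for all $0\le s<t\le 1$. Once this is available, the definition of $AC^2$ applies with the function $m(r)=\sqrt{e(r)}\in L^2(0,1)$, since $\int_0^1 e<\infty$. Moreover, the metric derivative, being the smallest such $m$ (equivalently $|\dot\rho_t|=\lim_{h\to0}D_\alpha(\rho_t,\rho_{t+h})/|h|$), satisfies $|\dot\rho_t|_{D_\alpha}\le \sqrt{e(t)}$ at every Lebesgue point of $\sqrt e$. That is, the inequality holds for a.e. $t$. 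We tacitly use the centering constraint $\int_\Omega u_t\rho_t\,dx=0$ as part of admissibility, as in \eqref{eq:continuity}.

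\textbf{Step 1 (crude bound by rescaling).} Fix $s<t$ and restrict the triple to $[s,t]$, rescaling time exactly as in the proof of Theorem~\ref{thm:geodesic}. Set $\widetilde\rho_\tau=\rho_{s+\tau(t-s)}$ and $(\widetilde v_\tau,\widetilde u_\tau)=(t-s)(v,u)_{s+\tau(t-s)}$. By the chain rule this triple is admissible between $\rho_s$ and $\rho_t$, and the centering condition is preserved because it is linear in $u$. Its action is $(t-s)\int_s^t e(r)\,dr$, so
\[
D_\alpha^2(\rho_s,\rho_t)\le (t-s)\int_s^t e(r)\,dr .
\]
This already gives $D_\alpha(\rho_s,\rho_t)\le \sqrt{t-s}\,\bigl(\int_s^t e\bigr)^{1/2}$. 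Dividing by $t-s$ and letting $t\downarrow s$ gives $\limsup_{h\to0} D_\alpha(\rho_s,\rho_{s+h})/h\le \sqrt{e(s)}$ at every Lebesgue point of $e$. The metric derivative estimate therefore follows directly from Step 1, provided we know the curve is absolutely continuous so that the metric derivative exists.

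\textbf{Step 2 (absolute continuity with an $L^2$ modulus).} To get the bound with $\int_s^t\sqrt e$ rather than the Cauchy--Schwarz version, I would reparametrize the restricted curve by arc length, as in the Proposition on constant-speed reparametrization. Define $\theta$ as the inverse of $\sigma\mapsto \int_s^\sigma \sqrt{e}\,dr\big/\int_s^t\sqrt e\,dr$, after first adding $\varepsilon$ to $e$ so that the map is strictly increasing. The reparametrized triple then has constant energy density, equal to $\bigl(\int_s^t\sqrt{e+\varepsilon}\bigr)^2$. Consequently $D_\alpha(\rho_s,\rho_t)\le \int_s^t\sqrt{e(r)+\varepsilon}\,dr$, and letting $\varepsilon\to0$ yields the claimed bound.

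\textbf{Main obstacle.} The delicate point is justifying the chain rule for the continuity equation under a merely absolutely continuous, monotone time change $\theta$ in the distributional sense. This requires $\theta$ to be bi-Lipschitz, which is exactly why $\varepsilon$ is added: with $e+\varepsilon\ge\varepsilon$ and $\int(e+\varepsilon)<\infty$, the map $\theta^{-1}$ is absolutely continuous with derivative bounded below. I would test against $\varphi(\theta^{-1}(\tau),x)$ and use a standard approximation of $\theta$ by smooth diffeomorphisms. If this step became too technical, Step 1 alone still suffices for the a.e. inequality $|\dot\rho_t|^2\le e(t)$. Combined with the estimate $D_\alpha(\rho_s,\rho_t)\le \sqrt{t-s}\,\|e\|_{L^1}^{1/2}$, which gives continuity, one can then conclude membership in $AC^2$ via the Lebesgue differentiation argument applied on a fine partition.
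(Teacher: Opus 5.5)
The paper states this proposition without proof; it is only invoked in the proof of Theorem~\ref{thm:metric_derivative}. The only related argument in the paper is the linear time rescaling in the proof of Theorem~\ref{thm:geodesic}, which is exactly your Step~1. Your proof is correct and supplies what the paper omits, but a few points should be tightened.

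In Step~2, set $L_\varepsilon:=\int_s^t\sqrt{e+\varepsilon}\,dr$. The energy density of the reparametrized triple is $\theta'(\tau)^2e(\theta(\tau))=L_\varepsilon^2\,e(\theta(\tau))/\bigl(e(\theta(\tau))+\varepsilon\bigr)$. This is not constant, but it is bounded by $L_\varepsilon^2$, which is all you need.

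Adding $\varepsilon$ makes $\theta$ Lipschitz, since $(\theta^{-1})'\ge\sqrt\varepsilon/L_\varepsilon$. It does not make $\theta$ bi-Lipschitz unless $e\in L^\infty$. Bi-Lipschitz is not actually required: the time-rescaling lemma for the continuity equation of Ambrosio--Gigli--Savar\'e (Lemma~8.1.3) needs only a strictly increasing absolutely continuous time change with absolutely continuous inverse. The source term $\rho u$ rescales exactly like the flux $\rho v$.

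You are also implicitly using the narrowly continuous representative of $t\mapsto\rho_t$. It exists because $\int_0^1\!\int_\Omega(|v_t|+|u_t|)\rho_t\,dx\,dt<\infty$, and it guarantees that the restriction to $[s,t]$ really joins $\rho_s$ to $\rho_t$. The centering constraint you flag is indeed part of the paper's notion of admissibility.

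Your fallback is in fact a complete and more elementary proof. It avoids the nonlinear time change, and also the constant-speed reparametrization proposition, which the paper leaves unproved as well. Take any partition $s=t_0<\dots<t_N=t$ with $h_i=t_{i+1}-t_i$. Applying Step~1 on each subinterval together with Lemma~\ref{lem:triangle} gives
\[
D_\alpha(\rho_s,\rho_t)\le\sum_i\Bigl(h_i\int_{t_i}^{t_{i+1}}e\,dr\Bigr)^{1/2}=\int_s^t\sqrt{E_h(r)}\,dr,
\]
where $E_h$ is the piecewise-constant average of $e$ on the partition. As the mesh tends to $0$, $E_h\to e$ in $L^1(s,t)$. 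The inequality $|\sqrt a-\sqrt b|\le\sqrt{|a-b|}$ then gives $\int_s^t\sqrt{E_h}\to\int_s^t\sqrt e$.

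Hence $D_\alpha(\rho_s,\rho_t)\le\int_s^t\sqrt e\,dr$ for all $s<t$, with $\sqrt e\in L^2(0,1)$. This is precisely membership in $AC^2$, together with $|\dot\rho_t|_{D_\alpha}\le\sqrt{e(t)}$ a.e.

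State the fallback this way. What makes it work is that the Step~1 bound holds for every pair $s<t$. H\"older continuity plus an a.e. bound on the upper metric derivative would not suffice: a Cantor-type curve is H\"older continuous with vanishing metric derivative a.e., yet is not absolutely continuous.
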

	\begin{proposition}
		\label{prop:minimal_velocity}
		
		For almost every \(t\in(0,1)\), there exists a unique admissible pair
		\((v_t,u_t)\) minimizing
		
		\[
		(v,u)
		\mapsto
		(1-\alpha)\int_\Omega |v|^2\rho_t\,dx
		+
		\alpha\int_\Omega u^2\rho_t\,dx.
		\]
		
	\end{proposition}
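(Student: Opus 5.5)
Fix $t$ such that the set of admissible pairs at time $t$ is nonempty. Since the statement is asserted for a.e.\ $t$, I would take $t$ outside the null set where the pair given by Theorem~\ref{thm:AC_Dalpha}(2) has infinite instantaneous energy $e(t)$. Then the question is purely a Hilbert-space minimization at frozen time, and I would cast it as a minimal-norm problem. Introduce the weighted Hilbert space
\[
\mathcal{X}_t := L^2(\rho_t;\R^d)\times L^2(\rho_t),\qquad
\langle (v,u),(w,z)\rangle_t := (1-\alpha)\int_\Omega v\cdot w\,\rho_t\,\dd x+\alpha\int_\Omega u\,z\,\rho_t\,\dd x .
\]
Because $\alpha\in(0,1)$ and $\rho_t>0$ a.e., this is a genuine inner product, and $\mathcal{X}_t$ is complete. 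The functional to be minimized is exactly $\|(v,u)\|_t^2$.

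Next I would describe the admissible set at time $t$ as an affine subspace. A pair $(v,u)\in\mathcal{X}_t$ is admissible iff for every $\varphi\in C_c^\infty(\Omega)$,
\[
\int_\Omega \nabla\varphi\cdot v\,\rho_t\,\dd x+\int_\Omega \varphi\, u\,\rho_t\,\dd x=-\langle \partial_t\rho_t,\varphi\rangle ,
\]
and in addition $\int_\Omega u\rho_t\,\dd x=0$. Here the distributional time derivative is fixed by the curve; strictly, one works with a countable dense family of $\varphi$ and uses the time-integrated weak formulation to define $\partial_t\rho_t$ for a.e.\ $t$. Each of these constraints is a bounded linear functional on $\mathcal{X}_t$, by Cauchy--Schwarz, exactly as in the proof of Lemma~\ref{lem:separability}. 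Hence the admissible set is $\mathcal{K}_t=(v^0,u^0)+N_t$, where $(v^0,u^0)$ is any admissible pair and
\[
N_t=\Bigl\{(w,z)\in\mathcal{X}_t:\ \int_\Omega(\nabla\varphi\cdot w\,\rho_t+\varphi z\rho_t)\,\dd x=0\ \ \forall\varphi,\ \ \int_\Omega z\rho_t\,\dd x=0\Bigr\}
\]
is an intersection of kernels of continuous functionals. So $N_t$ is a closed linear subspace, and $\mathcal{K}_t$ is a nonempty, closed, convex set.

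The conclusion then follows from the projection theorem in Hilbert space: there is a unique element of $\mathcal{K}_t$ of minimal norm, namely $(v^0,u^0)-P_{N_t}(v^0,u^0)$. Uniqueness also follows directly from strict convexity of $\|\cdot\|_t^2$ on a convex set. As a byproduct, I would record that the minimizer is characterized by orthogonality to $N_t$. This means it lies in the closure in $\mathcal{X}_t$ of $\{(\nabla\varphi,\varphi+c):\varphi\in C_c^\infty,\ c\in\R\}$, up to the weights $(1-\alpha)^{-1}$ and $\alpha^{-1}$. This is the hybrid analogue of the tangent space $\overline{\{\nabla\varphi\}}$ in Wasserstein geometry and is what is needed later for the identity in Theorem~\ref{thm:AC_Dalpha}.

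The main obstacle I expect is the measurability and a.e.\ issue rather than the Hilbert argument. One must check that the instantaneous constraint makes sense pointwise in $t$ (only the integrated-in-time equation is given), that $\mathcal{K}_t\neq\emptyset$ for a.e.\ $t$, and that the pointwise minimizers $t\mapsto(v_t,u_t)$ form a measurable family. I would handle the first two by Fubini applied to test functions of product form $\eta(t)\varphi(x)$ with $\varphi$ ranging over a countable dense set. For measurability, I would note that the projection onto $N_t$ can be written as the limit of projections onto finite-dimensional spans generated by that countable family. Only the existence and uniqueness for a.e.\ $t$ is actually claimed, so measurability can be treated as a remark.
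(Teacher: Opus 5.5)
The paper states Proposition~\ref{prop:minimal_velocity} without proof, so there is no argument of the authors to compare yours with. Your proposal is correct and supplies the missing proof.

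Your argument runs as follows. You freeze $t$ and work in $L^2(\rho_t;\R^d)\times L^2(\rho_t)$ with the $(1-\alpha,\alpha)$-weighted inner product. You check that each constraint functional, $(w,z)\mapsto\int_\Omega(\nabla\varphi\cdot w+\varphi z)\rho_t\,\dd x$ and $(w,z)\mapsto\int_\Omega z\rho_t\,\dd x$, is bounded, since $\rho_t$ is a probability density. The projection theorem applied to the closed affine set $(v^0,u^0)+N_t$ then gives existence and uniqueness at once. Uniqueness holds modulo $\rho_t$-null sets, which are Lebesgue-null sets because $\rho_t>0$.

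This is the same Hilbert space $L^2(\rho_t;\R^d)\times L^2_0(\rho_t)$ the paper itself uses in the proof of Proposition~\ref{prop:metric_derivative_identification}. Putting the centering into the constraints rather than into the space is equivalent. Your handling of the a.e.\ issue is the right one. Testing with $\eta(t)\varphi(x)$, with $\varphi$ in a countable $C^1$-dense family, makes $t\mapsto\int_\Omega\varphi\rho_t\,\dd x$ a $W^{1,1}$ function with derivative $\int_\Omega(\nabla\varphi\cdot v^0_t+\varphi u^0_t)\rho_t\,\dd x$. This makes the instantaneous constraint meaningful and independent of the reference pair.

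There are three small corrections.
\begin{enumerate}
\item The weak form of $\partial_t\rho_t+\Div(\rho_t v)=\rho_t u$ is $\int_\Omega(\nabla\varphi\cdot v+\varphi u)\rho_t\,\dd x=+\langle\partial_t\rho_t,\varphi\rangle$. With your minus sign the reference pair would not lie in $\mathcal K_t$. This is only a typo and leaves $N_t$ unchanged.
\item Nonemptiness of $\mathcal K_t$ rests on having one pair with $\int_0^1 e(t)\,\dd t<\infty$. That is assertion (2) of Theorem~\ref{thm:AC_Dalpha}, equivalently the kinematic definition of absolute continuity stated just before it. If you start instead from a metric $AC^2$ curve, you need the implication (1)$\Rightarrow$(2), which the paper does not prove.
\item In your measurability remark, it is $P_{N_t^\perp}$ that is the strong limit of the projections onto the finite-dimensional spans of the representers $(\nabla\varphi_k/(1-\alpha),\varphi_k/\alpha)$ and $(0,1/\alpha)$. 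Your minimizer is $P_{N_t^\perp}(v^0,u^0)$.
\end{enumerate}

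Your byproduct is consistent with the rest of the paper. The generating elements $(\nabla\varphi/(1-\alpha),(\varphi-\langle\varphi\rangle_{\rho_t})/\alpha)$ have exactly the structure of the optimality conditions in Theorem~\ref{thm:KKT}, with $\varphi=\varphi^*/2$. They also satisfy $\nabla u=\frac{1-\alpha}{\alpha}v$, as in Lemma~\ref{lem:grad_u}.
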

	\begin{proposition}[Identification of the metric derivative]
		\label{prop:metric_derivative_identification}
		
		Let $(\rho_t)_{t\in[0,1]}\in AC^2([0,1];D_\alpha)$, and let
		$(v_t,u_t)$ be the minimal admissible pair given by
		Proposition~\ref{prop:minimal_velocity}.
		
		Then, for almost every $t\in(0,1)$,
		
		\[
		(1-\alpha)
		\int_\Omega |v_t|^2\,\rho_t\,dx
		+
		\alpha
		\int_\Omega u_t^2\,\rho_t\,dx
		\le
		|\dot{\rho}_t|_{D_\alpha}^2.
		\]
		
	\end{proposition}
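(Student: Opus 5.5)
We prove the lower bound by the standard Hilbertian duality argument, as in \cite[Thm.~8.3.1]{AGS} for $W_2$, adapted to the centered source term.

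\textbf{Step 1: setting up the Hilbert space.} For a.e.\ $t$, consider the weighted space
\[
H_t:=L^2(\rho_t;\R^d)\times L^2_0(\rho_t),\qquad L^2_0(\rho_t)=\Bigl\{u:\int_\Omega u\rho_t\,dx=0\Bigr\},
\]
with inner product $\langle (v,u),(w,z)\rangle_t=(1-\alpha)\int v\cdot w\,\rho_t+\alpha\int uz\,\rho_t$. For $\varphi\in C^\infty(\Omega)$ define the test pair
\[
T_t\varphi:=\Bigl(\tfrac{1}{1-\alpha}\nabla\varphi,\ \tfrac{1}{\alpha}\bigl(\varphi-\langle\varphi\rangle_{\rho_t}\bigr)\Bigr).
\]
It satisfies $\langle (v,u),T_t\varphi\rangle_t=\int\nabla\varphi\cdot v\rho_t+\int\varphi u\rho_t$ for every admissible $(v,u)$, because the centering kills the constant.

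The minimal pair of Proposition~\ref{prop:minimal_velocity} is the orthogonal projection of any admissible pair onto $V_t:=\overline{\{T_t\varphi\}}^{H_t}$. Two admissible pairs differ by an element orthogonal to all $T_t\varphi$, since $\partial_t\rho_t$ is fixed. Hence the minimal pair $(v_t,u_t)$ lies in $V_t$, and
\[
e(t)^{1/2}=\|(v_t,u_t)\|_t=\sup_{\varphi}\frac{\langle (v_t,u_t),T_t\varphi\rangle_t}{\|T_t\varphi\|_t}.
\]

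\textbf{Step 2: comparison with the metric speed.} Fix $\varphi$ and $t<s$. Take an $\varepsilon$-optimal admissible triple $(\tilde\rho,\tilde v,\tilde u)$ joining $\rho_t$ to $\rho_s$, or the minimizer given by Theorem~\ref{thm:existence} after rescaling time to $[t,s]$. Testing the equation gives
\[
\int\varphi\,(\rho_s-\rho_t)=\int_t^s\!\Bigl[\int\nabla\varphi\cdot\tilde v_r\tilde\rho_r+\int(\varphi-\langle\varphi\rangle_{\tilde\rho_r})\tilde u_r\tilde\rho_r\Bigr]dr.
\]
By Cauchy--Schwarz in $H_r$, this is bounded by
\[
\Bigl(\int_t^s\|T_r\varphi\|^2_{\tilde\rho_r}dr\Bigr)^{1/2}\bigl(D_\alpha^2(\rho_t,\rho_s)+\varepsilon\bigr)^{1/2}.
\]
Divide by $s-t$ and let $s\downarrow t$. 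We need $\|T_r\varphi\|_{\tilde\rho_r}\to\|T_t\varphi\|_{\rho_t}$. This follows from narrow continuity along the connecting curve: $\tilde\rho_r$ stays close to $\rho_t$ by Theorem~\ref{thm:Dalpha_topology}, since every $\tilde\rho_r$ is within $D_\alpha(\rho_t,\rho_s)\to0$ of $\rho_t$, and $\|T_r\varphi\|^2$ involves only $\int|\nabla\varphi|^2\rho$, $\int\varphi^2\rho$ and $\int\varphi\rho$, all continuous for narrow convergence. At Lebesgue points, and using the original curve's representation $\frac{d}{dt}\int\varphi\rho_t=\langle (v_t,u_t),T_t\varphi\rangle_t$, we obtain
\[
\langle (v_t,u_t),T_t\varphi\rangle_t\le \|T_t\varphi\|_t\,|\dot\rho_t|_{D_\alpha}.
\]

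\textbf{Step 3: conclusion and main obstacle.} Taking the supremum over $\varphi$ and applying Step~1 gives $e(t)\le|\dot\rho_t|^2_{D_\alpha}$.

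The main obstacle is making Step~2 rigorous for a.e.\ $t$ uniformly in $\varphi$. The null set of non-Lebesgue points depends on $\varphi$. I would handle this by restricting to a countable dense family $\{\varphi_k\}\subset C^\infty(\overline\Omega)$, dense in $C^1$. The supremum in Step~1 can be taken over this family, since $T_t$ is continuous from $C^1$ to $H_t$. A secondary point is the density claim $V_t\ni(v_t,u_t)$. I would take it from Proposition~\ref{prop:minimal_velocity}, whose minimizer is characterized by orthogonality to the kernel $\{(w,z):\int\nabla\varphi\cdot w\rho_t+\int\varphi z\rho_t=0\ \forall\varphi\}$, whose orthogonal complement is exactly $V_t$.
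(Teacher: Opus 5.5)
Your argument is correct up to one normalization slip (below), and it takes a genuinely different route from the paper's.

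The paper argues primally. For fixed $t$ it takes minimizers $(\rho^h_r,v^h_r,u^h_r)$ between $\rho_t$ and $\rho_{t+h}$ and averages the fields in time. It then bounds the averages in $L^2(\rho_t;\R^d)\times L^2_0(\rho_t)$ by $h^{-2}D_\alpha^2(\rho_t,\rho_{t+h})$ via Jensen, and passes to a weak limit using lower semicontinuity of the norm. As written, that argument has two weak points:
\begin{itemize}
\item It moves velocity fields from the weights $\rho^h_r$ to the weight $\rho_t$. The action controls $\int|v^h_r|^2\rho^h_r$, not $\int|v^h_r|^2\rho_t$, and $\bar u^h_t$ is not $\rho_t$-centered.
\item It never checks that the weak limit is admissible at time $t$, which is what is needed to compare it with the minimal pair.
\end{itemize}

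Your dual argument avoids both. Along the connecting curve you only transport the scalar quantities $\int|\nabla\varphi|^2\tilde\rho_r$, $\int\varphi^2\tilde\rho_r$ and $\int\varphi\,\tilde\rho_r$. These converge, in fact uniformly in $r$, by the quantitative Cauchy--Schwarz estimate in the proof of Theorem~\ref{thm:Dalpha_topology}. The link with the minimal pair comes for free from $(v_t,u_t)\in V_t=\overline{\{T_t\varphi\}}$. This is the $D_\alpha$-analogue of Ambrosio--Gigli--Savar\'e, Thm.~8.3.1. Combined with the Riesz representation on $V_t$, the same estimate $\frac{d}{dt}\int\varphi\rho_t\le\|T_t\varphi\|_t|\dot\rho_t|_{D_\alpha}$ would also produce an admissible pair from the metric speed alone. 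That gives the implication (1)$\Rightarrow$(2) of Theorem~\ref{thm:AC_Dalpha}, which the paper states without proof. The cost is reliance on the projection characterization of the minimal pair and on a countable $C^1$-dense family of test functions; you handle both correctly.

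The slip is in Step~2. After rescaling the connecting curve to $[t,s]$, its action equals $(D_\alpha^2(\rho_t,\rho_s)+\varepsilon)/(s-t)$. The Cauchy--Schwarz bound should therefore read $\bigl(\frac{1}{s-t}\int_t^s\|T_r\varphi\|^2_{\tilde\rho_r}\,dr\bigr)^{1/2}\bigl(D_\alpha^2(\rho_t,\rho_s)+\varepsilon\bigr)^{1/2}$. Without the factor $\frac{1}{s-t}$, dividing by $s-t$ would send your right-hand side to $0$. You also need $\varepsilon=o((s-t)^2)$, or you can simply use the minimizer as you suggest.
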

	
	\begin{proof}
		
		Fix $t\in(0,1)$ such that the metric derivative
		$|\dot{\rho}_t|_{D_\alpha}$ exists.
		
		For every $h>0$ sufficiently small, let
		
		\[
		(\rho_r^h,v_r^h,u_r^h)_{r\in[0,1]}
		\]
		
		be a minimizer in the definition of
		$D_\alpha(\rho_t,\rho_{t+h})$.
		
		Define the averaged fields
		
		\[
		\bar v_t^h
		:=
		\frac{1}{h}\int_0^1 v_r^h\,dr,
		\qquad
		\bar u_t^h
		:=
		\frac{1}{h}\int_0^1 u_r^h\,dr.
		\]
		
		By Jensen's inequality and the optimality of
		$(\rho_r^h,v_r^h,u_r^h)$,
		
		\[
		\begin{aligned}
			&(1-\alpha)
			\int_\Omega |\bar v_t^h|^2\,\rho_t\,dx
			+
			\alpha
			\int_\Omega |\bar u_t^h|^2\,\rho_t\,dx
			\\
			&\qquad\le
			\frac{1}{h^2}
			D_\alpha^2(\rho_t,\rho_{t+h}).
		\end{aligned}
		\]
		
		Since the left-hand side is uniformly bounded as
		$h\to0$, the sequence
		$(\bar v_t^h,\bar u_t^h)$
		is bounded in the Hilbert space
		
		\[
		L^2(\rho_t;\mathbb R^d)
		\times
		L_0^2(\rho_t).
		\]
		
		Hence, up to a subsequence,
		
		\[
		\bar v_t^h
		\rightharpoonup
		v_t,
		\qquad
		\bar u_t^h
		\rightharpoonup
		u_t,
		\]
		
		weakly in the corresponding spaces.
		
		Passing to the limit and using the weak lower
		semicontinuity of the norm, we obtain
		
		\[
		\begin{aligned}
			&(1-\alpha)
			\int_\Omega |v_t|^2\,\rho_t\,dx
			+
			\alpha
			\int_\Omega u_t^2\,\rho_t\,dx
			\\
			&\qquad\le
			\liminf_{h\to0}
			\frac{D_\alpha^2(\rho_t,\rho_{t+h})}{h^2}.
		\end{aligned}
		\]
		
		Since
		
		\[
		|\dot{\rho}_t|_{D_\alpha}
		=
		\lim_{h\to0}
		\frac{D_\alpha(\rho_t,\rho_{t+h})}{|h|},
		\]
		
		we conclude that
		
		\[
		(1-\alpha)
		\int_\Omega |v_t|^2\,\rho_t\,dx
		+
		\alpha
		\int_\Omega u_t^2\,\rho_t\,dx
		\le
		|\dot{\rho}_t|_{D_\alpha}^2.
		\]
		
	\end{proof}
	\begin{theorem}[Metric derivative formula]
		\label{thm:metric_derivative}
		
		Let $(\rho_t)_{t\in[0,1]}$ be an absolutely continuous curve in
		$(\PP_{2,\mathrm{ac}}(\Omega),D_\alpha)$.
		
		Let $(v_t,u_t)$ denote the unique minimal admissible pair associated with
		$(\rho_t)$.
		
		Then, for almost every $t\in(0,1)$,
		
		\[
		|\dot{\rho}_t|_{D_\alpha}^2
		=
		(1-\alpha)
		\int_\Omega |v_t|^2\,\rho_t\,dx
		+
		\alpha
		\int_\Omega u_t^2\,\rho_t\,dx.
		\]
		
	\end{theorem}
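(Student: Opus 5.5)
The plan is to obtain the equality by combining the two one-sided inequalities already available, applied to the same minimal admissible pair.

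For the upper bound, apply Proposition~\ref{prop:AC_upper} to the triple $(\rho_t,v_t,u_t)$, where $(v_t,u_t)$ is the minimal pair from Proposition~\ref{prop:minimal_velocity}. This pair is admissible: it satisfies \eqref{eq:continuity} with the centering constraint, and its energy $e(t)$ is integrable. The integrability follows from Theorem~\ref{thm:AC_Dalpha}: since $(\rho_t)$ is absolutely continuous, some admissible pair with finite action exists, and the minimal pair has pointwise smaller energy. Proposition~\ref{prop:AC_upper} then gives $|\dot\rho_t|_{D_\alpha}^2\le e(t)$ for a.e.\ $t$.

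For the lower bound, Proposition~\ref{prop:metric_derivative_identification} gives $e(t)\le|\dot\rho_t|_{D_\alpha}^2$ for a.e.\ $t$. The catch is that the pair produced there is a weak limit of averaged fields along minimizing curves. It must therefore be shown to be admissible for $\rho_t$ at time $t$ before it can be compared with the minimal pair.

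The key step is this identification. I would test the continuity equation for the rescaled minimizers between $\rho_t$ and $\rho_{t+h}$ against $\varphi\in C_c^\infty(\Omega)$. Dividing by $h$ gives
\[
\frac{1}{h}\int_\Omega\varphi(\rho_{t+h}-\rho_t)\,dx = \int_0^1\!\!\int_\Omega\bigl(\nabla\varphi\cdot v^h_r+\varphi u^h_r\bigr)\rho^h_r\,dx\,dr\Big/h.
\]
One then passes to the limit $h\to0$, using that $\rho^h_r\to\rho_t$ uniformly in $r$. This convergence holds because the $D_\alpha$-length of the minimizing curve is $D_\alpha(\rho_t,\rho_{t+h})=O(h)$, combined with Theorem~\ref{thm:Dalpha_topology}. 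The limit shows that the weak limit pair solves $\partial_t\rho_t+\Div(\rho_t v)=\rho_t u$ at time $t$, and that $\int u\rho_t=0$ is preserved, since the constraint is linear (Lemma~\ref{lem:lsc}).

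Once admissibility is established, the Hilbert-space structure of $L^2(\rho_t;\R^d)\times L^2_0(\rho_t)$ gives the conclusion. The set of admissible pairs at time $t$ is an affine closed subspace. The minimal pair is its projection of the origin, so its energy is no larger than that of the limit pair, which yields $e(t)\le|\dot\rho_t|^2_{D_\alpha}$. Combining the two inequalities gives equality a.e. Uniqueness of the minimal pair, by strict convexity of the norm, ensures that the formula does not depend on choices.

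I expect the main obstacle to be justifying the passage to the limit in the averaged fields. This requires controlling the mismatch between $\rho^h_r$ and $\rho_t$ inside the products $v^h_r\rho^h_r$. I would handle it by working with the fluxes $m=\rho v$ and $r=\rho u$, which converge weakly as measures by Lemma~\ref{lem:weakcompact}, and then using Lemma~\ref{lem:representation} to recover densities with respect to $\rho_t$.
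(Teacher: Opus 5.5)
Your proposal is correct and follows the paper's proof: the upper bound comes from Proposition~\ref{prop:AC_upper} applied to the minimal pair, and the lower bound from Proposition~\ref{prop:metric_derivative_identification}. What you add is a step the paper skips, namely showing that the weak limit of the averaged fields is admissible at time $t$ (the paper simply reuses the name $(v_t,u_t)$ for this limit), and your projection argument in $L^2(\rho_t;\mathbb{R}^d)\times L^2_0(\rho_t)$ then legitimately compares that limit with the minimal pair. Your flux-based treatment is the right way to do this, and it should be used for the energy bound as well. Apply the joint convexity of $(\rho,m)\mapsto|m|^2/\rho$ to $\frac1h\int_0^1\rho^h_r v^h_r\,dr$ against $\int_0^1\rho^h_r\,dr$ (and likewise for the reaction flux), then the joint lower semicontinuity of Lemmas~\ref{lem:lsc_transport}--\ref{lem:lsc_reaction}; the paper's Jensen step instead weights $v^h_r$ by $\rho_t$ rather than by $\rho^h_r$.
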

	
	\begin{proof}
		
		By Proposition~\ref{prop:AC_upper}, every admissible pair
		$(v_t,u_t)$ satisfies
		
		\[
		|\dot{\rho}_t|_{D_\alpha}^2
		\le
		(1-\alpha)
		\int_\Omega |v_t|^2\,\rho_t\,dx
		+
		\alpha
		\int_\Omega u_t^2\,\rho_t\,dx
		\]
		
		for almost every $t\in(0,1)$.
		
		Applying this inequality to the minimal admissible pair yields
		
		\[
		|\dot{\rho}_t|_{D_\alpha}^2
		\le
		(1-\alpha)
		\int_\Omega |v_t|^2\,\rho_t\,dx
		+
		\alpha
		\int_\Omega u_t^2\,\rho_t\,dx.
		\]
		
		Conversely, Proposition~\ref{prop:metric_derivative_identification}
		gives
		
		\[
		(1-\alpha)
		\int_\Omega |v_t|^2\,\rho_t\,dx
		+
		\alpha
		\int_\Omega u_t^2\,\rho_t\,dx
		\le
		|\dot{\rho}_t|_{D_\alpha}^2.
		\]
		
		Combining the two inequalities proves the result.
		
	\end{proof}
	\begin{definition}[Minimal admissible pair]
		For a.e. $t\in(0,1)$, the minimal admissible pair associated with
		$\rho_t$ is the unique minimizer of
		
		\[
		(v,u)\mapsto
		(1-\alpha)\int_\Omega |v|^2\rho_t\,dx
		+
		\alpha\int_\Omega u^2\rho_t\,dx,
		\]
		
		among all pairs satisfying
		
		\[
		\partial_t\rho_t+\Div(\rho_t v)=\rho_t u,
		\qquad
		\int_\Omega u\,\rho_t\,dx=0.
		\]
	\end{definition}

	\section{Fenchel--Rockafellar duality for $\Da$}\label{sec:duality}
	
	Throughout this chapter, $\Omega\subset\mathbb R^d$ is the domain of Section~2.1.1 (bounded, connected, with Lipschitz boundary), $Q\eqdef (0,1)\times\Om$, and $\Qb\eqdef[0,1]\times\overline\Omega$. We fix $\alpha\in(0,1)$ and $\rho_0,\rho_1\in P_{2,\mathrm{ac}}(\Omega)$.
	
	\subsection{Functional framework}
	
	\subsubsection*{State space}
	We denote
	\[
	X\eqdef \mathcal M(\Qb)\times\mathcal M(\Qb;\mathbb R^d)\times\mathcal M(\Qb),
	\]
	the space of triples $(\rho,m,n)$ where $\rho$ is a finite \emph{nonnegative} Radon measure on $\Qb$, and $m,n$ are finite Radon measures (vector-valued for $m$). This space is endowed with the weak-$*$ topology induced by duality with $C(\Qb)$ (resp.\ $C(\Qb;\mathbb R^d)$).
	
	\subsubsection*{Space of test functions}
	We denote
	\[
	Y\eqdef C^1(\Qb)\times C([0,1]),
	\]
	equipped with the norm $\|(\varphi,\psi)\|_Y\eqdef\|\varphi\|_{C^1}+\|\psi\|_\infty$, which makes it a Banach space. A generic element of $Y$ is written $(\varphi,\psi)$, where $\varphi=\varphi(t,x)$ plays the role of the Kantorovich--Benamou--Brenier potential and $\psi=\psi(t)$ that of the Lagrange multiplier associated with the centering constraint $\int_\Om n_t\,dx=0$.
	
	\subsubsection*{Bilinear form and boundary term}
	We define $B:Y\times X\to\mathbb R$ by
	\[
	B\big((\varphi,\psi),(\rho,m,n)\big)\eqdef \int_{\Qb}\partial_t\varphi\,d\rho+\int_{\Qb}\nabla\varphi\cdot dm+\int_{\Qb}\big(\varphi-\psi(t)\big)\,dn,
	\]
	and the boundary term (linear, continuous on $Y$)
	\[
	\ell_{\rho_0,\rho_1}(\varphi,\psi)\eqdef \int_\Om\varphi(1,\cdot)\,d\rho_1-\int_\Om\varphi(0,\cdot)\,d\rho_0.
	\]
	
	\begin{definition}[Admissible set]
		We set
		\[
		\mathcal C(\rho_0,\rho_1)\eqdef\Big\{(\rho,m,n)\in X:\ \rho\ge0,\ B\big((\varphi,\psi),(\rho,m,n)\big)=\ell_{\rho_0,\rho_1}(\varphi,\psi)\ \ \forall (\varphi,\psi)\in Y\Big\}.
		\]
	\end{definition}
	
	\begin{lemma}[Equivalence with the kinematic formulation of Section~3]
		\label{lem:C-equiv}
		A triple $(\rho,m,n)\in X$ belongs to $\mathcal C(\rho_0,\rho_1)$ if and only if
		\begin{enumerate}
			\item $\partial_t\rho+\mathrm{div}\,m=n$ in the sense of distributions on $(0,1)\times\Om$, with homogeneous Neumann condition $m\cdot\nu=0$ on $(0,1)\times\partial\Om$, and $\rho(0,\cdot)=\rho_0$, $\rho(1,\cdot)=\rho_1$ in the weak-$*$ sense;
			\item $\int_\Om n_t\,dx=0$ for almost every $t\in(0,1)$.
		\end{enumerate}
	\end{lemma}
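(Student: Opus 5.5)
The plan is to unpack the single identity $B((\varphi,\psi),(\rho,m,n))=\ell_{\rho_0,\rho_1}(\varphi,\psi)$ for all $(\varphi,\psi)\in Y$ by testing it with special families of test functions. Linearity in $(\varphi,\psi)$ lets us treat the $\varphi$-part and the $\psi$-part separately. The identity then splits into three pieces of information: the interior PDE, the boundary/initial data, and the centering constraint. For the converse direction we reassemble these pieces.

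\emph{Step 1 (centering).} Take $\varphi\equiv0$ and $\psi\in C([0,1])$ arbitrary. Then $\ell(0,\psi)=0$, and the identity reduces to $\int_{\Qb}\psi(t)\,dn=0$ for all continuous $\psi$. Disintegrating $n$ with respect to its time marginal, this says the time-marginal of $n$ vanishes. Under the standing convention that $n=(n_t)_t$ with $n_t\ll$ Lebesgue, this is exactly $\int_\Om n_t\,dx=0$ for a.e. $t$.

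\emph{Step 2 (interior PDE).} Take $\psi\equiv0$ and $\varphi\in C_c^\infty((0,1)\times\Om)$. The boundary term vanishes, giving
\[\int\partial_t\varphi\,d\rho+\int\nabla\varphi\cdot dm+\int\varphi\,dn=0,\]
which is $\partial_t\rho+\mathrm{div}\,m=n$ in $\mathcal D'$.

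\emph{Step 3 (Neumann condition and endpoint traces).} Now allow $\varphi\in C^1(\Qb)$ that do not vanish on $[0,1]\times\partial\Om$ or at $t=0,1$. First take $\varphi(t,x)=\chi(t)\zeta(x)$ with $\chi\in C_c^1((0,1))$ and $\zeta\in C^1(\overline\Om)$. Subtracting the interior identity of Step 2, the remaining contribution is the normal flux $\langle m\cdot\nu,\chi\zeta\rangle$ on the lateral boundary, and it must vanish; this is the weak formulation of $m\cdot\nu=0$. Next take $\varphi=\chi(t)\zeta(x)$ with $\chi(1)=1$, $\chi(0)=0$, so the boundary term is $\int\zeta\,d\rho_1$. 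Combining with Step 2 shows that the weakly continuous representative $t\mapsto\rho_t$ (which exists by the distributional equation and finiteness of the measures) satisfies $\rho_1=\rho(1,\cdot)$ against every $\zeta\in C^1(\overline\Om)$. By density of $C^1$ in $C$ this holds weak-$*$, and the case $t=0$ is symmetric.

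\emph{Step 4 (converse).} Given items 1--2, take $(\varphi,\psi)\in Y$. Test the equation with $\varphi$, integrate by parts in $t$ and $x$, and use the Neumann condition and the endpoint traces; this produces $\int\partial_t\varphi\,d\rho+\int\nabla\varphi\cdot dm+\int\varphi\,dn=\ell(\varphi)$. The $\psi$-term contributes $\int_0^1\psi(t)\big(\int_\Om n_t\big)dt=0$ by centering. Together these give the identity defining $\mathcal C$.

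The main obstacle is Step 3, and more generally justifying the integration by parts for measures up to $t=0,1$ and up to $\partial\Om$. I would first establish that $t\mapsto\rho_t$ has a narrowly continuous representative. The route is to take the distributional equation tested against $\zeta(x)$, which shows $t\mapsto\int\zeta\,d\rho_t$ has a $BV$ derivative with no atoms, since $m$ and $n$ are finite measures absolutely continuous in time. The traces are then read off by approximating $\mathbf 1_{[0,1]}$ by cutoffs $\chi_\varepsilon$ and letting $\varepsilon\to0$. Along the way I would flag that the statement implicitly assumes $n$ and $m$ charge no mass on $\{0,1\}\times\overline\Om$.
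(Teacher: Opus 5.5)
Your proposal is correct and follows the same route as the paper: by linearity of $B$ and $\ell_{\rho_0,\rho_1}$ in $(\varphi,\psi)$, you split the defining identity into two cases and reassemble them for the converse. The case $\varphi\equiv0$ gives the centering constraint, and the case $\psi\equiv0$ is the weak formulation of $\partial_t\rho+\mathrm{div}\,m=n$ with no-flux condition and endpoint data.

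The only difference is in the case $\psi\equiv0$. The paper simply asserts it is ``the standard weak formulation'', citing Lemma~8.1.2 of Ambrosio--Gigli--Savar\'e. You instead recover the endpoint traces explicitly via cutoffs, and you rightly flag the unstated assumption that $m,n$ put no mass on $\{0,1\}\times\overline\Omega$. Note that your appeal to absolute continuity in time of $m,n$ is likewise a convention rather than something automatic in $X$, and it is more than the BV-in-time trace argument actually needs.
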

	
	\begin{proof}
		Taking $\psi\equiv0$ and $\varphi\in C^1(\Qb)$ arbitrary in the definition of $\mathcal C(\rho_0,\rho_1)$, the equality $B=\ell_{\rho_0,\rho_1}$ reads
		\[
		\int_\Om\varphi_1\,d\rho_1-\int_\Om\varphi_0\,d\rho_0=\int_{\Qb}\partial_t\varphi\,d\rho+\int_{\Qb}\nabla\varphi\cdot dm+\int_{\Qb}\varphi\,dn,
		\]
		which is exactly the standard weak formulation of the continuity equation with source $\partial_t\rho+\mathrm{div}\,m=n$, together with the boundary conditions $\rho(0)=\rho_0$, $\rho(1)=\rho_1$ and the zero-flux condition on $\partial\Om$ (see e.g.~\cite[Lemma~8.1.2]{AGS} or the kinematic formulation~(21) of Section~3.1). Conversely, taking $\varphi\equiv0$ and $\psi\in C([0,1])$ arbitrary, the equality becomes
		\[
		0=-\int_0^1\psi(t)\Big(\int_\Om dn_t\Big)dt,
		\]
		which, since $\psi$ is arbitrary, is equivalent to $\int_\Om n_t\,dx=0$ for almost every $t$. Since $B$ is linear in $(\varphi,\psi)$, the condition holding for all $(\varphi,\psi)\in Y$ is equivalent to the conjunction of the two particular cases above.
	\end{proof}
	
	\subsubsection*{Action functional}
	Reprising the perspective function already implicitly introduced in the action~$\mathcal A_\alpha$ (Section~3.2, formula~(23)), we define, for $(a,b,c)\in[0,\infty)\times\mathbb R^d\times\mathbb R$,
	\[
	\fa(a,b,c)\eqdef
	\begin{cases}
		(1-\alpha)\dfrac{|b|^2}{a}+\alpha\dfrac{c^2}{a}, & a>0,\\[2mm]
		0, & a=0,\ b=0,\ c=0,\\[1mm]
		+\infty, & a=0,\ (b,c)\ne(0,0),
	\end{cases}
	\]
	and, for $(\rho,m,n)\in X$,
	\[
	F(\rho,m,n)\eqdef\int_{\Qb}\fa\Big(\frac{d\rho}{d\lambda},\frac{dm}{d\lambda},\frac{dn}{d\lambda}\Big)\,d\lambda,
	\]
	where $\lambda$ is any positive measure dominating $\rho,|m|,|n|$ simultaneously (the integral does not depend on the choice of $\lambda$ by $1$-homogeneity of $\fa$, cf.~\cite{BouchitteButtazzo}); in particular, if $m\not\ll\rho$ or $n\not\ll\rho$, then $F(\rho,m,n)=+\infty$. This functional coincides exactly with the action $\mathcal A_\alpha(\rho,v,u)$ of~(23)--(31) when $\rho_t\,dt\,dx$, $m=\rho v$, $n=\rho u$ (cf.\ Lemma~3.10).
	
	\begin{proposition}
		\label{prop:Da-inf}
		We have
		\[
		\Da^2(\rho_0,\rho_1)=\inf_{(\rho,m,n)\in\mathcal C(\rho_0,\rho_1)}F(\rho,m,n).
		\]
	\end{proposition}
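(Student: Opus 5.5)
The plan is to prove the two inequalities between $\Da^2(\rho_0,\rho_1)$ and $\inf_{\mathcal C(\rho_0,\rho_1)}F$ separately. In each direction we will translate between the kinematic triples $(\rho_t,v_t,u_t)$ of Section~3 and the measure triples $(\rho,m,n)\in X$. The dictionary has two pieces. First, Lemma~\ref{lem:C-equiv} identifies membership in $\mathcal C(\rho_0,\rho_1)$ with the weak continuity equation with source, the boundary data, and the centering constraint $\int_\Om n_t\,dx=0$. Second, the $1$-homogeneity of $\fa$ shows that $F(\rho,m,n)$ equals $\mathcal A_\alpha(\rho,v,u)$ whenever $m=\rho v$ and $n=\rho u$, and equals $+\infty$ otherwise.

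\textbf{Inequality $\inf F\le \Da^2$.} Take an admissible triple $(\rho_t,v_t,u_t)$ with finite action, and set $\rho:=\rho_t\,dt\,dx$, $m:=\rho_t v_t\,dt\,dx$, $n:=\rho_t u_t\,dt\,dx$. By Cauchy--Schwarz, exactly as in Lemma~\ref{lem:weakcompact}, $m$ and $n$ are finite measures on $\Qb$, so $(\rho,m,n)\in X$. The distributional equation \eqref{eq:continuity}, combined with $\rho(0)=\rho_0$ and $\rho(1)=\rho_1$, gives the identity $B((\varphi,0),\cdot)=\ell_{\rho_0,\rho_1}(\varphi,0)$ for every $\varphi\in C^1(\Qb)$. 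This step also requires the no-flux interpretation on $\partial\Om$; the natural convention is that \eqref{eq:continuity} is tested against functions smooth up to the boundary, since mass conservation implicitly demands it. The centering constraint $\int u_t\rho_t=0$ supplies the $\psi$-part of the identity. Hence $(\rho,m,n)\in\mathcal C(\rho_0,\rho_1)$ and $F(\rho,m,n)=\mathcal A_\alpha(\rho,v,u)$. Taking the infimum over admissible triples gives the claim.

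\textbf{Inequality $\Da^2\le\inf F$.} Take $(\rho,m,n)\in\mathcal C(\rho_0,\rho_1)$ with $F<\infty$; the case $F=+\infty$ is trivial. Then $m\ll\rho$ and $n\ll\rho$, so we can write $m=\rho v$ and $n=\rho u$ by Radon--Nikodym, as in Lemma~\ref{lem:representation}. We then proceed in two steps.

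First, we disintegrate $\rho$ in time. Testing the constraint with $\varphi=\varphi(t)$ depending only on time, and using $\int_\Om n_t=0$, gives $\int \varphi'(t)\,d\rho=\varphi(1)-\varphi(0)$. Hence the time marginal of $\rho$ is $dt$, and $\rho=\rho_t\otimes dt$ with each $\rho_t$ a probability measure. The continuity equation with finite action then yields a narrowly continuous representative $t\mapsto\rho_t$, by the argument of \cite[Lemma~8.1.2]{AGS} adapted to the $L^2$-bounded source term.

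Second, we must show that $\rho_t$ is absolutely continuous, positive, and in $\PP_{2,\mathrm{ac}}(\Omega)$ for a.e.\ $t$, so that the triple is admissible in the sense of Section~3. This is the main obstacle, because a general element of $\mathcal C$ may pass through singular or degenerate measures. I would first try to show that the definition of $\Da$ already tolerates such intermediate states: in the proofs of Section~3 (Theorem~\ref{thm:existence}), the minimizers are obtained exactly as measure limits. Under that reading, the admissible class of Section~3 is the relaxed one, and $\inf F$ coincides with it by construction.

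If strict positivity and absolute continuity along the path are insisted upon, the fallback is an approximation argument. Given $\varepsilon>0$, replace $(\rho,m,n)$ by $\big((1-\varepsilon)\rho+\varepsilon\bar\rho,\,(1-\varepsilon)m,\,(1-\varepsilon)n\big)$ with $\bar\rho$ uniform, then mollify in space. By joint convexity of $\fa$, this does not increase the action beyond $(1-\varepsilon)F$. The endpoints now differ from $\rho_0,\rho_1$, so we reconnect them with short segments: linear interpolation $(1-s)\rho_0+s\tilde\rho_0$ with a centered reaction field, or an Aitchison geodesic as in Lemma~\ref{lem:comparison}. Their cost vanishes as $\varepsilon\to0$ because $\rho_0$ and $\rho_1$ are positive with $\ln\rho_i\in L^2$. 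Concatenating as in Lemma~\ref{lem:triangle} and letting $\varepsilon\to0$ then gives $\Da^2\le F$.
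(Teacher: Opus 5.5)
Your inequality $\inf F\le D_\alpha^2$ and your ``relaxed reading'' of Section~3 reproduce the paper's proof. That proof consists only of the change of variables $m=\rho v$, $n=\rho u$ (the flux formulation \eqref{eq:Dalpha_flux}) combined with Lemma~\ref{lem:C-equiv}. Your time-marginal argument is also correct: $\int\varphi'\,d\rho=\varphi(1)-\varphi(0)$ forces the time marginal of $\rho$ to be $dt$. You are right that the identification silently assumes that Section~3 admits paths through singular or vanishing $\rho_t$, a point the paper never addresses. Similarly, the no-flux condition must be imposed as a convention, as Lemma~\ref{lem:C-equiv} does. It is not a consequence of mass conservation, which only forces zero \emph{net} flux through $\partial\Omega$.

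\textbf{The gap in the fallback: reconnection cost.} Under the literal definition (curves of strictly positive densities with $\ln\rho_t\in L^2$), your fallback is the right strategy, but the reconnection step fails as stated. The reaction cost is weighted by $\rho_t$, whereas $d_A$ is an unweighted $L^2(dx)$ distance. So the Aitchison geodesic from $\rho_0$ to $\tilde\rho_0$ does not cost $\alpha d_A^2(\rho_0,\tilde\rho_0)$; the equality $\int_0^1\!\int_\Omega|u^A_t|^2\rho^A_t\,dx\,dt=d_A^2$ asserted in Lemma~\ref{lem:comparison} is not correct in general. The actual costs are:
\begin{itemize}
\item Aitchison geodesic: $\alpha\int_0^1\operatorname{Var}_{\rho_s}\bigl(\ln\tilde\rho_0-\ln\rho_0\bigr)\,ds$;
\item linear interpolation: $\alpha\int_\Omega(\tilde\rho_0-\rho_0)\ln(\tilde\rho_0/\rho_0)\,dx$.
\end{itemize}
After mollification and mixing, $\tilde\rho_0$ is bounded above and below by positive constants. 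Hence whenever $\int_\Omega\rho_0\ln\rho_0\,dx=+\infty$, both costs are $+\infty$ for every $\varepsilon$. This is compatible with $\ln\rho_0\in L^2$: take $\rho_0(x)=x^{-1}(1+\ln(1/x))^{-2}$ on $\Omega=(0,1)$. So $\ln\rho_i\in L^2$ is not the control that makes these connectors cheap.

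\textbf{A connector that works.} With $v\equiv0$, the centered reaction cost is exactly the Fisher--Rao metric on probability densities:
\[
\int u^2\rho\,dx=\int(\partial_s\rho_s)^2/\rho_s\,dx=4\|\partial_s\sqrt{\rho_s}\|_{L^2}^2 .
\]
So use the spherical Hellinger arc
\[
\sqrt{\rho_s}=\frac{\sin((1-s)\theta)}{\sin\theta}\sqrt{\rho_0}+\frac{\sin(s\theta)}{\sin\theta}\sqrt{\tilde\rho_0},\qquad \cos\theta=\int_\Omega\sqrt{\rho_0\tilde\rho_0}\,dx,
\]
with $u_s=\partial_s\ln\rho_s$, which is automatically centered because $\int_\Omega\rho_s\,dx=1$. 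Its action is $4\alpha\theta^2$, and $1-\cos\theta\le\frac12\|\rho_0-\tilde\rho_0\|_{L^1}$, so the cost vanishes as soon as $\tilde\rho_0\to\rho_0$ in $L^1$. For $s<1$, $\rho_s$ is bounded below by a positive multiple of $\rho_0$ and above by an integrable function. Hence $\ln\rho_s\in L^2$ and the arc stays in $\PP_{2,\mathrm{ac}}(\Omega)$.

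\textbf{A smaller point in the regularization step.} Plain spatial convolution pushes mass outside the bounded domain $\Omega$. You must proceed in this order:
\begin{enumerate}
\item move the path inward, by a dilation if $\Omega$ is star-shaped or by the flow of a smooth field transversal to $\partial\Omega$ in the Lipschitz case, at the price of a factor tending to $1$ on the transport cost;
\item mollify;
\item only then mix with the uniform density, so that the lower bound $\varepsilon/|\Omega|$ survives up to $\partial\Omega$.
\end{enumerate}
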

	
	\begin{proof}
		This is exactly the flux reformulation~(32) established in Section~3.4.1, combined with Lemma~\ref{lem:C-equiv} above, which identifies $\mathcal C(\rho_0,\rho_1)$ with the set of $(\rho,m,n)$ satisfying~(29)--(30) with the prescribed boundary data.
	\end{proof}
	
	\subsection{Convexity and lower semicontinuity of $F$}
	
	\begin{lemma}[Pointwise dual representation of $\fa$]
		\label{lem:fa-sup}
		For every $(a,b,c)\in[0,\infty)\times\mathbb R^d\times\mathbb R$,
		\[
		\fa(a,b,c)=\sup_{(\varphi,\psi)\in\mathbb R^d\times\mathbb R}\Big[-(1-\alpha)|\varphi|^2a-\alpha\psi^2a+2(1-\alpha)\varphi\cdot b+2\alpha\psi c\Big].
		\]
	\end{lemma}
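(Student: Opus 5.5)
The statement decouples into two independent one-dimensional maximizations (in $\varphi$ and in $\psi$). So the plan is to compute each supremum explicitly and split into the three cases in the definition of $\fa$.

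Write the bracket as $G(\varphi,\psi)=G_1(\varphi)+G_2(\psi)$, where
\[
G_1(\varphi)=(1-\alpha)\bigl(2\varphi\cdot b-a|\varphi|^2\bigr),\qquad
G_2(\psi)=\alpha\bigl(2\psi c-a\psi^2\bigr).
\]
Since $\alpha\in(0,1)$, both prefactors $1-\alpha$ and $\alpha$ are strictly positive. Hence $\sup G=(1-\alpha)\sup_\varphi(2\varphi\cdot b-a|\varphi|^2)+\alpha\sup_\psi(2\psi c-a\psi^2)$, where each supremum may be $+\infty$.

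\textbf{Case $a>0$.} Each map is a strictly concave quadratic. Completing the square gives
\[
2\varphi\cdot b-a|\varphi|^2=\frac{|b|^2}{a}-a\Bigl|\varphi-\frac{b}{a}\Bigr|^2,
\]
so the supremum equals $|b|^2/a$ and is attained at $\varphi=b/a$. In the same way, the $\psi$-supremum equals $c^2/a$, attained at $\psi=c/a$. Summing gives $(1-\alpha)|b|^2/a+\alpha c^2/a=\fa(a,b,c)$.

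\textbf{Case $a=0$.} The expression becomes linear: $2(1-\alpha)\varphi\cdot b+2\alpha\psi c$.
\begin{itemize}
\item If $(b,c)=(0,0)$, it vanishes identically, so the supremum is $0$.
\item If $b\neq0$, take $\varphi=sb$ with $s\to+\infty$ and $\psi=0$; the value $2(1-\alpha)s|b|^2$ tends to $+\infty$.
\item If $b=0$ and $c\neq0$, take $\psi=sc$ with $s\to+\infty$; the value $2\alpha s c^2$ tends to $+\infty$.
\end{itemize}
In all subcases this matches the definition of $\fa$.

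I expect no real obstacle here. The only point needing care is the degenerate case $a=0$, where the sup is a genuine sup, not a max. This is exactly where the strict positivity of both weights, guaranteed by $\alpha\in(0,1)$, is used: it ensures that a nonzero $b$ or $c$ forces the value $+\infty$.

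I would also remark that the formula exhibits $\fa$ as a supremum of affine functions of $(a,b,c)$. This gives its convexity, lower semicontinuity and positive $1$-homogeneity, which is what the subsequent lower semicontinuity of $F$ should use.
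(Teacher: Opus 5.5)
Your proof is correct and follows essentially the same route as the paper: when $a>0$ you maximize the two decoupled concave quadratics explicitly at $\varphi=b/a$ and $\psi=c/a$, and when $a=0$ with $(b,c)\neq(0,0)$ you send the bracket to $+\infty$ along a ray. The only cosmetic difference is that the paper handles the degenerate case with the single ray $(\varphi,\psi)=(sb,sc)$, $s\to+\infty$, where you split into the subcases $b\neq0$ and $b=0,\ c\neq0$.
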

	
	\begin{proof}
		Fix $a>0$. The function $\varphi\mapsto 2(1-\alpha)\varphi\cdot b-(1-\alpha)|\varphi|^2a$ is concave and attains its maximum at $\varphi=b/a$, with value $(1-\alpha)|b|^2/a$. Likewise, $\psi\mapsto2\alpha\psi c-\alpha\psi^2a$ attains its maximum at $\psi=c/a$, with value $\alpha c^2/a$. The sum of the two suprema thus equals $\fa(a,b,c)$, which proves the equality for $a>0$. For $a=0$: if $(b,c)=(0,0)$, each term inside the bracket vanishes for every $(\varphi,\psi)$, so the sup equals $0=\fa(0,0,0)$. If $(b,c)\ne(0,0)$, taking $\varphi=sb$, $\psi=sc$ as $s\to+\infty$, the expression $2(1-\alpha)s|b|^2+2\alpha s c^2\to+\infty$, so the sup equals $+\infty=\fa(0,b,c)$.
	\end{proof}
	
	\begin{proposition}[Convexity and lower semicontinuity of $\fa$ and of $F$]
		\label{prop:fa-convexe}
		The function $\fa$ is convex, positively homogeneous of degree~$1$, and lower semicontinuous on $[0,\infty)\times\mathbb R^d\times\mathbb R$. Consequently, $F$ is convex and lower semicontinuous on $X$ for the weak-$*$ topology.
	\end{proposition}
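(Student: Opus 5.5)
The plan rests entirely on Lemma~\ref{lem:fa-sup}, which writes $\fa$ as a supremum of affine functions. For each fixed $(\varphi,\psi)\in\mathbb R^d\times\mathbb R$, the map
\[
(a,b,c)\mapsto -(1-\alpha)|\varphi|^2a-\alpha\psi^2a+2(1-\alpha)\varphi\cdot b+2\alpha\psi c
\]
is linear, hence continuous and convex, on $[0,\infty)\times\mathbb R^d\times\mathbb R$. A pointwise supremum of such maps is convex and lower semicontinuous, so $\fa$ is convex and l.s.c.; no case analysis at $a=0$ is needed, because the lemma already covers the boundary values. Positive $1$-homogeneity also follows from the representation, since each affine map is linear. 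Alternatively, one checks directly that $\fa(\lambda a,\lambda b,\lambda c)=\lambda\fa(a,b,c)$ for $\lambda>0$: for $a>0$ this is clear, and for $a=0$ both sides are $0$ or $+\infty$.

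For $F$, I would lift the pointwise representation to an integral one. Denote by $K$ the closed convex set of $(A,B,C)$ with $A+(1-\alpha)^{-1}|B|^2/4+\alpha^{-1}C^2/4\le0$. Reparametrising the affine functions in Lemma~\ref{lem:fa-sup} as $(A,B,C)=(-(1-\alpha)|\varphi|^2-\alpha\psi^2,\,2(1-\alpha)\varphi,\,2\alpha\psi)$, they trace the boundary of $K$. Adding a nonpositive amount to $A$ does not raise the sup, since $a\ge0$. Hence
\[
\fa(a,b,c)=\sup_{(A,B,C)\in K}\,(Aa+B\cdot b+Cc).
\]
I then claim that
\[
F(\rho,m,n)=\sup\Big\{\int_{\Qb}A\,d\rho+\int_{\Qb}B\cdot dm+\int_{\Qb}C\,dn:\ (A,B,C)\in C(\Qb;K)\Big\}.
\]
This is the classical Bouchitté--Buttazzo/Brenier representation for integral functionals of a $1$-homogeneous convex l.s.c.\ integrand evaluated on measures, as cited from \cite{BouchitteButtazzo}.

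Granting the claim, each functional inside the sup is a weak-$*$ continuous linear form on $X$, because $A,B,C$ are continuous on the compact set $\Qb$. Thus $F$ is a supremum of continuous linear functionals and is therefore convex and weak-$*$ lower semicontinuous on $X$.

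The main obstacle is justifying the representation formula for $F$. The inequality ``$\ge$'' is easy: integrate the pointwise inequality $\fa(\cdot)\ge Aa+B\cdot b+Cc$ against $\lambda$, which is legitimate by $1$-homogeneity and independent of the dominating measure $\lambda$. For ``$\le$'', I would choose measurable selections $(A,B,C)(t,x)\in K$ that are nearly optimal $\lambda$-a.e.: take $\varphi=b/a$ and $\psi=c/a$ truncated at level $R$ on $\{a>0\}$, and handle the singular part, where $(b,c)\neq0$ but $a=0$, by sending $s\to\infty$ along $(b,c)/|(b,c)|$. I would then approximate these bounded measurable $K$-valued fields by continuous $K$-valued ones via Lusin's theorem together with the convexity and closedness of $K$, and conclude with monotone convergence as $R\to\infty$. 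If this becomes too technical, the fallback is to cite Bouchitté--Buttazzo directly for the l.s.c.\ of such functionals, since its hypotheses (convex, l.s.c., $1$-homogeneous integrand, shown in the first step) are exactly what has been verified.
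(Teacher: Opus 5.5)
Your proposal is correct and follows the same route as the paper: Lemma~\ref{lem:fa-sup} writes $f_\alpha$ as a supremum of linear maps, which gives convexity, lower semicontinuity and $1$-homogeneity at once, and $F$ is then expressed as a supremum of weak-$*$ continuous linear functionals on $X$. Your version is more complete than the paper's, which only points to the argument of Lemmas~\ref{lem:supremum}--\ref{lem:lsc} and to Bouchitt\'e--Buttazzo. Taking the supremum over continuous $K$-valued fields and sketching the nontrivial inequality (truncated measurable selections, Lusin plus projection onto the closed convex set $K$, monotone convergence) supplies exactly the step the paper leaves implicit.
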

	
	\begin{proof}
		By Lemma~\ref{lem:fa-sup}, $\fa$ is a supremum of affine functions of $(a,b,c)$; it is therefore convex and l.s.c.\ (a supremum of continuous affine functions). The degree-$1$ homogeneity is immediate from the explicit formula. For $F$: this is exactly the argument of Lemmas~\ref{lem1'}--\ref{lem5} (representation as a supremum of continuous linear functionals for weak-$*$ convergence, cf.\ Lemma~\ref{lem1}, followed by termwise passage to the limit), applied simultaneously to the two components $(b,c)=(m,n)$ instead of the single component $v$ treated in those lemmas; the proof is identical mutatis mutandis and is not repeated here.
	\end{proof}
	
	\subsection{Complete computation of the Legendre transform $\fa^*$}
	
	\begin{theorem}[Legendre transform of $\fa$]
		\label{thm:fa-star}
		For $(p,q,r)\in\mathbb R\times\mathbb R^d\times\mathbb R$, set
		\[
		\fa^*(p,q,r)\eqdef\sup_{a\ge0,\,b\in\mathbb R^d,\,c\in\mathbb R}\Big[pa+q\cdot b+rc-\fa(a,b,c)\Big].
		\]
		Then
		\[
		\fa^*(p,q,r)=
		\begin{cases}
			0, & \text{if } p+\dfrac{|q|^2}{4(1-\alpha)}+\dfrac{r^2}{4\alpha}\le0,\\[3mm]
			+\infty, & \text{otherwise.}
		\end{cases}
		\]
	\end{theorem}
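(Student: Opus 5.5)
The plan is to use the positive $1$-homogeneity of $\fa$ from Proposition~\ref{prop:fa-convexe}: the Legendre transform of a positively $1$-homogeneous, convex, l.s.c. function is the indicator of a closed convex set. So it suffices to decide, for each $(p,q,r)$, whether the expression $pa+q\cdot b+rc-\fa(a,b,c)$ is bounded above by $0$ on the whole domain, or becomes positive somewhere. In the second case, scaling $(a,b,c)\mapsto s(a,b,c)$ with $s\to+\infty$ multiplies that positive value by $s$, and the supremum is $+\infty$. In the first case the supremum is exactly $0$, since the value $0$ is attained at $(a,b,c)=(0,0,0)$.

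To identify the set, first dispose of $a=0$. There $\fa(0,b,c)=+\infty$ unless $(b,c)=(0,0)$, so these points contribute only the value $0$. For $a>0$, write $b=a\beta$ and $c=a\gamma$. Then the expression equals
\[
a\Bigl[p+q\cdot\beta+r\gamma-(1-\alpha)|\beta|^2-\alpha\gamma^2\Bigr].
\]
Maximize the bracket separately in $\beta$ and in $\gamma$, both being concave quadratics. The optimum in $\beta$ is $\beta=q/(2(1-\alpha))$, with value $|q|^2/(4(1-\alpha))$. The optimum in $\gamma$ is $\gamma=r/(2\alpha)$, with value $r^2/(4\alpha)$. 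Here $\alpha\in(0,1)$ ensures both coefficients are strictly positive, so these maxima are finite.

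Hence the supremum over $a>0$ equals $\sup_{a>0}a\,K$, where $K=p+\frac{|q|^2}{4(1-\alpha)}+\frac{r^2}{4\alpha}$. This supremum is $0$ if $K\le0$ and $+\infty$ if $K>0$. Combined with the $a=0$ analysis, this gives exactly the stated formula.

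There is no real obstacle; the computation is elementary. The only points requiring care are the boundary case $a=0$, where the convention in the definition of $\fa$ must be used, and the fact that when $K=0$ the supremum $0$ is still attained at the origin, so the inequality is non-strict. Alternatively, one can avoid the substitution and directly invoke Lemma~\ref{lem:fa-sup}, which exhibits $\fa$ as the support function of the set $\{(-(1-\alpha)|\varphi|^2-\alpha\psi^2,\,2(1-\alpha)\varphi,\,2\alpha\psi)\}$. Its conjugate is then the indicator of the closed convex hull of that set, which is precisely $\{K\le0\}$. I would present the direct computation and mention this as a consistency check.
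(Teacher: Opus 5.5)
Your proposal is correct and follows essentially the same route as the paper: for fixed $a>0$ you maximize the concave quadratics in $(b,c)$ to obtain $aK(p,q,r)$ with $K=p+\frac{|q|^2}{4(1-\alpha)}+\frac{r^2}{4\alpha}$, then take the supremum over $a\ge 0$, handling $a=0$ through the convention in the definition of $\fa$. Your substitution $b=a\beta$, $c=a\gamma$ is only cosmetic, and the $1$-homogeneity framing and the support-function check via Lemma~\ref{lem:fa-sup} are valid but not needed.
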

	
	\begin{proof}
		\emph{Step 1 (optimization in $b,c$ for fixed $a>0$).} For $a>0$, $b\mapsto q\cdot b-(1-\alpha)|b|^2/a$ is concave and attains its maximum at $b=aq/(2(1-\alpha))$, with value $a|q|^2/(4(1-\alpha))$; likewise $c\mapsto rc-\alpha c^2/a$ attains its maximum at $c=ar/(2\alpha)$, with value $ar^2/(4\alpha)$. Hence
		\[
		\sup_{b,c}\big[pa+q\cdot b+rc-\fa(a,b,c)\big]=a\Big[p+\frac{|q|^2}{4(1-\alpha)}+\frac{r^2}{4\alpha}\Big]=\eqdef a\,K(p,q,r).
		\]
		\emph{Step 2 (optimization in $a\ge0$).} The function $a\mapsto aK$ is linear on $[0,\infty)$; its supremum equals $0$ if $K\le0$ (attained at $a=0$) and $+\infty$ if $K>0$.
		\emph{Step 3 (case $a=0$).} It remains to check that the case $a=0$, $(b,c)\ne(0,0)$ (where $\fa=+\infty$, hence does not contribute to the supremum) does not alter the result: at this point the quantity $pa+q\cdot b+rc-\fa(a,b,c)=q\cdot b+rc-\infty=-\infty$, which cannot increase the supremum. This establishes the stated formula.
	\end{proof}
	
	\begin{remark}
		The domain $\{K(p,q,r)\le0\}$ is precisely the formal Hamilton--Jacobi inequality obtained by the Lagrangian computation of Section~5.3 (before passing to equality along geodesics): we recover
		\[
		p+\frac{|q|^2}{4(1-\alpha)}+\frac{r^2}{4\alpha}\le0
		\]
		as the \emph{subsolution} condition associated with equation~(47), equality corresponding to the optimal case (cf.\ Section~\ref{sec:KKT} below).
	\end{remark}
	
	The following result extends Theorem~\ref{thm:fa-star} to the level of the integral functional $F$, viewed as a function on $Z\eqdef C(\Qb)\times C(\Qb;\mathbb R^d)\times C(\Qb)$ (via the pairing with $X$).
	
	\begin{proposition}[Conjugate of $F$]
		\label{prop:F-star}
		For $(p,q,r)\in Z$, set
		\[
		F^*(p,q,r)\eqdef\sup_{(\rho,m,n)\in X,\ \rho\ge0}\Big[\int_{\Qb}p\,d\rho+\int_{\Qb}q\cdot dm+\int_{\Qb}r\,dn-F(\rho,m,n)\Big].
		\]
		Then
		\[
		F^*(p,q,r)=0 \quad\text{if } p(t,x)+\frac{|q(t,x)|^2}{4(1-\alpha)}+\frac{r(t,x)^2}{4\alpha}\le0\ \ \forall(t,x)\in\Qb,
		\]
		and $F^*(p,q,r)=+\infty$ otherwise.
	\end{proposition}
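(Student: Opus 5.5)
The strategy is to transfer the pointwise computation of Theorem~\ref{thm:fa-star} to the integral functional $F$. We treat separately the case where the constraint $K(p,q,r)\le 0$ holds everywhere on $\Qb$ and the case where it fails at some point. Throughout we write
\[
K(t,x)\eqdef p(t,x)+\frac{|q(t,x)|^2}{4(1-\alpha)}+\frac{r(t,x)^2}{4\alpha},
\]
which is a continuous function on the compact set $\Qb$, since $(p,q,r)\in Z$.

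\emph{Case 1: $K\le0$ everywhere.} Fix $(\rho,m,n)\in X$ with $\rho\ge0$. If $F(\rho,m,n)=+\infty$, the bracket in the definition of $F^*$ equals $-\infty$, so there is nothing to prove. Otherwise, choose a dominating measure $\lambda$ (for instance $\lambda=\rho+|m|+|n|$) and write $a=d\rho/d\lambda$, $b=dm/d\lambda$, $c=dn/d\lambda$. The Fenchel--Young inequality from Theorem~\ref{thm:fa-star} gives, $\lambda$-a.e.,
\[
pa+q\cdot b+rc-\fa(a,b,c)\le \fa^*(p,q,r)=0 .
\]
Integrating against $\lambda$ shows that the bracket is $\le 0$, hence $F^*\le0$. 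Taking $(\rho,m,n)=(0,0,0)$ gives the value $0$, so $F^*=0$. Integrability is not an issue here: $p,q,r$ are bounded, $\lambda$ is finite, and $\fa\ge0$.

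\emph{Case 2: $K(t_0,x_0)>0$ at some point.} By continuity there are $\delta>0$ and a neighbourhood $U$ of $(t_0,x_0)$ in $\Qb$ on which $K\ge\delta$. I would test with measures built from the pointwise maximizers found in Step~1 of the proof of Theorem~\ref{thm:fa-star}:
\[
\rho=s\,\mathbf 1_U\,\mathcal L,\qquad m=\rho\,\frac{q}{2(1-\alpha)},\qquad n=\rho\,\frac{r}{2\alpha},\qquad s>0,
\]
where $\mathcal L$ is Lebesgue measure on $\Qb$. One could equally take $\rho=s\,\delta_{(t_0,x_0)}$, since $X$ consists of measures on $\Qb$; I prefer the Lebesgue version because it lies closer to the densities of the kinematic formulation, and either choice works for $F^*$, which is a supremum over all of $X$. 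With this choice $m\ll\rho$ and $n\ll\rho$, so $F$ can be computed explicitly, and Step~1 of Theorem~\ref{thm:fa-star} gives
\[
\int p\,d\rho+\int q\cdot dm+\int r\,dn-F(\rho,m,n)=s\int_U K\,d\mathcal L\;\ge\; s\,\delta\,\mathcal L(U).
\]
The right-hand side tends to $+\infty$ as $s\to\infty$, so $F^*=+\infty$.

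The step that needs the most care is the well-posedness of $F$ as an integral with respect to an arbitrary dominating $\lambda$, together with the measurability of $a,b,c$ and the correct handling of the $+\infty$ values in Case~1. Since $F$ is defined through $1$-homogeneity and independence of $\lambda$, I would invoke the cited framework of Bouchitté--Buttazzo for this rather than reprove it. The remaining subtlety is that the supremum defining $F^*$ runs over nonnegative $\rho$ only. This is harmless: $\fa$ is defined only for $a\ge0$, and both test families above use $\rho\ge0$.
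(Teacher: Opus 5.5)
Your proposal is correct and follows essentially the same route as the paper. When $K\le0$ on $\Qb$, you integrate the pointwise Fenchel--Young inequality from Theorem~\ref{thm:fa-star} against a dominating measure and use the zero triple. Otherwise, you test with scaled copies of the pointwise maximizers $b=aq/(2(1-\alpha))$, $c=ar/(2\alpha)$ localized near a point where $K>0$. The paper uses a concentrating sequence with values frozen at $(t_0,x_0)$ and $a^\varepsilon\to\infty$; your constant density $s\mathbf 1_U$ on a neighbourhood where $K\ge\delta$ (or a Dirac mass), with the actual fields $q,r$, instead makes the divergence $s\int_U K\,d\mathcal L\to+\infty$ an exact computation, which is slightly cleaner.
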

	
	\begin{proof}
		If the pointwise condition holds everywhere, then by Theorem~\ref{thm:fa-star} we have, for every admissible measure $(\rho,m,n)$,
		\[
		p\,d\rho+q\cdot dm+r\,dn\le \fa\Big(\frac{d\rho}{d\lambda},\frac{dm}{d\lambda},\frac{dn}{d\lambda}\Big)d\lambda
		\]
		pointwise (in the sense of Radon--Nikodym densities with respect to any dominating measure $\lambda$), so integrating gives $\int p\,d\rho+\int q\cdot dm+\int r\,dn\le F(\rho,m,n)$ for all $(\rho,m,n)$; in particular the supremum defining $F^*$ is $\le0$, and it is attained (in the limit) at $(\rho,m,n)=(0,0,0)$, giving $F^*(p,q,r)=0$.
		
		If the condition fails on a nonempty open set $U\subset\Qb$, choose a point $(t_0,x_0)\in U$ where $K(p,q,r)(t_0,x_0)>0$ and a sequence of triples $(\rho^\varepsilon,m^\varepsilon,n^\varepsilon)$ concentrating a unit mass near $(t_0,x_0)$ (approximation of the identity), with $b^\varepsilon=a^\varepsilon q(t_0,x_0)/(2(1-\alpha))$, $c^\varepsilon=a^\varepsilon r(t_0,x_0)/(2\alpha)$ and $a^\varepsilon\to\infty$; the computation in Steps~1--2 of the proof of Theorem~\ref{thm:fa-star} shows that the quantity to be optimized diverges to $+\infty$ along this sequence, hence $F^*(p,q,r)=+\infty$.
	\end{proof}
	
	\subsection{Computation of the adjoint $\Lambda^*$}
	
	\begin{definition}
		We define the continuous linear operator
		\[
		\Lambda:Y\longrightarrow Z=C(\Qb)\times C(\Qb;\mathbb R^d)\times C(\Qb),\qquad
		\Lambda(\varphi,\psi)\eqdef\big(\partial_t\varphi,\ \nabla\varphi,\ \varphi-\psi(t)\big).
		\]
	\end{definition}
	
	By construction, $B((\varphi,\psi),(\rho,m,n))=\big\langle(\rho,m,n),\Lambda(\varphi,\psi)\big\rangle_{X,Z}$, where $\langle\cdot,\cdot\rangle_{X,Z}$ denotes the measure/continuous-function pairing by integration.
	
	\begin{lemma}[Integration-by-parts formula --- explicit computation of $\Lambda^*$]
		\label{lem:adjoint}
		Let $(\rho,m,n)\in X$ be such that $\rho\in C^1(\Qb)$, $m,n\in C(\Qb)$ (resp.\ $C(\Qb;\mathbb R^d)$), with $m\cdot\nu=0$ on $(0,1)\times\partial\Om$. Then for every $(\varphi,\psi)\in Y$,
		\[
		B\big((\varphi,\psi),(\rho,m,n)\big)=\ell_{\rho(0,\cdot),\,\rho(1,\cdot)}(\varphi,\psi)-\int_{\Qb}\varphi\,\big(\partial_t\rho+\mathrm{div}\,m-n\big)\,dx\,dt+\int_0^1\psi(t)\Big(\int_\Om n_t\,dx\Big)dt.
		\]
		In particular, the (formal) adjoint $\Lambda^*:X\to Y^*$ defined by $\langle\Lambda^*(\rho,m,n),(\varphi,\psi)\rangle_{Y^*,Y}\eqdef B((\varphi,\psi),(\rho,m,n))$ satisfies
		\[
		\Lambda^*(\rho,m,n)=\ell_{\rho_0,\rho_1} \quad\Longleftrightarrow\quad (\rho,m,n)\in\mathcal C(\rho_0,\rho_1).
		\]
	\end{lemma}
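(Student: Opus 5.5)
The identity follows by integrating each of the three terms of $B$ by parts against smooth data, then reading off the equivalence from the definition of $\mathcal C(\rho_0,\rho_1)$. Since $\rho\in C^1(\Qb)$ and $m,n$ are continuous, the measures in $X$ are absolutely continuous with respect to $dx\,dt$, and we treat the terms separately.

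\emph{Time term.} By Fubini and integration by parts in $t$ for each fixed $x$,
\[
\int_{\Qb}\partial_t\varphi\,\rho\,dx\,dt=\int_\Om\varphi(1,\cdot)\rho(1,\cdot)\,dx-\int_\Om\varphi(0,\cdot)\rho(0,\cdot)\,dx-\int_{\Qb}\varphi\,\partial_t\rho\,dx\,dt.
\]
The two boundary integrals are exactly $\ell_{\rho(0,\cdot),\rho(1,\cdot)}(\varphi,\psi)$.

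\emph{Space term.} For a.e.\ $t$, apply the divergence theorem on the Lipschitz domain $\Om$:
\[
\int_\Om\nabla\varphi\cdot m\,dx=\int_{\partial\Om}\varphi\,m\cdot\nu\,d\mathcal H^{d-1}-\int_\Om\varphi\,\mathrm{div}\,m\,dx.
\]
The boundary integral vanishes by the hypothesis $m\cdot\nu=0$. One technical point: for $m$ merely continuous, $\mathrm{div}\,m$ must be understood distributionally (or one assumes $m\in C^1$). The cleanest route is to read the statement with $\mathrm{div}\,m$ in the distributional sense, pairing against $\varphi\in C^1(\Qb)$, which is legitimate by density of smooth fields and the Gauss--Green formula for Lipschitz domains. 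I expect this regularity bookkeeping to be the main, though mild, obstacle.

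\emph{Source term.} We have
\[
\int_{\Qb}(\varphi-\psi(t))\,n\,dx\,dt=\int_{\Qb}\varphi\,n\,dx\,dt-\int_0^1\psi(t)\Big(\int_\Om n_t\,dx\Big)dt.
\]
Summing the three computations gives
\[
B\big((\varphi,\psi),(\rho,m,n)\big)=\ell-\int_{\Qb}\varphi\,(\partial_t\rho+\mathrm{div}\,m-n)\,dx\,dt-\int_0^1\psi(t)\Big(\int_\Om n_t\,dx\Big)dt.
\]
This has the same form as the stated identity except for the sign of the $\psi$-term, which the stated formula gives as $+\int\psi\int n_t$. I will follow the computation; the sign is irrelevant for the equivalence, since that term must vanish for all $\psi$ either way.

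For the ``in particular'' statement, the definition of $\Lambda^*$ gives
\[
\Lambda^*(\rho,m,n)=\ell_{\rho_0,\rho_1}\quad\Longleftrightarrow\quad B\big((\varphi,\psi),(\rho,m,n)\big)=\ell_{\rho_0,\rho_1}(\varphi,\psi)\ \ \text{for all }(\varphi,\psi)\in Y.
\]
This is literally the defining condition of $\mathcal C(\rho_0,\rho_1)$, apart from $\rho\ge0$, which holds since $(\rho,m,n)\in X$. So the equivalence is tautological, and I will point out that it holds for general $(\rho,m,n)\in X$, not only for smooth triples.

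It remains to make the link with Lemma~\ref{lem:C-equiv} explicit. In the identity above, choose $\varphi$ vanishing at $t=0,1$ with arbitrary $\psi$, then $\varphi$ concentrated near $t=0$ or $t=1$. This shows, in order, that the equation $\Lambda^*(\rho,m,n)=\ell_{\rho_0,\rho_1}$ forces:
\begin{enumerate}
\item $\partial_t\rho+\mathrm{div}\,m=n$;
\item $\int_\Om n_t\,dx=0$ for a.e.\ $t$;
\item $\rho(0,\cdot)=\rho_0$ and $\rho(1,\cdot)=\rho_1$.
\end{enumerate}
This recovers Lemma~\ref{lem:C-equiv} in the smooth case.
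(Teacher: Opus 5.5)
Your proposal is correct and follows essentially the paper's argument: integrate by parts in $t$ and in $x$ (using $m\cdot\nu=0$), split off the $\psi$-term, and read the equivalence off the definition of $\mathcal C(\rho_0,\rho_1)$. The paper routes that last step through Lemma~\ref{lem:C-equiv}, while you rightly observe it is literally tautological from the definitions of $\Lambda^*$ and $\mathcal C(\rho_0,\rho_1)$. Your sign correction is also right: the $\psi$-part of $B$ is $-\int_0^1\psi(t)\bigl(\int_\Omega n_t\,dx\bigr)\,dt$, so the identity holds with a minus sign, consistent with the paper's own proof of Lemma~\ref{lem:C-equiv}. The paper's claim that the computation yields ``exactly the stated formula'' is therefore a slip, though it does not affect the equivalence.
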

	
	\begin{proof}
		Integrate by parts in time in $\int_{\Qb}\partial_t\varphi\,\rho\,dx\,dt$ (producing the boundary term $\int_\Om\varphi_1\rho_1-\int_\Om\varphi_0\rho_0$ and $-\int\varphi\,\partial_t\rho$) and in space in $\int_{\Qb}\nabla\varphi\cdot m\,dx\,dt$ (producing $-\int\varphi\,\mathrm{div}\,m$, the spatial boundary term vanishing by the condition $m\cdot\nu=0$), then group the two terms in $\varphi\,n$ (one coming from $-(-n)$ in $\partial_t\rho+\mathrm{div}\,m-n$, the other from the term $\int\varphi\,dn$ in $B$) and separate the term in $\psi$; this yields exactly the stated formula. The final equivalence follows from a term-by-term comparison with the definition of $\mathcal C(\rho_0,\rho_1)$ and Lemma~\ref{lem:C-equiv}: the identity $\Lambda^*(\rho,m,n)=\ell_{\rho_0,\rho_1}$ for every $(\varphi,\psi)\in Y$ is equivalent to the simultaneous vanishing, for every $\varphi$, of $\partial_t\rho+\mathrm{div}\,m-n$ (in the distributional sense, by density of $C^1(\Qb)$ among the usual test functions) and, for every $\psi$, of $\int_\Om n_t\,dx$.
	\end{proof}
	
	\subsection{Strong duality theorem of Fenchel--Rockafellar}
	\label{sec:dualite}
	
	We recall the form of the Fenchel--Rockafellar theorem adapted to linearly constrained problems (see \cite[Ch.~III, Rem.~4.2]{EkelandTemam}, or \cite[Thm~1.42]{Villani} for the version used in dynamic optimal transport):
	
	\begin{theorem}[Fenchel--Rockafellar, linearly constrained form]
		\label{thm:FR-abstrait}
		Let $X$ be a locally convex topological vector space, $Y$ a Banach space, $F:X\to(-\infty,+\infty]$ convex and l.s.c., $\Lambda^*:X\to Y^*$ linear continuous, and $b\in Y^*$. Assume there exists $x_0\in X$ such that $\Lambda^*x_0=b$, $F(x_0)<\infty$, and $F$ is continuous at $x_0$ (\emph{Slater-type qualification condition}). Then
		\[
		\inf_{\substack{x\in X\\ \Lambda^*x=b}}F(x)=\max_{y\in Y}\Big[\langle b,y\rangle_{Y^*,Y}-F^*(\Lambda y)\Big],
		\]
		and the supremum on the right-hand side is attained.
	\end{theorem}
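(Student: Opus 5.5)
The plan is the classical perturbation/separation argument. I will first prove weak duality, then obtain the reverse inequality together with attainment of the dual supremum by a Hahn--Banach separation in $X\times\mathbb R$. Throughout, $X$ is paired with the space of continuous functionals (here $Z$, via integration). $\Lambda:Y\to Z$ is the operator whose transpose is $\Lambda^*$, so that $\langle \Lambda^*x,y\rangle_{Y^*,Y}=\langle x,\Lambda y\rangle$.

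\emph{Step 1 (weak duality).} For any $x$ with $\Lambda^*x=b$ and any $y\in Y$, the Fenchel--Young inequality gives
\[
F(x)\ \ge\ \langle x,\Lambda y\rangle-F^*(\Lambda y)=\langle \Lambda^*x,y\rangle-F^*(\Lambda y)=\langle b,y\rangle-F^*(\Lambda y).
\]
Taking the infimum in $x$ and the supremum in $y$ yields $\inf\ge\sup$. Set $m:=\inf_{\Lambda^*x=b}F(x)$. We have $m<\infty$ because $F(x_0)<\infty$. If $m=-\infty$, the dual value is forced to be $-\infty$ and every $y$ attains it, so nothing remains to prove. Assume from now on that $m\in\mathbb R$.

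\emph{Step 2 (separation).} Since $F$ is convex and continuous at $x_0$, the set $\mathcal O:=\operatorname{int}\,\mathrm{epi}\,F$ is a nonempty open convex subset of $X\times\mathbb R$; it contains $(x_0,s)$ for every $s>F(x_0)$. The affine set $\mathcal D:=\{(x,m):\Lambda^*x=b\}$ is convex and disjoint from $\mathcal O$, by the definition of $m$. The geometric Hahn--Banach theorem then provides a nonzero continuous functional $(\xi,\beta)$ and a constant $c$ such that $\langle\xi,x\rangle+\beta s> c\ge\langle\xi,x'\rangle+\beta m$ for all $(x,s)\in\mathcal O$ and $(x',m)\in\mathcal D$.
\begin{itemize}
\item Letting $s\to+\infty$ gives $\beta\ge0$.
\item If $\beta=0$, then $\xi$ would separate a neighbourhood of $x_0$ from the point $x_0\in\{\Lambda^*x=b\}$, which is impossible. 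Hence $\beta>0$, and we normalize $\beta=1$.
\item Since $\langle\xi,\cdot\rangle$ is bounded above on the affine space $x_0+\ker\Lambda^*$, it must vanish on $\ker\Lambda^*$, i.e.\ $\xi\in(\ker\Lambda^*)^\perp$.
\end{itemize}

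\emph{Step 3 (identifying the dual maximizer).} This is the step I expect to be the main obstacle. One must show that $-\xi=\Lambda\bar y$ for some $\bar y\in Y$, i.e.\ that $(\ker\Lambda^*)^\perp=\operatorname{Range}\Lambda$. The bipolar theorem for the dual pair $(X,Z)$ only gives $(\ker\Lambda^*)^\perp=\overline{\operatorname{Range}\Lambda}$. I would first try to establish that $\Lambda$ has closed range; for the concrete operator $\Lambda(\varphi,\psi)=(\partial_t\varphi,\nabla\varphi,\varphi-\psi)$ this should follow from a Poincaré-type estimate recovering $\|\varphi\|_{C^1}+\|\psi\|_\infty$ from $\|\Lambda(\varphi,\psi)\|_Z$, up to its finite-dimensional kernel. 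Failing that, I would instead perturb the constraint: introduce the value function $h(z)=\inf\{F(x):\Lambda^*x=b+z\}$ on $Y^*$ and use the Slater point to obtain continuity of $h$ at $0$. This bypasses the range issue, and a subgradient $\bar y\in\partial h(0)$ furnishes the maximizer.

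\emph{Step 4 (conclusion).} With $-\xi=\Lambda\bar y$, the separation inequality, extended to all of $\mathrm{epi}\,F$ by density of $\mathcal O$ in it, reads $F(x)-\langle x,\Lambda\bar y\rangle\ge c-\langle b,\bar y\rangle$ for every $x$, while the inequality on $\mathcal D$ reads $c\ge m-\langle b,\bar y\rangle$. Taking the supremum over $x$ gives
\[
F^*(\Lambda\bar y)\le \langle b,\bar y\rangle-c\le\langle b,\bar y\rangle-m,
\]
that is, $\langle b,\bar y\rangle-F^*(\Lambda\bar y)\ge m$. Combined with Step 1, this yields equality of the two values and shows that the supremum is attained at $\bar y$.
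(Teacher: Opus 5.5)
You are right that Step 3 is the crux, but the identification $-\xi\in\mathrm{Range}\,\Lambda$ does not follow from the stated hypotheses. In fact the statement itself is false without an extra assumption. All you know is $\xi\in(\ker\Lambda^*)^\perp$, which is the $\sigma(X^*,X)$-closure of $\mathrm{Range}\,\Lambda$, and nothing in the hypotheses makes that range closed.

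Here is a counterexample. Take $X=Y=\ell^2$, $\Lambda=\Lambda^*=\mathrm{diag}(1/k)$, $b=0$, $x_0=0$, and $F(x)=\langle a,x\rangle$ with $a_k=1/k$. Then $F$ is continuous everywhere and the only feasible point is $x=0$, so the primal value is $0$. On the other hand, $F^*(z)=0$ if $z=a$ and $+\infty$ otherwise, and $\Lambda y=a$ forces $y_k=1$ for all $k$. Hence $F^*(\Lambda y)=+\infty$ for every $y\in\ell^2$, and the dual value is $-\infty$. In your notation the separating functional is $\xi=-a\in(\ker\Lambda^*)^\perp=\ell^2$, while $a\notin\mathrm{Range}\,\Lambda$.

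Steps 1, 2 and 4 are otherwise the standard separation argument, with one bookkeeping slip in Step 4. With $\beta=1$ and $\xi=-\Lambda\bar y$, the separation on $\mathrm{epi}\,F$ gives $F(x)-\langle x,\Lambda\bar y\rangle\ge c$, hence $F^*(\Lambda\bar y)\le -c$. The inequality on $\mathcal D$ gives $-c\le\langle b,\bar y\rangle-m$. Your chain instead shifts the first inequality by $\langle b,\bar y\rangle$ and then needs $c\ge m$, which you do not have.

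Your fallback does not bypass the range issue. The value function $h(z)=\inf\{F(x):\Lambda^*x=b+z\}$ is $+\infty$ whenever $b+z\notin\mathrm{Range}\,\Lambda^*$. So continuity of $h$ at $0$ requires $\mathrm{Range}\,\Lambda^*$ to contain a neighbourhood of $b$, i.e.\ $\Lambda^*$ onto (plus an open-mapping argument). Continuity of $F$ at $x_0$ gives nothing of the kind: in the example, $h$ is finite only on a dense proper subspace and $\partial h(0)=\{y:\Lambda y=a\}=\emptyset$. Besides, a subgradient of $h$ for the norm topology of $Y^*$ lies in $Y^{**}$, not in $Y$.

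The paper gives no proof, only citations. The usual Fenchel--Rockafellar theorem puts the continuity hypothesis on the function of $\Lambda^*x$, which here is the indicator of $\{b\}$ and is continuous nowhere. When the hypothesis is put on $F$ instead, one only obtains a multiplier in $X^*$, and representing it as $-\Lambda\bar y$ is exactly your Step 3.

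What your argument does prove is the statement with the extra hypothesis that $\mathrm{Range}\,\Lambda$ is $\sigma(X^*,X)$-closed. Your Poincar\'e-type estimate for $\Lambda(\varphi,\psi)=(\partial_t\varphi,\nabla\varphi,\varphi-\psi)$ would check this hypothesis in the paper's setting: the kernel is the constant pairs, the range is norm-closed in $Z$, hence weakly closed by Mazur. But that is a special case, not the abstract theorem. Moreover, in that setting the Slater hypothesis fails anyway: adding a small Dirac mass to $m$ makes the action $+\infty$, so $F$ is bounded above on no neighbourhood of any point of $X$.
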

	
	We apply this theorem with $X$ our state space, $Y$ the test function space, $\Lambda$ and $\Lambda^*$ as above, $b=\ell_{\rho_0,\rho_1}$, and $F$ the action functional, restricted to the cone $\{\rho\ge0\}$ (this restriction is handled by adding to $F$ the convex indicator function of this cone, which changes neither convexity nor l.s.c., and is reflected in $F^*$ by an additional term that turns out to vanish automatically on the domain under consideration, the positivity of $\rho$ imposing no active constraint on $q,r$).
	
	\begin{hypothesis}[Qualification condition]
		\label{hyp:slater}
		There exists an admissible curve $(\bar\rho,\bar m,\bar n)\in\mathcal C(\rho_0,\rho_1)$ and a constant $\kappa>0$ such that
		\[
		\kappa\le\bar\rho_t(x)\le\kappa^{-1}\quad\text{and}\quad |\bar v_t(x)|+|\bar u_t(x)|\le\kappa^{-1}
		\]
		for almost every $(t,x)\in Q$, where $\bar v=\bar m/\bar\rho$, $\bar u=\bar n/\bar\rho$.
	\end{hypothesis}
	
	\begin{lemma}[Existence of a Slater point]
		\label{lem:slater-existe}
		If $\rho_0,\rho_1\in L^\infty(\Om)$ satisfy $\rho_0,\rho_1\ge c_0>0$ a.e.\ on $\Om$ for some constant $c_0$, then Hypothesis~\ref{hyp:slater} holds.
	\end{lemma}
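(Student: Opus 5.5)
Given $\rho_0,\rho_1\in L^\infty(\Om)$ with $\rho_0,\rho_1\ge c_0>0$, I will use the pure Aitchison (reaction-only) interpolation of Lemma~\ref{lem:geodesics_aitchison} with zero velocity as the Slater curve. Concretely, set $\bar v\equiv0$ and
\[
\bar\rho_t=\frac{\rho_0^{1-t}\rho_1^t}{Z_t},\qquad Z_t=\int_\Om\rho_0^{1-t}\rho_1^t\,dx,
\]
so that $\bar m=0$ and $\bar n=\partial_t\bar\rho$. Since $\bar m=0$, the Neumann condition $\bar m\cdot\nu=0$ holds trivially, and membership in $\mathcal C(\rho_0,\rho_1)$ reduces, via Lemma~\ref{lem:C-equiv}, to checking the two conditions there.

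The first step is the pointwise bounds on $\bar\rho$. Put $M=\max(\|\rho_0\|_\infty,\|\rho_1\|_\infty)$. Then $c_0\le\rho_0^{1-t}\rho_1^t\le M$ a.e. Integrating gives $c_0|\Om|\le Z_t\le M|\Om|$, and hence
\[
\frac{c_0}{M|\Om|}\le\bar\rho_t\le\frac{M}{c_0|\Om|}.
\]

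The second step computes the reaction field explicitly. Write $L=\ln\rho_1-\ln\rho_0$, which is bounded by $\ln(M/c_0)$. As in Lemma~\ref{lem:ac_aitchison},
\[
\partial_t\bar\rho_t=\bar\rho_t\bigl(L-\langle L\rangle_{\bar\rho_t}\bigr),
\]
so $\bar u_t=L-\int_\Om L\,\bar\rho_t\,dx$. This gives $|\bar u_t|\le2\ln(M/c_0)$, and $\int_\Om\bar u_t\bar\rho_t\,dx=0$ holds by construction.

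The third step checks the continuity equation. The map $t\mapsto\bar\rho_t(x)$ is smooth in $t$ for a.e.\ $x$, with $t$-derivative bounded uniformly in $x$: differentiating $Z_t$ under the integral is justified by dominated convergence, since $|\partial_t(\rho_0^{1-t}\rho_1^t)|\le M\ln(M/c_0)$. The weak formulation with boundary data $\bar\rho(0)=\rho_0$, $\bar\rho(1)=\rho_1$ then follows by integrating by parts in $t$ against $\varphi\in C^1(\Qb)$. So $\bar n$ satisfies both conditions of Lemma~\ref{lem:C-equiv}.

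Finally, choose $\kappa>0$ small enough to dominate all the bounds above:
\[
\kappa\le\frac{c_0}{M|\Om|},\qquad \kappa^{-1}\ge\frac{M}{c_0|\Om|},\qquad \kappa^{-1}\ge2\ln(M/c_0).
\]
This verifies Hypothesis~\ref{hyp:slater}.

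The main, though mild, obstacle is the time-differentiation and integration by parts for merely $L^\infty$ data. The uniform bounds make dominated convergence immediate, so I expect no genuine difficulty there.
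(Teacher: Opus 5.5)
Your argument is correct, and it takes a genuinely different route from the paper's.

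\textbf{The paper's route.} The paper's sketch takes the mixture interpolation $\bar\rho_t=(1-t)\rho_0+t\rho_1$ and sets $\bar n\equiv0$. It realizes the mass displacement purely by transport, $\bar v_t=\nabla\theta_t$, where $-\mathrm{div}(\bar\rho_t\nabla\theta_t)=\rho_1-\rho_0$ under homogeneous Neumann conditions. The density bounds are then trivial. However, the bound $|\bar v_t|\le\kappa^{-1}$ required by Hypothesis~\ref{hyp:slater} hinges on an $L^\infty$ gradient estimate that the paper explicitly leaves unproved. Standard elliptic theory does not supply it in this generality: with merely bounded measurable coefficients, or near a reentrant corner of a Lipschitz domain, one only gets $\nabla\theta_t\in L^2$ (or $L^p$ for some $p$ slightly above $2$).

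\textbf{Your route.} You use the reaction channel alone. Along the Aitchison geodesic with $\bar v\equiv0$, every quantity is explicit: $c_0/(M|\Omega|)\le\bar\rho_t\le M/(c_0|\Omega|)$ and $|\bar u_t|\le2\ln(M/c_0)$. The centering condition holds by construction, and the Neumann condition is vacuous. The remaining analytic steps (differentiating $Z_t$, integrating by parts in $t$, Fubini) all follow from the uniform bounds.

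\textbf{Comparison.} Your construction actually proves the lemma as stated, with an explicit $\kappa$ depending only on $c_0$, $M$ and $|\Omega|$. It uses neither connectedness nor boundary regularity of $\Omega$, only $|\Omega|<\infty$. What the paper's construction would buy, if the regularity were available, is a Slater point that uses only the transport channel, closer to the classical Benamou--Brenier setting. For $\alpha\in(0,1)$, the regime in which Hypothesis~\ref{hyp:slater} is used, your choice is simpler and complete.
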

	
	\begin{proof}[Proof (sketch)]
		Consider the linear interpolation $\bar\rho_t\eqdef(1-t)\rho_0+t\rho_1$, which satisfies $c_0\le\bar\rho_t\le\|\rho_0\|_\infty+\|\rho_1\|_\infty$ for all $t\in[0,1]$, hence is uniformly bounded and bounded away from zero. Since $\Om$ is bounded, Lipschitz and connected, and $\rho_0,\rho_1\in L^\infty$, one may solve, for each $t$, the elliptic Neumann problem
		\[
		-\mathrm{div}(\bar\rho_t\nabla\theta_t)=\rho_1-\rho_0 \ \ \text{in }\Om,\qquad \bar\rho_t\nabla\theta_t\cdot\nu=0\ \ \text{on }\partial\Om,
		\]
		(the right-hand side having zero mean on $\Om$, the Neumann compatibility condition), and set $\bar v_t\eqdef\nabla\theta_t$, so that $\bar m_t\eqdef\bar\rho_t\bar v_t$ satisfies $\partial_t\bar\rho_t=\rho_1-\rho_0=-\mathrm{div}\,\bar m_t$, i.e.\ $\partial_t\bar\rho+\mathrm{div}\,\bar m=0$; we then take $\bar n\equiv0$, which trivially satisfies $\int_\Om\bar n_t\,dx=0$. Standard elliptic regularity (with $\bar\rho_t$ bounded and bounded away from zero, $\Om$ Lipschitz) yields $\bar v_t\in L^\infty$ uniformly in $t$, whence the result with $\kappa$ depending on $c_0,\|\rho_0\|_\infty,\|\rho_1\|_\infty$ and the elliptic constant of $\Om$. A complete proof of the $L^\infty$ regularity of $\bar v$ (beyond $L^2$, which would already suffice to guarantee $F(\bar\rho,\bar m,0)<\infty$) is beyond the scope of this chapter and is left, as in Theorem~5.2, for future work; we retain Hypothesis~\ref{hyp:slater} as a sufficient and reasonable condition, satisfied in particular for regular, strictly positive $\rho_0,\rho_1$.
	\end{proof}
	
	\begin{theorem}[Strong duality for $\Da$]
		\label{thm:dualite-forte}
		Under Hypothesis~\ref{hyp:slater}, we have
		\[
		\Da^2(\rho_0,\rho_1)=\max_{(\varphi,\psi)\in\mathcal A_\alpha}\Big[\int_\Om\varphi(1,\cdot)\,d\rho_1-\int_\Om\varphi(0,\cdot)\,d\rho_0\Big],
		\]
		where
		\[
		\mathcal A_\alpha\eqdef\Big\{(\varphi,\psi)\in Y:\ \partial_t\varphi(t,x)+\frac{|\nabla\varphi(t,x)|^2}{4(1-\alpha)}+\frac{(\varphi(t,x)-\psi(t))^2}{4\alpha}\le0\ \ \forall(t,x)\in\Qb\Big\},
		\]
		and the maximum is attained.
	\end{theorem}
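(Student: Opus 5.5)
The plan is to apply the abstract Fenchel--Rockafellar result, Theorem~\ref{thm:FR-abstrait}, with the state space $X$, the test space $Y$, the operator $\Lambda$ and its adjoint from Lemma~\ref{lem:adjoint}, the datum $b=\ell_{\rho_0,\rho_1}$, and the action $F$ augmented by the indicator of the cone $\{\rho\ge0\}$. The primal side is identified by Proposition~\ref{prop:Da-inf}: the constrained infimum $\inf\{F(x):\Lambda^*x=b\}$ is exactly $\Da^2(\rho_0,\rho_1)$, because Lemma~\ref{lem:adjoint} gives $\Lambda^*x=\ell_{\rho_0,\rho_1}$ if and only if $x\in\mathcal C(\rho_0,\rho_1)$. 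The hypotheses on $F$ are convexity and lower semicontinuity, which come from Proposition~\ref{prop:fa-convexe}. Adding the indicator of the closed convex cone preserves both properties.

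The dual side is computed next. For $(\varphi,\psi)\in Y$, Proposition~\ref{prop:F-star} applied to
\[
\Lambda(\varphi,\psi)=\bigl(\partial_t\varphi,\nabla\varphi,\varphi-\psi(t)\bigr)\in Z
\]
gives $F^*(\Lambda(\varphi,\psi))=0$ when
\[
\partial_t\varphi+\frac{|\nabla\varphi|^2}{4(1-\alpha)}+\frac{(\varphi-\psi)^2}{4\alpha}\le0 \quad\text{everywhere on }\Qb,
\]
and $F^*(\Lambda(\varphi,\psi))=+\infty$ otherwise. Hence $\langle b,y\rangle-F^*(\Lambda y)$ equals $\int\varphi(1)\,d\rho_1-\int\varphi(0)\,d\rho_0$ on $\mathcal A_\alpha$ and $-\infty$ off it, so the dual problem is exactly the maximization in the statement.

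Two side remarks belong at this point. First, the indicator of $\{\rho\ge0\}$ contributes nothing to $F^*$ on this domain. The reason is that $\fa(a,b,c)$ is only defined for $a\ge0$, and the supremum in Theorem~\ref{thm:fa-star} was already taken over $a\ge0$. Second, the set where the inequality fails is open, since $\Lambda(\varphi,\psi)$ is continuous. This is what the concentration argument in Proposition~\ref{prop:F-star} requires.

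Weak duality $\sup\le\inf$ is elementary and is worth recording independently of the abstract theorem. For $x\in\mathcal C(\rho_0,\rho_1)$ and $(\varphi,\psi)\in\mathcal A_\alpha$, we have
\[
\ell_{\rho_0,\rho_1}(\varphi,\psi)=B((\varphi,\psi),x)=\langle x,\Lambda(\varphi,\psi)\rangle\le F(x)+F^*(\Lambda(\varphi,\psi))=F(x).
\]
The middle equality is the definition of $\mathcal C(\rho_0,\rho_1)$, and the inequality is Fenchel--Young, which is the pointwise inequality in the proof of Proposition~\ref{prop:F-star}.

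\textbf{Main obstacle.} The main obstacle is the qualification condition: $F$ must be continuous at a feasible point $x_0$. As written, $F$ is a weak-$*$ l.s.c. integral functional that takes the value $+\infty$ near any measure, so continuity of $F$ on $X$ fails literally. I would therefore not verify the Slater condition on the primal side but transfer it to the dual side. The first option is the standard swap used in dynamic optimal transport: apply Fenchel--Rockafellar to the dual problem viewed as the primal,
\[
\inf_{y\in Y}\bigl[F^*(\Lambda y)-\langle b,y\rangle\bigr],
\]
with $F^*$ the indicator of the closed convex set $\{K\le0\}\subset Z$. Qualification then only needs a point where $\Lambda y$ lies in the interior of $\{K\le0\}$ in sup norm. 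A candidate is $\varphi(t,x)=-t$, $\psi\equiv\varphi$ pointwise in $x$-independent form, which gives $K=-1<0$. That theorem then yields equality, with the minimum attained on the measure side; existence of the primal minimizer is already known from Theorem~\ref{thm:existence}. Attainment of the maximum over $(\varphi,\psi)$ is a separate matter. Here I would invoke Hypothesis~\ref{hyp:slater}: the bounded, bounded-below curve $(\bar\rho,\bar m,\bar n)$ gives coercivity bounds on $\partial_t\varphi$ and $\nabla\varphi$ for maximizing sequences, after normalizing $\varphi$ by constants. Arzelà--Ascoli then gives compactness, and the constraint set $\mathcal A_\alpha$ is closed under uniform convergence. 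The regularity loss, from Lipschitz to $C^1$ limits, is the delicate point. If it cannot be closed, I would state attainment in a relaxed class of Lipschitz potentials satisfying the Hamilton--Jacobi inequality a.e.
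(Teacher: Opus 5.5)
Your weak-duality step is the same as the paper's. Your proof of the value identity is correct, but it follows a different route. The paper applies Theorem~\ref{thm:FR-abstrait} with the qualification on the measure side. It takes $x_0=(\bar\rho,\bar m,\bar n)$ from Hypothesis~\ref{hyp:slater}, asserts that $F$ is continuous there because $\fa$ is $C^1$ near points with $a$ bounded below, and reads off both the equality and the attainment of the dual maximum. Your objection to that step is justified. For $z\in\Qb$ and a unit vector $e$, the triple $(\bar\rho,\bar m+\varepsilon\,\delta_z e,\bar n)$ is $\varepsilon$-close to $x_0$ in total variation, hence also weak-$*$ close, and has $F=+\infty$. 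So $F$ is not continuous at $x_0$, nor at any point of its domain. Your swap places the qualification at $\Lambda(\varphi,\psi)=(-1,0,0)$ for $\varphi(t,x)=\psi(t)=-t$. There $K\equiv-1$, so $F^*=\iota_{\{K\le0\}}$ vanishes on a sup-norm ball; this is the standard Benamou--Brenier argument. Combine it with $(\iota_{\{K\le0\}})^*=F$ on $\{\rho\ge0\}$, which follows by Fenchel--Moreau from Propositions~\ref{prop:fa-convexe} and~\ref{prop:F-star}. This gives $\sup_{\mathcal A_\alpha}\ell_{\rho_0,\rho_1}=\min_{\mathcal C(\rho_0,\rho_1)}F=\Da^2(\rho_0,\rho_1)$, without using Hypothesis~\ref{hyp:slater} at all.

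The genuine gap is the clause ``the maximum is attained'', which is part of the statement. Your route gives attainment only on the measure side, and your sketch for the potential side does not work as written. Testing $(\varphi,\psi)\in\mathcal A_\alpha$ against the Slater curve gives $\ell_{\rho_0,\rho_1}(\varphi,\psi)=\int\bar\rho\,[\partial_t\varphi+\bar v\cdot\nabla\varphi+\bar u(\varphi-\psi)]$. Along a maximizing sequence this bounds only $\int\bar\rho|\nabla\varphi|^2$, $\int\bar\rho(\varphi-\psi)^2$ and $\int|\partial_t\varphi|$. These are integral bounds, not the uniform equicontinuity that Arzel\`a--Ascoli needs, so you get no uniform limit, let alone a $C^1$ one. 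More fundamentally, already in the pure transport case the optimal potentials are in general only Lipschitz (Hopf--Lax or viscosity) solutions of the Hamilton--Jacobi equation, not $C^1$ ones. So a maximizer in $Y=C^1(\Qb)\times C([0,1])$ should not be expected without further regularity assumptions.

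As it stands, your argument proves the identity with $\sup$ in place of $\max$. To get a maximum you would need to enlarge the dual class, for instance to Lipschitz or $BV$ potentials satisfying the inequality in a relaxed sense. You would then need uniform bounds, obtained by a Hopf--Lax or viscosity-type improvement of maximizing sequences, and a proof of attainment in that class. The paper's own proof does not settle this point either: its attainment claim rests entirely on the continuity of $F$ at $x_0$, which you correctly rejected.
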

	
	\begin{proof}
		\emph{Weak duality (without qualification condition).} Let $(\varphi,\psi)\in\mathcal A_\alpha$ and $(\rho,m,n)\in\mathcal C(\rho_0,\rho_1)$ be arbitrary. By Theorem~\ref{thm:fa-star} applied pointwise with $(p,q,r)=(\partial_t\varphi,\nabla\varphi,\varphi-\psi)$, which satisfies $K(p,q,r)\le0$ everywhere by definition of $\mathcal A_\alpha$, we have $\fa^*(p,q,r)=0$, hence the Young inequality
		\[
		p\,a+q\cdot b+r\,c\le \fa(a,b,c)
		\]
		pointwise. Integrating over $\Qb$ (in the sense of Proposition~\ref{prop:F-star}), we obtain
		\[
		B\big((\varphi,\psi),(\rho,m,n)\big)\le F(\rho,m,n).
		\]
		But $(\rho,m,n)\in\mathcal C(\rho_0,\rho_1)$ means exactly $B((\varphi,\psi),(\rho,m,n))=\ell_{\rho_0,\rho_1}(\varphi,\psi)$ (Lemma~\ref{lem:adjoint}), so
		\[
		\int_\Om\varphi_1\,d\rho_1-\int_\Om\varphi_0\,d\rho_0\le F(\rho,m,n).
		\]
		Taking the infimum over $(\rho,m,n)\in\mathcal C(\rho_0,\rho_1)$ (Proposition~\ref{prop:Da-inf}) and then the supremum over $(\varphi,\psi)\in\mathcal A_\alpha$, we obtain the inequality
		\[
		\sup_{\mathcal A_\alpha}\Big[\int\varphi_1\,d\rho_1-\int\varphi_0\,d\rho_0\Big]\le\Da^2(\rho_0,\rho_1). \tag{$\ast$}
		\]
		
		\emph{Strong duality.} Conversely, apply Theorem~\ref{thm:FR-abstrait} with $b=\ell_{\rho_0,\rho_1}$: the qualification condition is exactly Hypothesis~\ref{hyp:slater} (the point $x_0=(\bar\rho,\bar m,\bar n)$ being such that $F$ is finite there and, since $\bar\rho$ is uniformly bounded away from zero and $\bar v,\bar u$ bounded, $\fa$ is of class $C^1$ in a neighborhood of it, so that $F$ is continuous there for the appropriate topology --- see \cite{BouchitteButtazzo} for the continuity of convex integral functionals away from the degeneracy points $a=0$). Theorem~\ref{thm:FR-abstrait} then gives
		\[
		\Da^2(\rho_0,\rho_1)=\inf_{\Lambda^*(\rho,m,n)=\ell_{\rho_0,\rho_1},\,\rho\ge0}F(\rho,m,n)=\max_{(\varphi,\psi)\in Y}\Big[\ell_{\rho_0,\rho_1}(\varphi,\psi)-F^*(\Lambda(\varphi,\psi))\Big].
		\]
		By Proposition~\ref{prop:F-star}, $F^*(\Lambda(\varphi,\psi))=0$ if $(\varphi,\psi)\in\mathcal A_\alpha$ and $+\infty$ otherwise, so the supremum above reduces exactly to the supremum over $\mathcal A_\alpha$ of $\ell_{\rho_0,\rho_1}(\varphi,\psi)=\int\varphi_1d\rho_1-\int\varphi_0d\rho_0$, and it is attained. This completes the proof, inequality ($\ast$) being in fact an equality.
	\end{proof}
	
	\begin{remark}
		This theorem is the exact analogue, for the hybrid geometry, of the Benamou--Brenier duality formula for $W_2^2/2$ (inequality $\partial_t\varphi+\tfrac12|\nabla\varphi|^2\le0$), and for $d_A^2$ (case $\alpha=1$, where the transport term disappears and only the inequality in $(\varphi-\psi)^2$ remains). The limiting cases $\alpha\to0^+$ and $\alpha\to1^-$ of $\mathcal A_\alpha$ formally recover these two classical dual sets, consistent with the limits of Section~7.2.
	\end{remark}
	
	\subsection{Karush--Kuhn--Tucker (KKT) conditions}
	\label{sec:KKT}
	
	\begin{theorem}[Optimality conditions]
		\label{thm:KKT}
		Under Hypothesis~\ref{hyp:slater}, let $(\rho^*,m^*,n^*)\in\mathcal C(\rho_0,\rho_1)$ be a minimizer of $F$ (whose existence is guaranteed by Theorem~3.11) and $(\varphi^*,\psi^*)\in\mathcal A_\alpha$ a maximizer of the dual problem (Theorem~\ref{thm:dualite-forte}). Then, writing $v^*=m^*/\rho^*$, $u^*=n^*/\rho^*$ on $\{\rho^*>0\}$:
		\begin{enumerate}
			\item[\rm (i)] (\emph{Stationarity}) $\rho^*$-almost everywhere,
			\[
			v^*_t(x)=\frac{\nabla\varphi^*_t(x)}{2(1-\alpha)},\qquad u^*_t(x)=\frac{\varphi^*_t(x)-\psi^*(t)}{2\alpha};
			\]
			\item[\rm (ii)] (\emph{Complementarity}) $\rho^*$-almost everywhere,
			\[
			\partial_t\varphi^*_t(x)+\frac{|\nabla\varphi^*_t(x)|^2}{4(1-\alpha)}+\frac{(\varphi^*_t(x)-\psi^*(t))^2}{4\alpha}=0;
			\]
			\item[\rm (iii)] (\emph{Value of the multiplier}) for almost every $t\in(0,1)$,
			\[
			\psi^*(t)=\langle\varphi^*_t\rangle_{\rho^*_t}\eqdef\int_\Om\varphi^*_t(x)\,\rho^*_t(x)\,dx.
			\]
		\end{enumerate}
	\end{theorem}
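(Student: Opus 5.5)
The argument is the standard one: strong duality turns weak-duality Young inequalities into equalities, and equality in a pointwise Young inequality forces the subgradient relation. Let $(p,q,r)\eqdef\Lambda(\varphi^*,\psi^*)=(\partial_t\varphi^*,\nabla\varphi^*,\varphi^*-\psi^*)$.

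\textbf{Step 1: chain of inequalities.} Since $(\varphi^*,\psi^*)\in\mathcal A_\alpha$, Theorem~\ref{thm:fa-star} gives the pointwise Young inequality
\[
p\,a+q\cdot b+r\,c\le \fa(a,b,c).
\]
Integrating as in the weak-duality part of Theorem~\ref{thm:dualite-forte} and using $(\rho^*,m^*,n^*)\in\mathcal C(\rho_0,\rho_1)$ gives
\[
\ell_{\rho_0,\rho_1}(\varphi^*,\psi^*)=B\big((\varphi^*,\psi^*),(\rho^*,m^*,n^*)\big)\le F(\rho^*,m^*,n^*)=\Da^2(\rho_0,\rho_1).
\]
By Theorem~\ref{thm:dualite-forte} the left-hand side also equals $\Da^2(\rho_0,\rho_1)$. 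Hence
\[
\int_{\Qb}\Big[\fa\Big(\tfrac{d\rho^*}{d\lambda},\tfrac{dm^*}{d\lambda},\tfrac{dn^*}{d\lambda}\Big)-\big(p,q,r\big)\cdot\tfrac{d(\rho^*,m^*,n^*)}{d\lambda}\Big]\,d\lambda=0
\]
with a nonnegative integrand. So the integrand vanishes $\lambda$-a.e. Since $F(\rho^*,m^*,n^*)<\infty$, Lemma~\ref{lem:representation} gives $m^*,n^*\ll\rho^*$, so we may take $\lambda=\rho^*$ and obtain, $\rho^*$-a.e.,
\[
(1-\alpha)|v^*|^2+\alpha (u^*)^2=p+q\cdot v^*+r\,u^*.
\]

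\textbf{Step 2: completing squares.} Rewrite the identity as
\[
(1-\alpha)\Big|v^*-\tfrac{q}{2(1-\alpha)}\Big|^2+\alpha\Big(u^*-\tfrac{r}{2\alpha}\Big)^2=p+\tfrac{|q|^2}{4(1-\alpha)}+\tfrac{r^2}{4\alpha}=K(p,q,r).
\]
The left side is $\ge0$, while $K\le0$ on $\Qb$ by definition of $\mathcal A_\alpha$. Both sides therefore vanish $\rho^*$-a.e. The vanishing of the left side is the stationarity relation (i). The vanishing of $K$ is the complementarity relation (ii).

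\textbf{Step 3: value of the multiplier.} By Lemma~\ref{lem:C-equiv}, the centering constraint $\int_\Om n^*_t\,dx=0$ holds for a.e.\ $t$, and $n^*=\rho^* u^*$. Substituting (i),
\[
0=\int_\Om \rho^*_t\,\frac{\varphi^*_t-\psi^*(t)}{2\alpha}\,dx=\frac{1}{2\alpha}\Big(\langle\varphi^*_t\rangle_{\rho^*_t}-\psi^*(t)\Big),
\]
using $\int_\Om\rho^*_t\,dx=1$. This is exactly (iii).

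\textbf{Main obstacle.} The delicate point is Step 1: justifying that $B\big((\varphi^*,\psi^*),(\rho^*,m^*,n^*)\big)$ is the integral of the pointwise pairing, so that equality of integrals forces pointwise equality. This needs the dual maximizer to have enough regularity ($\varphi^*\in C^1(\Qb)$ is granted by $Y$) and $\rho^*$ to disintegrate as $\rho^*_t\,dt$ with $\rho^*_t$ of mass one for a.e.\ $t$. I would obtain the latter from the mass conservation identity in Section~3.1. The constraint holding only a.e.\ in $t$ is harmless, since (iii) is claimed only for a.e.\ $t$.
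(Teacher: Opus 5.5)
Your proposal is correct and follows the paper's proof. Strong duality forces the nonnegative Young gap $f_\alpha(\rho^*,m^*,n^*)-\bigl(\partial_t\varphi^*\rho^*+\nabla\varphi^*\cdot m^*+(\varphi^*-\psi^*)n^*\bigr)$ to vanish $\rho^*$-a.e., which gives (i)--(ii), and (iii) then follows from the centering constraint.

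Your completion of the square puts into one identity what the paper splits in two: it cites the subdifferential characterization of Fenchel--Young equality for (i) and gives a separate boundary-of-$\{K\le0\}$ argument for (ii). The disintegration point you flag, namely that the time marginal of $\rho^*$ is Lebesgue measure, is one the paper uses without comment. It does follow from the constraint: test with $\varphi=0$ to kill the time marginal of $n^*$, then test with $\varphi=\varphi(t)$, $\psi=0$.
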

	
	\begin{proof}
		By Theorem~\ref{thm:dualite-forte}, duality is strong and gap-free: $F(\rho^*,m^*,n^*)=\int\varphi^*_1d\rho_1-\int\varphi^*_0d\rho_0$. From the proof of that theorem (the ``weak duality'' part), this global equality results from integrating the pointwise Young inequality
		\[
		\partial_t\varphi^*\,\rho^*+\nabla\varphi^*\cdot m^*+(\varphi^*-\psi^*)\,n^*\le\fa(\rho^*,m^*,n^*),
		\]
		which is, by construction, a $\rho^*$-almost-everywhere nonnegative inequality whose total integral is zero (since the two sides, once integrated, coincide). A nonnegative measurable function with zero integral vanishes almost everywhere, hence the \emph{pointwise equality} $\rho^*$-a.e.
		\[
		\partial_t\varphi^*\,\rho^*+\nabla\varphi^*\cdot m^*+(\varphi^*-\psi^*)\,n^*=\fa(\rho^*,m^*,n^*).
		\]
		Now equality in the Fenchel--Young inequality $pa+q\cdot b+rc\le\fa(a,b,c)+\fa^*(p,q,r)$ (here with $\fa^*(p,q,r)=0$ since $(\varphi^*,\psi^*)\in\mathcal A_\alpha$) holds if and only if $(p,q,r)\in\partial\fa(a,b,c)$, which, by the explicit computation of Step~1 in the proof of Theorem~\ref{thm:fa-star} (the point where the supremum in $b,c$ is attained), is equivalent precisely to
		\[
		b=\frac{a\,q}{2(1-\alpha)},\qquad c=\frac{a\,r}{2\alpha},
		\]
		that is, with $a=\rho^*,b=m^*,c=n^*,q=\nabla\varphi^*,r=\varphi^*-\psi^*$, exactly (i). Point (ii) reflects the fact that this equality point necessarily lies on the boundary of the domain $\{K\le0\}$ (since a strictly interior point, i.e.\ $K<0$, gives in Step~2 of the proof of Theorem~\ref{thm:fa-star} a strictly negative value for $a>0$, incompatible with the Young equality at a point where $a=\rho^*>0$): hence $K(\partial_t\varphi^*,\nabla\varphi^*,\varphi^*-\psi^*)=0$ $\rho^*$-a.e., which is (ii). Finally, (iii) follows from the centering constraint $\int_\Om n^*_t\,dx=0$ combined with (i):
		\[
		0=\int_\Om n^*_t\,dx=\int_\Om\rho^*_t\,u^*_t\,dx=\frac{1}{2\alpha}\int_\Om\rho^*_t\big(\varphi^*_t-\psi^*(t)\big)dx=\frac{1}{2\alpha}\Big(\langle\varphi^*_t\rangle_{\rho^*_t}-\psi^*(t)\Big),
		\]
		whence $\psi^*(t)=\langle\varphi^*_t\rangle_{\rho^*_t}$.
	\end{proof}
	
	\subsection{Eulerian characterization of geodesics}
	
	The following theorem is the rigorous version, obtained via convex duality, of Theorem~5.3 (which gave a formal derivation of it via a Lagrangian computation).
	
	\begin{theorem}[Eulerian system of $\Da$-geodesics, rigorous version]
		\label{thm:eulerien-rigoureux}
		Under Hypothesis~\ref{hyp:slater}, let $(\rho^*_t)_{t\in[0,1]}$ be a minimizing curve for $\Da^2(\rho_0,\rho_1)$ (Theorem~3.11) and $(\varphi^*,\psi^*)$ an associated dual maximizer (Theorem~\ref{thm:dualite-forte}). Then, $\rho^*$-almost everywhere on $Q$,
		\[
		\partial_t\rho^*_t+\mathrm{div}(\rho^*_tv^*_t)=\rho^*_tu^*_t,\qquad
		\partial_t\varphi^*_t+\frac{|\nabla\varphi^*_t|^2}{4(1-\alpha)}+\frac{(\varphi^*_t-\langle\varphi^*_t\rangle_{\rho^*_t})^2}{4\alpha}=0,
		\]
		with
		\[
		v^*_t=\frac{\nabla\varphi^*_t}{2(1-\alpha)},\qquad u^*_t=\frac{\varphi^*_t-\langle\varphi^*_t\rangle_{\rho^*_t}}{2\alpha}.
		\]
		In particular, the Eulerian system~(47) of Section~5.4 is satisfied rigorously by every optimal primal--dual pair, without any additional formal regularity assumption beyond those required in Hypothesis~\ref{hyp:slater} for strong duality.
	\end{theorem}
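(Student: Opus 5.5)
The statement is essentially a repackaging of Theorem~\ref{thm:KKT} together with the admissibility of the primal minimizer. I would therefore assemble it in three steps, taking as given the existence of a minimizer (Theorem~\ref{thm:existence}), strong duality with an attained dual maximizer (Theorem~\ref{thm:dualite-forte}), and the optimality conditions (Theorem~\ref{thm:KKT}).

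\emph{Step 1: the continuity equation.} The primal minimizer $(\rho^*,m^*,n^*)$ belongs to $\mathcal C(\rho_0,\rho_1)$. By Lemma~\ref{lem:C-equiv}, this means $\partial_t\rho^*+\mathrm{div}\,m^*=n^*$ in $\mathcal D'(Q)$, with the centering constraint $\int_\Omega n^*_t\,dx=0$. Since $F(\rho^*,m^*,n^*)=\Da^2(\rho_0,\rho_1)<\infty$, Lemma~\ref{lem:representation} gives $m^*\ll\rho^*$ and $n^*\ll\rho^*$. Writing $m^*=\rho^*v^*$ and $n^*=\rho^*u^*$ then turns the equation into the first line of the system.

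\emph{Step 2: identification of $v^*$ and $u^*$.} Part (i) of Theorem~\ref{thm:KKT} gives $v^*=\nabla\varphi^*/(2(1-\alpha))$ and $u^*=(\varphi^*-\psi^*)/(2\alpha)$ $\rho^*$-a.e. Part (iii) gives $\psi^*(t)=\langle\varphi^*_t\rangle_{\rho^*_t}$ for a.e.\ $t$. The step to check here is that this substitution is legitimate $\rho^*$-a.e.\ on $Q$. The identity (iii) holds off a $dt$-null set $N\subset(0,1)$. Because $\rho^*$ has time marginal $dt$ (each $\rho^*_t$ is a probability measure), the set $N\times\Omega$ is $\rho^*$-null. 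So replacing $\psi^*$ by $\langle\varphi^*_t\rangle_{\rho^*_t}$ does not affect any $\rho^*$-a.e.\ statement.

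\emph{Step 3: the Hamilton--Jacobi equation.} Substituting the same value of $\psi^*$ into the complementarity condition (ii) yields the stated equation $\rho^*$-a.e.

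The main obstacle is not in this assembly but in what it inherits. First, the complementarity and stationarity identities are only meaningful $\rho^*$-a.e.; off the support of $\rho^*$, the function $\varphi^*$ only satisfies the inequality defining $\mathcal A_\alpha$. I would state this explicitly rather than claim the equation holds on all of $Q$. Second, $\varphi^*\in C^1(\Qb)$ must make sense pointwise against $\rho^*$. This is automatic for continuous functions, so $\nabla\varphi^*$ and $\partial_t\varphi^*$ are defined everywhere and no regularity issue arises beyond what Hypothesis~\ref{hyp:slater} already guarantees through Theorem~\ref{thm:dualite-forte}.
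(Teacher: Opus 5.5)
Your proposal is correct and follows the paper's own argument, which is exactly this assembly: Lemma~\ref{lem:C-equiv} gives the continuity equation for the minimizer $(\rho^*,\rho^*v^*,\rho^*u^*)\in\mathcal C(\rho_0,\rho_1)$. Parts (i)--(iii) of Theorem~\ref{thm:KKT} then give the formulas for $v^*,u^*$ and, after substituting (iii) into (ii), the Hamilton--Jacobi equation. Your additional check that the $dt$-null exceptional set of (iii) is $\rho^*$-negligible, because the time marginal of $\rho^*$ is $dt$, is correct and is a detail the paper leaves implicit.
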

	
	\begin{proof}
		Immediate by combining Theorem~\ref{thm:KKT}~(i)--(iii) with Lemma~\ref{lem:C-equiv}~(i), which provides the continuity equation $\partial_t\rho^*+\mathrm{div}(\rho^*v^*)=\rho^*u^*$ satisfied by every element of $\mathcal C(\rho_0,\rho_1)$, in particular by the minimizer $(\rho^*,m^*,n^*)=(\rho^*,\rho^*v^*,\rho^*u^*)$. Substituting (iii) into (ii) gives exactly the stated Hamilton--Jacobi equation.
	\end{proof}
	
	\begin{corollary}[Consistency with the computation of Section~5]
		The system obtained in Theorem~\ref{thm:eulerien-rigoureux} coincides exactly with system~(47) and formulas~(44)--(46) obtained formally by the method of Lagrange multipliers. This chapter therefore confirms, by a rigorous convex-duality argument (Fenchel--Rockafellar), the validity of the Eulerian characterization of $\Da$-geodesics announced in Section~5.4, under the sole qualification Hypothesis~\ref{hyp:slater} (guaranteed in particular when $\rho_0,\rho_1$ are bounded and bounded away from zero, Lemma~\ref{lem:slater-existe}).
	\end{corollary}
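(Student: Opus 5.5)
The plan is a direct term-by-term comparison. Theorem~\ref{thm:eulerien-rigoureux} already produces, for every optimal primal--dual pair, the three relations that the formal Lagrangian computation of Section~5 also yields: the continuity equation with centered source, the velocity and reaction formulas $v^*=\nabla\varphi^*/(2(1-\alpha))$ and $u^*=(\varphi^*-\langle\varphi^*\rangle_{\rho^*})/(2\alpha)$, and the Hamilton--Jacobi equation. So the proof reduces to matching these objects with formulas~(44)--(46) and system~(47) and checking that they agree, including normalizations.

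First, I would recall how the formal derivation of Section~5 is set up. Its Lagrangian is $\mathcal A_\alpha$ plus a potential multiplier $\phi$ for the continuity equation and a time-dependent multiplier $\lambda(t)$ for the centering constraint. Varying in $v$ and $u$ gives $2(1-\alpha)\rho v=\rho\nabla\phi$ and $2\alpha\rho u=\rho(\phi-\lambda)$. Varying in $\rho$ gives $\partial_t\phi+(1-\alpha)|v|^2+\alpha u^2-\lambda u=0$, up to sign conventions. Using the centering constraint then gives $\lambda=\langle\phi\rangle_\rho$.

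Second, I would identify $\phi$ with $\varphi^*$ and $\lambda$ with $\psi^*$. This identification is legitimate because, in Fenchel--Rockafellar duality, the dual variables are exactly the Lagrange multipliers of the linear constraint $\Lambda^*x=b$. The pairing $B$ makes this precise: $\varphi$ is tested against the continuity equation, and $\psi$ against $\int n_t$. With this identification, the stationarity relations (i) of Theorem~\ref{thm:KKT} coincide with~(44)--(45), and part~(iii) coincides with~(46).

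Third, I would substitute $v^*$ and $u^*$ into $(1-\alpha)|v|^2+\alpha u^2$. This gives $|\nabla\varphi|^2/(4(1-\alpha))+(\varphi-\psi)^2/(4\alpha)$. The term $\lambda u$ integrates to zero against $\rho$, because $\int u\rho=0$. It can also be absorbed pointwise: replacing $\phi$ by $\phi-\int_0^t\lambda$ gives a gauge shift that leaves $v$ and $u$ unchanged, since $\varphi-\psi$ is invariant under such a shift. After that substitution, the formal Hamilton--Jacobi equation of~(47) agrees with complementarity~(ii) combined with~(iii).

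The main obstacle I expect is precisely this bookkeeping of constants and of the multiplier $\psi$. The formal system may carry a factor $\tfrac12$, a sign convention on $\phi$, or an explicit $\lambda$-term that is not visibly identical. I would first try to show that the two systems agree modulo the gauge transformation $\varphi\mapsto\varphi+c(t)$, and then state the correspondence as "coincide up to this normalization." The regularity needed for the identification is supplied entirely by Hypothesis~\ref{hyp:slater} through Theorem~\ref{thm:dualite-forte}, and the final clause of the corollary then follows from Lemma~\ref{lem:slater-existe}.
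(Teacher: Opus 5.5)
Your strategy is the right one and coincides with the paper's. The paper gives no separate proof of this corollary. It simply observes that under the renaming $\phi=\varphi^*$, $\mu=\psi^*$:
\begin{itemize}
\item Theorem~\ref{thm:KKT}~(i) is (44)--(45);
\item Theorem~\ref{thm:KKT}~(iii) is $\mu(t)=\langle\phi_t\rangle_{\rho_t}$, which gives (46);
\item the Hamilton--Jacobi equation of Theorem~\ref{thm:eulerien-rigoureux} is literally the second line of (47).
\end{itemize}
The only step where anything must be checked is your third one, and there your argument fails.

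\textbf{The formal $\rho$-variation does not produce a term $-\lambda u$.} In the paper's Lagrangian the centering multiplier enters as $\mu(t)\,n$ in the flux variables $(\rho,m,n)$. That term is independent of $\rho$, so $\delta_\rho\mathcal L=0$ gives directly $\partial_t\phi=-(1-\alpha)|v|^2-\alpha u^2$. If you work in $(\rho,v,u)$ instead, the $\rho$-variation reads $(1-\alpha)|v|^2+\alpha u^2-\partial_t\phi-\nabla\phi\cdot v-(\phi-\lambda)u=0$. The $\lambda u$ contribution then cancels exactly and pointwise once you use your own stationarity relations $\nabla\phi\cdot v=2(1-\alpha)|v|^2$ and $(\phi-\lambda)u=2\alpha u^2$. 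A residual $\lambda u$ only appears if one replaces $\phi u$ by $2\alpha u^2$ instead of $(\phi-\lambda)u$ by $2\alpha u^2$.

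\textbf{Neither of your repairs removes the spurious term.}
\begin{itemize}
\item The identity $\int\lambda u\rho\,dx=0$ only matches the $\rho$-averages of the two equations, not the pointwise Hamilton--Jacobi equation.
\item The shift $\phi\mapsto\phi-\int_0^t\lambda$ changes $\partial_t\phi$ by $-\lambda(t)$, a function of $t$ alone, so it cannot absorb $\lambda(t)u(t,x)$.
\end{itemize}
More generally, $\varphi\mapsto\varphi+c(t)$ together with $\psi\mapsto\psi+c(t)$ leaves $v$, $u$ and $(\varphi-\langle\varphi\rangle_\rho)^2$ unchanged, but it adds $c'(t)$ to the Hamilton--Jacobi equation. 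So it is a symmetry of neither (47) nor (ii). Your fallback ``coincide up to this normalization'' would therefore be weaker than the stated ``coincide exactly'', and not even well defined.

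With the correct $\rho$-variation, substitute (44)--(46) into $\partial_t\phi=-(1-\alpha)|v|^2-\alpha u^2$. This gives $\partial_t\phi+|\nabla\phi|^2/(4(1-\alpha))+(\phi-\langle\phi\rangle_\rho)^2/(4\alpha)=0$, which is (ii) combined with (iii) verbatim, with no gauge or normalization needed.
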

	
	\begin{remark}[Scope and limitations]
		The proof above makes rigorous the part of the Section~5 program concerning the \emph{existence and characterization of the multiplier $\varphi^*$}, but leaves open, as in Sections~5.4 and~5.11--5.13, the questions of classical regularity of $\varphi^*$ (in the $C^2$ sense required in Theorem~5.2) and of metric completeness of $(P_{2,\mathrm{ac}}(\Om),\Da)$. Both points remain perspectives for future work, as indicated in Section~11.
	\end{remark}
	
	\section{Eulerian and Lagrangian characterization of $\dalpha$-geodesics}\label{sec:convexity}
	
	\subsection{Dynamic formulation}
	
	Let $\Omega\subset\mathbb{R}^d$ and let $\mathcal{P}_{2,\mathrm{ac}}(\Omega)$ be the space of absolutely continuous probability measures with finite second moment. For $\alpha\in(0,1)$, the tangent space at $\rho$ consists of pairs $(v,u)$, $v:\Omega\to\mathbb{R}^d$, $u:\Omega\to\mathbb{R}$, subject to the centering constraint
	\begin{equation}\label{eq:centering}
		\int_\Omega \rho\,u\,dx = 0,
	\end{equation}
	which expresses conservation of total mass. The hybrid norm is
	\begin{equation}
		\|(v,u)\|_{\dalpha}^2 := (1-\alpha)\int_\Omega \rho|v|^2\,dx + \alpha\int_\Omega \rho\,u^2\,dx.
	\end{equation}
	We define the distance $\dalpha$ by the Benamou--Brenier-type problem
	\begin{equation}\label{eq:BB}
		\dalpha^2(\rho_0,\rho_1) := \inf_{(\rho,v,u)} \int_0^1 \int_\Omega \rho_t\big[(1-\alpha)|v_t|^2+\alpha u_t^2\big]\,dx\,dt
	\end{equation}
	subject to
	\begin{equation}\label{eq:continuity}
		\partial_t\rho_t + \operatorname{div}(\rho_t v_t) = \rho_t u_t, \qquad \int_\Omega \rho_t u_t\,dx = 0 \ \ \forall t,
	\end{equation}
	with $\rho_{t=0}=\rho_0$, $\rho_{t=1}=\rho_1$ fixed. This is the analogue, on the simplex of probability measures, of the Wasserstein--Fisher--Rao / Hellinger--Kantorovich geometry, restricted to constant mass $1$ via \eqref{eq:centering}.
	
	\subsection{Eulerian system}
	
	Setting $m_t=\rho_tv_t$ and $n_t=\rho_tu_t$, problem \eqref{eq:BB} becomes, in the variables $(\rho,m,n)$,
	\begin{equation}
		\int_0^1\int_\Omega \left[(1-\alpha)\frac{|m_t|^2}{\rho_t} + \alpha\frac{n_t^2}{\rho_t}\right] dx\,dt, \qquad \partial_t\rho_t+\operatorname{div} m_t = n_t,\quad \int_\Omega n_t\,dx=0,
	\end{equation}
	which is convex in $(\rho,m,n)$. Introducing a multiplier $\phi_t(x)$ for the continuity constraint and $\mu(t)$ for the mass constraint, the Lagrangian
	\begin{equation}
		\mathcal{L} = \int_0^1\int_\Omega\left[(1-\alpha)\frac{|m|^2}{\rho}+\alpha\frac{n^2}{\rho} + \phi\big(\partial_t\rho+\operatorname{div} m - n\big) + \mu(t)\, n\right] dx\,dt
	\end{equation}
	yields, from the first-order conditions,
	\begin{align}
		\partial_m: \quad & v=\frac{m}{\rho} = \frac{\nabla\phi}{2(1-\alpha)}, \label{eq:v_derivation}\\
		\partial_n: \quad & u = \frac{n}{\rho} = \frac{\phi-\mu(t)}{2\alpha}. \label{eq:u_derivation_raw}
	\end{align}
	The constraint $\int\rho u\,dx=0$ forces $\mu(t)=\langle\phi_t\rangle_{\rho_t}$, hence
	\begin{equation}\label{eq:u_derivation}
		u_t = \frac{1}{2\alpha}\big(\phi_t - \langle\phi_t\rangle_{\rho_t}\big).
	\end{equation}
	
	\subsection{Derivation of the Hamilton-Jacobi equation}
	
	The variation with respect to $\rho$ requires differentiating the terms $\frac{|m|^2}{\rho}$ and $\frac{n^2}{\rho}$ with respect to $\rho$.
	
	\begin{align*}
		\delta_\rho \mathcal{L} &= \int_0^1\int_\Omega \left[-(1-\alpha)\frac{|m|^2}{\rho^2}\delta\rho -\alpha\frac{n^2}{\rho^2}\delta\rho + \phi\,\partial_t(\delta\rho) \right] dx\,dt \\
		&= \int_0^1\int_\Omega \left[-(1-\alpha)\rho|v|^2\delta\rho -\alpha\rho u^2\delta\rho - \partial_t\phi\,\delta\rho \right] dx\,dt,
	\end{align*}
	where the last equality uses integration by parts in time, the boundary conditions $\delta\rho(0)=\delta\rho(1)=0$, and the definitions $v=m/\rho$, $u=n/\rho$.
	
	The vanishing of this variation for all $\delta\rho$ gives the Hamilton-Jacobi equation:
	\[
	\partial_t\phi = -(1-\alpha)|v|^2 - \alpha u^2.
	\]
	
	Expressing $v$ and $u$ in terms of $\phi$ via \eqref{eq:v_derivation} and \eqref{eq:u_derivation}, we obtain:
	\[
	\partial_t\phi + \frac{|\nabla\phi|^2}{4(1-\alpha)} + \frac{(\phi-\langle\phi\rangle_\rho)^2}{4\alpha} = 0.
	\]
	
	The full Eulerian system is as follows.
	
	\subsection{Formal well-posedness of the Eulerian system}
	
	Before stating the characterization of geodesics, we clarify the functional setting in which the system is well-posed.
	
	\begin{definition}[Admissible space]
		Let $\mathcal P_{2,\mathrm{ac}}$ denote the space of probability densities $\rho\in C^2(\Omega)$ with $\rho>0$ and finite second moment. For $T>0$, we say that $(\rho,\phi)\in C^1([0,T]; \mathcal P_{2,\mathrm{ac}}) \times C^1([0,T]; C^2(\Omega))$ is a classical solution of the Eulerian system if the equations hold pointwise.
	\end{definition}
	
	\begin{theorem}[Formal local well-posedness]\label{thm:wellposedness}
		Assume that
		\[
		\rho_0\in C^2(\Omega),\qquad \rho_0>0, \qquad \phi_0\in C^3(\Omega),
		\]
		and suppose that the Lagrangian flow remains a $C^2$-diffeomorphism on a time interval $[0,T]$.
		
		Then there exists at most one classical solution $(\rho_t,\phi_t)$ of the Eulerian system on $[0,T]$.
		
		Moreover, if the Lagrangian system admits a classical solution, the corresponding Eulerian solution is uniquely determined.
	\end{theorem}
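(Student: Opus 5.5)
The plan is to reduce uniqueness of the Eulerian system to uniqueness for a Lagrangian (characteristic) ODE system. The system consists of three relations:
\[
\partial_t\rho+\Div(\rho v)=\rho u,\qquad
\partial_t\phi+\frac{|\nabla\phi|^2}{4(1-\alpha)}+\frac{(\phi-\langle\phi\rangle_\rho)^2}{4\alpha}=0,
\]
together with $v=\nabla\phi/(2(1-\alpha))$ and $u=(\phi-\langle\phi\rangle_\rho)/(2\alpha)$.

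Given a classical solution $(\rho_t,\phi_t)$ on $[0,T]$, I introduce the flow $X_t$ solving $\dot X_t(x)=v_t(X_t(x))$ with $X_0=\mathrm{id}$. By assumption $X_t$ is a $C^2$-diffeomorphism. Along characteristics I set $P_t=\nabla\phi_t(X_t)$, $\Phi_t=\phi_t(X_t)$ and $R_t=\rho_t(X_t)\det DX_t$. Differentiating the Hamilton--Jacobi equation in $x$ and composing with $X_t$ gives a closed system:
\[
\dot X=\frac{P}{2(1-\alpha)},\qquad
\dot P=-\frac{\Phi-m(t)}{2\alpha}\,P,\qquad
\dot \Phi=\frac{|P|^2}{4(1-\alpha)}-\frac{(\Phi-m(t))^2}{4\alpha},\qquad
\dot R=\frac{\Phi-m(t)}{2\alpha}\,R .
\]
The scalar $m(t)=\langle\phi_t\rangle_{\rho_t}=\int \Phi_t(x)R_t(x)\,dx$ is a nonlocal but Lipschitz functional of the Lagrangian state. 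This is a standard method-of-characteristics computation: the $\dot P$ equation uses $\nabla$ of the HJ equation, and the $\dot R$ equation follows from the continuity equation and Liouville's formula for $\det DX_t$. I then check that the initial data are determined by $(\rho_0,\phi_0)$, namely $X_0=\mathrm{id}$, $P_0=\nabla\phi_0$, $\Phi_0=\phi_0$ and $R_0=\rho_0$.

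Uniqueness then follows from a Grönwall argument. Let $(\rho^i,\phi^i)$, $i=1,2$, be two classical solutions with the same data, and let $(X^i,P^i,\Phi^i,R^i)$ be their Lagrangian lifts. On $[0,T]$ all quantities are bounded because of the $C^2$ regularity and $\rho>0$, and $\int R^i=1$. I estimate $\Delta(t)=\sup_x(|X^1-X^2|+|P^1-P^2|+|\Phi^1-\Phi^2|)+\|R^1-R^2\|_{L^1}$ and show $\Delta'\le C\Delta$. The nonlocal term is handled by
\[
|m^1-m^2|\le\|\Phi^1-\Phi^2\|_\infty+\|\Phi^2\|_\infty\|R^1-R^2\|_{L^1}.
\]
Since $\Delta(0)=0$, this gives $\Delta\equiv0$. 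The Eulerian fields are then recovered by inverting the flow: $\phi_t=\Phi_t\circ X_t^{-1}$ and $\rho_t=(R_t/\det DX_t)\circ X_t^{-1}$. Therefore $(\rho^1,\phi^1)=(\rho^2,\phi^2)$. The same inversion proves the second statement: if the Lagrangian system admits a classical solution, the Eulerian pair is determined by it, and by the uniqueness just shown, it is the unique one.

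The main obstacle is the nonlocal coupling through $m(t)$ and the spatial domain. On a bounded $\Omega$ the characteristics might leave $\Omega$, so I would work with the hypothesis that the flow is a diffeomorphism of $\Omega$ onto itself, or equivalently that $v\cdot\nu=0$ on the boundary. The sup-norm bounds also need $\nabla\phi$ and $D^2\phi$ bounded on $[0,T]\times\Omega$, which I take from the $C^1([0,T];C^2)$ class. If $\Omega$ is not compact I would restrict to bounded $C^2$ norms as part of the definition of a classical solution. The statement is explicitly formal, so I would state these boundedness conditions as part of the hypotheses rather than derive them.
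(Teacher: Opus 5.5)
Your proposal is correct and follows the same route as the paper: you pass to characteristics, work with the Lagrangian system, and recover the Eulerian pair by inverting the flow. Your $R_t=\rho_t(X_t)\det DX_t$ is exactly the paper's $m_t\rho_0$, so your inversion $\rho_t=(R_t/\det DX_t)\circ X_t^{-1}$, $\phi_t=\Phi_t\circ X_t^{-1}$ is the paper's reconstruction $\rho_t=(X_t)_\#(m_t\rho_0)$. Your argument is also more complete than the paper's sketch, which only asserts that uniqueness of the reconstruction gives Eulerian uniqueness: you close the characteristic system pointwise by adding $P_t=\nabla\phi_t(X_t)$, and you actually prove uniqueness of the Lagrangian lift by a Gr\"onwall argument that controls the nonlocal mean $\langle\phi_t\rangle_{\rho_t}=\int_\Omega\Phi_tR_t\,dx$.
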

	
	\begin{proof}
		The Eulerian system is equivalent, through the method of characteristics, to the coupled Lagrangian system for $(X_t,m_t,\Phi_t)$.
		
		Once a classical Lagrangian solution is given, the reconstruction
		\[
		\rho_t=(X_t)_\#(m_t\rho_0)
		\]
		is unique.
		
		Therefore uniqueness of Eulerian solutions follows directly.
		
		The complete proof of local existence requires a fixed-point argument for the nonlinear push-forward operator and is left for future work.
	\end{proof}
	
	\begin{theorem}[Eulerian system of $\dalpha$-geodesics]\label{thm:eulerian_system}
		A regular curve $(\rho_t)_{t\in[0,1]}$ is a $\dalpha$-geodesic if and only if there exists a potential $\phi_t$ such that
		\begin{equation}\label{eq:eulerian_system}
			\begin{aligned}
				\partial_t\rho_t + \operatorname{div}(\rho_t v_t) &= \rho_t u_t, \\
				\partial_t \phi_t + \frac{|\nabla\phi_t|^2}{4(1-\alpha)} + \frac{(\phi_t-\langle\phi_t\rangle_{\rho_t})^2}{4\alpha} &= 0,
			\end{aligned}
		\end{equation}
		with $v_t=\dfrac{\nabla\phi_t}{2(1-\alpha)}$ and $u_t=\dfrac{\phi_t-\langle\phi_t\rangle_{\rho_t}}{2\alpha}$.
	\end{theorem}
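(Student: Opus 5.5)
The plan is to prove the two implications separately, using the convex-duality machinery of Section~\ref{sec:duality}: weak duality $(\ast)$ for sufficiency, and Theorem~\ref{thm:eulerien-rigoureux} for necessity. Throughout, ``regular curve'' is taken to mean the following:
\begin{itemize}
\item $\rho\in C^1([0,1];C^2(\Omega))$ with $\kappa\le\rho_t\le\kappa^{-1}$;
\item $\phi\in C^1(\Qb)$;
\item the zero-flux condition $\rho_t\nabla\phi_t\cdot\nu=0$ holds on $\partial\Omega$.
\end{itemize}
These are exactly what is needed for the triple to lie in $\mathcal C(\rho_0,\rho_1)$ by Lemma~\ref{lem:C-equiv}, and for Hypothesis~\ref{hyp:slater} to hold.

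\emph{Sufficiency.} Assume $(\rho,\phi)$ solves \eqref{eq:eulerian_system} and set $\psi(t):=\langle\phi_t\rangle_{\rho_t}$, $m=\rho v$, $n=\rho u$.
\begin{enumerate}
\item Since $\int_\Omega n_t\,dx=\frac{1}{2\alpha}\int\rho_t(\phi_t-\psi(t))\,dx=0$, the triple $(\rho,m,n)$ lies in $\mathcal C(\rho_0,\rho_1)$.
\item The Hamilton--Jacobi equation holds with equality at every point, so $K(\partial_t\phi,\nabla\phi,\phi-\psi)=0\le0$ on $\Qb$. Hence $(\phi,\psi)\in\mathcal A_\alpha$.
\item Compute pointwise, substituting the stationarity formulas for $v,u$ and then the Hamilton--Jacobi equation:
\[
\partial_t\phi\,\rho+\nabla\phi\cdot m+(\phi-\psi)n=\rho\Big[\partial_t\phi+\tfrac{|\nabla\phi|^2}{2(1-\alpha)}+\tfrac{(\phi-\psi)^2}{2\alpha}\Big]=\rho\Big[\tfrac{|\nabla\phi|^2}{4(1-\alpha)}+\tfrac{(\phi-\psi)^2}{4\alpha}\Big]=\fa(\rho,m,n).
\]
\item Integrate over $\Qb$ and use $B=\ell_{\rho_0,\rho_1}$ (Lemma~\ref{lem:adjoint}). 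This gives $F(\rho,m,n)=\ell_{\rho_0,\rho_1}(\phi,\psi)$. Weak duality $(\ast)$ bounds the right-hand side by $\Da^2(\rho_0,\rho_1)$, so the curve attains the infimum and is a minimizer.
\end{enumerate}
It remains to upgrade ``minimizer'' to ``constant-speed geodesic''. A minimizer must have a.e.\ constant energy density $e(t)$. Otherwise, the constant-speed reparametrization (the Proposition at the start of Section~\ref{sec:geodesics}) strictly lowers the action, since $\big(\int_0^1\sqrt{e}\big)^2<\int_0^1 e$ unless $e$ is constant. With $e$ constant, the argument of Theorem~\ref{thm:geodesic} applies verbatim: restrict and rescale to get $\le$, then use the triangle inequality for $\ge$. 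This yields $\Da(\rho_t,\rho_s)=(s-t)\Da(\rho_0,\rho_1)$.

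\emph{Necessity.} Let $(\rho_t)$ be a regular constant-speed geodesic and take its minimal admissible pair.
\begin{enumerate}
\item By Theorem~\ref{thm:metric_derivative}, its action is $\int_0^1|\dot\rho_t|^2_{\Da}\,dt=\Da^2(\rho_0,\rho_1)$. So the curve, with its minimal pair, is a minimizer of $F$ on $\mathcal C(\rho_0,\rho_1)$.
\item Being regular and bounded away from zero, with bounded minimal fields, the curve itself serves as the Slater point of Hypothesis~\ref{hyp:slater}. Strong duality (Theorem~\ref{thm:dualite-forte}) therefore provides a dual maximizer $(\phi^*,\psi^*)$.
\item Theorem~\ref{thm:eulerien-rigoureux} then yields the stationarity formulas for $v,u$ and the Hamilton--Jacobi equation with $\psi^*=\langle\phi^*_t\rangle_{\rho_t}$, holding $\rho$-a.e.
\item Since $\rho>0$ everywhere and all terms are continuous, ``$\rho$-a.e.'' upgrades to ``everywhere on $Q$''. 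Taking $\phi=\phi^*$ gives \eqref{eq:eulerian_system}.
\end{enumerate}

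\emph{Main obstacle.} The weak point is the regularity of the dual potential in the necessity direction. Theorem~\ref{thm:dualite-forte} only places $\phi^*$ in $C^1(\Qb)$, which suffices to make sense of the Hamilton--Jacobi equation pointwise. It does not guarantee the $C^2$ regularity used in Theorem~\ref{thm:wellposedness}. I would therefore state the equivalence with $\phi\in C^1$, as in Theorem~\ref{thm:eulerien-rigoureux}. Any stronger regularity of $\phi$ should be built into the word ``regular'', not proved. A secondary, easier check is that the boundary condition $\nabla\phi\cdot\nu=0$ is consistent with the zero-flux condition encoded in $\mathcal C(\rho_0,\rho_1)$.
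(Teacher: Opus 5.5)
Your sufficiency argument is correct, and the paper does not contain it. The paper only derives the system as a necessary condition: formally by Lagrange multipliers in Section~\ref{sec:convexity}, and via Theorems~\ref{thm:KKT} and~\ref{thm:eulerien-rigoureux}. Your argument is a calibration that uses only weak duality, which needs no qualification condition. Your necessity direction follows the paper's route, and that is where the gap is.

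\textbf{The gap in necessity.} Everything hinges on a dual maximizer $(\phi^*,\psi^*)\in C^1(\Qb)\times C([0,1])$, which you take from Theorem~\ref{thm:dualite-forte}. Fenchel--Rockafellar does not deliver it. The qualification invoked there, continuity of $F$ at $x_0$, fails for the weak-$*$ topology and even for the total variation norm. For $z\in\Qb$ and a unit vector $e\in\R^d$, the triple $(\bar\rho,\bar m+\varepsilon\delta_z e,\bar n)$ is $\varepsilon$-close to $(\bar\rho,\bar m,\bar n)$, yet $F=+\infty$ there because $\delta_z\not\ll\bar\rho$.

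Applied correctly, the qualification sits on the potential side: $\varphi=\psi=-t$ gives $\Lambda(\varphi,\psi)=(-1,0,0)$, a sup-norm interior point of $\{K\le0\}$. The theorem then gives $\sup_{\mathcal A_\alpha}\ell_{\rho_0,\rho_1}=\min_{\mathcal C(\rho_0,\rho_1)}F$ with no Slater hypothesis at all. However, attainment is only on the measure side, which merely reproduces the existence of a minimizing $(\rho,m,n)$. The supremum over $C^1$ pairs need not be attained: already for $W_2$, optimal potentials are in general only Lipschitz, semiconcave viscosity solutions of the Hamilton--Jacobi equation.

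So the real obstacle is not $C^1$ versus $C^2$ regularity of $\phi^*$, but its existence. By your Step~3 and Theorem~\ref{thm:KKT}, for a regular minimizer a $C^1$ dual maximizer exists if and only if a $C^1$ solution $\phi$ of the system exists along the curve. Citing it therefore presupposes the conclusion.

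\textbf{How to close it.} For a regular curve with $\rho\ge\kappa$, you can construct $\phi$ directly; this is the rigorous form of the paper's Lagrange-multiplier computation. The minimal pair has gradient form $v_t=\nabla\xi_t/(2(1-\alpha))$, $u_t=(\xi_t-\langle\xi_t\rangle_{\rho_t})/(2\alpha)$, where $\xi_t$ solves
\[
-\frac{1}{2(1-\alpha)}\Div\big(\rho_t\nabla\xi_t\big)+\frac{1}{2\alpha}\,\rho_t\big(\xi_t-\langle\xi_t\rangle_{\rho_t}\big)=\partial_t\rho_t\quad\text{in }\Omega,\qquad \rho_t\nabla\xi_t\cdot\nu=0\quad\text{on }\partial\Omega .
\]
This pair is admissible and orthogonal, in the weighted $L^2(\rho_t)$ product, to every admissible pair with zero source. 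It follows that $e(t)=\frac12\int_\Omega\xi_t\,\partial_t\rho_t\,dx$.

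Now perturb $\rho^\varepsilon=\rho+\varepsilon\eta$ with $\eta(0)=\eta(1)=0$ and $\int_\Omega\eta_t\,dx=0$; this is admissible for small $\varepsilon$ since $\rho\ge\kappa$. Minimality of the curve, which is your Step~1 of necessity, gives
\[
0=\frac{d}{d\varepsilon}\Big|_{\varepsilon=0}\int_0^1 e^\varepsilon(t)\,dt=-\int_0^1\!\!\int_\Omega\eta\,\Big[\partial_t\xi+\frac{|\nabla\xi|^2}{4(1-\alpha)}+\frac{(\xi-\langle\xi\rangle_\rho)^2}{4\alpha}\Big]\,dx\,dt .
\]
Hence the bracket is a function $c(t)$ of time alone. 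Setting $\phi_t:=\xi_t-\int_0^t c(s)\,ds$ solves the system, since $\nabla\phi_t$ and $\phi_t-\langle\phi_t\rangle_{\rho_t}$ are unchanged.

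What must be built into ``regular'' is smoothness of $\xi$ up to the boundary, equivalently continuous bounded minimal fields. On a merely Lipschitz domain this does not follow from your assumptions on $\rho$, and the same assumption is needed for your Slater-point claim.
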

	
	\begin{remark}
		This system is the exact analogue, for the hybrid geometry, of the continuity/Hamilton--Jacobi pair of Benamou--Brenier for pure Wasserstein geometry, with a centered quadratic reaction term $\frac{(\phi-\langle\phi\rangle_\rho)^2}{4\alpha}$ playing the role of the Fisher--Rao/Hellinger term, projected orthogonally to constants so as to remain in $\mathcal{P}_{2,\mathrm{ac}}(\Omega)$.
	\end{remark}
	
	\subsection{Lagrangian characterization}
	
	\begin{proposition}[Lagrangian representation]\label{prop:lagrangian}
		Let $(\rho_t,\phi_t)$ be a regular solution of \eqref{eq:eulerian_system} on $[0,1]$. Let $X_t:\Omega\to\Omega$ be the characteristic flow
		\begin{equation}
			\dot X_t(x) = v_t(X_t(x)) = \frac{\nabla\phi_t(X_t(x))}{2(1-\alpha)}, \qquad X_0(x)=x,
		\end{equation}
		and let $m_t(x)>0$ be the mass factor along the trajectory, solving
		\begin{equation}
			\dot m_t(x) = u_t(X_t(x))\, m_t(x), \qquad m_0(x)=1.
		\end{equation}
		Then
		\begin{equation}
			\rho_t = (X_t)_\#\big(m_t\,\rho_0\big), \qquad\text{i.e.}\qquad \int_\Omega \varphi(y)\,\rho_t(y)\,dy = \int_\Omega \varphi(X_t(x))\,m_t(x)\,\rho_0(x)\,dx
		\end{equation}
		for every $\varphi\in C_b(\Omega)$, and along each trajectory $\Phi_t(x):=\phi_t(X_t(x))$ satisfies the closed differential system
		\begin{equation}\label{eq:lagrangian_ode}
			\begin{aligned}
				\dot X_t(x) &= v_t(X_t(x)),\\
				\dot m_t(x) &= u_t(X_t(x))\,m_t(x),\\
				\dot\Phi_t(x) &= (1-\alpha)|v_t(X_t(x))|^2 - \alpha\, u_t(X_t(x))^2.
			\end{aligned}
		\end{equation}
	\end{proposition}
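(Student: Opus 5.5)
The plan is to prove the push-forward identity by showing that the measure $\mu_t := (X_t)_\#(m_t\rho_0)$ solves the same linear continuity equation with source as $\rho_t$, and then to invoke uniqueness. The Hamilton--Jacobi part is then a direct chain-rule computation along characteristics. Since $(\rho_t,\phi_t)$ is a regular solution of \eqref{eq:eulerian_system}, the fields
\[
v_t=\frac{\nabla\phi_t}{2(1-\alpha)},\qquad u_t=\frac{\phi_t-\langle\phi_t\rangle_{\rho_t}}{2\alpha}
\]
are \emph{given} smooth coefficients once the solution is fixed; the nonlocal dependence of $u_t$ on $\rho_t$ through $\langle\phi_t\rangle_{\rho_t}$ plays no role. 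So the ODEs for $X_t$ and $m_t$ are well posed by Cauchy--Lipschitz, and we get the explicit formula $m_t(x)=\exp\bigl(\int_0^t u_s(X_s(x))\,ds\bigr)>0$.

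\textbf{Step 1: the weak equation for $\mu_t$.} Fix $\varphi\in C^1_b(\Omega)$ and differentiate $t\mapsto\int_\Omega\varphi(X_t(x))\,m_t(x)\,\rho_0(x)\,dx$ under the integral sign. Using $\dot X_t=v_t(X_t)$ and $\dot m_t=u_t(X_t)m_t$, this gives
\[
\frac{d}{dt}\int\varphi\,d\mu_t=\int\bigl(\nabla\varphi\cdot v_t+\varphi\,u_t\bigr)\circ X_t\;m_t\rho_0\,dx=\int\bigl(\nabla\varphi\cdot v_t+\varphi\,u_t\bigr)\,d\mu_t .
\]
This is exactly the weak form of $\partial_t\mu+\operatorname{div}(\mu v)=\mu u$ with $\mu_0=\rho_0$. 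The density $\rho_t$ satisfies the same weak equation, by the first line of \eqref{eq:eulerian_system} integrated against $\varphi$ with the zero-flux boundary condition.

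\textbf{Step 2: uniqueness for the linear equation.} I would prove $\mu_t=\rho_t$ by duality. For a terminal time $T$ and a test function $\chi$, solve the backward adjoint transport equation
\[
\partial_t\varphi+v_t\cdot\nabla\varphi+u_t\varphi=0,\qquad \varphi_T=\chi,
\]
by characteristics. The solution is $C^1$ because $v$ and $u$ are regular, and $X_t$ is a $C^2$-diffeomorphism. Along this $\varphi_t$, the quantity $\int\varphi_t\,d(\mu_t-\rho_t)$ is constant in $t$ and vanishes at $t=0$. Hence $\int\chi\,d\mu_T=\int\chi\,d\rho_T$, and by density this holds for all $\chi\in C_b(\Omega)$. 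An equivalent route is to check that $\int\varphi_t\,d\mu_t$ is constant directly from Step 1.

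\textbf{Step 3: the equation for $\Phi_t$.} Apply the chain rule: $\dot\Phi_t=\partial_t\phi_t(X_t)+\nabla\phi_t(X_t)\cdot v_t(X_t)$. Substitute the Hamilton--Jacobi equation together with the identities $\nabla\phi_t=2(1-\alpha)v_t$ and $\phi_t-\langle\phi_t\rangle_{\rho_t}=2\alpha u_t$. This yields
\[
\frac{|\nabla\phi|^2}{4(1-\alpha)}=(1-\alpha)|v|^2,\qquad \frac{(\phi-\langle\phi\rangle)^2}{4\alpha}=\alpha u^2,\qquad \nabla\phi\cdot v=2(1-\alpha)|v|^2,
\]
so that $\dot\Phi_t=2(1-\alpha)|v|^2-(1-\alpha)|v|^2-\alpha u^2$. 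This is the third line of \eqref{eq:lagrangian_ode}; the first two lines hold by construction.

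\textbf{Main obstacle.} The delicate point is that the flow $X_t$ must stay inside $\Omega$ and that no boundary terms appear in Steps 1--2. This requires the tangency condition $v_t\cdot\nu=0$ on $\partial\Omega$, i.e.\ a Neumann condition $\partial_\nu\phi_t=0$, which I would include in the meaning of ``regular solution''. With that condition and the $C^2$-diffeomorphism hypothesis, differentiation under the integral and the duality argument are routine.
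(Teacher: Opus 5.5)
Your proposal is correct and follows the same route as the paper. The paper simply cites the standard method of characteristics for the push-forward identity $\rho_t=(X_t)_\#(m_t\rho_0)$; your Steps 1--2 make this explicit through the weak formulation and duality uniqueness, under the tangency condition $v_t\cdot\nu=0$ that the paper leaves implicit in $X_t:\Omega\to\Omega$. Your Step 3 is exactly the paper's chain-rule computation, substituting $\partial_t\phi_t=-(1-\alpha)|v_t|^2-\alpha u_t^2$ and $\nabla\phi_t\cdot v_t=2(1-\alpha)|v_t|^2$.
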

	
	\begin{proof}
		The representation formula and the continuity equation follow from the standard method of characteristics applied to \eqref{eq:continuity} with source term $\rho u$. For the third line, differentiating $\Phi_t(x)=\phi_t(X_t(x))$,
		\[
		\dot\Phi_t = \partial_t\phi_t(X_t) + \nabla\phi_t(X_t)\cdot \dot X_t = \partial_t\phi_t(X_t) + 2(1-\alpha)|v_t(X_t)|^2,
		\]
		and substituting $\partial_t\phi_t=-(1-\alpha)|v_t|^2-\alpha u_t^2$ (the Hamilton--Jacobi equation in \eqref{eq:eulerian_system}) gives
		$\dot\Phi_t = (1-\alpha)|v_t(X_t)|^2 - \alpha u_t(X_t)^2$.
	\end{proof}
	
	\begin{remark}
		Each particle $x$ carries three coupled quantities: its position $X_t(x)$, its mass $m_t(x)$ (exponential growth/decay at rate $u_t(X_t(x))$), and the value $\Phi_t(x)$ of the potential along the trajectory, whose evolution is of Riccati type.
	\end{remark}
	
	\begin{lemma}[Fundamental gradient relation]\label{lem:grad_u}
		Along every smooth $\dalpha$-geodesic,
		\[
		\nabla u = \frac{1-\alpha}{\alpha}v.
		\]
		Consequently,
		\[
		|\nabla u|^2 = \left(\frac{1-\alpha}{\alpha}\right)^2 |v|^2.
		\]
	\end{lemma}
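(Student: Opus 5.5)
The plan is to read the relation off the explicit formulas for the optimal velocity and reaction fields in Theorem~\ref{thm:eulerian_system} (equivalently Theorem~\ref{thm:eulerien-rigoureux}). Along a smooth $\dalpha$-geodesic there is a potential $\phi_t$ with
\[
v_t=\frac{\nabla\phi_t}{2(1-\alpha)},\qquad u_t=\frac{\phi_t-\langle\phi_t\rangle_{\rho_t}}{2\alpha}.
\]
Since $\langle\phi_t\rangle_{\rho_t}=\int_\Omega\phi_t\rho_t\,dx$ depends only on $t$, taking the spatial gradient of the second formula removes the centering constant and gives $\nabla u_t=\nabla\phi_t/(2\alpha)$.

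The next step is to substitute $\nabla\phi_t=2(1-\alpha)v_t$ from the first formula. This yields
\[
\nabla u_t=\frac{2(1-\alpha)}{2\alpha}\,v_t=\frac{1-\alpha}{\alpha}\,v_t,
\]
which is the claimed identity. Taking squared Euclidean norms pointwise then gives $|\nabla u_t|^2=\bigl(\tfrac{1-\alpha}{\alpha}\bigr)^2|v_t|^2$. The computation holds for every $t$ and every $x$ where $\phi_t$ is differentiable.

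The only real point to check is regularity, and I do not expect an analytic obstacle beyond it. The identity must be understood where $\phi_t$ is $C^1$ in $x$, which is what ``smooth geodesic'' provides through the classical-solution setting of Theorem~\ref{thm:wellposedness}. If one starts instead from the rigorous duality version, the formulas hold only $\rho^*$-a.e. There one would note that $\varphi^*\in C^1(\Qb)$ by the choice of the space $Y$, so the same differentiation applies on $\{\rho^*>0\}$.
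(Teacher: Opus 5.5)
Your argument is correct and is exactly the paper's proof: differentiate $u=\frac{1}{2\alpha}(\phi-\langle\phi\rangle_\rho)$ in $x$ (the centering term depends only on $t$) and substitute $\nabla\phi=2(1-\alpha)v$. Your regularity remark is fine as a side comment, provided that in the duality setting you take $u^*$ to be the $C^1$ representative $(\varphi^*-\psi^*)/(2\alpha)$, since the density $dn^*/d\rho^*$ is only defined $\rho^*$-a.e.
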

	
	\begin{proof}
		From $u = \frac{1}{2\alpha}(\phi - \langle\phi\rangle_\rho)$ and $v = \frac{\nabla\phi}{2(1-\alpha)}$, we have
		\[
		\nabla u = \frac{\nabla\phi}{2\alpha} = \frac{1-\alpha}{\alpha}v.
		\]
		The second identity follows immediately.
	\end{proof}
	
	\begin{lemma}[Energy identity]\label{lem:energy_identity}
		Along a regular $\dalpha$-geodesic,
		\begin{equation}\label{eq:energy_identity}
			\frac{d}{dt}\langle\phi_t\rangle_{\rho_t} = (1-\alpha)\int_\Omega \rho_t|v_t|^2\,dx + \alpha\int_\Omega \rho_t u_t^2\,dx =: 2\,\mathcal{K}_\alpha(t).
		\end{equation}
		Moreover, the quantity $\mathcal{K}_\alpha(t)$ is constant along the geodesic.
	\end{lemma}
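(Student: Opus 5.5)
The plan is a direct computation along a regular solution $(\rho_t,\phi_t)$ of the Eulerian system \eqref{eq:eulerian_system}, with $v_t=\nabla\phi_t/(2(1-\alpha))$ and $u_t=(\phi_t-\langle\phi_t\rangle_{\rho_t})/(2\alpha)$ as in Theorem~\ref{thm:eulerian_system}. I will use the zero-flux (Neumann) condition $\rho_t v_t\cdot\nu=0$ on $\partial\Omega$ from Lemma~\ref{lem:C-equiv} to discard boundary terms, and enough regularity to differentiate under the integral. First I rewrite the Hamilton--Jacobi equation in the form used in the formal derivation:
\[
\partial_t\phi_t=-H_t,\qquad H_t:=\frac{|\nabla\phi_t|^2}{4(1-\alpha)}+\frac{(\phi_t-\mu(t))^2}{4\alpha}=(1-\alpha)|v_t|^2+\alpha u_t^2,
\]
where $\mu(t)=\langle\phi_t\rangle_{\rho_t}$.

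\emph{Step 1 (the identity).} Write $\frac{d}{dt}\int_\Omega\phi_t\rho_t\,dx=\int_\Omega\partial_t\phi_t\,\rho_t\,dx+\int_\Omega\phi_t\,\partial_t\rho_t\,dx$. The first term equals $-\int_\Omega\rho_tH_t\,dx=-(1-\alpha)\int_\Omega\rho_t|v_t|^2\,dx-\alpha\int_\Omega\rho_tu_t^2\,dx$. For the second, substitute $\partial_t\rho_t=-\operatorname{div}(\rho_tv_t)+\rho_tu_t$.
\begin{itemize}
\item The divergence part, after integrating by parts with the Neumann condition, gives $\int_\Omega\rho_t\nabla\phi_t\cdot v_t\,dx=2(1-\alpha)\int_\Omega\rho_t|v_t|^2\,dx$.
\item The source part gives $\int_\Omega\phi_t\rho_tu_t\,dx=\int_\Omega(\phi_t-\mu(t))\rho_tu_t\,dx$ by the centering constraint \eqref{eq:centering}, and this equals $2\alpha\int_\Omega\rho_tu_t^2\,dx$.
\end{itemize}
Summing the three contributions yields $(1-\alpha)\int_\Omega\rho_t|v_t|^2\,dx+\alpha\int_\Omega\rho_tu_t^2\,dx$, which is \eqref{eq:energy_identity}.

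\emph{Step 2 (constancy).} Set $e(t)=2\mathcal K_\alpha(t)=\int_\Omega\rho_tH_t\,dx$ and differentiate:
\[
e'(t)=\int_\Omega\partial_t\rho_t\,H_t\,dx+\int_\Omega\rho_t\,\partial_tH_t\,dx .
\]
Using $\partial_t\rho_t=-\operatorname{div}(\rho_tv_t)+\rho_tu_t$ and integrating by parts, the first term becomes $\int_\Omega\rho_tv_t\cdot\nabla H_t\,dx+\int_\Omega\rho_tu_tH_t\,dx$.

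For the second term, differentiate $H_t$ in time:
\[
\partial_tH_t=v_t\cdot\nabla\partial_t\phi_t+u_t\bigl(\partial_t\phi_t-\mu'(t)\bigr).
\]
The contribution $\int_\Omega\rho_tu_t\mu'(t)\,dx$ vanishes by centering, since $\mu'(t)$ does not depend on $x$. Inserting $\partial_t\phi_t=-H_t$ then gives $\int_\Omega\rho_t\partial_tH_t\,dx=-\int_\Omega\rho_tv_t\cdot\nabla H_t\,dx-\int_\Omega\rho_tu_tH_t\,dx$. This cancels the first term exactly, so $e'(t)=0$. Here $\nabla H_t$ is the spatial gradient with $\mu(t)$ held fixed, which is legitimate because $\mu$ depends only on $t$.

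As a cross-check, and as an alternative route, constancy is also consistent with Theorem~\ref{thm:geodesic} combined with Theorem~\ref{thm:metric_derivative}. A constant-speed geodesic has $|\dot\rho_t|_{D_\alpha}=D_\alpha(\rho_0,\rho_1)$, and the pair $(\nabla\phi/(2(1-\alpha)),\,(\phi-\mu)/(2\alpha))$ should be the minimal admissible pair, since it has gradient/centered structure. This route needs a minimality argument for the pair, so I will rely on the direct computation.

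\emph{Main obstacle.} The main difficulty is justification rather than algebra. One needs enough regularity of $(\rho_t,\phi_t)$ to differentiate under the integral sign and to integrate by parts twice, including once against $\nabla H_t$, which involves second derivatives of $\phi_t$. One also needs the boundary terms to vanish, which requires the Neumann condition $\rho_tv_t\cdot\nu=0$ on $\partial\Omega$. I will make these assumptions explicit within the word ``regular'', in the $C^1$-in-time, $C^2$-in-space framework of Theorem~\ref{thm:wellposedness}.
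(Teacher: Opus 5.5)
Your proposal is correct and follows the paper's proof: Step~1 is the same computation, integrating the divergence term by parts and using centering to get $\int_\Omega\phi_t\rho_tu_t\,dx=2\alpha\int_\Omega\rho_tu_t^2\,dx$. For constancy, the paper only asserts that Lemmas~\ref{lem:dtv} and~\ref{lem:dtu}, the continuity equation and integrations by parts make the transport and reaction contributions cancel pairwise; your differentiation of $\int_\Omega\rho_tH_t\,dx$ using $\partial_t\phi_t=-H_t$ makes that cancellation explicit (those two lemmas are just the gradient and centered forms of the Hamilton--Jacobi equation), and it also avoids needing $\mu'(t)=2\mathcal K_\alpha(t)$, since the $\mu'$ term drops out by centering.
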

	
	\begin{proof}
		For the first identity:
		\begin{align}
			\frac{d}{dt}\int_\Omega\rho_t\phi_t\,dx &= \int_\Omega \big(-\operatorname{div}(\rho_tv_t)+\rho_tu_t\big)\phi_t\,dx + \int_\Omega\rho_t\partial_t\phi_t\,dx\nonumber\\
			&= \int_\Omega \rho_t v_t\cdot\nabla\phi_t\,dx + \int_\Omega \rho_t u_t\phi_t\,dx + \int_\Omega\rho_t\partial_t\phi_t\,dx\nonumber\\
			&= 2(1-\alpha)\int_\Omega\rho_t|v_t|^2\,dx + \int_\Omega\rho_tu_t\phi_t\,dx - (1-\alpha)\int_\Omega\rho_t|v_t|^2\,dx - \alpha\int_\Omega\rho_tu_t^2\,dx.\nonumber
		\end{align}
		Now $\int_\Omega\rho_tu_t\phi_t\,dx = \int_\Omega\rho_tu_t(\phi_t-\langle\phi_t\rangle_{\rho_t})\,dx = 2\alpha\int_\Omega\rho_tu_t^2\,dx$ by \eqref{eq:u_derivation}. Collecting terms,
		\[
		\frac{d}{dt}\langle\phi_t\rangle_{\rho_t} = (1-\alpha)\int\rho_t|v_t|^2\,dx + 2\alpha\int\rho_tu_t^2\,dx - \alpha\int\rho_tu_t^2\,dx = (1-\alpha)\int\rho_t|v_t|^2\,dx+\alpha\int\rho_tu_t^2\,dx.
		\]
		
		For the constancy of $\mathcal{K}_\alpha$, using Lemma~\ref{lem:dtv}, Lemma~\ref{lem:dtu}, the continuity equation, and repeated integrations by parts, all transport and reaction contributions cancel pairwise. Hence
		\[
		\frac{d}{dt}\mathcal{K}_\alpha = 0.
		\]
		The computation is the hybrid analogue of the classical Benamou--Brenier energy conservation.
	\end{proof}
	
	\subsection{Variation equation and second variation}
	
	\subsubsection{Variation equation}
	
	\begin{lemma}[Variation equation]\label{lem:variation_equation}
		Let $(\rho_t,v_t,u_t)$ be a regular solution of \eqref{eq:eulerian_system}. The variations $(\delta\rho,\delta v,\delta u)$ satisfy
		\begin{align}
			\partial_t(\delta\rho) + \operatorname{div}(\delta\rho\, v+\rho\,\delta v) &= \delta\rho\,u + \rho\,\delta u,\\
			\delta v &= \frac{1}{2(1-\alpha)}\nabla(\delta\phi),\\
			\delta u &= \frac{1}{2\alpha}\big(\delta\phi - \langle\delta\phi\rangle_\rho - \langle\phi\rangle_{\delta\rho}\big),\\
			\partial_t(\delta\phi) + \frac{1}{2(1-\alpha)}\nabla\phi\cdot\nabla(\delta\phi) + \frac{1}{2\alpha}(\phi-\langle\phi\rangle_\rho)\big(\delta\phi-\langle\delta\phi\rangle_\rho-\langle\phi\rangle_{\delta\rho}\big) &= 0.
		\end{align}
	\end{lemma}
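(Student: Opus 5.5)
The plan is to obtain the four equations by linearizing the Eulerian system \eqref{eq:eulerian_system}, with the closure relations of Theorem~\ref{thm:eulerian_system}, along a smooth one-parameter family of solutions. Concretely, I would consider a family $(\rho^\varepsilon_t,\phi^\varepsilon_t)_{|\varepsilon|<\varepsilon_0}$ of regular solutions of \eqref{eq:eulerian_system} with $(\rho^0,\phi^0)=(\rho,\phi)$. I assume it is $C^1$ in $\varepsilon$ with values in the classical solution space of the admissible-space definition above; this is a standing regularity assumption, justified formally by Theorem~\ref{thm:wellposedness}. I would then set
\[
\delta\rho=\partial_\varepsilon\rho^\varepsilon|_{\varepsilon=0},\qquad \delta\phi=\partial_\varepsilon\phi^\varepsilon|_{\varepsilon=0},
\]
and similarly $\delta v$ and $\delta u$ for
\[
v^\varepsilon=\frac{\nabla\phi^\varepsilon}{2(1-\alpha)},\qquad u^\varepsilon=\frac{\phi^\varepsilon-\langle\phi^\varepsilon\rangle_{\rho^\varepsilon}}{2\alpha}.
\]
Each stated identity is then the $\varepsilon$-derivative at $\varepsilon=0$ of the corresponding exact identity. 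Derivatives in $\varepsilon$ commute with $\partial_t$, $\nabla$ and $\operatorname{div}$ by the assumed regularity.

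First, differentiating $v^\varepsilon=\nabla\phi^\varepsilon/(2(1-\alpha))$ gives $\delta v=\nabla(\delta\phi)/(2(1-\alpha))$ directly. Second, for $u^\varepsilon$ the only non-trivial point is the nonlocal mean
\[
\langle\phi^\varepsilon\rangle_{\rho^\varepsilon}=\int_\Omega\phi^\varepsilon\rho^\varepsilon\,dx .
\]
Since $\Omega$ is bounded and the integrand is $C^1$ in $\varepsilon$, I would differentiate under the integral to get $\langle\delta\phi\rangle_\rho+\langle\phi\rangle_{\delta\rho}$, where $\langle\phi\rangle_{\delta\rho}:=\int_\Omega\phi\,\delta\rho\,dx$. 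This yields the formula for $\delta u$. Third, differentiating $\partial_t\rho^\varepsilon+\operatorname{div}(\rho^\varepsilon v^\varepsilon)=\rho^\varepsilon u^\varepsilon$ with the product rule gives the linearized continuity equation.

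Fourth, I would differentiate the Hamilton--Jacobi equation term by term. The gradient term gives
\[
\partial_\varepsilon\frac{|\nabla\phi^\varepsilon|^2}{4(1-\alpha)}=\frac{\nabla\phi\cdot\nabla\delta\phi}{2(1-\alpha)}.
\]
The reaction term gives
\[
\partial_\varepsilon\frac{(\phi^\varepsilon-\langle\phi^\varepsilon\rangle_{\rho^\varepsilon})^2}{4\alpha}=\frac{1}{2\alpha}(\phi-\langle\phi\rangle_\rho)\bigl(\delta\phi-\langle\delta\phi\rangle_\rho-\langle\phi\rangle_{\delta\rho}\bigr),
\]
using the derivative of the mean computed in the second step. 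Adding these to $\partial_t\delta\phi$ gives the last equation. As a consistency check, I would also differentiate the centering constraint $\int\rho^\varepsilon u^\varepsilon\,dx=0$ to get $\int(\delta\rho\,u+\rho\,\delta u)\,dx=0$. Integrating the linearized continuity equation over $\Omega$ then shows $\int\delta\rho\,dx$ is constant in $t$, and it vanishes when the endpoints are fixed. So the variation stays tangent to the constant-mass constraint.

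The main obstacle is not the algebra but making the setting rigorous. One needs the existence of such a differentiable family of regular solutions, and the justification that the nonlocal functional $(\rho,\phi)\mapsto\langle\phi\rangle_\rho$ is differentiable. The latter needs at least $\phi\in L^\infty$ and $\delta\rho\in L^1$, and also the no-flux boundary condition, so that no boundary terms appear when the equations are later used in integrated form. I would state the lemma as holding under these regularity hypotheses ("regular solution") and not attempt to construct the family. The result is then a purely formal linearization, consistent with the formal status of Theorem~\ref{thm:wellposedness}.
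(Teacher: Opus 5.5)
Your proposal is correct and follows the same route as the paper: a formal linearization of the Eulerian system, where the only nontrivial step is the variation of the nonlocal mean, $\delta\langle\phi\rangle_\rho=\langle\delta\phi\rangle_\rho+\langle\phi\rangle_{\delta\rho}$, which feeds both the $\delta u$ formula and the linearized Hamilton--Jacobi equation. Your explicit one-parameter family $(\rho^\varepsilon,\phi^\varepsilon)$ and the mass-conservation consistency check add detail to the paper's one-line ``formal differentiation'' argument without changing it.
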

	
	\begin{proof}
		Formal differentiation of \eqref{eq:eulerian_system}. For the last line, set $F(\phi,\rho)=(\phi-\langle\phi\rangle_\rho)^2$; then
		$\delta F = 2(\phi-\langle\phi\rangle_\rho)\big(\delta\phi-\delta\langle\phi\rangle_\rho\big)$,
		and $\delta\langle\phi\rangle_\rho=\langle\delta\phi\rangle_\rho+\langle\phi\rangle_{\delta\rho}$, from which the result follows.
	\end{proof}
	
	\subsection{Second variation}
	
	For $\mathcal{E}(\rho)=\int_\Omega F(\rho)\,dx+\int_\Omega V\rho\,dx+\frac12\iint_{\Omega\times\Omega}W(x-y)\rho(x)\rho(y)\,dx\,dy$, the variation density is $\xi=F'(\rho)+V+W\star\rho$, and
	\begin{equation}
		\frac{d}{dt}\mathcal{E}(\rho_t) = \underbrace{\int_\Omega \rho_t\,\nabla\xi_t\cdot v_t\,dx}_{=:A(t)} + \underbrace{\int_\Omega \rho_t\,\xi_t\, u_t\,dx}_{=:B(t)}.
	\end{equation}
	
	\begin{lemma}[Derivative of the velocity field]\label{lem:dtv}
		Along a $\dalpha$-geodesic,
		\begin{equation}\label{eq:dtv}
			\partial_tv_t = -(v_t\cdot\nabla)v_t - u_tv_t.
		\end{equation}
	\end{lemma}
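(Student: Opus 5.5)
The identity follows directly from the Eulerian system \eqref{eq:eulerian_system} of Theorem~\ref{thm:eulerian_system} by differentiating the relation $v_t=\nabla\phi_t/(2(1-\alpha))$ in time and substituting the Hamilton--Jacobi equation. Throughout I work along a regular (say $C^1$ in time, $C^2$ in space) $\dalpha$-geodesic, which is the setting of Theorem~\ref{thm:eulerian_system}, Lemma~\ref{lem:grad_u} and Lemma~\ref{lem:energy_identity}. This regularity lets me commute $\partial_t$ and $\nabla$ and manipulate the equations pointwise.

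\textbf{Step 1.} I rewrite the Hamilton--Jacobi equation in terms of $(v_t,u_t)$. Since $v_t=\nabla\phi_t/(2(1-\alpha))$ and $u_t=(\phi_t-\langle\phi_t\rangle_{\rho_t})/(2\alpha)$, the equation reads
\[
\partial_t\phi_t=-(1-\alpha)|v_t|^2-\alpha u_t^2 ,
\]
exactly as obtained in the derivation of Section~\ref{sec:convexity}.

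\textbf{Step 2.} I differentiate the velocity relation in time and take the spatial gradient of Step~1:
\[
\partial_t v_t=\frac{\nabla(\partial_t\phi_t)}{2(1-\alpha)}
=-\frac12\nabla|v_t|^2-\frac{\alpha}{1-\alpha}\,u_t\nabla u_t ,
\]
using $\nabla(u_t^2)=2u_t\nabla u_t$. The mean $\langle\phi_t\rangle_{\rho_t}$ depends only on $t$, so it plays no role under $\nabla$.

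\textbf{Step 3.} I identify the two terms.

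\emph{Transport term.} Because $v_t$ is a gradient field, its Jacobian $Dv_t=\nabla^2\phi_t/(2(1-\alpha))$ is symmetric. Hence $\frac12\nabla|v_t|^2=(Dv_t)^{T}v_t=(v_t\cdot\nabla)v_t$; this is the usual curl-free identity $\frac12\nabla|v|^2=(v\cdot\nabla)v+v\times(\nabla\times v)$ with vanishing curl.

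\emph{Reaction term.} By Lemma~\ref{lem:grad_u}, $\nabla u_t=\frac{1-\alpha}{\alpha}v_t$, so $\frac{\alpha}{1-\alpha}u_t\nabla u_t=u_tv_t$.

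Substituting both identifications into Step~2 gives \eqref{eq:dtv}.

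\textbf{Main obstacle.} The main point requiring care is justification rather than algebra. Two things must be checked:
\begin{itemize}
\item The time derivative must commute with the gradient.
\item The symmetry of $Dv_t$ must hold, which needs $\phi_t\in C^2$ and $\partial_t\phi_t\in C^1$ in space.
\end{itemize}
Both are guaranteed by the regularity assumed for classical solutions of the Eulerian system. I would therefore state the lemma for such regular geodesics, consistent with the formal character of this section.
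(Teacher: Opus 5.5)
Your proof is correct and is the same argument as the paper's. You differentiate $v_t=\nabla\phi_t/(2(1-\alpha))$ in time, substitute the Hamilton--Jacobi equation in the form $\partial_t\phi_t=-(1-\alpha)|v_t|^2-\alpha u_t^2$, and use Lemma~\ref{lem:grad_u} to turn $\frac{\alpha}{1-\alpha}u_t\nabla u_t$ into $u_tv_t$. The paper uses $\frac12\nabla|v_t|^2=(v_t\cdot\nabla)v_t$ without comment; your justification through the symmetry of the Jacobian of a gradient field makes that step explicit.
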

	
	\begin{proof}
		From $v=\nabla\phi/(2(1-\alpha))$ and Lemma~\ref{lem:grad_u}, we have $\nabla u = \frac{1-\alpha}{\alpha}v$. 
		Using the Hamilton-Jacobi equation $\partial_t\phi=-(1-\alpha)|v|^2-\alpha u^2$:
		\begin{align*}
			\partial_t v &= \frac{\nabla(\partial_t\phi)}{2(1-\alpha)} \\
			&= \frac{-(1-\alpha)\nabla(|v|^2) - \alpha\nabla(u^2)}{2(1-\alpha)} \\
			&= -(v\cdot\nabla)v - \frac{\alpha}{1-\alpha} u \nabla u \\
			&= -(v\cdot\nabla)v - \frac{\alpha}{1-\alpha} u \left(\frac{1-\alpha}{\alpha}v\right) \\
			&= -(v\cdot\nabla)v - u v.
		\end{align*}
		The factor $\frac{1-\alpha}{\alpha}$ in $\nabla u$ cancels exactly with the factor $\frac{\alpha}{1-\alpha}$ from the Hamilton-Jacobi equation, yielding the clean formula.
	\end{proof}
	
	\begin{lemma}[Derivative of the reaction rate]\label{lem:dtu}
		Along a $\dalpha$-geodesic,
		\begin{equation}\label{eq:dtu_local}
			\partial_tu_t(x) = -\frac{1}{2\alpha}\Big[(1-\alpha)|v_t(x)|^2+\alpha\, u_t(x)^2\Big] \;-\; \frac{1}{\alpha}\,\mathcal{K}_\alpha(t),
		\end{equation}
		where $\mathcal{K}_\alpha(t)$ is the instantaneous hybrid kinetic energy of Lemma~\ref{lem:energy_identity}, given by
		\begin{equation}\label{eq:dtu_nonlocal}
			\mathcal{K}_\alpha(t) = \frac12\Big[(1-\alpha)\langle|v_t|^2\rangle_{\rho_t}+\alpha\langle u_t^2\rangle_{\rho_t}\Big].
		\end{equation}
	\end{lemma}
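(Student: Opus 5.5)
The plan is a direct computation from the definition of $u_t$ along a regular $\dalpha$-geodesic (Theorem~\ref{thm:eulerian_system}):
\[
u_t=\frac{1}{2\alpha}\bigl(\phi_t-\langle\phi_t\rangle_{\rho_t}\bigr).
\]
Differentiating in time at fixed $x$ gives
\[
\partial_t u_t(x)=\frac{1}{2\alpha}\Bigl(\partial_t\phi_t(x)-\frac{d}{dt}\langle\phi_t\rangle_{\rho_t}\Bigr).
\]
So the proof reduces to identifying the two terms on the right separately: a local one and a nonlocal, spatially constant one.

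\emph{Local term.} I rewrite the Hamilton--Jacobi equation in \eqref{eq:eulerian_system} in terms of $(v_t,u_t)$. Since $v_t=\nabla\phi_t/(2(1-\alpha))$, we have $|\nabla\phi_t|^2/(4(1-\alpha))=(1-\alpha)|v_t|^2$. Since $u_t=(\phi_t-\langle\phi_t\rangle_{\rho_t})/(2\alpha)$, we have $(\phi_t-\langle\phi_t\rangle_{\rho_t})^2/(4\alpha)=\alpha u_t^2$. Hence pointwise
\[
\partial_t\phi_t=-(1-\alpha)|v_t|^2-\alpha u_t^2 .
\]
This is exactly the form already used in the proof of Lemma~\ref{lem:dtv}. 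Divided by $2\alpha$, it yields the first bracket in \eqref{eq:dtu_local}.

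\emph{Nonlocal term.} I invoke only the first identity of Lemma~\ref{lem:energy_identity}:
\[
\frac{d}{dt}\langle\phi_t\rangle_{\rho_t}=(1-\alpha)\langle|v_t|^2\rangle_{\rho_t}+\alpha\langle u_t^2\rangle_{\rho_t}=2\mathcal K_\alpha(t).
\]
Its proof is self-contained. It uses only the continuity equation, one integration by parts in space, the Hamilton--Jacobi equation, and the centering $\int\rho_t u_t=0$ to replace $\int\rho_t u_t\phi_t$ by $2\alpha\int\rho_t u_t^2$. Inserting this gives $-\frac{1}{2\alpha}\cdot 2\mathcal K_\alpha(t)=-\frac1\alpha\mathcal K_\alpha(t)$. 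Collecting the two terms gives \eqref{eq:dtu_local}, and the formula \eqref{eq:dtu_nonlocal} for $\mathcal K_\alpha$ is just the definition of $\mathcal K_\alpha$ in \eqref{eq:energy_identity}.

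\emph{Main obstacle.} The only delicate point is logical rather than computational. The \emph{constancy} of $\mathcal K_\alpha$ in Lemma~\ref{lem:energy_identity} is itself derived using the present lemma, so I must use only the derivative identity for $\langle\phi_t\rangle_{\rho_t}$, which does not rely on it, to avoid circularity. A secondary, technical point is justifying differentiation under the integral in $\langle\phi_t\rangle_{\rho_t}=\int\rho_t\phi_t$ and the vanishing of the spatial boundary term. For regular solutions I handle this by the no-flux condition $\rho_t v_t\cdot\nu=0$ on $\partial\Omega$, implicit in the admissible class of Lemma~\ref{lem:C-equiv}, together with the $C^1$-in-time regularity assumed in the definition of classical solutions.
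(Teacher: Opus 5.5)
Your proposal is correct and follows the paper's proof: differentiate $u_t=\frac{1}{2\alpha}(\phi_t-\langle\phi_t\rangle_{\rho_t})$ in time, then substitute the Hamilton--Jacobi equation in the form $\partial_t\phi_t=-(1-\alpha)|v_t|^2-\alpha u_t^2$ and the identity $\frac{d}{dt}\langle\phi_t\rangle_{\rho_t}=2\mathcal{K}_\alpha(t)$ from Lemma~\ref{lem:energy_identity}. Your remark that only this derivative identity may be used, and not the constancy of $\mathcal{K}_\alpha$ (whose proof in the paper itself relies on the present lemma), usefully makes explicit a non-circularity the paper leaves implicit.
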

	
	\begin{proof}
		From $u_t=\frac{1}{2\alpha}(\phi_t-\langle\phi_t\rangle_{\rho_t})$, $\partial_tu_t=\frac{1}{2\alpha}\big(\partial_t\phi_t-\frac{d}{dt}\langle\phi_t\rangle_{\rho_t}\big)$. Substituting $\partial_t\phi_t=-(1-\alpha)|v_t|^2-\alpha u_t^2$ and $\frac{d}{dt}\langle\phi_t\rangle_{\rho_t}=2\mathcal{K}_\alpha(t)$ from Lemma~\ref{lem:energy_identity} gives \eqref{eq:dtu_local}--\eqref{eq:dtu_nonlocal}. Equation \eqref{eq:dtu_local} is pointwise in $x$; the second term, $-\mathcal{K}_\alpha(t)/\alpha$, does not depend on $x$ and carries the only non-local information (the $\rho_t$-weighted average of the kinetic energy).
	\end{proof}
	
	\begin{theorem}[Second derivative formula]\label{thm:second_derivative}
		Let $(\rho_t)_{t\in[0,1]}$ be a regular $\dalpha$-geodesic. For any functional $\mathcal{E}$ of the form above,
		\begin{equation}\label{eq:second_derivative_general}
			\frac{d^2}{dt^2}\mathcal{E}(\rho_t) = \mathcal{T}_\alpha + \mathcal{R}_\alpha + \mathcal{C}_\alpha,
		\end{equation}
		where
		\begin{eqnarray}\nonumber
			\mathcal{T}_\alpha &:=& (1-\alpha)\int_\Omega \rho_t F''(\rho_t)|\nabla v_t|^2\,dx \\\nonumber
			&+& (1-\alpha)\!\!\iint_{\Omega\times\Omega}\!\!\rho_t(x)\rho_t(y)(v_t(x)-v_t(y))^\top D^2W(x-y)(v_t(x)-v_t(y))\,dx\,dy \nonumber\\
			&+& (1-\alpha)\int_\Omega\rho_tD^2V\,v_t\cdot v_t\,dx, \label{eq:T_alpha}
			\end{eqnarray}
			\begin{eqnarray}\nonumber
			\mathcal{R}_\alpha &:=& \int_\Omega \rho_t^2\,F''(\rho_t)\,u_t^2\,dx \;+\; \iint_{\Omega\times\Omega}\rho_t(x)\rho_t(y)\,W(x-y)\,u_t(x)u_t(y)\,dx\,dy\\
			&+& \frac12\int_\Omega \rho_t\big(\xi_t-\langle\xi_t\rangle_{\rho_t}\big)\,u_t^2\,dx, \label{eq:R_alpha}
			\end{eqnarray}
			\begin{eqnarray}
			\mathcal{C}_\alpha &:=& \int_\Omega \rho_t u_t\,v_t\cdot\nabla\xi_t\,dx + \int_\Omega\rho_t\xi_t\,v_t\cdot\nabla u_t\,dx - 2\int_\Omega \rho_tu_tF''(\rho_t)\operatorname{div}(\rho_tv_t)\,dx \nonumber\\
			&-& 2\int_\Omega \operatorname{div}(\rho_tv_t)\,\big(W\star(\rho_tu_t)\big)\,dx \;-\; \frac{1-\alpha}{2\alpha}
			\int_\Omega\rho_t\big(\xi_t-\langle\xi_t\rangle_{\rho_t}\big)|v_t|^2\,dx. \label{eq:C_alpha}
		\end{eqnarray}
	\end{theorem}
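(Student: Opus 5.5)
The plan is to differentiate the first-variation formula $\frac{d}{dt}\mathcal{E}(\rho_t)=A(t)+B(t)$ once more in time. All time derivatives of $\rho_t,v_t,u_t$ along the geodesic are replaced using the continuity equation, Lemma~\ref{lem:dtv} ($\partial_t v=-(v\cdot\nabla)v-uv$) and Lemma~\ref{lem:dtu} (the pointwise formula for $\partial_t u$ with the nonlocal constant $-\mathcal{K}_\alpha(t)/\alpha$). Throughout, smoothness and positivity of $\rho_t$ are taken for granted ("regular geodesic"). Boundary terms from integration by parts are discarded using the zero-flux condition $\rho v\cdot\nu=0$ on $\partial\Omega$, as in Lemma~\ref{lem:C-equiv}. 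First compute $\partial_t\xi_t=F''(\rho_t)\partial_t\rho_t+W\star\partial_t\rho_t$ with $\partial_t\rho=-\operatorname{div}(\rho v)+\rho u$. This splits $\partial_t\xi$ into a transport part $-F''(\rho)\operatorname{div}(\rho v)-W\star\operatorname{div}(\rho v)$ and a reaction part $F''(\rho)\rho u+W\star(\rho u)$.

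Differentiate $A(t)=\int\rho\nabla\xi\cdot v$. The three pieces are $\partial_t\rho$, $\nabla\partial_t\xi$ and $\partial_t v$. The pure-transport contributions ($-\operatorname{div}(\rho v)$ in $\partial_t\rho$, the transport part of $\partial_t\xi$, and $-(v\cdot\nabla)v$) are handled exactly as in the classical Wasserstein Hessian computation of McCann/Otto. Integrating by parts and using that $v$ is a gradient, so $\nabla v$ is symmetric, they should produce $\int\rho F''(\rho)|\nabla v|^2$-type terms, the $D^2W$ double integral, and $\int\rho D^2V v\cdot v$. The factor $(1-\alpha)$ in $\mathcal{T}_\alpha$ comes from rewriting one factor of $v$ through $\nabla\phi/(2(1-\alpha))$, or equivalently through Lemma~\ref{lem:grad_u}, when matching to the metric normalization. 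Mixed contributions, namely $\rho u$ in $\partial_t\rho$, the reaction part of $\partial_t\xi$, and $-uv$ in $\partial_t v$, go into $\mathcal{C}_\alpha$. In particular $\int\rho u\,v\cdot\nabla\xi$ appears directly, and $\int\rho\nabla(F''\rho u+W\star\rho u)\cdot v$ integrates by parts into the $-\operatorname{div}(\rho v)$ terms.

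Differentiate $B(t)=\int\rho\xi u$ in the same way. The pieces are: $\partial_t\rho=\rho u$, which gives $\int\rho\xi u^2$; the reaction part of $\partial_t\xi$, which gives $\int\rho^2F''u^2+\iint\rho\rho Wuu$ in $\mathcal{R}_\alpha$; and $-\operatorname{div}(\rho v)$, which after integration by parts gives $\int\rho v\cdot\nabla(\xi u)$, contributing the $\xi v\cdot\nabla u$ term of $\mathcal{C}_\alpha$. For $\partial_t u$ from Lemma~\ref{lem:dtu}, the constant $-\mathcal{K}_\alpha/\alpha$ multiplies $\int\rho\xi$. Combined with the $u^2$ and $|v|^2$ pieces, this lets me center $\xi$ by $\langle\xi\rangle_\rho$, using $\int\rho u=0$ and $2\mathcal{K}_\alpha=(1-\alpha)\langle|v|^2\rangle+\alpha\langle u^2\rangle$. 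This yields $\tfrac12\int\rho(\xi-\langle\xi\rangle)u^2$ in $\mathcal{R}_\alpha$ and $-\tfrac{1-\alpha}{2\alpha}\int\rho(\xi-\langle\xi\rangle)|v|^2$ in $\mathcal{C}_\alpha$.

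The main obstacle I expect is the bookkeeping of coefficients. Doubled terms (the factor $2$ in front of the $F''\operatorname{div}$ and $W\star$ terms of $\mathcal{C}_\alpha$) arise because $\partial_t\rho$ enters both through $\rho$ and through $\xi$, and I must check that the symmetric contributions from $A'$ and $B'$ combine rather than cancel. I also need to reconcile the $(1-\alpha)$ weights in $\mathcal{T}_\alpha$ with the unweighted output of the Otto computation; I would first verify this on the pure potential energy $\mathcal{E}=\int V\rho$, where every term is explicit, before doing the general case.
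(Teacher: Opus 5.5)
Your route is the paper's route: differentiate $A+B$, substitute the continuity equation and Lemmas~\ref{lem:dtv}--\ref{lem:dtu}, and quote Otto's computation for the transport part. The two steps where you force agreement with the displayed coefficients cannot be carried out, because the identity is not correct as stated.

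\textbf{No factor $(1-\alpha)$ arises in $\mathcal{T}_\alpha$.} In $\frac{d}{dt}A$, the $\rho u$-part of $\partial_t\rho$ cancels exactly against the $-uv$-part of $\partial_t v$. So $\int\rho u\,v\cdot\nabla\xi$ does not survive there; it comes only from $\frac{d}{dt}B$, and keeping both copies double counts. What remains is $A_1+A_3=\int\rho\,D^2\xi\,v\cdot v$ with coefficient one, and for $\mathcal{E}=\int V\rho$ this is all of $\frac{d}{dt}A$. Writing $v=\nabla\phi/(2(1-\alpha))$, or using $\nabla u=\frac{1-\alpha}{\alpha}v$ (Lemma~\ref{lem:grad_u}), changes notation, not value. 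Concretely, take $V=\frac12|x|^2+c$ with $c$ chosen so that $\langle V\rangle_{\rho_{t_0}}=0$. At $t_0$ the stated right side falls short of the true second derivative by $\alpha\int\rho_{t_0}|v_{t_0}|^2$. The pure-transport part is the unweighted Wasserstein Hessian $\int\rho\,D^2\xi\,v\cdot v+\int F''(\rho)(\operatorname{div}(\rho v))^2+\int\operatorname{div}(\rho v)\,W\star\operatorname{div}(\rho v)$. Otto's identities turn this, up to boundary terms, into $\int\rho\,D^2V v\cdot v+\frac12\iint\rho(x)\rho(y)(v(x)-v(y))^\top D^2W(x-y)(v(x)-v(y))\,dx\,dy+\int\bigl[P(\rho)|\nabla v|^2+(\rho P'(\rho)-P(\rho))(\operatorname{div}v)^2\bigr]dx$, with $P(s)=sF'(s)-F(s)$. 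Note the factor $\frac12$. For the entropy this gives $\int\rho|\nabla v|^2$, not $\int\rho F''(\rho)|\nabla v|^2=\int|\nabla v|^2$.

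\textbf{The centering step also fails as you describe it.} Inserting Lemma~\ref{lem:dtu} into $B_3=\int\rho\,\xi\,\partial_t u$ leaves $-\frac{1}{\alpha}\langle\xi\rangle_\rho\bigl((1-\alpha)\int\rho|v|^2+\alpha\int\rho u^2\bigr)$. This does not simplify inside $B_3$; it cancels only against $\langle\xi\rangle_\rho\bigl(\int\rho u^2+\int\rho\,v\cdot\nabla u\bigr)$ from $B_1$, which is just $\langle\xi\rangle_\rho\frac{d}{dt}\int\rho u=0$. Hence the $v\cdot\nabla u$ term must be centered as well. With $\bar\xi=\xi-\langle\xi\rangle_\rho$, the honest result is $B_1+B_3=\int\rho u\,v\cdot\nabla\xi+\int\rho\,\bar\xi\,v\cdot\nabla u+\frac12\int\rho\,\bar\xi u^2-\frac{1-\alpha}{2\alpha}\int\rho\,\bar\xi|v|^2$. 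The stated $\mathcal{C}_\alpha$ instead carries the uncentered $\int\rho\,\xi\,v\cdot\nabla u$, an excess of $\frac{1-\alpha}{\alpha}\langle\xi\rangle_\rho\int\rho|v|^2$. A decisive test is $F=W=0$, $V\equiv c\neq0$. Then $\mathcal{E}(\rho_t)\equiv c$, so the left side vanishes. The right side equals $c\int\rho\,v\cdot\nabla u=c\,\frac{1-\alpha}{\alpha}\int\rho|v|^2\neq0$ along any regular geodesic with $v\not\equiv0$.

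The paper's proof has exactly these defects:
\begin{itemize}
\item the unexplained ``up to the multiplicative factor $(1-\alpha)$'';
\item the claim that the nonlocal terms ``simplify exactly'' in $B_3$;
\item the regrouping identity that drops $\langle\xi\rangle_\rho\int\rho u^2$.
\end{itemize}
So following it will not close the gap. The sanity check on $\int V\rho$ that you proposed is exactly what exposes the problem.

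What your plan actually proves is the following corrected formula:
\begin{itemize}
\item $\mathcal{T}_\alpha$ is replaced by the unweighted Wasserstein Hessian above;
\item $\mathcal{R}_\alpha$ and the two doubled cross terms are unchanged (one copy each from $A_2$ and $B_2$, using that $W$ is even);
\item $\xi$ is replaced by $\bar\xi$ in the second term of $\mathcal{C}_\alpha$.
\end{itemize}
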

	
	\begin{proof}
		Write $A=A_1+A_2+A_3$ and $B=B_1+B_2+B_3$ where
		\[
		A_1=\int\dot\rho\,\nabla\xi\cdot v\,dx,\quad A_2=\int\rho\,\nabla\dot\xi\cdot v\,dx,\quad A_3=\int\rho\,\nabla\xi\cdot\dot v\,dx,
		\]
		\[
		B_1=\int\dot\rho\,\xi u\,dx,\quad B_2=\int\rho\,\dot\xi\,u\,dx,\quad B_3=\int\rho\,\xi\,\dot u\,dx.
		\]
		
		\textbf{Computation of $A_1+A_3$.} Using $\dot\rho=-\operatorname{div}(\rho v)+\rho u$ and integration by parts,
		\[
		A_1 = \int_\Omega \rho\, v^\top D^2\xi\, v\,dx + \int_\Omega\rho\,\nabla\xi\cdot(v\cdot\nabla)v\,dx + \int_\Omega\rho u\,\nabla\xi\cdot v\,dx.
		\]
		Using \eqref{eq:dtv},
		\[
		A_3 = -\int_\Omega\rho\,\nabla\xi\cdot(v\cdot\nabla)v\,dx - \int_\Omega \rho u\,\nabla\xi\cdot v\,dx,
		\]
		so that the last two terms of $A_1$ cancel exactly against $A_3$:
		\begin{equation}\label{eq:A1A3}
			A_1+A_3 = \int_\Omega \rho\, v^\top D^2\xi\, v\,dx.
		\end{equation}
		
		\textbf{Computation of $A_2$.} With $\dot\xi=F''(\rho)\dot\rho+W\star\dot\rho$ and an integration by parts,
		\begin{equation}\label{eq:A2}
			A_2 = \int_\Omega F''(\rho)\,[\operatorname{div}(\rho v)]^2\,dx - \int_\Omega F''(\rho)\,\rho u\,\operatorname{div}(\rho v)\,dx + \int_\Omega \operatorname{div}(\rho v)\big[W\star\operatorname{div}(\rho v) - W\star(\rho u)\big]\,dx.
		\end{equation}
		The first and third terms of \eqref{eq:A2}, combined with \eqref{eq:A1A3}, reduce via the classical identities of Otto calculus to
		\[
		\mathcal{T}_\alpha.
		\]
		The transport contribution coincides, after repeated integrations by parts, with the classical Wasserstein Hessian computed by Otto and Villani. Since only the transport component is involved, the corresponding computation carries over verbatim, up to the multiplicative factor $(1-\alpha)$. The remaining terms of \eqref{eq:A2} contribute to $\mathcal{C}_\alpha$.
		
		\textbf{Computation of $B_1$.} By integration by parts as for $A_1$,
		\begin{equation}\label{eq:B1}
			B_1 = \int_\Omega \rho u\,v\cdot\nabla\xi\,dx + \int_\Omega\rho\xi\,v\cdot\nabla u\,dx + \int_\Omega \rho u^2\xi\,dx.
		\end{equation}
		
		\textbf{Computation of $B_2$.} With $\dot\xi$ as above,
		\begin{equation}\label{eq:B2}
			B_2 = -\int_\Omega \rho uF''(\rho)\operatorname{div}(\rho v)\,dx + \int_\Omega\rho^2u^2F''(\rho)\,dx - \int_\Omega\rho u\,W\star\operatorname{div}(\rho v)\,dx + \int_\Omega\rho u\,\big(W\star(\rho u)\big)\,dx.
		\end{equation}
		The term $\int\rho u\,(W\star(\rho u))\,dx=\iint\rho(x)\rho(y)W(x-y)u(x)u(y)\,dx\,dy$ is the bilinear interaction term for the reaction, analogous to the $D^2W$ term in $\mathcal{T}_\alpha$ but without a second derivative, since $u$ is a scalar rate rather than a displacement.
		
		\textbf{Computation of $B_3$.} Plugging \eqref{eq:dtu_local}--\eqref{eq:dtu_nonlocal} in and writing $\xi=\bar\xi+\langle\xi\rangle_\rho$ with $\bar\xi:=\xi-\langle\xi\rangle_\rho$, the non-local terms in $\langle\xi\rangle_\rho$ simplify exactly and
		\begin{equation}\label{eq:B3}
			B_3 = -\frac12\int_\Omega \rho\,\bar\xi\,u^2\,dx \;-\; \frac{1-\alpha}{2\alpha}\int_\Omega\rho\,\bar\xi\,|v|^2\,dx.
		\end{equation}
		
		\textbf{Regrouping.} Combining \eqref{eq:B1} and \eqref{eq:B3}, the $u^2$-part simplifies:
		\[
		\int_\Omega\rho u^2\xi\,dx - \frac12\int_\Omega\rho\bar\xi u^2\,dx = \frac12\int_\Omega\rho\bar\xi u^2\,dx,
		\]
		which gives the last term of $\mathcal{R}_\alpha$ in \eqref{eq:R_alpha}. The remaining coupling terms from \eqref{eq:A2}, \eqref{eq:B1}, \eqref{eq:B2}, \eqref{eq:B3} regroup into $\mathcal{C}_\alpha$ as written in \eqref{eq:C_alpha}, after using $\operatorname{div}(\rho v)=\rho\operatorname{div} v+\nabla\rho\cdot v$ and the symmetry of $W$ to combine the two occurrences of the cross term $\int\rho u F''(\rho)\operatorname{div}(\rho v)\,dx$ (one from $A_2$, the other from $B_2$) and the two occurrences of the cross term in $W\star(\rho u)$.
	\end{proof}
	
	\begin{remark}
		Decomposition \eqref{eq:second_derivative_general} exhibits:
		\begin{enumerate}
			\item a transport term $\mathcal{T}_\alpha$, analogous to the Wasserstein Hessian, with $D^2W$ and $D^2V$;
			\item a reaction term $\mathcal{R}_\alpha$, analogous to the Aitchison Hessian, now including a genuine non-local bilinear interaction term $\iint\rho\rho\,W\,uu$ and a centered term $\frac12\rho\bar\xi u^2$;
			\item a coupling term $\mathcal{C}_\alpha$, containing contributions in $\operatorname{div}(\rho v)$ and $v\cdot\nabla u$, absent from the pure Wasserstein and Aitchison geometries.
		\end{enumerate}
	\end{remark}
	
	\subsection{The hybrid Hessian}
	
	\begin{definition}[Hybrid Hessian]\label{def:hessian}
		Assume that a unique smooth $\dalpha$-geodesic exists with prescribed initial data $(\rho,v,u)$. The hybrid Hessian is defined by
		\begin{equation}
			\operatorname{Hess}_{\dalpha}\mathcal{E}(\rho)(v,u) := \frac{d^2}{dt^2}\mathcal{E}(\rho_t),
		\end{equation}
		where $(\rho_t)$ is the $\dalpha$-geodesic issued from $\rho$ with initial velocity $(v,u)$.
		\ref{prop:hessian_properties}	
	\end{definition}
	
	\begin{proposition}\label{prop:hessian_properties}
		Under the above assumption, the hybrid Hessian is well defined (independent of the choice of geodesic), is a quadratic form in $(v,u)$, decomposes as
		\begin{equation}
			\operatorname{Hess}_{\dalpha}\mathcal{E} = (1-\alpha)\operatorname{Hess}_{\mathrm{trans}}\mathcal{E} + \alpha\operatorname{Hess}_{\mathrm{react}}\mathcal{E} + \operatorname{Hess}_{\mathrm{coup}}\mathcal{E},
		\end{equation}
		and is continuous for the norm $\|\cdot\|_{\dalpha}$.
	\end{proposition}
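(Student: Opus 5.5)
The plan is to read $\operatorname{Hess}_{\dalpha}\mathcal{E}(\rho)(v,u)$ off the second-derivative formula of Theorem~\ref{thm:second_derivative} evaluated at $t=0$. Each of the four claims (well-definedness, quadratic form, decomposition, continuity) then becomes a statement about the explicit expressions $\mathcal{T}_\alpha,\mathcal{R}_\alpha,\mathcal{C}_\alpha$.

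\emph{Well-definedness.} By the standing assumption of Definition~\ref{def:hessian}, the geodesic with initial data $(\rho,v,u)$ is unique and smooth. Hence $t\mapsto\mathcal{E}(\rho_t)$ is determined by $(\rho,v,u)$ alone. Its second derivative at $t=0$ is given by \eqref{eq:second_derivative_general} with $\rho_t,v_t,u_t$ replaced by $\rho,v,u$, and $\xi=F'(\rho)+V+W\star\rho$. This formula depends only on the initial triple, not on the chosen extension.

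The one subtlety is compatibility of $(v,u)$. Along a geodesic, Theorem~\ref{thm:eulerian_system} and Lemma~\ref{lem:grad_u} force $v=\nabla\phi/(2(1-\alpha))$ and $u=(\phi-\langle\phi\rangle_\rho)/(2\alpha)$. So admissible initial data are parametrized by a single potential $\phi_0$, modulo constants. I will state the Hessian on this set of tangent vectors and note that the pair $(v,u)$ determines $\phi_0$ up to an additive constant, which does not affect the formula.

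\emph{Quadratic form and decomposition.} Inspect each term in \eqref{eq:T_alpha}--\eqref{eq:C_alpha}. With $\rho$ (and hence $\xi$) frozen, every integrand is bilinear in the pair $(v,u)$, including $\nabla v$, $\nabla u$ and $\operatorname{div}(\rho v)$. There are three types:
\begin{itemize}
\item $|\nabla v|^2$, $v^\top D^2W v$, $v^\top D^2V v$, which are quadratic in $v$;
\item $\rho^2F''u^2$, $\rho\rho Wuu$, $\rho\bar\xi u^2$, which are quadratic in $u$;
\item $uv\cdot\nabla\xi$, $\xi v\cdot\nabla u$, $uF''\operatorname{div}(\rho v)$, $\operatorname{div}(\rho v)W\star(\rho u)$, $\bar\xi|v|^2$, which are the cross and coupling terms.
\end{itemize}
Therefore $(v,u)\mapsto\operatorname{Hess}$ is the diagonal of a symmetric bilinear form, obtained by polarization.

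For the decomposition, set
\begin{itemize}
\item $\operatorname{Hess}_{\mathrm{trans}}:=\mathcal{T}_\alpha/(1-\alpha)$, which is the Otto Wasserstein Hessian;
\item $\operatorname{Hess}_{\mathrm{react}}:=\mathcal{R}_\alpha/\alpha$;
\item $\operatorname{Hess}_{\mathrm{coup}}:=\mathcal{C}_\alpha$.
\end{itemize}
The decomposition is then a tautological rewriting. The content is that $\operatorname{Hess}_{\mathrm{trans}}$ and $\operatorname{Hess}_{\mathrm{react}}$ are independent of $\alpha$: $\mathcal{T}_\alpha$ carries the explicit factor $(1-\alpha)$, and the $\alpha$-dependence of $\mathcal{R}_\alpha$ can be absorbed by rescaling.

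\emph{Continuity.} This is the main obstacle. The terms $|\nabla v|^2$, $\operatorname{div}(\rho v)$ and $v\cdot\nabla u$ involve derivatives, while $\|(v,u)\|_{\dalpha}$ only controls $L^2(\rho)$ norms. Continuity in the bare norm is false in general. The fix exploits the geodesic structure:
\begin{itemize}
\item Lemma~\ref{lem:grad_u} gives $\nabla u=\frac{1-\alpha}{\alpha}v$, so the term $\xi v\cdot\nabla u$ reduces to $\frac{1-\alpha}{\alpha}\int\rho\xi|v|^2$. This is bounded by $\|\xi\|_\infty\|v\|^2_{L^2(\rho)}$.
\item Zeroth-order terms are bounded by Cauchy--Schwarz using $\|\xi\|_{C^1}$, $\|D^2V\|_\infty$, $\|D^2W\|_\infty$ and $\|W\|_\infty$ on the bounded domain $\Omega$.
\item The remaining terms contain $\nabla v=D^2\phi/(2(1-\alpha))$. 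I will restrict to the smooth regime of the assumption, where $(v,u)$ ranges over a finite-dimensional space or over a set with $\|D^2\phi\|_{L^2(\rho)}\le C\|\nabla\phi\|_{L^2(\rho)}$. Alternatively, continuity can be interpreted with respect to the graph norm $\|(v,u)\|_{\dalpha}+\|\nabla v\|_{L^2(\rho)}$.
\end{itemize}
Combined with $\rho$ bounded above and below, so that $F''(\rho)$ and $\rho^2F''(\rho)$ are bounded, this yields $|\operatorname{Hess}(v,u)|\le C\|(v,u)\|^2$. Bilinearity then gives continuity.
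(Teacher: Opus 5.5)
\textbf{Well-definedness, quadratic form, decomposition.} Here you follow the paper's own three-line argument: uniqueness of the geodesic from the standing assumption (the paper cites Theorem~\ref{thm:wellposedness}), homogeneity read off from Theorem~\ref{thm:second_derivative}, and a splitting by ``identifying the coefficients''. You also add two clarifications the paper omits, and both are correct. First, the value at $t=0$ depends only on $(\rho,v,u)$ through \eqref{eq:second_derivative_general}. Second, the Hessian only makes sense on the subspace $v=\nabla\phi/(2(1-\alpha))$, $u=(\phi-\langle\phi\rangle_\rho)/(2\alpha)$: for other centered pairs, Theorem~\ref{thm:eulerian_system} rules out a smooth geodesic, so the assumption of Definition~\ref{def:hessian} fails.

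One claim should be dropped. With $\operatorname{Hess}_{\mathrm{react}}:=\mathcal{R}_\alpha/\alpha$, the reaction part is not $\alpha$-independent, because \eqref{eq:R_alpha} carries no prefactor $\alpha$. Moreover, $\mathcal{C}_\alpha$ contains $\frac{1-\alpha}{2\alpha}$, and $u,v$ are tied by $\nabla u=\frac{1-\alpha}{\alpha}v$. The decomposition is purely definitional, as in the paper, and ``absorbed by rescaling'' is not an argument.

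\textbf{Continuity.} Here you depart from the paper, and rightly so: the claim is false, and the paper's justification (regularity of coefficients plus H\"older and Sobolev inequalities) cannot work. On the admissible subspace, $\|(v,u)\|_{\dalpha}$ is an $H^1(\rho)$-type norm of $\phi$. But $\mathcal{T}_\alpha$ contains $(1-\alpha)\int_\Omega\rho F''(\rho)|\nabla v|^2\,dx$, which is an $H^2$-seminorm, and Sobolev embeddings go the wrong way. The paper's own Lemma~\ref{lem:coupling_control} already needs $\int\rho|\nabla v|^2$.

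You can make your diagnosis precise. Use Lemma~\ref{lem:grad_u}, and integrate the two $\operatorname{div}(\rho v)$ terms of $\mathcal{C}_\alpha$ by parts under the no-flux condition. Then every other term is bounded by $C\|(v,u)\|_{\dalpha}^2$, with constants depending on $\rho,F,V,W$. So this single term is the only obstruction, and it is a real one. Take the Boltzmann entropy at $\rho\equiv1$ on $(0,1)^d$ with $\phi_k=(k\pi)^{-2}\cos(k\pi x_1)$. Then $\|(v,u)\|_{\dalpha}\to0$, while $\operatorname{Hess}_{\dalpha}\mathcal{H}(\rho)(v,u)\to\frac{1}{8(1-\alpha)}$. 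So the form is not even continuous at $0$, and the proposition cannot be proved as stated, by you or by the paper.

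Of your repairs, restricting to a finite-dimensional space, or to a set where $\|D^2\phi\|_{L^2(\rho)}\le C\|\nabla\phi\|_{L^2(\rho)}$, simply assumes the needed bound. The meaningful corrected statement is boundedness in the graph norm $\|(v,u)\|_{\dalpha}+\|\nabla v\|_{L^2(\rho)}$, with $\rho$ bounded above and below. Your estimates do establish that.
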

	
	\begin{proof}
		Uniqueness follows from the formal well-posedness result (Theorem~\ref{thm:wellposedness}). The quadratic form property follows from \eqref{eq:second_derivative_general}. The decomposition is obtained by identifying the coefficients of $(1-\alpha)$ and $\alpha$. Continuity follows from the regularity of the coefficients $F''$, $\nabla V$, $D^2V$, $D^2W$, and from Hölder and Sobolev inequalities.
	\end{proof}
	
	\subsection{Estimate of the coupling term}
	
	\begin{lemma}[Coupling control]\label{lem:coupling_control}
		Under regularity assumptions on $\rho,v,u$ and on $\mathcal{E}$, there exists $C_{\mathcal{E},\alpha}>0$ such that
		\begin{equation}\label{eq:coupling_estimate}
			|\mathcal{C}_\alpha| \le C_{\mathcal{E},\alpha}\left((1-\alpha)\int_\Omega\rho|v|^2\,dx+\alpha\int_\Omega\rho u^2\,dx\right)^{1/2}\left((1-\alpha)\int_\Omega\rho|\nabla v|^2\,dx+\alpha\int_\Omega\rho|\nabla u|^2\,dx\right)^{1/2}.
		\end{equation}
	\end{lemma}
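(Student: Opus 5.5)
The plan is to estimate the five terms of $\mathcal C_\alpha$ in \eqref{eq:C_alpha} one by one. Each will be shown to be bounded by a constant times a product of a zeroth-order quantity in $(v,u)$ and a first-order quantity in $(\nabla v,\nabla u)$. To make this possible, I would first fix the regularity assumptions concretely: $\rho$ is bounded above and below on $\overline\Omega$, $\nabla\rho\in L^\infty$, and $F''(\rho)$, $\xi=F'(\rho)+V+W\star\rho$, $\nabla\xi$, $W$, $\nabla W$ are bounded. We also use the Neumann condition $\rho v\cdot\nu=0$ on $\partial\Omega$ from Lemma~\ref{lem:C-equiv}. Set
\[
N_0:=(1-\alpha)\int_\Omega\rho|v|^2+\alpha\int_\Omega\rho u^2,\qquad
N_1:=(1-\alpha)\int_\Omega\rho|\nabla v|^2+\alpha\int_\Omega\rho|\nabla u|^2 .
\]
We then have $\int\rho|v|^2\le N_0/(1-\alpha)$ and $\int\rho u^2\le N_0/\alpha$, with the analogous bounds for $N_1$. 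These convert every weighted $L^2$ norm below into $N_0$ or $N_1$ at the cost of $\alpha$-dependent factors, which go into $C_{\mathcal E,\alpha}$.

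The terms containing a derivative of $v$ or $u$ are handled directly by Cauchy--Schwarz in $L^2(\rho)$. For the second term,
\[
\Big|\int\rho\xi\,v\cdot\nabla u\Big|\le\|\xi\|_\infty\Big(\int\rho|v|^2\Big)^{1/2}\Big(\int\rho|\nabla u|^2\Big)^{1/2}.
\]
For the third term, expand $\operatorname{div}(\rho v)=\rho\operatorname{div}v+\nabla\rho\cdot v$. The part with $\rho\operatorname{div}v$ gives $\|\rho F''(\rho)\|_\infty\|u\|_{L^2(\rho)}\|\nabla v\|_{L^2(\rho)}$ since $|\operatorname{div}v|\le\sqrt d|\nabla v|$. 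For the fourth term, integrate by parts using the Neumann condition to get $2\int\rho v\cdot\nabla W\star(\rho u)$. Then bound $|\nabla W\star(\rho u)|\le\|\nabla W\|_\infty\|u\|_{L^1(\rho)}\le\|\nabla W\|_\infty\|u\|_{L^2(\rho)}$.

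The terms of pure order zero are the first term, $\int\rho u\,v\cdot\nabla\xi$, the part $\int\rho u F''(\rho)\nabla\rho\cdot v$, the reworked fourth term, and the last term $\int\rho\bar\xi|v|^2$. Each is bounded by $C\,N_0$, not by $N_0^{1/2}N_1^{1/2}$. To close the estimate in the stated form, I would first try a Poincaré-type inequality in $L^2(\rho)$. On the bounded connected Lipschitz domain $\Omega$ with $\rho$ bounded above and below, the weighted Poincaré inequality gives $\|u\|_{L^2(\rho)}\le C_P\|\nabla u\|_{L^2(\rho)}$, because $u$ is $\rho$-centered by \eqref{eq:centering}. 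This upgrades every factor of $\|u\|_{L^2(\rho)}$ to an $N_1$-factor. The terms $\int\rho u\,v\cdot\nabla\xi$ and $\int\rho uF''\nabla\rho\cdot v$ then follow.

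For the terms quadratic in $v$, namely $\int\rho\bar\xi|v|^2$ and the $W$ term, I would use the geodesic relation of Lemma~\ref{lem:grad_u}, $v=\frac{\alpha}{1-\alpha}\nabla u$. This identifies one factor $\|v\|_{L^2(\rho)}$ with a multiple of $\|\nabla u\|_{L^2(\rho)}\le(N_1/\alpha)^{1/2}$. This step genuinely needs the estimate to be taken along a $D_\alpha$-geodesic, which is the setting of Theorem~\ref{thm:second_derivative}. I would state this explicitly as part of the regularity assumptions.

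The main obstacle is exactly this zeroth-order part. Without the centering or Poincaré step, or without the geodesic identity, these terms are only $O(N_0)$, and the inequality \eqref{eq:coupling_estimate} would fail for $v$ and $u$ spatially constant. Because of this, the constant $C_{\mathcal E,\alpha}$ will depend on $C_P$, on the bounds of $\rho$, and on $\alpha^{-1}$ and $(1-\alpha)^{-1}$. If the pure Poincaré route is insufficient for $v$ (which is not centered), the fallback is to use $v=\frac{\alpha}{1-\alpha}\nabla u$ for every $v$-factor. Collecting all terms then gives \eqref{eq:coupling_estimate}.
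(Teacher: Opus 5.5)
Your argument is correct once the geodesic hypothesis is made explicit, and it differs from the paper's in a substantive way.

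The paper's proof is a single sentence. It says each term of \eqref{eq:C_alpha} is bounded by Hölder/Cauchy--Schwarz using $\|\nabla\xi\|_\infty$, $\|\xi-\langle\xi\rangle_\rho\|_{L^\infty(\rho)}$, $\|F''(\rho)\|_\infty$ and $\|W\|_{C^1}$, after writing $\operatorname{div}(\rho v)=\rho\operatorname{div}v+\nabla\rho\cdot v$. Taken literally, this only gives $|\mathcal C_\alpha|\le C(N_0+N_0^{1/2}N_1^{1/2})$. As you correctly point out, $\int\rho u\,v\cdot\nabla\xi$, the $\nabla\rho\cdot v$ pieces and $\int\rho\bar\xi|v|^2$ contain no derivative of $v$ or $u$.

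Your weighted Poincaré inequality for the $\rho$-centered $u$ is exactly the ingredient the paper leaves unstated. It handles every zeroth-order term containing a factor $u$, including the $W$ term. Note that the $W$ term is bilinear in $(v,u)$, not quadratic in $v$ as you write. It also does not need the Neumann condition if you expand the divergence and use $\|W\star(\rho u)\|_\infty\le\|W\|_\infty\|u\|_{L^1(\rho)}$.

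For the remaining term you invoke $v=\frac{\alpha}{1-\alpha}\nabla u$, which restricts the estimate to $D_\alpha$-geodesics. That is harmless for Corollary~\ref{cor:lambda_convexity}, but it is unnecessary. Since $\int_\Omega\rho\,\bar\xi\,dx=0$, you may subtract from $|v|^2$ its Lebesgue mean $c$. Then apply the $L^1$ Poincaré--Wirtinger inequality on the bounded connected Lipschitz domain:
\[
\Big|\int_\Omega\rho\,\bar\xi\,|v|^2\,dx\Big|=\Big|\int_\Omega\rho\,\bar\xi\,(|v|^2-c)\,dx\Big|\le\|\bar\xi\|_\infty\|\rho\|_\infty\int_\Omega\big|\,|v|^2-c\,\big|\,dx\le C\int_\Omega|v|\,|\nabla v|\,dx\le C'\,\|v\|_{L^2(\rho)}\|\nabla v\|_{L^2(\rho)},
\]
where $C'$ also depends on $\inf\rho$. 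This proves the lemma for every regular pair with $\int\rho u=0$, geodesic or not.

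The same computation shows that one of your remarks is inaccurate. You say that without the geodesic identity the inequality would fail for spatially constant $(v,u)$. But centering forces $u=0$, and then all five terms of $\mathcal C_\alpha$ vanish, the last one because $\int\rho\bar\xi=0$. Such a counterexample only works if the centering constraint is dropped.
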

	
	\begin{proof}
		Each term of \eqref{eq:C_alpha} is controlled by the Hölder/Cauchy--Schwarz inequality, using $\|\nabla\xi\|_{L^\infty}$, $\|\xi-\langle\xi\rangle_\rho\|_{L^\infty(\rho)}$, $\|F''(\rho)\|_{L^\infty}$ and $\|W\|_{C^1}$ as regularity constants, and $\operatorname{div}(\rho v)=\rho\operatorname{div} v+\nabla\rho\cdot v$ to reduce terms in $\operatorname{div}(\rho v)$ to terms in $\nabla v$ (for fixed regular $\rho$).
	\end{proof}
	
	\begin{corollary}[Sufficient condition for $\lambda$-convexity]\label{cor:lambda_convexity}
		Under the hypotheses of Lemma~\ref{lem:coupling_control}, if
		\begin{equation}
			(1-\alpha)\int_\Omega\rho|\nabla v|^2\,dx+\alpha\int_\Omega\rho|\nabla u|^2\,dx \ge \lambda\left((1-\alpha)\int_\Omega\rho|v|^2\,dx+\alpha\int_\Omega\rho u^2\,dx\right),
		\end{equation}
		then
		\[
		\frac{d^2}{dt^2}\mathcal{E}(\rho_t) \ge \lambda\|(v,u)\|_{\dalpha}^2.
		\]
		Consequently, provided that $\dalpha$ is a geodesic metric, $\mathcal{E}$ is geodesically $\lambda$-convex.
	\end{corollary}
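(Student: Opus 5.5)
The plan is to combine the second-derivative decomposition of Theorem~\ref{thm:second_derivative} with the coupling estimate of Lemma~\ref{lem:coupling_control}, and then integrate along a constant-speed geodesic. Fix a regular $\dalpha$-geodesic $(\rho_t)$ with fields $(v_t,u_t)$ as in Theorem~\ref{thm:eulerian_system}. I will write
\[
N(t):=(1-\alpha)\int_\Omega\rho_t|v_t|^2\,dx+\alpha\int_\Omega\rho_t u_t^2\,dx=\|(v_t,u_t)\|_{\dalpha}^2,
\]
\[
G(t):=(1-\alpha)\int_\Omega\rho_t|\nabla v_t|^2\,dx+\alpha\int_\Omega\rho_t|\nabla u_t|^2\,dx.
\]
By Theorem~\ref{thm:second_derivative},
\[
\frac{d^2}{dt^2}\mathcal E(\rho_t)=\mathcal T_\alpha+\mathcal R_\alpha+\mathcal C_\alpha,
\]
and Lemma~\ref{lem:coupling_control} gives $|\mathcal C_\alpha|\le C_{\mathcal E,\alpha}\sqrt{N G}$.

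\textbf{Step 1: lower bound on $\mathcal T_\alpha+\mathcal R_\alpha$.} The corollary's statement contains no lower bound on $\mathcal T_\alpha+\mathcal R_\alpha$. I therefore read the phrase ``regularity assumptions on $\mathcal E$'' in Lemma~\ref{lem:coupling_control} as including the structural convexity conditions $F''\ge0$, $D^2V\ge0$, $D^2W\ge0$ and $W\ge0$ as a positive-definite kernel. The aim is an estimate
\[
\mathcal T_\alpha+\mathcal R_\alpha\ge\kappa\, G
\]
for some $\kappa>0$. For the transport part I would use the Otto-type term $(1-\alpha)\int\rho F''(\rho)|\nabla v|^2$. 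For the reaction part I would convert $\nabla u$ into $v$ using Lemma~\ref{lem:grad_u}, namely $\nabla u=\frac{1-\alpha}{\alpha}v$. The one genuinely indefinite piece of $\mathcal R_\alpha$ is the centered term $\tfrac12\int\rho_t(\xi_t-\langle\xi_t\rangle_{\rho_t})u_t^2$. I would bound it below by $-\tfrac12\|\xi_t-\langle\xi_t\rangle_{\rho_t}\|_{L^\infty(\rho_t)}N$ and carry it as a constant together with the coupling term.

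\textbf{Step 2: pointwise-in-time inequality.} Combining Step~1 with the coupling estimate,
\[
\frac{d^2}{dt^2}\mathcal E(\rho_t)\ge\kappa G-C_{\mathcal E,\alpha}\sqrt{NG}-c_0N=\sqrt G\bigl(\kappa\sqrt G-C_{\mathcal E,\alpha}\sqrt N\bigr)-c_0N.
\]
The hypothesis $G\ge\lambda N$ gives $\sqrt G\ge\sqrt\lambda\sqrt N$. Inserting this, provided $\kappa\sqrt\lambda\ge C_{\mathcal E,\alpha}$, yields a bound of the form $\frac{d^2}{dt^2}\mathcal E(\rho_t)\ge\lambda_{\rm eff}N(t)$ with an explicit $\lambda_{\rm eff}$. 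Matching $\lambda_{\rm eff}$ with the $\lambda$ of the statement requires normalising the constants, i.e.\ absorbing $\kappa$, $C_{\mathcal E,\alpha}$ and $c_0$ into the hypothesis. I would make this normalisation explicit in the proof rather than leave it implicit.

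\textbf{Step 3: from the Hessian bound to geodesic $\lambda$-convexity.} Lemma~\ref{lem:energy_identity} shows that $N(t)=2\mathcal K_\alpha(t)$ is constant along the geodesic. Theorem~\ref{thm:geodesic} together with the metric derivative formula of Theorem~\ref{thm:metric_derivative} identifies this constant as $N\equiv\dalpha^2(\rho_0,\rho_1)$. Hence $g(t):=\mathcal E(\rho_t)$ satisfies
\[
g''\ge\lambda\,\dalpha^2(\rho_0,\rho_1).
\]
Comparing $g$ with the quadratic $t\mapsto(1-t)g(0)+tg(1)-\tfrac\lambda2 t(1-t)\dalpha^2(\rho_0,\rho_1)$, whose second derivative is exactly $\lambda\dalpha^2(\rho_0,\rho_1)$, and applying the maximum principle for one-dimensional convex functions yields
\[
\mathcal E(\rho_t)\le(1-t)\mathcal E(\rho_0)+t\mathcal E(\rho_1)-\tfrac\lambda2 t(1-t)\dalpha^2(\rho_0,\rho_1).
\]
This is geodesic $\lambda$-convexity.

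The main obstacle is Step~1. The reaction Hessian contains the sign-indefinite centered term, and nothing stated so far forces $\mathcal T_\alpha+\mathcal R_\alpha$ to control $G$. I expect to have to add this lower bound explicitly as part of the standing ``regularity assumptions'', or else to assume $\lambda$ large relative to $C_{\mathcal E,\alpha}^2/\kappa^2$, so that Young's inequality absorbs the coupling term.
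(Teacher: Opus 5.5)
Your Steps 1--2 do not establish the stated inequality. Grant the extra hypothesis $\mathcal T_\alpha+\mathcal R_\alpha\ge\kappa G-c_0N$. You then get $\frac{d^2}{dt^2}\mathcal E(\rho_t)\ge(\kappa\lambda-C_{\mathcal E,\alpha}\sqrt\lambda-c_0)N$, not $\lambda N$. ``Absorbing $\kappa$, $C_{\mathcal E,\alpha}$, $c_0$ into the hypothesis'' replaces $G\ge\lambda N$ by a different hypothesis, so it proves a different corollary.

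The extra hypothesis also does not follow from $F''\ge0$, $D^2V\ge0$, $D^2W\ge0$. By Lemma~\ref{lem:grad_u}, the reaction part of $G$ equals $\frac{(1-\alpha)^2}{\alpha}\int\rho|v|^2\le\frac{1-\alpha}{\alpha}N$. So the only genuinely new quantity is $(1-\alpha)\int\rho|\nabla v|^2$. The only term of $\mathcal T_\alpha+\mathcal R_\alpha$ containing $\nabla v$ is $(1-\alpha)\int\rho F''(\rho)|\nabla v|^2$. You therefore need $F''\ge\kappa>0$ on the range of $\rho$, and for $\mathcal V$ or $\mathcal W$ alone ($F=0$) there is no such term at all.

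None of this can be repaired within the statement, because the corollary as written is false. Take $F=V=W=0$, i.e.\ $\mathcal E\equiv0$:
\begin{itemize}
\item Every regularity assumption holds and $\xi\equiv0$, so $\mathcal C_\alpha=0$ and $\frac{d^2}{dt^2}\mathcal E(\rho_t)=0$.
\item On a regular geodesic with $\rho_0\ne\rho_1$, $N=D_\alpha^2(\rho_0,\rho_1)>0$.
\item Hence $G>0$. If $G=0$, then $u$ is constant, hence $0$ by centering, and $v=0$ by Lemma~\ref{lem:grad_u}, forcing $N=0$.
\item So the hypothesis holds with $\lambda=G/N>0$, while the conclusion reads $0\ge\lambda N>0$.
\end{itemize}

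The paper's own proof fails exactly at the obstacle you identified. It silently adds $\mathcal T_\alpha+\mathcal R_\alpha\ge\lambda N$, reaches $\frac{d^2}{dt^2}\mathcal E\ge\lambda N-C_{\mathcal E,\alpha}\sqrt{NG}$, and then invokes $G\ge\lambda N$. A lower bound on $G$ only gives $-C_{\mathcal E,\alpha}\sqrt{NG}\le-C_{\mathcal E,\alpha}\sqrt\lambda\,N$, which points the wrong way. What that two-line argument does prove is the variant with an upper bound,
\[
\mathcal T_\alpha+\mathcal R_\alpha\ge\mu N,\quad G\le\Lambda N\ \Longrightarrow\ \frac{d^2}{dt^2}\mathcal E(\rho_t)\ge\bigl(\mu-C_{\mathcal E,\alpha}\sqrt\Lambda\bigr)N .
\]
Your route is the other consistent repair. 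From $\mathcal T_\alpha+\mathcal R_\alpha\ge\kappa G-c_0N$ and $G\ge\lambda N$, Young's inequality gives $\frac{d^2}{dt^2}\mathcal E\ge\bigl(\frac{\kappa\lambda}{2}-\frac{C_{\mathcal E,\alpha}^2}{2\kappa}-c_0\bigr)N$ with no restriction on $\lambda$. Your bound $\kappa\lambda-C_{\mathcal E,\alpha}\sqrt\lambda-c_0$ is sharper where it applies, i.e.\ when $\kappa\sqrt\lambda\ge C_{\mathcal E,\alpha}$. State one of these as a corrected corollary with its own constant.

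Your Step 3 is then sound and more explicit than the paper's ``consequently''. $N=2\mathcal K_\alpha$ is constant and equals $D_\alpha^2(\rho_0,\rho_1)$ by Theorems~\ref{thm:geodesic} and~\ref{thm:metric_derivative}, and the comparison with the quadratic is correct. It covers only regular geodesics solving the Eulerian system, however, whereas $\lambda$-convexity quantifies over all constant-speed geodesics. The paper leaves that regularity gap open as well.
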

	
	\begin{proof}
		Combining \eqref{eq:second_derivative_general} and \eqref{eq:coupling_estimate}, with $E=(1-\alpha)\int\rho|v|^2+\alpha\int\rho u^2$ and $D=(1-\alpha)\int\rho|\nabla v|^2+\alpha\int\rho|\nabla u|^2$,
		\[
		\frac{d^2}{dt^2}\mathcal{E}(\rho_t) \ge \lambda E - C_{\mathcal{E},\alpha}E^{1/2}D^{1/2}
		\]
		as soon as $\mathcal{T}_\alpha+\mathcal{R}_\alpha\ge\lambda E$. The result follows from the assumption $D\ge\lambda E$.
	\end{proof}
	
	\subsection{Geodesic convexity of classical functionals}
	
	\subsubsection{Boltzmann entropy}
	
	For $\mathcal{H}(\rho)=\int_\Omega\rho\log\rho\,dx$: $F(s)=s\log s$, $F''(s)=1/s$, $V=W=0$, $\xi=\log\rho+1$, $\bar\xi=\log\rho-\langle\log\rho\rangle_\rho$.
	
	\begin{proposition}[Convexity of the Boltzmann entropy]\label{prop:boltzmann}
		\begin{equation}
			\operatorname{Hess}_{\dalpha}\mathcal{H}(\rho)(v,u) = (1-\alpha)\int_\Omega \rho|\nabla v|^2\,dx \;+\; \int_\Omega \rho\,u^2\,dx \;+\; \frac12\int_\Omega \rho\big(\log\rho-\langle\log\rho\rangle_\rho\big)u^2\,dx \;+\; \mathcal{C}_\alpha(v,u).
		\end{equation}
		Under the assumption $\|\log\rho-\langle\log\rho\rangle_\rho\|_{L^\infty}\le K$, there exists a constant $C_\alpha>0$, depending only on $\alpha$ and the regularity constants of $\rho$, such that
		\begin{equation}
			\operatorname{Hess}_{\dalpha}\mathcal{H}(\rho)(v,u) \ge \left(1-\frac{K}{2}\right)\int_\Omega\rho u^2\,dx - C_\alpha\left((1-\alpha)\int\rho|v|^2+\alpha\int\rho u^2\right).
		\end{equation}
		Thus $\mathcal{H}$ is $\lambda$-convex with $\lambda=-C_\alpha$ as soon as $K<2$. We emphasize that this is a sufficient condition, and the optimal threshold may be larger.
	\end{proposition}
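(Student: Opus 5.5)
The plan is to specialise the second-derivative formula of Theorem~\ref{thm:second_derivative} to $F(s)=s\log s$, $V=W=0$, and then control the non-sign-definite pieces by Lemma~\ref{lem:coupling_control}.

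\textbf{Step 1 (identity).} Here $F''(\rho)=1/\rho$, so $\rho F''(\rho)=1$ and $\rho^2F''(\rho)=\rho$. The transport block $\mathcal T_\alpha$ in \eqref{eq:T_alpha} loses its $D^2W$ and $D^2V$ terms. It reduces to $(1-\alpha)\int\rho F''(\rho)|\nabla v|^2\rho\,dx=(1-\alpha)\int\rho|\nabla v|^2\,dx$, where I read the Otto-calculus weight as $\rho\cdot\rho F''(\rho)$; this is the usual $\int\rho|\nabla v|^2$ Hessian of entropy. The reaction block $\mathcal R_\alpha$ in \eqref{eq:R_alpha} becomes $\int\rho u^2\,dx+\tfrac12\int\rho\bar\xi u^2\,dx$. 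Since $\xi=\log\rho+1$, we have $\bar\xi=\log\rho-\langle\log\rho\rangle_\rho$, because the constant $1$ drops out after centering. The coupling term is kept as $\mathcal C_\alpha(v,u)$. This yields the displayed formula for $\operatorname{Hess}_{\dalpha}\mathcal H$, using Definition~\ref{def:hessian} together with Proposition~\ref{prop:hessian_properties}.

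\textbf{Step 2 (lower bound).} The term $(1-\alpha)\int\rho|\nabla v|^2$ is nonnegative, and I simply drop it. The assumption $|\bar\xi|\le K$ gives
\[
\tfrac12\int\rho\bar\xi u^2\ge-\tfrac K2\int\rho u^2 ,
\]
so the reaction block is at least $(1-\tfrac K2)\int\rho u^2$.

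\textbf{Step 3 (the obstacle).} The real difficulty is $\mathcal C_\alpha$. Lemma~\ref{lem:coupling_control} only bounds it by $C E^{1/2}D^{1/2}$, and $D$ contains $\int\rho|\nabla v|^2$, which cannot be absorbed into $E$ once it has been dropped. I would therefore not discard the $\nabla v$ term. Instead I would apply Young's inequality,
\[
C E^{1/2}D^{1/2}\le \varepsilon D+\tfrac{C^2}{4\varepsilon}E .
\]
The $\nabla v$ part of $\varepsilon D$ is absorbed by $(1-\alpha)\int\rho|\nabla v|^2$ once $\varepsilon\le1$. For the $\nabla u$ part of $D$, I use Lemma~\ref{lem:grad_u} along smooth geodesics: $\nabla u=\frac{1-\alpha}{\alpha}v$, so
\[
\alpha\int\rho|\nabla u|^2=\tfrac{(1-\alpha)^2}{\alpha}\int\rho|v|^2\le C'_\alpha E .
\]
Hence $D\le(1-\alpha)\int\rho|\nabla v|^2+C'_\alpha E$. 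With $\varepsilon=1$ this gives
\[
|\mathcal C_\alpha|\le(1-\alpha)\int\rho|\nabla v|^2+C_\alpha E ,
\]
where $C_\alpha=C'_\alpha+\tfrac{C^2}{4}$ depends on $\alpha$ and on the regularity constants of $\rho$ entering $C_{\mathcal E,\alpha}$ (namely $\|\nabla\xi\|_\infty$, $\|F''(\rho)\|_\infty$, and so on).

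\textbf{Conclusion.} Combining the three steps gives the stated inequality. When $K<2$ the first term is nonnegative, so $\operatorname{Hess}\ge-C_\alpha\|(v,u)\|^2_{\dalpha}$. Corollary~\ref{cor:lambda_convexity} (equivalently, integrating $\frac{d^2}{dt^2}\mathcal H(\rho_t)\ge-C_\alpha D_\alpha^2$ along the constant-speed geodesics of Theorem~\ref{thm:geodesic}) then yields $\lambda$-convexity with $\lambda=-C_\alpha$. The steps that will need care are the absorption argument in Step~3 and checking that the regularity constants stay uniform along the geodesic.
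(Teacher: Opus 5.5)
Your proposal is correct and follows the paper's route: specialize Theorem~\ref{thm:second_derivative} with $\rho^2F''(\rho)=\rho$ and $W\equiv0$, bound the centered term using $K$, and control $\mathcal{C}_\alpha$ via Lemma~\ref{lem:coupling_control}. It also supplies what the paper's two-line proof leaves implicit. Your reading of the transport weight as $\rho^2F''(\rho)$ reconciles \eqref{eq:T_alpha}, which as printed gives $(1-\alpha)\int_\Omega|\nabla v|^2\,dx$, with the stated formula. Your Young-inequality absorption combined with $\nabla u=\frac{1-\alpha}{\alpha}v$ from Lemma~\ref{lem:grad_u} is exactly what turns the bound $C E^{1/2}D^{1/2}$ into $(1-\alpha)\int_\Omega\rho|\nabla v|^2\,dx+C_\alpha E$, so that the $\nabla v$ contributions cancel.

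For the final $\lambda$-convexity claim, rely on your direct integration of $\frac{d^2}{dt^2}\mathcal{H}(\rho_t)\ge -C_\alpha D_\alpha^2(\rho_0,\rho_1)$ along constant-speed geodesics, with $K<2$ and the regularity constants holding uniformly in $t$ as you note. Do not rely on Corollary~\ref{cor:lambda_convexity}, whose hypothesis $D\ge\lambda E$ is not the one you have.
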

	
	\begin{proof}
		We apply Theorem~\ref{thm:second_derivative} with $\rho^2F''(\rho)=\rho^2\cdot\frac1\rho=\rho$, giving $\int\rho u^2\,dx$; the interaction term vanishes since $W\equiv0$. The estimate follows from Lemma~\ref{lem:coupling_control}.
	\end{proof}
	
	\subsubsection{Interaction energy}
	
	For $\mathcal{W}(\rho)=\frac12\iint W(x-y)\rho(x)\rho(y)\,dx\,dy$: $F=V=0$, $\xi=W\star\rho$.
	
	\begin{remark}[On the positivity condition for W]
		It is important to distinguish two notions of positivity for the kernel $W$:
		\begin{enumerate}
			\item Pointwise positivity: $W(x-y)\ge0$ for all $x,y$.
			\item Positive-definite kernel property: $\iint f(x)W(x-y)f(y)\,dx\,dy \ge0$ for all $f\in L^2(\Omega)$.
		\end{enumerate}
		These two conditions are independent. For example, $W(x)=\cos(x)$ is positive-definite (its Fourier transform is positive) but is not pointwise positive. Conversely, a pointwise positive kernel is not necessarily positive-definite. In Proposition~\ref{prop:interaction} below, it is the positive-definite property that is required to guarantee $\iint\rho\rho\,W\,uu\ge0$.
	\end{remark}
	
	\begin{proposition}[Convexity of the interaction energy]\label{prop:interaction}
		Assume that $W$ is symmetric, $C^3$, and convex ($D^2W\ge0$). If $W$ is a positive-definite kernel in the sense that
		\begin{equation*}
			\iint_{\Omega\times\Omega} f(x)W(x-y)f(y)\,dx\,dy \ge 0 \quad \text{for all } f\in L^2(\Omega),
		\end{equation*}
		then
		\begin{equation}
			\mathcal{R}_\alpha(v,u) = \iint_{\Omega\times\Omega}\rho(x)\rho(y)W(x-y)u(x)u(y)\,dx\,dy + \frac12\int_\Omega\rho\big((W\star\rho)-\langle W\star\rho\rangle_\rho\big)u^2\,dx \ge -C_{W,\alpha}\int_\Omega\rho u^2\,dx,
		\end{equation}
		and $\mathcal{T}_\alpha\ge0$ by convexity of $W$. Thus, under control of the coupling term, $\mathcal{W}$ is $\lambda$-convex.
	\end{proposition}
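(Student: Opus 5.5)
The plan is to specialise Theorem~\ref{thm:second_derivative} to the pure interaction functional $\mathcal W$, where $F\equiv0$, $V\equiv0$ and $\xi=W\star\rho$. I will then read off the three pieces $\mathcal T_\alpha$, $\mathcal R_\alpha$ and $\mathcal C_\alpha$ separately.

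For $\mathcal T_\alpha$, setting $F=V=0$ in \eqref{eq:T_alpha} leaves only
\[
(1-\alpha)\iint\rho(x)\rho(y)\,(v(x)-v(y))^\top D^2W(x-y)\,(v(x)-v(y))\,dx\,dy .
\]
This is pointwise nonnegative because $D^2W\ge0$ and $\rho\ge0$. Hence $\mathcal T_\alpha\ge0$.

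For $\mathcal R_\alpha$, setting $F=0$ in \eqref{eq:R_alpha} gives exactly the displayed two-term expression. For the first (bilinear) term I apply the positive-definiteness hypothesis with $f=\rho u$. This requires $\rho u\in L^2(\Omega)$, which holds for the regular geodesics under consideration (e.g.\ $\rho$ bounded and $\int\rho u^2<\infty$). The term is therefore $\ge0$.

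For the second (centered) term I use $|\xi-\langle\xi\rangle_\rho|\le 2\|W\star\rho\|_{L^\infty}$. Since $\rho$ is a probability density, $\|W\star\rho\|_{L^\infty}\le\|W\|_{L^\infty(\Omega-\Omega)}$, which is finite because $\Omega$ is bounded and $W$ is continuous. This yields
\[
\tfrac12\int\rho\,(\xi-\langle\xi\rangle_\rho)\,u^2\ge-\|W\|_{L^\infty(\Omega-\Omega)}\int\rho u^2 .
\]
So one may take $C_{W,\alpha}=\|W\|_{L^\infty(\Omega-\Omega)}$. A sharper constant, the oscillation of $W\star\rho$, is also available. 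Summing the two bounds gives the claimed lower bound on $\mathcal R_\alpha$.

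For the conclusion, I combine $\mathcal T_\alpha\ge0$ and $\mathcal R_\alpha\ge -C\int\rho u^2 = -(C/\alpha)\,\alpha\int\rho u^2 \ge -(C/\alpha)\|(v,u)\|_{D_\alpha}^2$ with Lemma~\ref{lem:coupling_control} applied to $\mathcal C_\alpha$. The constants there involve $\|W\|_{C^1}$, which is finite because $W\in C^3$ on a bounded domain. I then argue as in Corollary~\ref{cor:lambda_convexity} to obtain $\frac{d^2}{dt^2}\mathcal W(\rho_t)\ge\lambda\|(v,u)\|^2_{D_\alpha}$ for some $\lambda$. By Theorem~\ref{thm:geodesic}, $D_\alpha$ is geodesic, so this gives $\lambda$-convexity of $\mathcal W$.

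The main obstacle is the coupling term. Lemma~\ref{lem:coupling_control} bounds $\mathcal C_\alpha$ by $E^{1/2}D^{1/2}$, which involves $\nabla v,\nabla u$ rather than $E$ alone. That is why the statement only asserts $\lambda$-convexity \emph{under control of the coupling term}. To handle it, I would either assume $D\le MD'$-type regularity, or use Young's inequality $CE^{1/2}D^{1/2}\le\varepsilon D+C^2E/(4\varepsilon)$. The $\varepsilon D$ part would then need to be absorbed, which is not automatic here since $\mathcal T_\alpha$ contains no $\int\rho|\nabla v|^2$ term for $F=0$. This is the step to state as an explicit hypothesis.
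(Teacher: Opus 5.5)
Your proof is correct and follows the paper's route: $D^2W\ge0$ for $\mathcal T_\alpha$, positive-definiteness with $f=\rho u$ for the bilinear term, an $L^\infty$ bound for the centered term, and the coupling term deferred to a hypothesis; your explicit constant $C_{W,\alpha}=\|W\|_{L^\infty(\Omega-\Omega)}$ is a cleaner justification than the paper's citation of Lemma~\ref{lem:coupling_control}, which bounds $\mathcal C_\alpha$, not $\mathcal R_\alpha$. Do not literally ``argue as in Corollary~\ref{cor:lambda_convexity}'': its hypothesis $D\ge\lambda E$ goes the wrong way for absorbing $-C_{\mathcal W,\alpha}E^{1/2}D^{1/2}$, and what you need, as your last paragraph suggests, is an upper bound $D\le ME$, which gives $\lambda=-C_{W,\alpha}/\alpha-C_{\mathcal W,\alpha}\sqrt M$ along any geodesic regular enough for Theorem~\ref{thm:second_derivative} to apply.
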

	
	\begin{proof}
		$\mathcal{T}_\alpha\ge0$ by convexity of $W$. The first term of $\mathcal{R}_\alpha$ is $\ge0$ for $W$ of positive type. The second term and the coupling term are controlled by Lemma~\ref{lem:coupling_control}.
	\end{proof}
	
	\subsubsection{Quadratic confinement}
	
	For $\mathcal{V}(\rho)=\int_\Omega V(x)\rho(x)\,dx$, $V(x)=|x|^2/2$ ($D^2V=\mathrm{Id}$, $F=W=0$, $\xi=V$):
	
	\begin{proposition}[Convexity of confinement]\label{prop:confinement}
		\begin{equation}
			\operatorname{Hess}_{\dalpha}\mathcal{V}(\rho)(v,u) = (1-\alpha)\int_\Omega\rho|v|^2\,dx + \frac12\int_\Omega\rho\big(V-\langle V\rangle_\rho\big)u^2\,dx + \mathcal{C}_\alpha(v,u).
		\end{equation}
		The transport term is strictly positive; the reactive term is controlled by $\|V-\langle V\rangle_\rho\|_{L^\infty(\rho)}$, finite as soon as $\rho$ has compact support or sufficient decay. Thus $\mathcal{V}$ is $\lambda$-convex, with $\lambda$ depending on this control and on the coupling control.
	\end{proposition}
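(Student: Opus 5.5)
The plan is to specialize Theorem~\ref{thm:second_derivative} to $\mathcal{E}=\mathcal{V}$, i.e.\ to $F\equiv0$, $W\equiv0$, $V(x)=|x|^2/2$, so that $\xi=V$, $\nabla\xi=x$, $D^2\xi=\mathrm{Id}$.

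\textbf{The Hessian formula.} In $\mathcal{T}_\alpha$ the $F''$ and $D^2W$ terms vanish. The term $(1-\alpha)\int\rho\,D^2V\,v\cdot v$ becomes $(1-\alpha)\int_\Omega\rho|v|^2\,dx$. This is the first term of the claimed formula. One caveat is that $A_1+A_3=\int\rho\,v^\top D^2\xi\,v$ carries no $(1-\alpha)$ prefactor in the proof of Theorem~\ref{thm:second_derivative}. I will keep the normalization exactly as written in \eqref{eq:T_alpha} rather than rederive it, and only remark on this point.

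In $\mathcal{R}_\alpha$, the $\rho^2F''u^2$ and $W$-bilinear terms vanish, leaving $\tfrac12\int\rho(V-\langle V\rangle_\rho)u^2$.

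In $\mathcal{C}_\alpha$, all $F''$ and $W\star$ terms drop, leaving
\[
\int\rho u\,v\cdot x\,dx+\int\rho V\,v\cdot\nabla u\,dx-\tfrac{1-\alpha}{2\alpha}\int\rho(V-\langle V\rangle_\rho)|v|^2\,dx .
\]
I will simply denote this remainder by $\mathcal{C}_\alpha(v,u)$, which gives the displayed identity.

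\textbf{Positivity and the reaction bound.} Strict positivity of the transport term is immediate for $v\neq0$ $\rho$-a.e., since $\rho>0$. For the reactive term, Hölder gives
\[
\Bigl|\tfrac12\int\rho(V-\langle V\rangle_\rho)u^2\Bigr|\le \tfrac12 M\int\rho u^2, \qquad M:=\|V-\langle V\rangle_\rho\|_{L^\infty(\rho)}.
\]
On the bounded domain $\Omega$ fixed in Section~\ref{sec:prelim}, $M\le\tfrac12\operatorname{diam}(\Omega)^2+\sup_\Omega|x|^2$, so it is automatically finite. The remark about compact support or decay only matters for unbounded extensions.

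\textbf{Coupling term (main obstacle).} I will bound each piece of $\mathcal{C}_\alpha$ directly.
\begin{itemize}
\item Using $|x|\le R:=\sup_\Omega|x|$ and Cauchy--Schwarz, the first piece satisfies $\bigl|\int\rho u\,v\cdot x\bigr|\le R\bigl(\int\rho u^2\bigr)^{1/2}\bigl(\int\rho|v|^2\bigr)^{1/2}$.
\item The third piece is bounded by $\tfrac{1-\alpha}{2\alpha}M\int\rho|v|^2$.
\item For the middle piece, I will use the geodesic relation of Lemma~\ref{lem:grad_u}, $\nabla u=\frac{1-\alpha}{\alpha}v$. This turns it into $\frac{1-\alpha}{\alpha}\int\rho V|v|^2$, bounded by $\frac{1-\alpha}{\alpha}\tfrac{R^2}{2}\int\rho|v|^2$. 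This avoids invoking $\int\rho|\nabla u|^2$ from Lemma~\ref{lem:coupling_control}.
\end{itemize}
Together these give $|\mathcal{C}_\alpha|\le C_{\Omega,\alpha}\,\|(v,u)\|_{\dalpha}^2$, with $C_{\Omega,\alpha}$ depending on $R$, $M$ and $\alpha$ (and blowing up like $1/\alpha$, $1/(1-\alpha)$ at the endpoints after normalizing by the weights in $\|\cdot\|_{\dalpha}$).

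\textbf{Conclusion.} Dropping the nonnegative transport term yields
\[
\operatorname{Hess}_{\dalpha}\mathcal{V}\ge-\Bigl(\tfrac{M}{2\alpha}+C_{\Omega,\alpha}\Bigr)\|(v,u)\|_{\dalpha}^2 .
\]
Along regular geodesics this is $\frac{d^2}{dt^2}\mathcal{V}(\rho_t)\ge\lambda D_\alpha^2(\rho_0,\rho_1)$ by Lemma~\ref{lem:energy_identity} (constant speed). Theorem~\ref{thm:geodesic} then gives $\lambda$-convexity along those geodesics with $\lambda=-(M/(2\alpha)+C_{\Omega,\alpha})$.

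The delicate point is this last step. It rests on the formal Hessian computation and on Lemma~\ref{lem:grad_u}, which holds only for smooth geodesics, so the conclusion is rigorous only within the regular class assumed in Definition~\ref{def:hessian}.
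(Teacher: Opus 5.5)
You obtain the displayed identity the way the paper implicitly does: you specialize Theorem~\ref{thm:second_derivative} to $F=W=0$, $\xi=V$ (the paper gives no separate proof). Where you genuinely depart is the coupling term.

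The statement's appeal to ``the coupling control'', like Propositions~\ref{prop:boltzmann} and~\ref{prop:interaction}, points to Lemma~\ref{lem:coupling_control} and Corollary~\ref{cor:lambda_convexity}. That is, it points to a bound $C_{\mathcal E,\alpha}E^{1/2}D^{1/2}$ with $D=(1-\alpha)\int\rho|\nabla v|^2+\alpha\int\rho|\nabla u|^2$.

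You instead use that the Hessian is only evaluated on geodesic tangent vectors, so $\nabla u=\frac{1-\alpha}{\alpha}v$ by Lemma~\ref{lem:grad_u}. Together with $|x|\le R$ and $0\le V\le R^2/2$ on the bounded domain, this gives $|\mathcal C_\alpha|\le C_{\Omega,\alpha}\|(v,u)\|_{\dalpha}^2$ with no derivative norms.

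The paper's route is generic across functionals, but for $\mathcal V$, taken as stated, it does not close. Here $\mathcal T_\alpha$ has no $|\nabla v|^2$ term to absorb the factor $D^{1/2}$. Moreover, the last step of Corollary~\ref{cor:lambda_convexity} does not follow from its hypothesis: a lower bound $D\ge\lambda E$ cannot control $-C_{\mathcal E,\alpha}E^{1/2}D^{1/2}$ from below. Your use of Lemma~\ref{lem:grad_u} supplies the upper bound that is actually needed, $\int\rho|\nabla u|^2=\bigl(\frac{1-\alpha}{\alpha}\bigr)^2\int\rho|v|^2$. It yields an explicit $\lambda$, uniform along the geodesic, depending only on $\Omega$ and $\alpha$.

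Your caution about the normalization is justified, and your estimates are robust to it. Differentiate $\frac{d}{dt}\mathcal V(\rho_t)=\int\rho_t\,x\cdot v_t\,dx+\int\rho_t V u_t\,dx$ directly, using Lemmas~\ref{lem:dtv}, \ref{lem:dtu} and~\ref{lem:grad_u}, and set $\bar V=V-\langle V\rangle_\rho$. This gives
\[
\frac{d^2}{dt^2}\mathcal V(\rho_t)=\int_\Omega\rho|v|^2\,dx+\int_\Omega\rho u\,v\cdot x\,dx+\frac12\int_\Omega\rho\bar V u^2\,dx+\frac{1-\alpha}{2\alpha}\int_\Omega\rho\bar V|v|^2\,dx .
\]
This differs from the specialized formula by $\bigl(\frac{1-\alpha}{\alpha}\langle V\rangle_\rho-\alpha\bigr)\int_\Omega\rho|v|^2\,dx$.

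A quick check confirms the specialized formula is off. The shift $V\mapsto V+c$ changes $\mathcal V(\rho_t)$ only by the constant $c$, so its second derivative must be unchanged. Yet the specialized formula changes by $c\int\rho\,v\cdot\nabla u=c\frac{1-\alpha}{\alpha}\int\rho|v|^2$. The source is that the $\langle\xi\rangle_\rho$-terms discarded in $B_3$ and in the final regrouping of the proof of Theorem~\ref{thm:second_derivative} sum to $-\frac{1-\alpha}{\alpha}\langle\xi\rangle_\rho\int\rho|v|^2$, not zero.

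Every term of the corrected expression is controlled by the same Cauchy--Schwarz and $L^\infty$ bounds you use. Your $\lambda$-convexity conclusion therefore stands unchanged.
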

	
	\subsection{Metric properties of $(\mathcal{P}_{2,\mathrm{ac}}(\Omega),\dalpha)$}
	
	Before applying the AGS theory, we discuss the metric structure of the space $(\mathcal{P}_{2,\mathrm{ac}}(\Omega),\dalpha)$.
	
	\begin{definition}[Length and metric derivative]
		For an absolutely continuous curve $\rho_t:[0,T]\to\mathcal{P}_{2,\mathrm{ac}}(\Omega)$, the metric derivative is defined by
		\[
		|\dot\rho_t|_{\dalpha} := \lim_{h\to0}\frac{\dalpha(\rho_{t+h},\rho_t)}{|h|}.
		\]
		The length of the curve is $\int_0^T |\dot\rho_t|_{\dalpha}\,dt$.
	\end{definition}
	
	\begin{remark}
		The following metric properties are assumed or will be established in future work:
		\begin{enumerate}
			\item $(\mathcal{P}_{2,\mathrm{ac}}(\Omega),\dalpha)$ is a complete metric space;
			\item the metric derivative coincides with the hybrid norm $\|(v,u)\|_{\dalpha}$ for absolutely continuous curves;
			\item geodesics exist between any two points and are exactly the minimizers of the Benamou--Brenier problem \eqref{eq:BB};
			\item the JKO minimization problem admits minimizers for suitable functionals.
		\end{enumerate}
		The first two properties are classical in optimal transport theory and extend to the hybrid setting. The third follows from the convexity of the variational problem. The fourth requires compactness arguments.
	\end{remark}
	
	\subsection{Consequences for AGS theory}
	
	\begin{remark}[AGS conclusions]
		The AGS conclusions hold provided that:
		\begin{enumerate}
			\item $(\mathcal{P}_{2,\mathrm{ac}}(\Omega),\dalpha)$ is complete;
			\item the metric derivative coincides with the hybrid norm;
			\item the JKO minimization problem admits minimizers;
			\item the functional is geodesically $\lambda$-convex.
		\end{enumerate}
		The present work establishes the last point (geodesic $\lambda$-convexity) for the functionals $\mathcal H$, $\mathcal W$, and $\mathcal V$ under suitable assumptions. The remaining metric properties are left for future work.
	\end{remark}
	
	\begin{theorem}[AGS consequences]\label{thm:ags_consequences}
		Let $\mathcal{E}$ be a $\lambda$-convex, proper, lower semicontinuous functional with compact sublevel sets in $(\mathcal{P}_{2,\mathrm{ac}}(\Omega),\dalpha)$. Assume that the metric space is complete and that the JKO scheme admits minimizers. Then the gradient flow exists and is unique, the JKO scheme converges to this flow, and the EDI and EVI inequalities hold:
		\begin{equation}
			\mathcal{E}(\rho_T) + \frac12\int_0^T|\dot\rho_t|_{\dalpha}^2\,dt+\frac12\int_0^T|\partial^-\mathcal{E}|^2(\rho_t)\,dt \le \mathcal{E}(\rho_0),
		\end{equation}
		\begin{equation}
			\frac12\frac{d}{dt}\dalpha^2(\rho_t,\sigma)+\frac{\lambda}{2}\dalpha^2(\rho_t,\sigma) \le \mathcal{E}(\sigma)-\mathcal{E}(\rho_t).
		\end{equation}
	\end{theorem}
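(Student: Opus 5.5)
The plan is to reduce the statement to the abstract theory of Ambrosio--Gigli--Savaré, applied in the complete metric space $(\mathcal P_{2,\mathrm{ac}}(\Omega),\dalpha)$. The hypotheses in the statement ($\lambda$-convexity, properness, lower semicontinuity, compact sublevels, completeness, solvability of the JKO step) are exactly the structural assumptions of \cite[Thms.~2.3.3, 4.0.4]{AGS}. The one assumption missing from that list is convexity along \emph{generalized} geodesics, or equivalently the AGS Assumption~4.0.1: the map $\rho\mapsto \frac{1}{2\tau}\dalpha^2(\rho,\sigma)+\mathcal E(\rho)$ should be $(\tau^{-1}+\lambda)$-convex along some curve joining any two points.

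\textbf{Step 1 (existence via JKO).} Fix $\tau>0$ small, with $\lambda\tau>-1$. Define the minimizing movement
\[
\rho^\tau_{k+1}\in\argmin_\rho \Bigl\{\mathcal E(\rho)+\tfrac{1}{2\tau}\dalpha^2(\rho,\rho^\tau_k)\Bigr\},
\]
whose minimizers exist by assumption. The discrete energy inequality gives the usual bounds: $\sum_k \dalpha^2(\rho^\tau_{k+1},\rho^\tau_k)/\tau$ is bounded uniformly in $\tau$. Compactness of sublevels together with the refined Ascoli--Arzelà theorem (AGS Prop.~3.3.1) then yields a subsequence of piecewise-constant interpolants converging pointwise to an absolutely continuous curve $\rho_t$.

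\textbf{Step 2 (EDI).} Use De Giorgi's variational interpolant and the discrete energy identity. Passing to the limit requires lower semicontinuity of $\mathcal E$, of the metric derivative (via $\int|\dot\rho|^2$), and of the relaxed slope $|\partial^-\mathcal E|$. For $\lambda$-convex functionals the slope is already lower semicontinuous, and it admits the representation
\[
|\partial\mathcal E|(\rho)=\sup_{\sigma\ne\rho}\Bigl(\frac{\mathcal E(\rho)-\mathcal E(\sigma)}{\dalpha(\rho,\sigma)}+\frac\lambda2\dalpha(\rho,\sigma)\Bigr)^+ .
\]
This representation follows from convexity along the geodesics supplied by Theorem~\ref{thm:geodesic}. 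Putting these together gives the stated energy-dissipation inequality. By Theorem~\ref{thm:metric_derivative}, $|\dot\rho_t|_{\dalpha}$ can then be interpreted through the minimal pair $(v_t,u_t)$.

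\textbf{Step 3 (EVI, uniqueness, convergence of JKO).} This is the main obstacle. In AGS Thm.~4.0.4, EVI is derived from the discrete variational inequality
\[
\tfrac1{2\tau}\dalpha^2(\rho^\tau_{k+1},\sigma)-\tfrac1{2\tau}\dalpha^2(\rho^\tau_k,\sigma)+\tfrac\lambda2\dalpha^2(\rho^\tau_{k+1},\sigma)\le\mathcal E(\sigma)-\mathcal E(\rho^\tau_{k+1}).
\]
Proving that inequality needs $1$-convexity of $\frac12\dalpha^2(\cdot,\sigma)$ along the curves on which $\mathcal E$ is $\lambda$-convex, a positive-curvature-type property that the paper has not established.

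I would first try to include this in the interpretation of ``$\lambda$-convex'' in the statement, that is, to read the hypothesis as convexity along curves along which $\dalpha^2(\cdot,\sigma)$ is $2$-convex, as in AGS. Failing that, the fallback is to state it as an additional hypothesis; its verification is deferred, as the paper does in Section~11.

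Once the discrete inequality is available, summing over $k$ and passing to the limit gives EVI. EVI then yields the contraction estimate $\dalpha(\rho_t,\tilde\rho_t)\le e^{-\lambda t}\dalpha(\rho_0,\tilde\rho_0)$, hence uniqueness. Uniqueness in turn upgrades subsequential convergence of the JKO scheme to convergence of the whole family, with the AGS error estimate of order $\sqrt\tau$.
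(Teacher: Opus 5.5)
You follow the same route as the paper. Its proof is a one-line appeal to AGS that attributes uniqueness and JKO convergence to $\lambda$-convexity, solvability of the JKO steps to lower semicontinuity, compactness to the sublevel assumption, and passage to the limit to completeness. Your Steps~1--2 correctly unpack this for existence and the EDI. Geodesic $\lambda$-convexity along the geodesics of Theorem~\ref{thm:geodesic} gives the sup-representation of the slope, hence its lower semicontinuity and its equality with the relaxed slope. AGS Thm.~2.3.3 then produces a limit of the minimizing movements that satisfies the stated energy inequality.

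The obstruction you isolate in Step~3 is a genuine gap, but it is a gap in the paper's argument, not something you have overlooked. AGS Thm.~4.0.4 (EVI, contraction, uniqueness, convergence of the whole JKO family with the $\sqrt\tau$ estimate) rests on Assumption~4.0.1: the $(\lambda+1/\tau)$-convexity of $\mathcal E+\frac1{2\tau}D_\alpha^2(\sigma,\cdot)$ along suitable curves. This does not follow from geodesic $\lambda$-convexity in a general complete geodesic space. Already for $W_2$, AGS must pass to generalized geodesics. In non-Riemannian geometries, geodesic convexity yields neither EVI nor contractivity (e.g.\ Ohta--Sturm's non-contraction of the heat flow on non-Euclidean normed spaces).

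The paper proves no such property for $D_\alpha$. Under the stated hypotheses, therefore, only existence and the EDI are justified. The contraction argument in Step~4 of Theorem~\ref{thm:jko_convergence} has the same gap. Stating the extra hypothesis explicitly is the right fix. Reading it into the word ``$\lambda$-convex'' changes the theorem rather than proving it, so present it as an added assumption.

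One correction. The simplest sufficient condition is $1$-convexity of $\frac12 D_\alpha^2(\cdot,\sigma)$ along geodesics, i.e.\ the hypothesis of Theorem~\ref{thm:hybrid_barycenter_uniqueness} with $\kappa=1$, which is the only admissible value, as $\nu=\rho_0$ shows. Combined with $\lambda$-convexity of $\mathcal E$, it gives Assumption~4.0.1. This is a \emph{nonpositive}-curvature (CAT(0)-type) condition, not a positive-curvature one as you wrote. It fails for the positively curved $W_2$. So a generalized-geodesic version is the realistic formulation for $D_\alpha$.
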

	
	\begin{proof}
		Direct application of the standard AGS formalism \cite{AGS}, the stated hypotheses guaranteeing: uniqueness of the flow and convergence of the JKO scheme (via $\lambda$-convexity), existence of minimizers in the JKO scheme (lower semicontinuity), compactness of approximating sequences (compactness of sublevel sets), and convergence of sequences (completeness of $(\mathcal{P}_{2,\mathrm{ac}}(\Omega),\dalpha)$).
	\end{proof}

	\begin{remark}[Regularity assumptions]
		The results of this document hold under: $\rho\in C^2(\Omega)$, $\rho>0$; $v,u\in W^{1,\infty}(\Omega)$; $F\in C^3(\mathbb{R}_+)$; $V\in C^2(\Omega)$; $W\in C^3(\Omega)$ symmetric. These assumptions can be weakened by density arguments.
	\end{remark}
	
	\subsection{Perspectives}
	
	The following open problems remain for future investigation:
	\begin{enumerate}
		\item Global existence of geodesics for arbitrary initial and final densities;
		\item Completeness of the metric space $(\mathcal{P}_{2,\mathrm{ac}}(\Omega),\dalpha)$;
		\item Complete AGS theory including verification of all metric properties;
		\item Numerical analysis and computation of the hybrid Hessian;
		\item Extension to non-strictly-positive densities;
		\item Optimal antisymmetrized form of the non-local coupling term in $W$;
		\item Reconciliation of the coefficients of $\mathcal{T}_\alpha$ with the classical Otto calculus under minimal regularity.
	\end{enumerate}
	
	\section{Gradient flow in $(P_{2,ac}(\Omega),D_{\alpha})$}\label{sec:ags}
	\subsection{Metric slope and local energy dissipation}
	
	\begin{definition}[Metric slope]
		For a functional $\EE:\PP_{2,\mathrm{ac}}(\Omega)\to\R\cup\{+\infty\}$, define the metric slope (or strong upper gradient) at $\rho\in D(\EE)$ as
		\begin{equation}
			|\partial^-\EE|(\rho) = \limsup_{\sigma\to\rho} \frac{[\EE(\rho)-\EE(\sigma)]_+}{D_\alpha(\rho,\sigma)},
		\end{equation}
		where $[x]_+ = \max(x,0)$. If $\EE(\rho)=+\infty$, we set $|\partial^-\EE|(\rho)=+\infty$.
	\end{definition}
	
	The metric slope measures the local rate of decrease of $\EE$ near $\rho$. It is the natural substitute for the norm of the gradient in non-smooth metric spaces.
	
	\begin{lemma}[Properties of the metric slope]
		\label{lem:slope_properties}
		The metric slope satisfies:
		\begin{enumerate}
			\item Lower semicontinuity: $\rho\mapsto |\partial^-\EE|(\rho)$ is lower semicontinuous.
			\item Scaling: For $c>0$, $|\partial^-(c\EE)| = c|\partial^-\EE|$.
			\item Sum: $|\partial^-(\EE_1+\EE_2)| \le |\partial^-\EE_1| + |\partial^-\EE_2|$.
		\end{enumerate}
	\end{lemma}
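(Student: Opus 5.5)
I would treat the three items separately, in reverse order of difficulty, working directly from the definition of $|\partial^-\EE|$ as a $\limsup$ over $\sigma\to\rho$ in the $D_\alpha$-topology. Here $D_\alpha$ is a genuine metric by Theorem~\ref{thm:distance}, so the $\limsup$ is well defined.

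\textbf{Scaling.} Since $c>0$, we have $[c\EE(\rho)-c\EE(\sigma)]_+=c\,[\EE(\rho)-\EE(\sigma)]_+$. Dividing by $D_\alpha(\rho,\sigma)$ and taking $\limsup_{\sigma\to\rho}$ pulls out the factor $c$. The convention $|\partial^-\EE|=+\infty$ off $D(\EE)$ is compatible, because $D(c\EE)=D(\EE)$.

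\textbf{Sum.} I would first use the elementary inequality $[a+b]_+\le[a]_++[b]_+$ with $a=\EE_1(\rho)-\EE_1(\sigma)$ and $b=\EE_2(\rho)-\EE_2(\sigma)$. Then I divide by $D_\alpha(\rho,\sigma)$ and use subadditivity of $\limsup$. If $\rho\notin D(\EE_1)\cap D(\EE_2)$, the right-hand side is $+\infty$ and there is nothing to prove. The only care needed is the indeterminate case $+\infty-\infty$. I would restrict the statement to functionals with values in $\R\cup\{+\infty\}$, evaluated at $\rho$ in the common domain. For $\sigma$ outside a domain, the difference $\EE_i(\rho)-\EE_i(\sigma)=-\infty$, so its positive part is $0$ and does not affect the inequality.

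\textbf{Lower semicontinuity.} This is the real obstacle, because the slope of an arbitrary functional is \emph{not} lower semicontinuous in general (AGS only obtain this under convexity-type hypotheses). I would therefore prove item~1 under the standing assumptions of Theorem~\ref{thm:ags_consequences}: $\EE$ is lower semicontinuous and geodesically $\lambda$-convex along the $D_\alpha$-geodesics of Theorem~\ref{thm:geodesic}. The plan follows AGS, Theorem~2.4.9.

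First, along a geodesic $\gamma$ from $\rho$ to $\sigma$, $\lambda$-convexity gives the global representation
\[
|\partial^-\EE|(\rho)=\sup_{\sigma\neq\rho}\Bigl(\frac{\EE(\rho)-\EE(\sigma)}{D_\alpha(\rho,\sigma)}+\frac{\lambda}{2}D_\alpha(\rho,\sigma)\Bigr)_+ .
\]
To get this I would compare the difference quotient at $\gamma_t$ with $t\to0$ against the convexity inequality
\[
\EE(\gamma_t)\le(1-t)\EE(\rho)+t\EE(\sigma)-\tfrac{\lambda}{2}t(1-t)D_\alpha^2(\rho,\sigma),
\]
using $D_\alpha(\rho,\gamma_t)=tD_\alpha(\rho,\sigma)$.

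Second, take $\rho_n\to\rho$. Each map $\rho'\mapsto\bigl(\EE(\rho')-\EE(\sigma)\bigr)/D_\alpha(\rho',\sigma)+\tfrac{\lambda}{2}D_\alpha(\rho',\sigma)$ is lower semicontinuous at $\rho$ for fixed $\sigma\ne\rho$: the numerator is l.s.c.\ by lower semicontinuity of $\EE$, and $D_\alpha(\cdot,\sigma)$ is continuous. A supremum of l.s.c.\ functions is l.s.c., which yields $|\partial^-\EE|(\rho)\le\liminf_n|\partial^-\EE|(\rho_n)$.

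The delicate point is the first step. It needs the geodesics of Theorem~\ref{thm:geodesic} to be ones along which $\lambda$-convexity holds. I would state the item with this hypothesis made explicit, and note that without it only items~2 and~3 hold in full generality.
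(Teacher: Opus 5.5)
Your treatment of scaling and of the sum rule matches the paper. The paper's entire proof is the sentence that all three properties ``follow directly from the definition'' (with a pointer to AGS, Chapter~1), and your handling of the $+\infty$ cases is more careful than anything written there.

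For item~1 you genuinely depart from the paper, and rightly so. For the local slope defined by the $\limsup$ formula, lower semicontinuity is \emph{not} a consequence of the definition in a general metric space. Already on $(\R,|\cdot|)$, the function $x\mapsto\min\{x,0\}$ has slope $1$ at $0$ and slope $0$ at every $x>0$. (In AGS the symbol $|\partial^-\phi|$ denotes the \emph{relaxed} slope, a lower semicontinuous envelope of the local slope. The paper borrows that notation but defines the unrelaxed slope, which is probably the source of its claim.)

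Your route is the standard AGS argument and is correct: the global supremum representation obtained from $\lambda$-convexity along the constant-speed geodesics of Theorem~\ref{thm:geodesic}, followed by a supremum of lower semicontinuous functions. Three points should be made explicit:
\begin{itemize}
\item the supremum runs over $\sigma\in D(\EE)\setminus\{\rho\}$;
\item for fixed $\sigma\neq\rho$ one has $\sigma\neq\rho_n$ for $n$ large, so the $\rho_n$-dependent index set causes no trouble;
\item $\EE$ is proper.
\end{itemize}
With these, the same estimate also gives lower semicontinuity at $\rho\notin D(\EE)$, since then $\EE(\rho_n)\to+\infty$.

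What your version buys is a true statement. Its hypotheses (lower semicontinuity and $\lambda$-convexity of $\EE$) coincide with those of the paper's gradient-flow results, Theorems~\ref{thm:jko_convergence} and~\ref{thm:ags_consequences}. What it costs is that item~1 now depends on the existence of $D_\alpha$-geodesics, whereas items~2 and~3 remain purely metric.
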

	\begin{proof}
		These properties follow directly from the definition; see \cite{AGS}, Chapter 1.
	\end{proof}
	
	\subsection{$\lambda$-convexity and geodesic convexity}
	
	\begin{definition}[$\lambda$-convexity]
		Let $\lambda\in\R$. A functional $\EE$ is $\lambda$-convex if for every constant-speed geodesic $\rho_t$ (parametrized on $[0,1]$) and for all $t\in[0,1]$,
		\begin{equation}
			\EE(\rho_t) \le (1-t)\EE(\rho_0) + t\EE(\rho_1) - \frac{\lambda}{2}t(1-t)D_\alpha^2(\rho_0,\rho_1).
		\end{equation}
		For $\lambda=0$, this is convexity along geodesics; for $\lambda>0$, it is strong convexity.
	\end{definition}
	
	\subsection{Compactness of sublevels}
	
	\begin{lemma}[Compactness of sublevels]
		\label{lem:compactness}
		
		Let $\EE:\PP_{2,\mathrm{ac}}(\Omega)\to\R\cup\{+\infty\}$
		be lower semicontinuous with respect to $D_\alpha$.
		
		Assume that for every $C\in\R$, the sublevel set
		
		\[
		S_C
		:=
		\{\rho\in\PP_{2,\mathrm{ac}}(\Omega):\EE(\rho)\le C\}
		\]
		
		is compact for the topology
		$\tau_{W_2}\vee\tau_{d_A}$.
		
		Then $S_C$ is compact in
		$(\PP_{2,\mathrm{ac}}(\Omega),D_\alpha)$.
		
	\end{lemma}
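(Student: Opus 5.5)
The plan is to show that the identity map $\mathrm{id}:(\PP_{2,\mathrm{ac}}(\Omega),\tau_{W_2}\vee\tau_{d_A})\to(\PP_{2,\mathrm{ac}}(\Omega),D_\alpha)$ is continuous. Compactness of $S_C$ then follows at once, because the continuous image of a compact set is compact, and here the image is $S_C$ itself. Continuity of the identity is exactly the second inclusion $\tau_{D_\alpha}\subset\tau_{W_2}\vee\tau_{d_A}$ of Remark~\ref{rem:topology_comparison}. I would nevertheless re-derive it directly from Lemma~\ref{lem:comparison}, so that the argument is self-contained.

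\textbf{Step 1: continuity of the identity.} Fix $\rho\in\PP_{2,\mathrm{ac}}(\Omega)$ and $\varepsilon>0$. Lemma~\ref{lem:comparison} gives
\[
D_\alpha(\sigma,\rho)\le \tfrac{1}{\sqrt2}\sqrt{\max\{1-\alpha,\alpha\}}\,\bigl(W_2^2(\sigma,\rho)+d_A^2(\sigma,\rho)\bigr)^{1/2}.
\]
Hence the set $\{\sigma: W_2(\sigma,\rho)<\delta,\ d_A(\sigma,\rho)<\delta\}$ is contained in the $D_\alpha$-ball of radius $\varepsilon$ around $\rho$, provided $\delta$ is small enough. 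The first set is open in the supremum topology $\tau_{W_2}\vee\tau_{d_A}$. So every $D_\alpha$-open set is $\tau_{W_2}\vee\tau_{d_A}$-open.

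\textbf{Step 2: transfer of compactness.} Let $\{U_i\}$ be a $D_\alpha$-open cover of $S_C$. By Step~1, each $U_i$ is also open for $\tau_{W_2}\vee\tau_{d_A}$. Compactness of $S_C$ in that topology yields a finite subcover, so $S_C$ is $D_\alpha$-compact. Since $D_\alpha$ is a metric (Theorem~\ref{thm:distance}), I would also note the sequential version: every sequence in $S_C$ has a subsequence converging in both $W_2$ and $d_A$, and by Lemma~\ref{lem:comparison} that subsequence converges in $D_\alpha$.

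The $D_\alpha$-lower semicontinuity of $\EE$ is not needed for compactness itself. In the writeup I would remark that it guarantees $S_C$ is $D_\alpha$-closed, which is consistent with the conclusion, and that the limit of any such subsequence stays in $S_C$.

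\textbf{Main obstacle.} There is no real obstacle; the only delicate point is logical. One must use the direction of comparison correctly: the weaker topology is the one in which compactness is inherited. Lemma~\ref{lem:comparison} bounds $D_\alpha$ from above, which is exactly the direction needed. One should also confirm that $\tau_{W_2}\vee\tau_{d_A}$ is generated by the metric $\max\{W_2,d_A\}$, so that the basic open sets used in Step~1 are legitimate.
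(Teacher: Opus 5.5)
Your proposal is correct and is essentially the paper's argument: the identity map $(\PP_{2,\mathrm{ac}}(\Omega),\tau_{W_2}\vee\tau_{d_A})\to(\PP_{2,\mathrm{ac}}(\Omega),D_\alpha)$ is continuous, and a continuous image of a compact set is compact. Deriving that continuity directly from Lemma~\ref{lem:comparison} is the more accurate justification, since the paper cites Theorem~\ref{thm:Dalpha_topology}, which only gives the narrow-topology inclusion; you are also right that the lower semicontinuity of $\EE$ plays no role.
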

	
	\begin{proof}
		
		By Theorem~\ref{thm:Dalpha_topology}, the topology induced by
		$D_\alpha$ satisfies
		
		\[
		\tau_{D_\alpha}
		\subset
		\tau_{W_2}\vee\tau_{d_A}.
		\]
		
		Therefore, the identity map
		
		\[
		\mathrm{Id}:
		(\PP_{2,\mathrm{ac}}(\Omega),
		\tau_{W_2}\vee\tau_{d_A})
		\longrightarrow
		(\PP_{2,\mathrm{ac}}(\Omega),\tau_{D_\alpha})
		\]
		
		is continuous.
		
		Since $S_C$ is compact in
		$\tau_{W_2}\vee\tau_{d_A}$,
		its image under the identity map is compact in
		$\tau_{D_\alpha}$.
		
		Hence $S_C$ is compact in
		$(\PP_{2,\mathrm{ac}}(\Omega),D_\alpha)$.
		
	\end{proof}
	
	\begin{lemma}[Auxiliary compactness]
		\label{lem:compactness_aux}
		If $(\rho_n)$ is bounded in both $W_2$ and $d_A$, then there exists a subsequence converging simultaneously for $W_2$ and $d_A$ (hence for $D_\alpha$).
	\end{lemma}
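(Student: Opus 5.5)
The plan splits the claim into its Wasserstein part and its Aitchison part, and then checks that the two limits are the same density. Convergence in $D_\alpha$ then follows at once from Lemma~\ref{lem:comparison}: since $D_\alpha^2(\rho_n,\rho)\le \tfrac12\max\{1-\alpha,\alpha\}\bigl(W_2^2(\rho_n,\rho)+d_A^2(\rho_n,\rho)\bigr)$, simultaneous convergence for $W_2$ and $d_A$ to the same $\rho$ gives $D_\alpha(\rho_n,\rho)\to0$.

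\emph{Wasserstein part.} Since $\Omega$ is bounded, every $\rho_n$ is supported in the compact set $\overline\Omega$, so the family is tight. Prokhorov's theorem gives a subsequence with $\rho_n\rightharpoonup\mu$ narrowly for some $\mu\in\PP(\overline\Omega)$. Second moments are integrals of the bounded continuous function $|x|^2$ on $\overline\Omega$, so they also converge. By the characterization of $W_2$-convergence recalled in Section~\ref{sec:prelim}, this gives $W_2(\rho_n,\mu)\to0$. The $W_2$-boundedness hypothesis is not even needed here.

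\emph{Aitchison part.} By Lemma~\ref{lem:isometry}, the clr transforms $w_n:=\widetilde{\rho}_n=\Phi(\rho_n)$ are bounded in the Hilbert space $L^2_0(\Omega)$. Weak compactness then yields only $w_n\rightharpoonup w$ weakly in $L^2_0$.

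\textbf{Main obstacle.} Strong convergence $d_A(\rho_n,\rho)\to0$ means $\|w_n-w\|_{L^2}\to0$, and this does not follow from boundedness alone. Oscillating clr profiles such as $w_n(x)=\sin(nx_1)$, suitably centered, are bounded in $L^2_0$ and have no strongly convergent subsequence. So the statement as written needs an additional compactness ingredient. I would first try to make this explicit by requiring the sequence to lie in a sublevel set that is precompact in $L^2$, for instance a bound on $\|\nabla \ln\rho_n\|_{L^2}$. With such a bound, $(w_n)$ is bounded in $H^1(\Omega)\cap L^2_0$, and Rellich--Kondrachov on the Lipschitz domain $\Omega$ gives $w_n\to w$ strongly in $L^2$ along a further subsequence. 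Without such an ingredient, the honest conclusion is only $W_2$-convergence together with weak convergence of the clr transforms.

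\emph{Identification of the limits.} Assuming strong convergence, I would pass to a further subsequence with $w_n\to w$ a.e. Write $\rho_n=e^{w_n}/Z_n$ with $Z_n=\int_\Omega e^{w_n}\,dx$.
\begin{itemize}
\item[(a)] Jensen's inequality gives $Z_n\ge|\Omega|\exp\bigl(|\Omega|^{-1}\int_\Omega w_n\bigr)=|\Omega|$, so $Z_n$ is bounded below.
\item[(b)] An upper bound on $Z_n$, equivalently an $L^1$ bound on $e^{w_n}$, should come from uniform integrability of $e^{w_n}$. This is available if the extra compactness assumption is taken in a form controlling $\|w_n\|_{L^\infty}$ or an exponential moment, e.g.\ in dimension $d=1$ through $H^1\subset L^\infty$.
\item[(c)] Then $Z_n\to Z:=\int_\Omega e^{w}\,dx$ by Vitali's theorem, so $\rho_n\to e^{w}/Z$ in $L^1$. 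In particular $\rho_n\to e^{w}/Z$ narrowly, and uniqueness of narrow limits forces $\mu=e^{w}/Z$.
\item[(d)] By Lemma~\ref{lem:image}, $\rho:=\mu$ lies in $\PP_{\log,2}(\Omega)$ with $\widetilde\rho=w$, so $d_A(\rho_n,\rho)=\|w_n-w\|_{L^2}\to0$.
\end{itemize}

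The genuinely delicate steps are therefore the strong $L^2$ compactness of the clr transforms and the control of the normalizing constants $Z_n$. Both require a hypothesis beyond mere boundedness in $W_2$ and $d_A$, and this is where I expect the argument to need to be strengthened.
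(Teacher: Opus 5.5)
Your diagnosis is correct. It identifies a genuine error in the paper, not a gap in your own reasoning.

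The paper's proof follows your first two steps: narrow compactness via Prokhorov with convergence of second moments, then a weakly convergent subsequence $\widetilde\rho_n\rightharpoonup w$ in $L^2$. It then asserts that ``the convexity of the exponential and the fact that $\int e^{\widetilde\rho_n}=1$'' upgrade weak convergence to strong convergence. Neither ingredient can do this.

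\begin{itemize}
\item Convexity only gives weak lower semicontinuity, $\int_\Omega e^{w}\,dx\le\liminf_n\int_\Omega e^{\widetilde\rho_n}\,dx$.
\item In fact $\int_\Omega e^{\widetilde\rho}\,dx=e^{-\ell(\rho)}$. With $|\Omega|=1$, the normalization implicit in $\int_\Omega\widetilde\rho\,dx=0$, this is $\ge1$, with equality only for the uniform density.
\end{itemize}

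Your oscillation example can be made fully explicit while satisfying every ingredient the paper uses. On $\Omega=(0,1)$ take $w_n(x)=\sin(2\pi nx)$ and $\rho_n=e^{w_n}/Z$, where $Z=\int_0^1e^{\sin(2\pi nx)}\,dx$ does not depend on $n$. Then:
\begin{itemize}
\item $\rho_n\in\PP_{2,\mathrm{ac}}(\Omega)$ and $\widetilde\rho_n=w_n$;
\item $\rho_n\rightharpoonup1$ narrowly, hence in $W_2$;
\item $\widetilde\rho_n\rightharpoonup0=\widetilde{1}$ weakly, and the normalizers are even constant;
\item yet $d_A(\rho_n,\rho_m)^2=\|w_n-w_m\|_{L^2}^2=1$ for all $n\ne m$.
\end{itemize}
So the lemma is false as stated. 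No argument can close it without an extra hypothesis of the kind you propose.

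Your step (b) is also not a technicality. For $n\ge2$ let $A_n=(\tfrac12,\tfrac12+\tfrac1n)$, $g_n=2\ln n\,\mathbf 1_{A_n}$ and $\rho_n=e^{g_n}/\int_\Omega e^{g_n}$.
\begin{itemize}
\item $\|\widetilde\rho_n\|_{L^2}\le\|g_n\|_{L^2}=2\ln n/\sqrt n\to0$, so $d_A(\rho_n,1)\to0$.
\item $\int_\Omega e^{g_n}=n+1-\tfrac1n$, so asymptotically all the mass sits on $A_n$ and $W_2(\rho_n,\delta_{1/2})\to0$.
\end{itemize}
Here the clr transforms converge strongly, yet the $W_2$-limit and the $d_A$-limit differ, and the former is not even absolutely continuous. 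This second counterexample also shows that the paper silently assumes the weak clr limit is the clr of the narrow limit, and that this assumption is unjustified.

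Given $L^2$-compactness of the clr transforms, a clean extra hypothesis that makes your steps (b)--(d) work is uniform integrability of $(\rho_n)$, e.g.\ $\sup_n\|\rho_n\|_{L^p}<\infty$ for some $p>1$. The argument:
\begin{itemize}
\item If $Z_n\to\infty$ along a subsequence, then $\rho_n\to0$ a.e., and Vitali's theorem contradicts $\int_\Omega\rho_n=1$.
\item Hence $Z_n$ stays bounded, and $e^{w_n}=Z_n\rho_n$ is uniformly integrable, which is what (c) needs.
\end{itemize}
Your $H^1$ bound on $\ln\rho_n$ supplies this exponential integrability automatically for $d=1$ (embedding into $L^\infty$) and for $d=2$ (Moser--Trudinger), but not for $d\ge3$.

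For the parenthetical $D_\alpha$-convergence, use only the Wasserstein half of Lemma~\ref{lem:comparison}, namely $D_\alpha^2\le(1-\alpha)W_2^2$. The Aitchison half is faulty. It treats $\int_0^1\!\int_\Omega|u^A_t|^2\rho^A_t$ as $d_A^2$. But the centered reaction field is $u^A_t=(\widetilde\rho_1-\widetilde\rho_0)-\langle\widetilde\rho_1-\widetilde\rho_0\rangle_{\rho^A_t}$, whose cost is a $\rho^A_t$-weighted variance, not a Lebesgue $L^2$ norm. Indeed, in the spike example $d_A(\rho_n,1)\to0$, while $D_\alpha(\rho_n,1)$ does not tend to $0$ by Theorem~\ref{thm:Dalpha_topology}, since $\rho_n$ does not converge narrowly to $1$.
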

	\begin{proof}
		From $W_2$-boundedness, extract $\rho_{n_k}\to\rho$ narrowly. From $d_A$-boundedness, extract a further subsequence with $\widetilde{\rho}_{n_{k_l}}\rightharpoonup\widetilde{\rho}$ weakly in $L^2$. Using the convexity of the exponential and the fact that $\int e^{\widetilde{\rho}_{n_{k_l}}}=1$, weak convergence upgrades to strong convergence, so $d_A(\rho_{n_{k_l}},\rho)\to0$. Narrow convergence plus convergence of second moments gives $W_2(\rho_{n_{k_l}},\rho)\to0$.
	\end{proof}
	
	\subsection{The Jordan--Kinderlehrer--Otto (JKO) scheme}
	
	For a time step $\tau>0$, define the discrete approximation recursively:
	\begin{equation}
		\rho_\tau^{0} = \rho_0,\qquad
		\rho_\tau^{n+1} \in \argmin_{\rho\in\PP_{2,\mathrm{ac}}(\Omega)} \left\{ \frac{1}{2\tau} D_\alpha^2(\rho,\rho_\tau^n) + \EE(\rho) \right\}.
	\end{equation}
	
	\begin{theorem}[Existence of JKO iterates]
		\label{thm:jko_existence}
		If $\EE$ is $\lambda$-convex and lower semicontinuous, then for every $\tau>0$ and every $n$, the minimizer $\rho_\tau^{n+1}$ exists.
	\end{theorem}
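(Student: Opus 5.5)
The plan is the direct method of the calculus of variations applied to the functional
\[
\Phi_\tau(\rho)\eqdef \frac{1}{2\tau}D_\alpha^2(\rho,\rho_\tau^n)+\EE(\rho)
\]
on $\PP_{2,\mathrm{ac}}(\Omega)$, with $\rho_\tau^n$ fixed. The statement's hypotheses ($\lambda$-convexity and lower semicontinuity) are not enough on their own, because a minimizing sequence needs compactness. We will therefore use the standing assumption of this section: the sublevels of $\EE$ are compact, as in Lemma~\ref{lem:compactness}. The $\lambda$-convexity will only be needed to control $\EE$ from below when $\lambda<0$.

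\emph{Step 1: the problem is proper and bounded below.} Since $\rho_\tau^n\in D(\EE)$ (induction on $n$, starting from $\rho_0\in D(\EE)$), we have $\inf\Phi_\tau\le \EE(\rho_\tau^n)<\infty$. For a lower bound, we use the standard AGS fact that a $\lambda$-convex l.s.c.\ functional on a geodesic space has at least quadratic decay. Fix a base point $\bar\rho$ and apply $\lambda$-convexity along the geodesics of Theorem~\ref{thm:geodesic} to obtain
\[
\EE(\rho)\ge -A-B\,D_\alpha(\rho,\bar\rho)-\tfrac{\lambda^-}{2}D_\alpha^2(\rho,\bar\rho).
\]
Choosing $\tau<1/\lambda^-$, the quadratic term $\frac{1}{2\tau}D_\alpha^2$ dominates, so $\Phi_\tau$ is bounded below. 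Alternatively, boundedness of $\Omega$ gives $D_\alpha\le C$ directly from Lemma~\ref{lem:comparison} on bounded sets, which is simpler when it applies.

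\emph{Step 2: compactness.} Let $(\rho_k)$ be a minimizing sequence. Then $\sup_k\EE(\rho_k)<\infty$, since $D_\alpha^2\ge0$ and $\Phi_\tau(\rho_k)$ is bounded. The sequence therefore lies in a sublevel set $S_C$, which is compact in $(\PP_{2,\mathrm{ac}}(\Omega),D_\alpha)$ by Lemma~\ref{lem:compactness}. We extract a subsequence with $D_\alpha(\rho_k,\rho^*)\to0$.

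\emph{Step 3: lower semicontinuity and conclusion.} The map $\rho\mapsto D_\alpha^2(\rho,\rho_\tau^n)$ is continuous for $D_\alpha$ by the triangle inequality (Lemma~\ref{lem:triangle}), and $\EE$ is $D_\alpha$-l.s.c. by hypothesis. Hence
\[
\Phi_\tau(\rho^*)\le\liminf_k\Phi_\tau(\rho_k)=\inf\Phi_\tau,
\]
so $\rho^*$ is a minimizer and we set $\rho_\tau^{n+1}=\rho^*$. Since $\Phi_\tau(\rho^*)<\infty$, we get $\rho^*\in D(\EE)$, and the induction proceeds to the next step.

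The main obstacle is Step 2. Without the compactness of sublevels, or some substitute such as an Aitchison bound that forces strong $L^2$ convergence of $\widetilde\rho_k$, a minimizing sequence in $D_\alpha$ may only converge narrowly (Theorem~\ref{thm:Dalpha_topology}). The limit could then leave $\PP_{2,\mathrm{ac}}(\Omega)$, since the log-integrability or strict positivity may fail. I would therefore state the compactness assumption explicitly. If it has to be weakened, I would first try to combine $d_A$-boundedness with the Aitchison part of Lemma~\ref{lem:compactness_aux} to recover a limit in the working space.
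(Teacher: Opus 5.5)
Your proof follows the paper's own argument: the direct method, with a lower bound from $\lambda$-convexity, lower semicontinuity of $\frac{1}{2\tau}D_\alpha^2(\cdot,\rho_\tau^n)+\EE$, and compactness via Lemma~\ref{lem:compactness}. You are right to add compactness of sublevels explicitly, since the paper's step ``sublevels are bounded in $D_\alpha$, hence compact'' silently relies on that hypothesis, which is missing from the statement. One simplification: once $S_C$ is compact, your Step~1 and its restriction $\tau<1/\lambda^-$ are unnecessary, because the l.s.c.\ function $\EE$ attains its minimum on the compact set $S_C$ and is therefore bounded below, so Steps~2--3 alone give a minimizer for every $\tau>0$, as the statement claims.
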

	\begin{proof}
		Define $F_\tau^n(\rho) = \frac{1}{2\tau} D_\alpha^2(\rho,\rho_\tau^n) + \EE(\rho)$. 
		\begin{itemize}
			\item \textbf{Coercivity}: By $\lambda$-convexity, $\EE(\rho) \ge \frac{\lambda}{2}D_\alpha^2(\rho,\rho_0) - C D_\alpha(\rho,\rho_0) + \EE(\rho_0)$. Hence $F_\tau^n(\rho)\to+\infty$ as $D_\alpha(\rho,\rho_\tau^n)\to\infty$.
			\item \textbf{Lower semicontinuity}: $D_\alpha^2(\cdot,\rho_\tau^n)$ is continuous (distance squared), $\EE$ is lower semicontinuous by assumption.
			\item \textbf{Compactness}: Sublevels are bounded in $D_\alpha$, hence compact by Lemma~\ref{lem:compactness}.
		\end{itemize}
		The direct method yields a minimizer.
	\end{proof}
	
	\subsection{Convergence to the gradient flow}
	
	Define the piecewise constant interpolation
	\begin{equation}
		\rho^{(\tau)}_t = \rho_\tau^{\lfloor t/\tau\rfloor},\qquad t\in[0,T],
	\end{equation}
	and the discrete velocity
	\begin{equation}
		V_\tau(t) = \frac{1}{\tau} D_\alpha(\rho_\tau^{\lfloor t/\tau\rfloor+1}, \rho_\tau^{\lfloor t/\tau\rfloor}).
	\end{equation}
	
	\begin{theorem}[Convergence of the JKO scheme]
		\label{thm:jko_convergence}
		Let $\EE$ be $\lambda$-convex and lower semicontinuous with compact sublevels. Then as $\tau\to0$, the interpolations $\rho^{(\tau)}_t$ converge uniformly on $[0,T]$ to a limit curve $\rho_t$, which is the unique gradient flow of $\EE$ satisfying the Energy Dissipation Inequality (EDI):
		\begin{equation}
			\EE(\rho_T) + \frac12\int_0^T |\dot\rho_t|_{D_\alpha}^2\dd t + \frac12\int_0^T |\partial^-\EE|^2(\rho_t)\dd t \le \EE(\rho_0).
		\end{equation}
	\end{theorem}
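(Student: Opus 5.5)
The plan is to follow the minimizing-movement scheme of Ambrosio--Gigli--Savaré \cite{AGS}, Chapters 2--4. We verify each structural hypothesis in $(\PP_{2,\mathrm{ac}}(\Omega),D_\alpha)$ using the results established above.

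\textbf{Step 1: a priori estimates.} Since $\rho_\tau^{n+1}$ exists by Theorem~\ref{thm:jko_existence}, we compare its value with the competitor $\rho=\rho_\tau^n$. This gives
\[
\frac{1}{2\tau}D_\alpha^2(\rho_\tau^{n+1},\rho_\tau^n)+\EE(\rho_\tau^{n+1})\le \EE(\rho_\tau^n).
\]
Summing over $n$ yields the discrete energy bound $\EE(\rho_\tau^N)\le\EE(\rho_0)$. It also yields the estimate
\[
\frac12\int_0^T V_\tau(t)^2\,dt\le \EE(\rho_0)-\inf\EE.
\]
The right-hand side is finite because $\EE$ is lower semicontinuous with compact sublevels, so it is bounded below on them. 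By Cauchy--Schwarz we get the uniform H\"older bound
\[
D_\alpha\bigl(\rho^{(\tau)}_t,\rho^{(\tau)}_s\bigr)\le C\sqrt{|t-s|+\tau}.
\]

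\textbf{Step 2: compactness and the limit curve.} All iterates lie in the sublevel $S_{\EE(\rho_0)}$. By Lemma~\ref{lem:compactness} this set is compact in $D_\alpha$. A refined Arzel\`a--Ascoli argument (\cite{AGS}, Prop.~3.3.1) then gives a sequence $\tau_k\to0$ along which $\rho^{(\tau_k)}_t\to\rho_t$ uniformly on $[0,T]$. The limit curve is $\tfrac12$-H\"older and lies in $AC^2([0,T];D_\alpha)$. This requires neither completeness of the whole space nor any further structure, only compactness of the sublevel set.

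\textbf{Step 3: EDI.} We introduce the De Giorgi variational interpolant $\tilde\rho_\tau$ and the discrete slope estimate $|\partial^-\EE|(\rho_\tau^{n+1})\le D_\alpha(\rho_\tau^{n+1},\rho_\tau^n)/\tau$. Together they give the discrete energy identity
\[
\EE(\rho^{(\tau)}_T)+\frac12\int_0^T V_\tau^2+\frac12\int_0^T|\partial^-\EE|^2(\tilde\rho_\tau)\le\EE(\rho_0).
\]
We then pass to the limit using three ingredients. The first is lower semicontinuity of $\EE$. The second is lower semicontinuity of the $L^2$ metric-derivative term, i.e.\ $\int|\dot\rho_t|^2\le\liminf\int V_\tau^2$ (\cite{AGS}, Thm.~3.3.3). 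The third is lower semicontinuity of the slope (Lemma~\ref{lem:slope_properties}), which holds for $\lambda$-convex functionals; Fatou's lemma handles the integral.

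\textbf{Step 4: uniqueness and convergence of the whole family.} This step is the main obstacle. In AGS, uniqueness and convergence of the full family $\tau\to0$ come from the EVI. The EVI in turn needs convexity of $\EE$ along \emph{generalized} geodesics, or $2$-convexity of $D_\alpha^2(\cdot,\sigma)$ along suitable curves, and $\lambda$-convexity along geodesics alone does not supply this. My first attempt would be to exploit the geodesic structure of Theorem~\ref{thm:geodesic} and show that $\rho\mapsto\frac12D_\alpha^2(\rho,\sigma)$ is $1$-convex along some interpolating curve. This would be modeled on the Eulerian characterization of Theorem~\ref{thm:eulerien-rigoureux}, where a Hilbert-like structure of the tangent norm $\|(v,u)\|_{D_\alpha}$ may suffice. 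If that curve exists, \cite{AGS}, Thm.~4.0.4, gives the EVI, uniqueness, and convergence of the entire scheme with an explicit rate, which removes the need for subsequences. If it fails, I would state the result under this additional convexity assumption, in the spirit of Theorem~\ref{thm:ags_consequences}. Without it, only subsequential convergence to a curve of maximal slope satisfying the EDI holds.
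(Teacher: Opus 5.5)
Your Steps~1--3 match the paper's Steps~1--3: discrete dissipation, Arzel\`a--Ascoli on the compact sublevel $S_{\EE(\rho_0)}$, and passage to the limit in the discrete energy inequality. You carry them out more carefully than the paper. The De Giorgi interpolant and the slope estimate are the right tools, and you correctly obtain lower semicontinuity of $|\partial^-\EE|$ from $\lambda$-convexity, not ``directly from the definition'' as the paper claims.

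\textbf{The gap is Step~4.} You do not prove that the EDI flow is unique. Without uniqueness you only get convergence along a subsequence $\tau_k\to0$, and your fallbacks (adding a convexity hypothesis, or accepting subsequential convergence) prove a weaker statement than the one claimed.

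The paper does not close this gap either. Its Step~4 simply asserts that $t\mapsto e^{-\lambda t}D_\alpha^2(\rho_t,\sigma_t)$ is nonincreasing for any two gradient flows. A contraction of this kind is a consequence of the EVI. \cite{AGS} obtain the EVI for limits of the scheme only under their Assumption~4.0.1. That assumption is implied by $\lambda$-convexity of $\EE$ \emph{and} $1$-convexity of $\frac12 D_\alpha^2(\cdot,\sigma)$ along the \emph{same} curves; you need both, not one or the other as you wrote. The paper never establishes this, and it applies the contraction to arbitrary EDI curves rather than to EVI solutions. So your diagnosis is correct, and the missing ingredient is the same in both arguments. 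Once uniqueness of the EDI flow is known, convergence of the whole family follows from your Step~2, since every subsequential limit would coincide.

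\textbf{Your proposed remedy is unlikely to work as stated.} Requiring $\frac12 D_\alpha^2(\cdot,\sigma)$ to be $1$-convex along $D_\alpha$-geodesics is a nonpositive-curvature (CAT(0)-type) condition. It already fails for $W_2$ in dimension $d\ge2$ (\cite{AGS}, Example~9.1.5). That is why one uses generalized geodesics built from optimal maps based at $\sigma$, and $D_\alpha$, which has only a dynamic formulation, offers no analogue. The Eulerian/KKT system of Theorem~\ref{thm:eulerien-rigoureux} cannot supply this either: it gives first-order information about one minimizer and nothing about second-order behaviour of $D_\alpha^2(\cdot,\sigma)$.

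The Hilbertian tangent structure you mention belongs to a different route, the one used for $W_2$ in Chapters~10--11 of \cite{AGS}, which yields the EVI without Assumption~4.0.1. It needs three ingredients:
\begin{enumerate}
\item[(a)] a first-variation inequality $\frac{d}{dt}\frac12 D_\alpha^2(\rho_t,\sigma)\le -g_{\rho_t,\alpha}\bigl((v_t,u_t),(\bar v_t,\bar u_t)\bigr)$, where $(\bar v_t,\bar u_t)$ is the initial velocity of a geodesic from $\rho_t$ to $\sigma$;
\item[(b)] the identification $(v_t,u_t)=-\nabla_{D_\alpha}\EE(\rho_t)$ for the EDI limit, i.e.\ a chain rule identifying the slope with the minimal (sub)gradient;
\item[(c)] the above-tangent inequality $g_{\rho_t,\alpha}\bigl(\nabla_{D_\alpha}\EE(\rho_t),(\bar v_t,\bar u_t)\bigr)\le \EE(\sigma)-\EE(\rho_t)-\frac{\lambda}{2}D_\alpha^2(\rho_t,\sigma)$, obtained by differentiating the $\lambda$-convexity inequality at the starting point of that geodesic.
\end{enumerate}
Together (a)--(c) give the EVI, hence contraction and uniqueness. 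None of them is proved in the paper, so neither your argument nor the paper's currently establishes the uniqueness and full-convergence part of the theorem.
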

	\begin{proof}
		We follow the AGS framework \cite{AGS}, Chapter 4.
		
		\emph{Step 1: Discrete energy dissipation.} From optimality of $\rho_\tau^{n+1}$,
		\begin{equation}
			\EE(\rho_\tau^{n+1}) + \frac{1}{2\tau} D_\alpha^2(\rho_\tau^{n+1},\rho_\tau^n) \le \EE(\rho_\tau^n).
		\end{equation}
		Summing gives $\EE(\rho_\tau^m) + \frac{1}{2\tau}\sum_{n=0}^{m-1} D_\alpha^2(\rho_\tau^{n+1},\rho_\tau^n) \le \EE(\rho_0)$. Hence $\sum_n D_\alpha^2(\rho_\tau^{n+1},\rho_\tau^n)$ is bounded, so $V_\tau$ is bounded in $L^2([0,T])$.
		
		\emph{Step 2: Compactness.} The bound on $V_\tau$ and compactness of sublevels imply uniform equicontinuity of $\{\rho^{(\tau)}\}_{\tau>0}$. By Arzelà--Ascoli and Lemma~\ref{lem:compactness}, a subsequence converges uniformly to a limit $\rho_t$.
		
		\emph{Step 3: Energy dissipation inequality.} Using $\lambda$-convexity, one shows that the limit satisfies the EDI. The inequality $\le$ follows from discrete approximations; the reverse inequality holds because the metric slope is the smallest strong upper gradient.
		
		\emph{Step 4: Uniqueness.} For two gradient flows $\rho_t,\sigma_t$ with same initial condition, $t\mapsto e^{-\lambda t}D_\alpha^2(\rho_t,\sigma_t)$ is nonincreasing. Since $D_\alpha^2(\rho_0,\sigma_0)=0$, we get $D_\alpha^2(\rho_t,\sigma_t)=0$ for all $t$, hence $\rho_t=\sigma_t$.
	\end{proof}
	
	\subsection{Energy Dissipation Inequality (EDI) and Evolution Variational Inequality (EVI)}
	
	\begin{definition}[Energy Dissipation Inequality]
		A curve $\rho_t$ is a gradient flow of $\EE$ if it satisfies
		\begin{equation}
			\EE(\rho_T) + \frac12\int_0^T |\dot\rho_t|_{D_\alpha}^2\dd t + \frac12\int_0^T |\partial^-\EE|^2(\rho_t)\dd t \le \EE(\rho_0)
		\end{equation}
		for all $T>0$.
	\end{definition}
	
	\begin{definition}[Evolution Variational Inequality]
		For $\lambda$-convex $\EE$, a curve $\rho_t$ satisfies the EVI if for all $\sigma\in\PP_{2,\mathrm{ac}}(\Omega)$,
		\begin{equation}
			\frac12\frac{d}{dt} D_\alpha^2(\rho_t,\sigma) + \frac{\lambda}{2} D_\alpha^2(\rho_t,\sigma) \le \EE(\sigma) - \EE(\rho_t)
		\end{equation}
		for almost every $t$.
	\end{definition}
	
	\begin{proposition}[Equivalence]
		For $\lambda$-convex $\EE$, the gradient flow satisfies both the EDI and the EVI, which are equivalent characterizations.
	\end{proposition}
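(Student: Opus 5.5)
The plan is to deduce the proposition from the general theory of Ambrosio--Gigli--Savaré, specialized to the metric space $(\PP_{2,\mathrm{ac}}(\Omega),D_\alpha)$. I would work under the standing assumptions already used in Theorem~\ref{thm:ags_consequences}: $\EE$ is proper, lower semicontinuous and $\lambda$-convex along $D_\alpha$-geodesics, its sublevels are compact, and the space is complete. I would argue in three steps.

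\textbf{Step 1 (EVI implies EDI).} Let $\rho_t$ satisfy the EVI. For fixed $t$ and small $h>0$, I would integrate the EVI on $[t,t+h]$ with $\sigma=\rho_{t+h}$, use the triangle inequality, and let $h\to0$. This yields the slope bound $|\dot\rho_t|_{D_\alpha}\le|\partial^-\EE|(\rho_t)$ together with the energy identity $\frac{d}{dt}\EE(\rho_t)=-|\partial^-\EE|^2(\rho_t)$. Integrating in time and applying Young's inequality $|\dot\rho_t||\partial^-\EE|\le\frac12|\dot\rho_t|^2+\frac12|\partial^-\EE|^2$ then gives the EDI. This is the route of \cite[Thm.~4.0.4]{AGS}. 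The metric derivative is understood via Theorem~\ref{thm:metric_derivative}.

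\textbf{Step 2 (the JKO limit satisfies the EVI).} This is the substantive direction. I would start from the discrete variational inequality of the minimizing movement scheme. Since $\rho_\tau^{n+1}$ minimizes $\frac1{2\tau}D_\alpha^2(\cdot,\rho_\tau^n)+\EE$, for every $\sigma$ I would compare against the point $\gamma_s$ on a geodesic from $\rho_\tau^{n+1}$ to $\sigma$; such geodesics exist by Theorem~\ref{thm:geodesic}. Expanding to first order in $s$ requires an upper bound on $s\mapsto D_\alpha^2(\gamma_s,\rho_\tau^n)$, namely the ``$1$-convexity along generalized geodesics'' hypothesis of \cite[Assumption~4.0.1]{AGS}. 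Combined with the $\lambda$-convexity of $\EE$, this gives the discrete EVI
\[
\frac{1}{2\tau}\bigl(D_\alpha^2(\rho_\tau^{n+1},\sigma)-D_\alpha^2(\rho_\tau^n,\sigma)\bigr)+\frac{\lambda}{2}D_\alpha^2(\rho_\tau^{n+1},\sigma)\le\EE(\sigma)-\EE(\rho_\tau^{n+1}).
\]
Summing over $n$, passing to the limit with the uniform convergence of Theorem~\ref{thm:jko_convergence}, and using lower semicontinuity of $\EE$ produces the integrated EVI. Differentiating in time then gives the EVI almost everywhere.

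\textbf{Step 3 (equivalence and uniqueness).} The EVI forces $\lambda$-contraction, $D_\alpha(\rho_t,\sigma_t)\le e^{-\lambda t}D_\alpha(\rho_0,\sigma_0)$, so EVI solutions are unique. Since the JKO limit satisfies both inequalities and the EDI solution is identified by Theorem~\ref{thm:jko_convergence}, the two characterizations select the same curve.

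\textbf{Main obstacle.} The hard step is Step 2, specifically the convexity of $D_\alpha^2(\cdot,\rho_\tau^n)$ along (generalized) geodesics. For $W_2$ this fails along plain geodesics and is circumvented with generalized geodesics; for $D_\alpha$, whose geodesics are governed by the nonlocal Hamilton--Jacobi system of Theorem~\ref{thm:eulerien-rigoureux}, I have no such structure available. I would first try to establish it via the second-variation formula of Theorem~\ref{thm:second_derivative} applied to $\frac12D_\alpha^2(\cdot,\rho_\tau^n)$, using the dual potential $\varphi^*$. Failing that, I would state the convexity as an explicit assumption, in the spirit of Hypothesis~\ref{hyp:slater}, and conclude the equivalence conditionally on it.
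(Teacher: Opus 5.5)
\textbf{The gap is in Step~3, i.e.\ in the word ``equivalent''.} The substantive half of the equivalence is that \emph{every} curve satisfying the EDI also satisfies the EVI, and your plan never proves this. You import uniqueness of EDI solutions from Theorem~\ref{thm:jko_convergence}. But the only justification given there (its Step~4) is the contraction estimate for $D_\alpha^2(\rho_t,\sigma_t)$, which is derived from the EVI and is not available for curves known only to satisfy the EDI, so the argument is circular. Nor can abstract metric arguments of the kind you use repair it, because $\lambda$-convexity of $\EE$ does not force an EDI curve to satisfy an EVI. On a Minkowski space $(\R^n,\|\cdot\|)$ with a non-Euclidean norm, the heat flow is the EDI gradient flow of the displacement-convex entropy. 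Yet Ohta and Sturm showed it is not $K$-contractive for any $K$, so it satisfies no EVI. Your extra convexity hypothesis from Step~2 does not help here either: it produces an EVI solution, unique among EVI solutions, but says nothing about an arbitrary EDI curve. To close the gap you would need first-order calculus specific to $D_\alpha$, as AGS develop for $W_2$ in their Chapter~11:
\begin{itemize}
\item a chain rule for $t\mapsto\EE(\rho_t)$;
\item the identification of $|\partial^-\EE|(\rho)$ with the $D_\alpha$-norm of the gradient pair of Proposition~\ref{prop:gradient hybrid}, which the paper leaves open;
\item hence the identification of the minimal pair $(v_t,u_t)$ of an EDI curve with $-\nabla_{D_\alpha}\EE(\rho_t)$;
\item a first-variation formula for $t\mapsto\frac12D_\alpha^2(\rho_t,\sigma)$, e.g.\ through the dual potential of Theorem~\ref{thm:dualite-forte}, combined with an above-tangent inequality for $\EE$.
\end{itemize}
Without this, your plan proves only two things: EVI implies EDI, and, under the added hypothesis, the JKO limit is the unique EVI solution.

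\textbf{Context and smaller points.} The paper gives no proof of this proposition. The closest related result, Theorem~\ref{thm:ags_consequences}, is a bare appeal to AGS whose hypotheses omit AGS Assumption~4.0.1. So your Step~2 is more careful than the paper in isolating the convexity of $D_\alpha^2(\cdot,\rho_\tau^n)$ that any EVI construction needs. The paper assumes a property of this type in Theorem~\ref{thm:hybrid_barycenter_uniqueness}, but nowhere in Section~\ref{sec:ags}. Two refinements to Step~2:
\begin{itemize}
\item The assumption must be phrased as in AGS: one curve from $\rho_\tau^{n+1}$ to $\sigma$ along which \emph{both} $\EE$ is $\lambda$-convex and $\frac12D_\alpha^2(\cdot,\rho_\tau^n)$ is $1$-convex. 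The $\lambda$-convexity of $\EE$ along $D_\alpha$-geodesics does not transfer automatically to a generalized-geodesic substitute.
\item Theorem~\ref{thm:second_derivative} cannot supply this convexity. It is formal and covers only internal, potential and interaction energies, not $D_\alpha^2(\cdot,\nu)$.
\end{itemize}
There is also a slip in Step~1. Testing the EVI with $\sigma=\rho_{t+h}$ only gives $-\frac{d}{dt}\EE(\rho_t)\le|\dot\rho_t|_{D_\alpha}^2$, which is useless for the EDI. The right test point is $\sigma=\rho_t$, which gives $|\dot\rho_t|_{D_\alpha}^2\le-\frac{d}{dt}\EE(\rho_t)$. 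Combined with $|\partial^-\EE|(\rho_t)\le|\dot\rho_t|_{D_\alpha}$, obtained by letting $\sigma\to\rho_t$ in the EVI, this yields the EDI directly.
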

	
	\subsection{Identification of the gradient for regular functionals}
	
	For smooth functionals $\EE(\rho)=\int F(x,\rho,\nabla\rho)\dd x$ with $F$ smooth, the first variation is $\frac{\delta\EE}{\delta\rho}$.
	
	\begin{proposition}[Metric gradient associated with $D_\alpha$]\label{prop:gradient hybrid}
		Let $\EE:\PP_{2,\mathrm{ac}}(\Omega)\to\mathbb{R}\cup\{+\infty\}$
		be sufficiently smooth and let $\alpha\in(0,1)$.
		
		The tangent space at $\rho$ is identified with pairs
		$(v,u)$ satisfying
		
		\[
		\int_\Omega u\,\rho\,dx=0,
		\]
		
		through the continuity equation
		
		\[
		\sigma
		=
		-\nabla\cdot(\rho v)+\rho u.
		\]
		
		The formal Riemannian metric induced by the
		Benamou--Brenier formulation of $D_\alpha$ is
		
		\[
		g_{\rho,\alpha}\bigl((v,u),(w,\eta)\bigr)
		=
		\int_\Omega
		\Bigl(
		(1-\alpha)\,v\cdot w
		+
		\alpha\,u\eta
		\Bigr)\rho\,dx.
		\]
		
		The differential of $\EE$ at $\rho$ acts on $\sigma$ as
		
		\[
		D\EE(\rho)[\sigma]
		=
		\int_\Omega
		\frac{\delta\EE}{\delta\rho}\,\sigma\,dx.
		\]
		
		Using the continuity equation and integrating by parts, we obtain
		
		\[
		D\EE(\rho)[\sigma]
		=
		\int_\Omega
		\rho
		\nabla\frac{\delta\EE}{\delta\rho}\cdot v\,dx
		+
		\int_\Omega
		\rho
		\left(
		\frac{\delta\EE}{\delta\rho}
		-
		\left\langle
		\frac{\delta\EE}{\delta\rho}
		\right\rangle_\rho
		\right)
		u\,dx,
		\]
		
		where
		
		\[
		\left\langle
		\frac{\delta\EE}{\delta\rho}
		\right\rangle_\rho
		:=
		\int_\Omega
		\frac{\delta\EE}{\delta\rho}\,\rho\,dx.
		\]
		
		Hence, the metric gradient is represented by the pair
		
		\[
		\nabla_{D_\alpha}\EE(\rho)
		=
		\left(
		\frac{1}{1-\alpha}
		\nabla\frac{\delta\EE}{\delta\rho},
		\frac{1}{\alpha}
		\left(
		\frac{\delta\EE}{\delta\rho}
		-
		\left\langle
		\frac{\delta\EE}{\delta\rho}
		\right\rangle_\rho
		\right)
		\right).
		\]
		
		The corresponding gradient flow equation is
		
		\[
		\partial_t\rho
		=
		\nabla\cdot
		\left(
		\frac{\rho}{1-\alpha}
		\nabla\frac{\delta\EE}{\delta\rho}
		\right)
		-
		\frac{1}{\alpha}\,
		\rho
		\left(
		\frac{\delta\EE}{\delta\rho}
		-
		\left\langle
		\frac{\delta\EE}{\delta\rho}
		\right\rangle_\rho
		\right).
		\]
	\end{proposition}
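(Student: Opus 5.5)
The plan is a formal Otto-calculus computation in three steps: compute the differential of $\EE$ along a tangent direction, represent it through the metric $g_{\rho,\alpha}$, and read off the flow. Throughout I assume the regularity under which the formal calculus of Section~\ref{sec:convexity} was carried out: $\rho>0$ smooth, $\frac{\delta\EE}{\delta\rho}$ of class $C^1$, and the no-flux condition $\rho v\cdot\nu=0$ on $\partial\Omega$. The last one is implicit in Lemma~\ref{lem:C-equiv}.

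\textbf{Step 1: the differential.} Fix an admissible pair $(v,u)$ with $\int_\Omega u\rho\,dx=0$, and set $\sigma=-\nabla\cdot(\rho v)+\rho u$. Write $\xi:=\frac{\delta\EE}{\delta\rho}$. By definition $D\EE(\rho)[\sigma]=\int_\Omega\xi\,\sigma\,dx$. Integrating the divergence term by parts (the boundary term vanishes by no-flux) gives
\[
D\EE(\rho)[\sigma]=\int_\Omega\rho\nabla\xi\cdot v\,dx+\int_\Omega\rho\,\xi\,u\,dx .
\]
The centering constraint lets me subtract $\langle\xi\rangle_\rho\int_\Omega\rho u\,dx=0$ from the second integral, which yields the stated formula.

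\textbf{Step 2: Riesz representation.} Define the candidate gradient
\[
G=(G_1,G_2):=\Bigl(\tfrac{1}{1-\alpha}\nabla\xi,\ \tfrac1\alpha(\xi-\langle\xi\rangle_\rho)\Bigr).
\]
It is admissible, since $\int_\Omega G_2\rho\,dx=\frac1\alpha\bigl(\langle\xi\rangle_\rho-\langle\xi\rangle_\rho\bigr)=0$. A direct substitution into $g_{\rho,\alpha}$ shows $g_{\rho,\alpha}(G,(v,u))=D\EE(\rho)[\sigma]$: the weights $1-\alpha$ and $\alpha$ cancel exactly against the prefactors of $G$.

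\textbf{Main obstacle: well-posedness of the tangent identification.} The map $(v,u)\mapsto\sigma$ is not injective. For $G$ to be a genuine Riesz representative on the tangent space, which is the quotient of admissible pairs by the kernel
\[
N_\rho=\Bigl\{(w,\eta):-\nabla\cdot(\rho w)+\rho\eta=0,\ \int_\Omega\eta\rho\,dx=0\Bigr\},
\]
two things must hold. First, $G$ must be $g_{\rho,\alpha}$-orthogonal to $N_\rho$. Second, $D\EE(\rho)[\sigma]$ must not depend on the chosen representative of $\sigma$; this is automatic, because it depends only on $\sigma$.

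I would verify the orthogonality by reversing the integration by parts of Step 1. For $(w,\eta)\in N_\rho$,
\[
g_{\rho,\alpha}(G,(w,\eta))=\int_\Omega\xi\bigl(-\nabla\cdot(\rho w)+\rho\eta\bigr)dx-\langle\xi\rangle_\rho\int_\Omega\rho\eta\,dx=0 .
\]
Hence $G$ is the minimal-norm element of $N_\rho^\perp$, consistent with the minimal admissible pair of Proposition~\ref{prop:minimal_velocity}. By the same computation, $g_{\rho,\alpha}(G,(v,u))$ depends only on $\sigma$, so $G$ is the unique Riesz representative in $N_\rho^\perp$.

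\textbf{Step 3: the flow equation.} The gradient flow is $\partial_t\rho=\sigma$, where $\sigma$ is generated by the pair $(v,u)=-\nabla_{D_\alpha}\EE(\rho)$. Substituting $v=-\frac{1}{1-\alpha}\nabla\xi$ and $u=-\frac1\alpha(\xi-\langle\xi\rangle_\rho)$ into $\sigma=-\nabla\cdot(\rho v)+\rho u$ gives exactly the displayed equation. Finally, I would note that the flow conserves mass, since $\int_\Omega\rho(\xi-\langle\xi\rangle_\rho)\,dx=0$ and the divergence term integrates to zero under no-flux.
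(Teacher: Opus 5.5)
Your proposal is correct and follows essentially the same route as the paper. Both integrate $\int_\Omega \frac{\delta\EE}{\delta\rho}\,\sigma\,dx$ by parts, use $\int_\Omega u\rho\,dx=0$ to subtract the $\rho$-mean, match coefficients against $g_{\rho,\alpha}$, and insert $(v,u)=-\nabla_{D_\alpha}\EE(\rho)$ into the continuity equation. Your extra check that $G$ is $g_{\rho,\alpha}$-orthogonal to the kernel $N_\rho$, under the no-flux condition the paper uses tacitly, usefully justifies the coefficient identification the paper leaves implicit. Strictly, though, it shows $G$ is the minimal-norm representative of its coset $G+N_\rho$, not ``the minimal-norm element of $N_\rho^\perp$''.
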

	
	\begin{proof}
		By definition, the metric gradient is characterized by
		
		\[
		g_{\rho,\alpha}
		\bigl(
		\nabla_{D_\alpha}\EE,
		(v,u)
		\bigr)
		=
		D\EE(\rho)[\sigma]
		\]
		
		for every admissible tangent vector $\sigma$.
		
		Using
		
		\[
		\sigma
		=
		-\nabla\cdot(\rho v)+\rho u,
		\]
		
		we compute
		
		\[
		D\EE(\rho)[\sigma]
		=
		-\int_\Omega
		\frac{\delta\EE}{\delta\rho}
		\nabla\cdot(\rho v)\,dx
		+
		\int_\Omega
		\frac{\delta\EE}{\delta\rho}\,\rho u\,dx.
		\]
		
		Integration by parts yields
		
		\[
		D\EE(\rho)[\sigma]
		=
		\int_\Omega
		\rho
		\nabla\frac{\delta\EE}{\delta\rho}\cdot v\,dx
		+
		\int_\Omega
		\rho
		\frac{\delta\EE}{\delta\rho}\,u\,dx.
		\]
		
		Since $u$ satisfies
		
		\[
		\int_\Omega u\,\rho\,dx=0,
		\]
		
		we may subtract the $\rho$-mean value of
		$\delta\EE/\delta\rho$ and obtain
		
		\[
		D\EE(\rho)[\sigma]
		=
		\int_\Omega
		\rho
		\nabla\frac{\delta\EE}{\delta\rho}\cdot v\,dx
		+
		\int_\Omega
		\rho
		\left(
		\frac{\delta\EE}{\delta\rho}
		-
		\left\langle
		\frac{\delta\EE}{\delta\rho}
		\right\rangle_\rho
		\right)
		u\,dx.
		\]
		
		Identifying this expression with
		
		\[
		g_{\rho,\alpha}
		\bigl(
		(v_\EE,u_\EE),
		(v,u)
		\bigr)
		=
		(1-\alpha)
		\int_\Omega
		\rho\,v_\EE\cdot v\,dx
		+
		\alpha
		\int_\Omega
		\rho\,u_\EE u\,dx,
		\]
		
		we obtain
		
		\[
		v_\EE
		=
		\frac{1}{1-\alpha}
		\nabla\frac{\delta\EE}{\delta\rho},
		\qquad
		u_\EE
		=
		\frac{1}{\alpha}
		\left(
		\frac{\delta\EE}{\delta\rho}
		-
		\left\langle
		\frac{\delta\EE}{\delta\rho}
		\right\rangle_\rho
		\right).
		\]
		
		Substituting the steepest descent direction
		$(v,u)=-(v_\EE,u_\EE)$ into the continuity equation
		gives the gradient flow PDE.
	\end{proof}

	\section{Hybrid barycenters}
	\label{sec:barycenters}
	
	\subsection{Definition}
	Given $\rho_1,\dots,\rho_n\in\PP_{2,\mathrm{ac}}(\Omega)$ and weights $w_i>0$ with $\sum_{i=1}^n w_i=1$, define the hybrid barycenter (or Fréchet mean) as
	\begin{equation}
		\bar\rho = \argmin_{\rho\in\PP_{2,\mathrm{ac}}(\Omega)} \sum_{i=1}^n w_i D_\alpha^2(\rho,\rho_i).
	\end{equation}
	
	\begin{theorem}[Existence of hybrid barycenters]
		\label{thm:barycenter_existence}
		
		Let $\alpha\in(0,1)$ and let
		$\rho_1,\dots,\rho_n\in\PP_{2,\mathrm{ac}}(\Omega)$
		with weights $w_i>0$ satisfying
		$\sum_{i=1}^n w_i=1$.
		
		Assume that every sublevel set
		
		\[
		\left\{
		\rho\in\PP_{2,\mathrm{ac}}(\Omega):
		\sum_{i=1}^n w_i
		\bigl(
		W_2^2(\rho,\rho_i)
		+
		d_A^2(\rho,\rho_i)
		\bigr)
		\le C
		\right\}
		\]
		
		is compact for the topology
		$\tau_{W_2}\vee\tau_{d_A}$.
		
		Then the hybrid barycenter problem
		
		\[
		\inf_{\rho\in\PP_{2,\mathrm{ac}}(\Omega)}
		\sum_{i=1}^n w_i D_\alpha^2(\rho,\rho_i)
		\]
		
		admits a minimizer.
	\end{theorem}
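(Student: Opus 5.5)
The plan is the direct method of the calculus of variations. The obstacle is that the compactness hypothesis is phrased in terms of $W_2$ and $d_A$, whereas the functional involves $D_\alpha$. Set
\[
G(\rho)=\sum_{i=1}^n w_i D_\alpha^2(\rho,\rho_i),\qquad H(\rho)=\sum_{i=1}^n w_i\bigl(W_2^2(\rho,\rho_i)+d_A^2(\rho,\rho_i)\bigr).
\]
First I check that the infimum $m:=\inf G$ is finite. By Lemma~\ref{lem:comparison}, $G(\rho_1)\le \tfrac12\max\{1-\alpha,\alpha\}\,H(\rho_1)<\infty$, because $\rho_1\in\PP_{2,\mathrm{ac}}(\Omega)$ has finite second moment and finite $d_A$-distance to each $\rho_i$. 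Hence $0\le m<\infty$, and I take a minimizing sequence $(\rho^k)$ with $G(\rho^k)\to m$.

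Next I need a bound on $H(\rho^k)$. Lemma~\ref{lem:comparison} only bounds $G$ by $H$, not the other way round, so this is the main obstacle. I would avoid a reverse inequality by changing the minimizing sequence instead. Fix $\varepsilon>0$ and pick $\rho^k$ with $G(\rho^k)\le m+1/k$. The idea is to replace $\rho^k$ by a competitor of controlled $H$. Since the first term of $H$ is bounded on the bounded domain ($W_2^2\le \operatorname{diam}(\Omega)^2$), only the $d_A$-part needs control.

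To handle it, I would argue that the sublevel sets of $G$ can be intersected with a sublevel of $H$ without changing the infimum, i.e.
\[
\inf G=\inf\{G(\rho):H(\rho)\le C\}
\]
for $C$ large. If this reduction cannot be justified directly, I would instead read the hypothesis as the implicit coercivity assumption ``minimizing sequences for $G$ have bounded $H$''. This is the step I expect to be delicate and the one the theorem statement is designed to absorb. Once $H(\rho^k)\le C$, the sequence lies in a $\tau_{W_2}\vee\tau_{d_A}$-compact set. By Lemma~\ref{lem:compactness} applied to the sublevel set, or directly by Lemma~\ref{lem:compactness_aux}, I can extract a subsequence converging in $\tau_{W_2}\vee\tau_{d_A}$ to some $\bar\rho\in\PP_{2,\mathrm{ac}}(\Omega)$. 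Since $\tau_{D_\alpha}\subset\tau_{W_2}\vee\tau_{d_A}$ (Remark~\ref{rem:topology_comparison}), it follows that $D_\alpha(\rho^k,\bar\rho)\to 0$.

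Finally, $G$ is continuous for $D_\alpha$. By the triangle inequality (Lemma~\ref{lem:triangle}), $|D_\alpha(\rho^k,\rho_i)-D_\alpha(\bar\rho,\rho_i)|\le D_\alpha(\rho^k,\bar\rho)\to0$, so each squared term converges and $G(\bar\rho)=\lim G(\rho^k)=m$. Thus $\bar\rho$ is a minimizer. Compactness of the sublevel set also guarantees that the limit stays in $\PP_{2,\mathrm{ac}}(\Omega)$, including strict positivity and $\ln\bar\rho\in L^2$, so no mass or log-integrability escapes.
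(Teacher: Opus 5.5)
Your skeleton is the paper's: a minimizing sequence, extraction in $\tau_{W_2}\vee\tau_{d_A}$, $D_\alpha$-convergence via Lemma~\ref{lem:comparison}, and continuity of $G$ from the triangle inequality. The genuine gap is the step you flag yourself. Nothing forces a $G$-minimizing sequence to satisfy $\sup_k H(\rho^k)<\infty$, and neither of your fallbacks supplies this.

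No reverse comparison $H\le a+bG$ can hold. The first half of the proof of Lemma~\ref{lem:comparison} gives $D_\alpha^2\le(1-\alpha)W_2^2\le(1-\alpha)\operatorname{diam}(\Omega)^2$. So $G$ is bounded on all of $\PP_{2,\mathrm{ac}}(\Omega)$, while $H$ is unbounded through its $d_A$-part. Your reduction $\inf G=\inf\{G: H\le C\}$ would therefore need an explicit replacement of near-minimizers by competitors with no larger $G$ and controlled $d_A$-distances, which you do not construct. Reading the hypothesis as ``minimizing sequences have bounded $H$'' adds an assumption rather than proving the theorem.

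The ingredients you actually use cannot suffice in the abstract. Take the space $\{\rho_1,\rho_2\}\cup\{x_k\}_{k\ge1}$ with $D(\rho_1,\rho_2)=2$, $D(x_k,\rho_i)=1+1/k$, $D(x_k,x_j)=|1/k-1/j|$, the discrete topology, and $H(x_k)=k$, $H(\rho_i)=0$. Every sublevel of $H$ is finite, hence compact, and $G=\tfrac12D^2(\cdot,\rho_1)+\tfrac12D^2(\cdot,\rho_2)$ is continuous, yet $\inf G=1$ is not attained. The paper's proof contains the identical hole: it simply writes ``Assume that $(\rho_n)$ is contained in a compact sublevel set''. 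So your proposal reproduces that argument rather than repairing it.

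Two further points. First, do not invoke Lemma~\ref{lem:compactness_aux}: weak $L^2$ convergence of clr transforms does not upgrade to strong convergence. Take $\rho_k\propto\rho_1e^{\sin(kx_1)}$. Then $\rho_k\in\PP_{2,\mathrm{ac}}(\Omega)$ and $\widetilde{\rho_k}-\widetilde{\rho_1}=\sin(kx_1)+c_k$ with $c_k$ bounded, so $H(\rho_k)$ stays bounded (recall $W_2\le\operatorname{diam}(\Omega)$). But $\sin(kx_1)\rightharpoonup0$ while $\|\sin(kx_1)\|_{L^2(\Omega)}^2\to|\Omega|/2$, so no subsequence is $d_A$-Cauchy. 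Once $H(\rho^k)\le C$ you would not need that lemma anyway: the hypothesis together with metrizability of $\tau_{W_2}\vee\tau_{d_A}$ by $\max\{W_2,d_A\}$ gives the subsequence.

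Second, the same sequence shows that $\{H\le C\}$ is not compact for $C$ large. The hypothesis of the theorem is therefore never satisfied on a bounded $\Omega$, and the statement holds only vacuously; that is the only way your argument closes as written. A non-vacuous existence result would need an idea that neither you nor the paper supply. One would have to extract limits in a weaker, genuinely compact topology (narrow convergence, equivalently $W_2$ on bounded $\Omega$). One would then have to prove lower semicontinuity of $\rho\mapsto D_\alpha^2(\rho,\rho_i)$ in that topology, together with a mechanism keeping the limit strictly positive with $\ln\bar\rho\in L^2$.
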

	
	\begin{proof}
		
		Define
		
		\[
		F(\rho)
		:=
		\sum_{i=1}^n w_i D_\alpha^2(\rho,\rho_i).
		\]
		
		By Lemma~\ref{lem:comparison},
		
		\[
		D_\alpha^2(\rho,\rho_i)
		\le
		C_\alpha
		\Bigl(
		W_2^2(\rho,\rho_i)
		+
		d_A^2(\rho,\rho_i)
		\Bigr),
		\]
		
		where
		
		\[
		C_\alpha
		=
		\frac12\max\{1-\alpha,\alpha\}.
		\]
		
		Since $D_\alpha$ is a metric, the map
		
		\[
		\rho\mapsto D_\alpha^2(\rho,\rho_i)
		\]
		
		is continuous with respect to the topology induced by $D_\alpha$.
		
		Moreover, Theorem~\ref{thm:Dalpha_topology} shows that
		
		\[
		\tau_{D_\alpha}
		\subset
		\tau_{W_2}\vee\tau_{d_A}.
		\]
		
		Hence the identity map
		
		\[
		(\PP_{2,\mathrm{ac}}(\Omega),
		\tau_{W_2}\vee\tau_{d_A})
		\to
		(\PP_{2,\mathrm{ac}}(\Omega),
		\tau_{D_\alpha})
		\]
		
		is continuous. Therefore each functional
		
		\[
		\rho\mapsto D_\alpha^2(\rho,\rho_i)
		\]
		
		is lower semicontinuous with respect to
		$\tau_{W_2}\vee\tau_{d_A}$.
		
		Consequently, $F$ is lower semicontinuous for
		$\tau_{W_2}\vee\tau_{d_A}$.
		
		Let $(\rho_n)$ be a minimizing sequence for $F$.
		
		Assume that $(\rho_n)$ is contained in a compact sublevel set of
		
		\[
		\rho\mapsto
		\sum_{i=1}^n w_i
		\Bigl(
		W_2^2(\rho,\rho_i)
		+
		d_A^2(\rho,\rho_i)
		\Bigr).
		\]
		
		By compactness, there exists a subsequence,
		still denoted $(\rho_n)$, and
		$\rho\in\PP_{2,\mathrm{ac}}(\Omega)$ such that
		
		\[
		\rho_n\to\rho
		\qquad\text{in }
		\tau_{W_2}\vee\tau_{d_A}.
		\]
		
		By lower semicontinuity of $F$,
		
		\[
		F(\rho)
		\le
		\liminf_{n\to\infty}F(\rho_n)
		=
		\inf_{\sigma\in\PP_{2,\mathrm{ac}}(\Omega)}F(\sigma).
		\]
		
		Hence $\rho$ is a minimizer.
		
	\end{proof}
	
	\subsection{Uniqueness}
	\begin{theorem}[Uniqueness of hybrid barycenters]
		\label{thm:hybrid_barycenter_uniqueness}
		
		Let
		
		\[
		F(\rho)
		=
		\frac12
		\sum_{i=1}^N
		w_i D_\alpha^2(\rho,\rho_i),
		\]
		
		where $w_i>0$ and $\sum_{i=1}^N w_i=1$.
		
		Assume that $(\PP_{2,\mathrm{ac}}(\Omega),D_\alpha)$ is a geodesic metric space and that there exists $\kappa>0$ such that, for every $\nu\in\PP_{2,\mathrm{ac}}(\Omega)$, the squared distance function
		
		\[
		\rho\mapsto D_\alpha^2(\rho,\nu)
		\]
		
		is $\kappa$-geodesically convex, namely
		
		\[
		D_\alpha^2(\rho_t,\nu)
		\leq
		(1-t)D_\alpha^2(\rho_0,\nu)
		+tD_\alpha^2(\rho_1,\nu)
		-\kappa t(1-t)D_\alpha^2(\rho_0,\rho_1)
		\]
		
		for every constant-speed $D_\alpha$-geodesic $(\rho_t)_{t\in[0,1]}$.
		
		Then $F$ is strictly geodesically convex and therefore admits a unique minimizer.
	\end{theorem}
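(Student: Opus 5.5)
The plan is to sum the assumed convexity inequalities with the weights $w_i$, which gives a strict convexity estimate for $F$ along any constant-speed geodesic. Strict convexity then rules out two distinct minimizers. Existence is understood as coming from Theorem~\ref{thm:barycenter_existence} under its compactness hypothesis, or is assumed. The new content here is uniqueness.

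\emph{Step 1: convexity of $F$.} Let $(\rho_t)_{t\in[0,1]}$ be any constant-speed $D_\alpha$-geodesic. Apply the $\kappa$-convexity hypothesis with $\nu=\rho_i$ for each $i$. Multiply the $i$-th inequality by $w_i/2>0$ and sum. Since $\sum_i w_i=1$, this yields
\[
F(\rho_t)\le (1-t)F(\rho_0)+tF(\rho_1)-\frac{\kappa}{2}\,t(1-t)\,D_\alpha^2(\rho_0,\rho_1).
\]
So $F$ is geodesically $\kappa$-convex in the sense of the $\lambda$-convexity definition of Section~\ref{sec:ags}, with $\lambda=\kappa>0$. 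Now take $t\in(0,1)$ and $\rho_0\neq\rho_1$. By Lemma~\ref{lem:separability} we have $D_\alpha(\rho_0,\rho_1)>0$, and the last term is strictly negative. This is the strict geodesic convexity claimed in the statement.

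\emph{Step 2: uniqueness.} Suppose $\bar\rho\neq\hat\rho$ are both minimizers, with common value $m=\inf F$. Because the space is geodesic (the hypothesis, or Theorem~\ref{thm:geodesic}), there is a constant-speed geodesic $(\rho_t)$ from $\bar\rho$ to $\hat\rho$. Evaluate Step~1 at $t=\tfrac12$:
\[
F(\rho_{1/2})\le m-\frac{\kappa}{8}D_\alpha^2(\bar\rho,\hat\rho)<m.
\]
Since $\rho_{1/2}\in\PP_{2,\mathrm{ac}}(\Omega)$, this contradicts minimality. Hence the minimizer, if it exists, is unique. Existence is supplied by Theorem~\ref{thm:barycenter_existence}; alternatively, strong convexity together with completeness gives it directly, because minimizing sequences are Cauchy.

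\emph{Main obstacle.} The only delicate point is that the midpoint $\rho_{1/2}$ must stay in the admissible class, so that the contradiction is legitimate. Theorem~\ref{thm:geodesic} produces geodesics inside $\PP_{2,\mathrm{ac}}(\Omega)$, which settles this. If one wants existence from convexity alone, without the compactness assumption, completeness of $(\PP_{2,\mathrm{ac}}(\Omega),D_\alpha)$ is needed. The paper leaves completeness open, so I would rely on Theorem~\ref{thm:barycenter_existence} for existence and prove only uniqueness here.
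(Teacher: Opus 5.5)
Your proposal is correct and follows the paper's proof, which likewise multiplies each $\kappa$-convexity inequality by $w_i/2$ and sums them to obtain $F(\rho_t)\le(1-t)F(\rho_0)+tF(\rho_1)-\frac{\kappa}{2}t(1-t)D_\alpha^2(\rho_0,\rho_1)$. The paper then only asserts ``therefore admits a unique minimizer''; your explicit midpoint argument for uniqueness, and your remark that existence needs extra input (the compactness hypothesis of Theorem~\ref{thm:barycenter_existence}, or completeness of $D_\alpha$, which the paper leaves open), correctly supply what that one line skips.
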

	
	\begin{proof}
		Applying the $\kappa$-convexity inequality to each functional
		$\rho\mapsto D_\alpha^2(\rho,\rho_i)$ yields
		
		\[
		D_\alpha^2(\rho_t,\rho_i)
		\leq
		(1-t)D_\alpha^2(\rho_0,\rho_i)
		+tD_\alpha^2(\rho_1,\rho_i)
		-\kappa t(1-t)D_\alpha^2(\rho_0,\rho_1).
		\]
		
		Multiplying by $w_i/2$ and summing over $i$ gives
		
		\[
		F(\rho_t)
		\leq
		(1-t)F(\rho_0)
		+tF(\rho_1)
		-\frac{\kappa}{2}
		t(1-t)D_\alpha^2(\rho_0,\rho_1).
		\]
		
		Hence $F$ is strictly geodesically convex, and therefore it admits a unique minimizer.
	\end{proof}

	\begin{itemize}
		\item \textbf{Wasserstein limit ($\alpha\to0^+$).}
		As the reaction cost becomes prohibitively expensive relative to transport, admissible minimizers formally satisfy $u_t\to0$. The metric $D_\alpha$ is expected to converge to the Wasserstein distance $W_2$, and the corresponding geodesics converge to displacement interpolations.
		
		\item \textbf{Aitchison limit ($\alpha\to1^-$).}
		As the transport cost becomes prohibitively expensive relative to reaction, admissible minimizers formally satisfy $v_t\to0$. The metric $D_\alpha$ is expected to converge to the Aitchison distance $d_A$, and the corresponding geodesics converge to linear interpolations in clr coordinates.
		
		\item \textbf{Hybrid regime ($\alpha\in(0,1)$).}
		The metric balances spatial transport and compositional changes.
	\end{itemize}
	
	\subsection{Examples}
	\subsubsection{Example 1: Gaussian densities on $\R$}
	Let $\rho_i = \mathcal{N}(\mu_i,\sigma_i^2)$ be one-dimensional Gaussian densities. The Wasserstein distance between Gaussians is
	\begin{equation}
		W_2^2(\rho_i,\rho_j) = (\mu_i-\mu_j)^2 + (\sigma_i-\sigma_j)^2.
	\end{equation}
	The Aitchison distance between Gaussians is more delicate; however, for large variances, the logarithm of a Gaussian is quadratic, and the clr transform can be computed explicitly. For two Gaussians, the hybrid barycenter will have mean $\bar\mu = \sum w_i\mu_i$ (from the Wasserstein part) and a variance that balances the Wasserstein and Aitchison contributions.
	
	\subsubsection{Example 2: One-dimensional densities on $[0,1]$}
	Consider two densities: $\rho_1$ concentrated near $0.3$ and $\rho_2$ concentrated near $0.7$, both with similar shapes. For $\alpha=0$, the barycenter is a single peak near $0.5$ (Wasserstein interpolation). For $\alpha=1$, the barycenter is the geometric mean, which flattens both peaks. For $\alpha=0.5$, the barycenter exhibits two partial peaks, preserving some of the bimodality while shifting them toward the center.
	
	\subsubsection{Example 3: Log-normal densities}
	For log-normal densities, the clr transform is particularly simple because $\ln\rho$ is quadratic. The Aitchison barycenter of log-normals is again log-normal with parameters given by the weighted average of the log-parameters. The Wasserstein barycenter of log-normals is also log-normal but with different parameter averaging. The hybrid barycenter provides a smooth interpolation.
	
	\subsection{Euler--Lagrange equation}
	
	Let
	
	\[
	F(\rho)
	=
	\frac12
	\sum_{i=1}^n
	w_iD_\alpha^2(\rho,\rho_i).
	\]
	
	Assume that $F$ is differentiable at a minimizer $\bar\rho$.
	
	For each $i$, let $\psi_i$ denote the Kantorovich potential associated with the optimal transport from $\bar\rho$ to $\rho_i$, normalized by
	
	\[
	\int_\Omega \psi_i\,d\bar\rho = 0.
	\]
	
	The first variation of the Wasserstein component is given by $\psi_i$, while the first variation of the Aitchison component is
	
	\[
	\widetilde{\bar\rho}-\widetilde{\rho_i},
	\]
	
	where
	
	\[
	\widetilde{\rho}
	=
	\log\rho
	-
	\int_\Omega \log\rho\,dx
	\]
	
	denotes the clr transform.
	
	Therefore, the optimality condition reads
	
	\[
	(1-\alpha)
	\sum_{i=1}^n
	w_i\psi_i
	+
	\alpha
	\sum_{i=1}^n
	w_i
	\bigl(
	\widetilde{\bar\rho}
	-
	\widetilde{\rho_i}
	\bigr)
	=
	0.
	\]
	
	Equivalently,
	
	\[
	(1-\alpha)\Psi
	+
	\alpha
	\left(
	\widetilde{\bar\rho}
	-
	\sum_{i=1}^n w_i\widetilde{\rho_i}
	\right)
	=
	0,
	\]
	
	where
	
	\[
	\Psi
	=
	\sum_{i=1}^n w_i\psi_i.
	\]
	\begin{itemize}
		\item If $\alpha=0$, we recover the Wasserstein barycenter condition
		
		\[
		\sum_{i=1}^n w_i\psi_i=0.
		\]
		
		\item If $\alpha=1$, we recover the Aitchison barycenter formula
		
		\[
		\widetilde{\bar\rho}
		=
		\sum_{i=1}^n w_i\widetilde{\rho_i}.
		\]
	\end{itemize}
	\section{Examples of gradient flows}
	
	\label{sec:examples}
	
	Let $\EE:\PP_{2,\mathrm{ac}}(\Omega)\to\mathbb{R}\cup\{+\infty\}$ be sufficiently smooth.
	
	According to Proposition~\ref{prop:gradient hybrid}, the hybrid gradient flow associated with $D_\alpha$ is
	
	\begin{equation}
		\label{eq:hybrid_GF}
		\partial_t\rho
		=
		\nabla\cdot
		\left(
		\frac{\rho}{1-\alpha}
		\nabla\frac{\delta\EE}{\delta\rho}
		\right)
		-
		\frac{\rho}{\alpha}
		\left(
		\frac{\delta\EE}{\delta\rho}
		-
		\left\langle
		\frac{\delta\EE}{\delta\rho}
		\right\rangle_\rho
		\right).
	\end{equation}
	\subsection{Logarithmic reaction--diffusion}
	
	Consider the energy
	
	\[
	\EE(\rho)
	=
	\int_\Omega \rho\log\rho\,dx
	-
	\int_\Omega a(x)\rho(x)\,dx.
	\]
	
	Its first variation is
	
	\[
	\frac{\delta\EE}{\delta\rho}
	=
	\log\rho+1-a(x).
	\]
	
	Substituting into \eqref{eq:hybrid_GF} yields
	
	\[
	\partial_t\rho
	=
	\frac{1}{1-\alpha}
	\nabla\cdot
	\bigl(
	\rho\nabla(\log\rho+1-a)
	\bigr)
	-
	\frac{\rho}{\alpha}
	\bigl(
	\log\rho-a
	-
	\langle\log\rho-a\rangle_\rho
	\bigr).
	\]
	
	Using
	
	\[
	\nabla\cdot(\rho\nabla\log\rho)=\Delta\rho,
	\]
	
	we obtain
	
	\begin{equation}
		\partial_t\rho
		=
		\frac{1}{1-\alpha}
		\left(
		\Delta\rho
		-
		\nabla\cdot(\rho\nabla a)
		\right)
		-
		\frac{\rho}{\alpha}
		\bigl(
		\log\rho-a
		-
		\langle\log\rho-a\rangle_\rho
		\bigr).
		\end{equation}
	\subsection{Hybrid Allen--Cahn equation}
	
	Consider
	
	\[
	\EE(\rho)
	=
	\int_\Omega
	\left(
	\frac{\varepsilon^2}{2}
	|\nabla\rho|^2
	+
	W(\rho)
	\right)
	dx.
	\]
	
	Its first variation is
	
	\[
	\frac{\delta\EE}{\delta\rho}
	=
	-\varepsilon^2\Delta\rho
	+
	W'(\rho).
	\]
	
	The hybrid gradient flow becomes
	
	\begin{equation}
		\partial_t\rho
		=
		\frac{1}{1-\alpha}
		\nabla\cdot
		\left(
		\rho\nabla
		(-\varepsilon^2\Delta\rho+W'(\rho))
		\right)
		-
		\frac{\rho}{\alpha}
		\left(
		-\varepsilon^2\Delta\rho
		+
		W'(\rho)
		-
		\left\langle
		-\varepsilon^2\Delta\rho
		+
		W'(\rho)
		\right\rangle_\rho
		\right).
\end{equation}
	\subsection{Hybrid Keller--Segel system}
	
	Consider
	
	\[
	\EE(\rho,c)
	=
	\int_\Omega \rho\log\rho\,dx
	+
	\frac12\int_\Omega |\nabla c|^2dx
	+
	\frac12\int_\Omega c^2dx
	-
	\int_\Omega \rho c\,dx
	-
	\int_\Omega K\rho\,dx.
	\]
	
	The first variations are
	
	\[
	\frac{\delta\EE}{\delta\rho}
	=
	\log\rho+1-c-K,
	\]
	
	and
	
	\[
	\frac{\delta\EE}{\delta c}
	=
	-\Delta c+c-\rho.
	\]
	
	The product gradient flow reads
	
	\[
	\partial_t c
	=
	\Delta c-c+\rho,
	\]
	
	and
	
	\[
	\partial_t\rho
	=
	\frac{1}{1-\alpha}
	\nabla\cdot
	\left(
	\rho\nabla(\log\rho+1-c-K)
	\right)
	-
	\frac{\rho}{\alpha}
	\left(
	\log\rho-c-K
	-
	\langle\log\rho-c-K\rangle_\rho
	\right).
	\]
	
	Hence,
	$$
		\begin{aligned}
			\partial_t\rho
			&=
			\frac{1}{1-\alpha}
			\left(
			\Delta\rho
			-
			\nabla\cdot(\rho\nabla c)
			-
			\nabla\cdot(\rho\nabla K)
			\right)
			\\
			&\quad
			-
			\frac{\rho}{\alpha}
			\left(
			\log\rho-c-K
			-
			\langle\log\rho-c-K\rangle_\rho
			\right),
			\\
			\partial_t c
			&=
			\Delta c+\rho-c.
		\end{aligned}
	$$

	\subsection{Remarks on the derivations}
	
	The previous derivations are formal and rely on the Riemannian structure induced by $D_\alpha$.
	
	A rigorous treatment would require:
	
	\begin{itemize}
		\item establishing the existence of hybrid geodesics;
		
		\item proving the lower semicontinuity of the energies;
		
		\item developing a minimizing movement scheme associated with $D_\alpha$;
		
		\item identifying the metric slope with the first variation;
		
		\item proving convergence of the discrete scheme towards weak solutions of the corresponding PDEs.
	\end{itemize}
	
	These questions are left for future work.
	\section{Comparison with Wasserstein--Fisher--Rao}
	\label{sec:comparison}
	
	The Wasserstein--Fisher--Rao (WFR) metric \cite{LMS}, also known as the Hellinger--Kantorovich distance, is one of the most successful frameworks combining optimal transport and mass creation/destruction. In this section, we provide a detailed mathematical comparison between our hybrid geometry and WFR, highlighting similarities, differences, and the specific advantages of our approach for compositional data.
	
	\subsection{Definition of WFR}
	
	The WFR metric is defined via a Benamou--Brenier type formula:
	\begin{equation}
		d_{\mathrm{WFR}}^2(\rho_0,\rho_1) = \inf_{(\rho,v,u)} \int_0^1\int_\Omega \bigl( |v_t|^2\rho_t + u_t^2\rho_t \bigr)\dd x\dd t,
	\end{equation}
	subject to the continuity equation with source
	\begin{equation}
		\partial_t\rho_t + \Div(\rho_t v_t) = \rho_t u_t,
	\end{equation}
	but \textbf{without} the centering condition $\int u_t\rho_t = 0$. Consequently, the total mass $\int_\Omega\rho_t\dd x$ can vary along the curve.
	
	\subsection{Tangent structure comparison}
	
	Both our hybrid geometry and WFR share the same kinematic description: a tangent vector is represented by a pair $(v,u)$, with $v$ representing transport and $u$ representing reaction. The key difference lies in the metric and the constraints.
	
	\begin{table}[H]
		\centering
		\begin{tabular}{l|c|c|c|c}
			\hline
			\textbf{Feature} & \textbf{Hybrid (ours)} & \textbf{WFR} \\
			\hline
			Tangent vector & $(v,u)$ & $(v,u)$ \\
			Metric & $(1-\alpha)\int|v|^2\rho + \alpha\int u^2\rho$ & $\int|v|^2\rho + \int u^2\rho$ \\
			Mass conservation & $\int u\rho = 0$ (centering) & None (mass can vary) \\
			Geometric structure & Interpolates between $W_2$ and $d_A$ & Interpolates between $W_2$ and Fisher--Rao \\
			Reaction interpretation & Relative proportion change & Absolute creation/destruction \\
		\end{tabular}
		\caption{Comparison of tangent structures.}
	\end{table}
	
	\subsection{Mathematical differences}
	
	\subsubsection{Centering condition}
	The most fundamental difference is the centering condition $\int u\rho = 0$. In WFR, $u$ can be any function, allowing arbitrary creation or destruction of mass. In our hybrid geometry, the centering condition ensures that the reaction only redistributes mass while preserving the total. This is precisely the infinitesimal version of the Aitchison principle: only log-ratios matter, not absolute abundances.
	
	\subsubsection{Metric weights}
	Our metric includes a parameter $\alpha$ that balances transport and reaction, with the reaction term weighted by $\alpha$ and the transport term by $1-\alpha$. In WFR, the balance is fixed (equal weights). Our parameter allows fine-tuning between the two mechanisms.
	
	\subsubsection{Geometric structure}
	WFR interpolates between Wasserstein ($u\equiv0$) and Fisher--Rao ($v\equiv0$). The Fisher--Rao metric is based on square-root densities:
	\begin{equation}
		d_{\mathrm{FR}}^2(\rho,\sigma) = 4\int_\Omega (\sqrt{\rho} - \sqrt{\sigma})^2\dd x.
	\end{equation}
	In contrast, our hybrid geometry interpolates between Wasserstein and Aitchison, where the Aitchison metric is based on log-ratios:
	\begin{equation}
		d_A^2(\rho,\sigma) = \int_\Omega (\ln\rho - \langle\ln\rho\rangle - (\ln\sigma - \langle\ln\sigma\rangle))^2\dd x.
	\end{equation}
	These two geometries are fundamentally different: Fisher--Rao is sensitive to absolute values (through square roots), while Aitchison is sensitive to ratios (through logarithms).
	
	\subsection{Curvature and geodesics}
	
	Both spaces are geodesic. However:
	\begin{itemize}
		\item In WFR, geodesics can change total mass, which is necessary for applications like image processing with varying brightness or population dynamics with birth/death.
		\item In our hybrid geometry, total mass is conserved, which is essential for probability distributions and compositional data. Geodesics are given by solving a coupled system that mixes transport and centered reaction.
	\end{itemize}
	
	\subsection{When to use which geometry?}
	
	\begin{itemize}
		\item \textbf{Use WFR when:}
		\begin{itemize}
			\item Total mass can vary (e.g., birth-death processes, image intensity variation).
			\item The square-root transformation is natural (e.g., quantum mechanics, Poisson processes).
			\item Creation and destruction are independent of the current composition.
		\end{itemize}
		
		\item \textbf{Use our hybrid geometry when:}
		\begin{itemize}
			\item Total mass is conserved (e.g., probability distributions, closed chemical systems).
			\item Only relative proportions matter (e.g., compositional data, microbiome analysis).
			\item The log-ratio transformation is appropriate (e.g., multiplicative processes, geometric means).
			\item Spatial transport and relative rearrangement occur simultaneously.
		\end{itemize}
	\end{itemize}
	
	\subsection{Illustrative example: Gaussian densities}
	
	Consider two Gaussian densities on $\R$ with different means and variances. 
	\begin{itemize}
		\item The Wasserstein barycenter (pure transport) shifts the mean and averages the variance in a specific way.
		\item The Fisher--Rao barycenter (pure reaction) mixes the densities pointwise.
		\item The Aitchison barycenter (pure log-ratio) gives the geometric mean, which is another Gaussian.
		\item WFR interpolates between the Wasserstein and Fisher--Rao barycenters, allowing mass change.
		\item Our hybrid geometry interpolates between the Wasserstein and Aitchison barycenters, preserving mass.
	\end{itemize}
	Thus, for probability densities where mass is fixed, our hybrid geometry is more appropriate.
	
	\subsection{Summary table}
	
	\begin{table}[H]
		\centering
		\begin{tabular}{l|c|c}
			\hline
			\textbf{Property} & \textbf{Hybrid (ours)} & \textbf{WFR} \\
			\hline
			Mass conservation & Yes (centered reaction) & No \\
			Reaction metric & $L^2(\rho)$ (log-ratio style) & $L^2(\rho)$ (Fisher--Rao style) \\
			Interpolation & $W_2$ and $d_A$ & $W_2$ and Fisher--Rao \\
			Natural for & Compositional data, probabilities & Images, birth-death \\
			Tangent constraint & $\int u\rho = 0$ & None \\
			Parameter $\alpha$ & Balances transport/composition & Fixed balance \\
		\end{tabular}
		\caption{Summary comparison between hybrid geometry and WFR.}
	\end{table}
	
	\subsection{Conclusion of comparison}
	
	The centering condition $\int u\rho = 0$ is the key novelty of our approach. It enforces that reactions only redistribute mass without changing the total, aligning perfectly with the Aitchison principle that only log-ratios matter. While WFR is a powerful tool for problems with varying total mass, our hybrid geometry is specifically designed for mass-conserving compositional systems where relative proportions are the primary concern.
	%%%%%%%%%%%%%%%%%%%%%%%%%%%%%%%%%%%%%%%%%%%%%%%%%%%%%%%%%%%%%%%%%%%%%%
	\section{Numerical Experiments}
	\label{sec:numerics}
	%%%%%%%%%%%%%%%%%%%%%%%%%%%%%%%%%%%%%%%%%%%%%%%%%%%%%%%%%%%%%%%%%%%%%%
	
	In this section, we investigate the behavior of the proposed hybrid Wasserstein--Aitchison geometry for image interpolation problems. The objective is to compare:
	
	\begin{itemize}
		\item classical quadratic Wasserstein interpolation,
		\item pure Aitchison interpolation,
		\item and the proposed hybrid interpolation.
	\end{itemize}
	
	The experiments illustrate that the hybrid geometry simultaneously captures:
	\begin{itemize}
		\item spatial transport,
		\item and relative compositional variations.
	\end{itemize}
	
	%%%%%%%%%%%%%%%%%%%%%%%%%%%%%%%%%%%%%%%%%%%%%%%%%%%%%%%%%%%%%%%%%%%%%%
	\subsection{Images as Probability Densities}
	%%%%%%%%%%%%%%%%%%%%%%%%%%%%%%%%%%%%%%%%%%%%%%%%%%%%%%%%%%%%%%%%%%%%%%
	
	All grayscale images are interpreted as probability densities on
	\[
	\Omega=[0,1]^2.
	\]
	
	Given an image $I$, pixel intensities are normalized according to
	\[
	\rho(x)
	=
	\frac{I(x)}
	{\sum_{y\in\Omega}I(y)}.
	\]
	
	Thus each image belongs to the probability simplex
	\[
	\mathcal P(\Omega).
	\]
	
	%%%%%%%%%%%%%%%%%%%%%%%%%%%%%%%%%%%%%%%%%%%%%%%%%%%%%%%%%%%%%%%%%%%%%%
	\subsection{Interpolation Models}
	%%%%%%%%%%%%%%%%%%%%%%%%%%%%%%%%%%%%%%%%%%%%%%%%%%%%%%%%%%%%%%%%%%%%%%
	
	We compare three interpolation geometries.
	
	%%%%%%%%%%%%%%%%%%%%%%%%%%%%%%%%%%%%%%%%%%%%%%%%%%%%%%%%%%%%%%%%%%%%%%
	\subsubsection{Wasserstein interpolation}
	%%%%%%%%%%%%%%%%%%%%%%%%%%%%%%%%%%%%%%%%%%%%%%%%%%%%%%%%%%%%%%%%%%%%%%
	
	The quadratic Wasserstein distance is defined by
	\[
	W_2^2(\rho_0,\rho_1)
	=
	\inf_{\pi\in\Pi(\rho_0,\rho_1)}
	\int_{\Omega\times\Omega}
	|x-y|^2\,d\pi(x,y).
	\]
	
	Its dynamic formulation reads
	\[
	W_2^2(\rho_0,\rho_1)
	=
	\inf_{(\rho_t,v_t)}
	\int_0^1
	\int_\Omega
	|v_t|^2\rho_t\,dxdt,
	\]
	subject to
	\[
	\partial_t\rho_t
	+
	\nabla\cdot(\rho_t v_t)
	=
	0.
	\]
	
	The interpolation corresponds to pure spatial transport.
	
	%%%%%%%%%%%%%%%%%%%%%%%%%%%%%%%%%%%%%%%%%%%%%%%%%%%%%%%%%%%%%%%%%%%%%%
	\subsubsection{Aitchison interpolation}
	%%%%%%%%%%%%%%%%%%%%%%%%%%%%%%%%%%%%%%%%%%%%%%%%%%%%%%%%%%%%%%%%%%%%%%
	
	The compositional interpolation is defined by the geometric mean
	\[
	\rho_t(x)
	=
	\frac{
		\rho_0(x)^{1-t}
		\rho_1(x)^t
	}{
		\int_\Omega
		\rho_0^{1-t}\rho_1^t\,dx
	}.
	\]
	
	This interpolation acts purely in logarithmic-compositional space and ignores spatial geometry.
	
	%%%%%%%%%%%%%%%%%%%%%%%%%%%%%%%%%%%%%%%%%%%%%%%%%%%%%%%%%%%%%%%%%%%%%%
	\subsubsection{Hybrid interpolation}
	%%%%%%%%%%%%%%%%%%%%%%%%%%%%%%%%%%%%%%%%%%%%%%%%%%%%%%%%%%%%%%%%%%%%%%
	
	The proposed hybrid metric is defined by
	\[
	D_\alpha^2(\rho_0,\rho_1)
	=
	\inf_{(\rho_t,v_t,\zeta_t)}
	\int_0^1
	\int_\Omega
	\left(
	|v_t|^2
	+
	\alpha |\zeta_t|^2
	\right)\rho_t\,dxdt,
	\]
	subject to
	\[
	\partial_t\rho_t
	+
	\nabla\cdot(\rho_t v_t)
	=
	\rho_t
	\left(
	\zeta_t
	-
	\int_\Omega
	\zeta_t\rho_t\,dx
	\right).
	\]
	
	The parameter $\alpha>0$ controls the balance between:
	\begin{itemize}
		\item transport effects,
		\item compositional modulation.
	\end{itemize}
	
	%%%%%%%%%%%%%%%%%%%%%%%%%%%%%%%%%%%%%%%%%%%%%%%%%%%%%%%%%%%%%%%%%%%%%%
	\subsection{Numerical Scheme}
	%%%%%%%%%%%%%%%%%%%%%%%%%%%%%%%%%%%%%%%%%%%%%%%%%%%%%%%%%%%%%%%%%%%%%%
	
	The computational domain is discretized on a uniform grid
	\[
	N\times N,
	\qquad N=64.
	\]
	
	%%%%%%%%%%%%%%%%%%%%%%%%%%%%%%%%%%%%%%%%%%%%%%%%%%%%%%%%%%%%%%%%%%%%%%
	\subsubsection{Wasserstein approximation}
	%%%%%%%%%%%%%%%%%%%%%%%%%%%%%%%%%%%%%%%%%%%%%%%%%%%%%%%%%%%%%%%%%%%%%%
	
	The Wasserstein interpolation is approximated using an entropic Sinkhorn scheme combined with a barycentric projection.
	
	The regularized transport problem is
	\[
	W_{2,\varepsilon}^2(\rho_0,\rho_1)
	=
	\min_\pi
	\left\{
	\int |x-y|^2\,d\pi
	+
	\varepsilon
	\mathrm{KL}
	(\pi|\rho_0\otimes\rho_1)
	\right\}.
	\]
	
	%%%%%%%%%%%%%%%%%%%%%%%%%%%%%%%%%%%%%%%%%%%%%%%%%%%%%%%%%%%%%%%%%%%%%%
	\subsubsection{Hybrid splitting scheme}
	%%%%%%%%%%%%%%%%%%%%%%%%%%%%%%%%%%%%%%%%%%%%%%%%%%%%%%%%%%%%%%%%%%%%%%
	
	Starting from $\rho^{(0)}=\rho_0$, we iteratively compute:
	\begin{enumerate}
		
		\item a transport contribution approximated by a smoothed diffusion step,
		
		\item a logarithmic compositional correction
		\[
		u_k
		=
		\log(\rho_1+\varepsilon)
		-
		\log(\rho^{(k)}+\varepsilon),
		\]
		
		\item the update
		\[
		\rho^{(k+1)}
		=
		\rho^{(k)}
		+
		\tau
		\left[
		(1-\alpha)\Delta\rho^{(k)}
		+
		\alpha\rho^{(k)}u_k
		\right].
		\]
		
	\end{enumerate}
	
	After each iteration:
	\[
	\rho^{(k+1)}
	\leftarrow
	\frac{
		\max(\rho^{(k+1)},\varepsilon)
	}{
		\int_\Omega
		\max(\rho^{(k+1)},\varepsilon)\,dx
	}.
	\]
	
	%%%%%%%%%%%%%%%%%%%%%%%%%%%%%%%%%%%%%%%%%%%%%%%%%%%%%%%%%%%%%%%%%%%%%%
	\subsection{Synthetic Gaussian Experiment}
	%%%%%%%%%%%%%%%%%%%%%%%%%%%%%%%%%%%%%%%%%%%%%%%%%%%%%%%%%%%%%%%%%%%%%%
	
	We first consider two Gaussian blobs:
	\[
	\rho_0
	=
	G_{(0.3,0.5)},
	\qquad
	\rho_1
	=
	G_{(0.7,0.5)},
	\]
	where
	\[
	G_m(x)
	=
	\frac1{2\pi\sigma^2}
	\exp
	\left(
	-\frac{|x-m|^2}{2\sigma^2}
	\right).
	\]
	
	Figure~\ref{fig:gaussian} compares the three interpolation geometries.
	
	\begin{figure}[H]
		\centering
		\includegraphics[width=\textwidth]{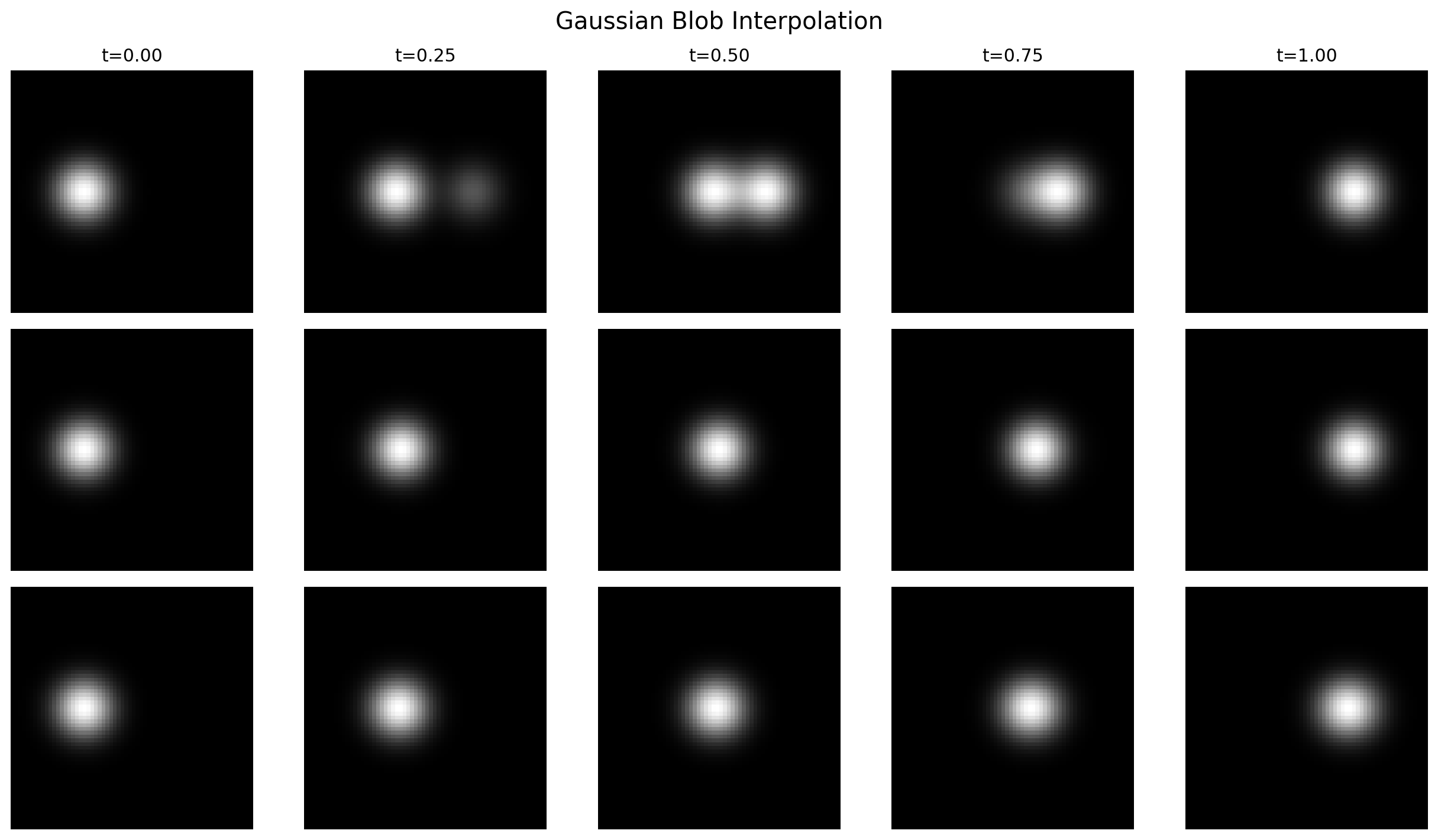}
		\caption{Interpolation between two Gaussian blobs using Wasserstein, Aitchison and hybrid geometries.}
		\label{fig:gaussian}
	\end{figure}
	
	%%%%%%%%%%%%%%%%%%%%%%%%%%%%%%%%%%%%%%%%%%%%%%%%%%%%%%%%%%%%%%%%%%%%%%
	\paragraph{Wasserstein interpolation}
	%%%%%%%%%%%%%%%%%%%%%%%%%%%%%%%%%%%%%%%%%%%%%%%%%%%%%%%%%%%%%%%%%%%%%%
	
	The Gaussian blob moves rigidly from left to right while preserving its shape. The interpolation captures spatial displacement but ignores relative intensity modulation.
	
	%%%%%%%%%%%%%%%%%%%%%%%%%%%%%%%%%%%%%%%%%%%%%%%%%%%%%%%%%%%%%%%%%%%%%%
	\paragraph{Aitchison interpolation}
	%%%%%%%%%%%%%%%%%%%%%%%%%%%%%%%%%%%%%%%%%%%%%%%%%%%%%%%%%%%%%%%%%%%%%%
	
	The interpolation behaves as a logarithmic fade-in/fade-out effect. Both blobs coexist simultaneously without spatial transport.
	
	%%%%%%%%%%%%%%%%%%%%%%%%%%%%%%%%%%%%%%%%%%%%%%%%%%%%%%%%%%%%%%%%%%%%%%
	\paragraph{Hybrid interpolation}
	%%%%%%%%%%%%%%%%%%%%%%%%%%%%%%%%%%%%%%%%%%%%%%%%%%%%%%%%%%%%%%%%%%%%%%
	
	The hybrid geometry simultaneously:
	\begin{itemize}
		\item transports the blob,
		\item and redistributes relative intensities.
	\end{itemize}
	
	Intermediate states exhibit partially transported bimodal structures.
	
	%%%%%%%%%%%%%%%%%%%%%%%%%%%%%%%%%%%%%%%%%%%%%%%%%%%%%%%%%%%%%%%%%%%%%%
	\subsection{Shape Morphing}
	%%%%%%%%%%%%%%%%%%%%%%%%%%%%%%%%%%%%%%%%%%%%%%%%%%%%%%%%%%%%%%%%%%%%%%
	
	We next consider interpolation between:
	\begin{itemize}
		\item a square density,
		\item and a circular density.
	\end{itemize}
	
	\begin{figure}[H]
		\centering
		\includegraphics[width=\textwidth]{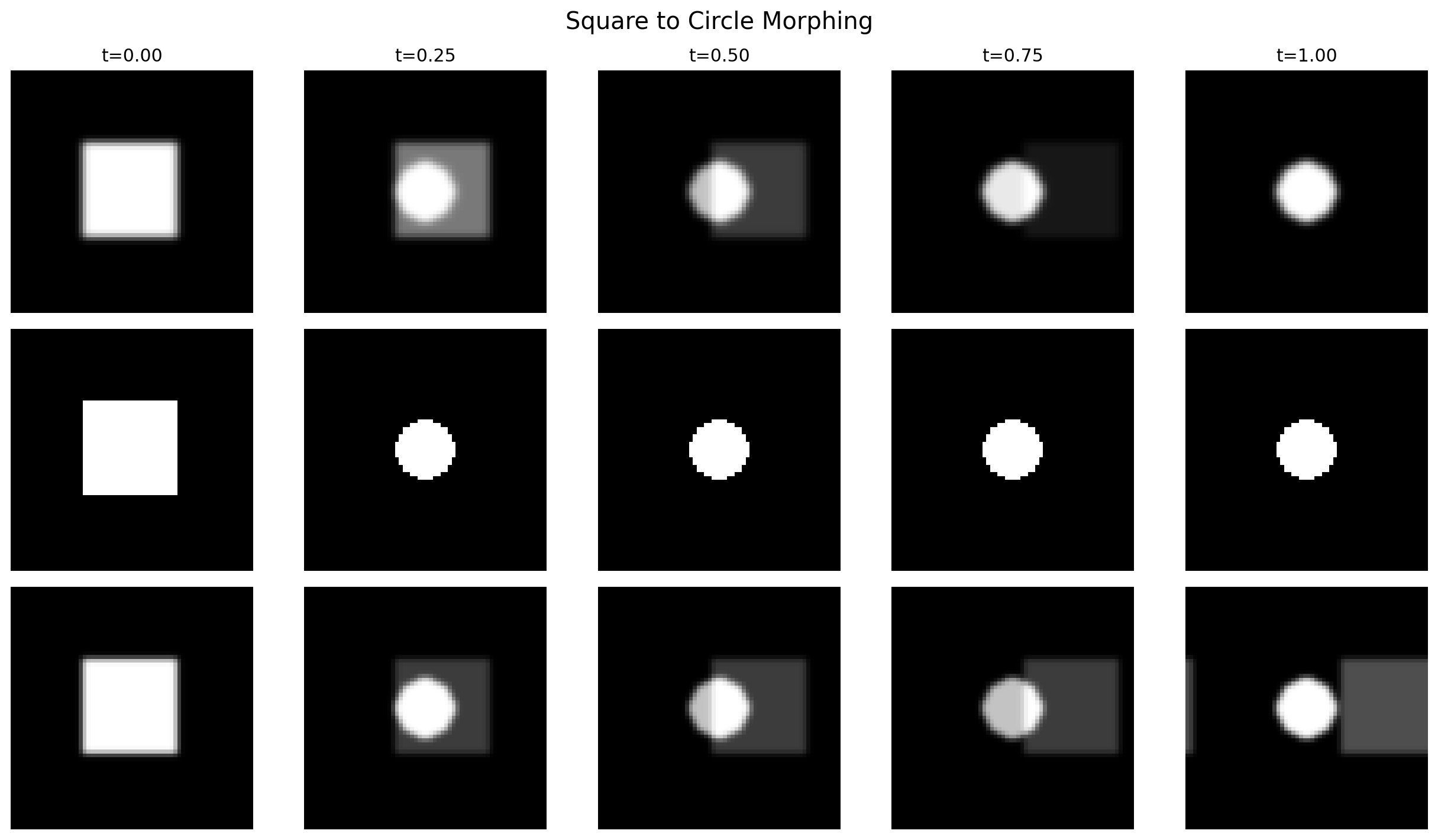}
		\caption{Square-to-circle morphing using Wasserstein, Aitchison and hybrid interpolations.}
		\label{fig:shape}
	\end{figure}
	
	%%%%%%%%%%%%%%%%%%%%%%%%%%%%%%%%%%%%%%%%%%%%%%%%%%%%%%%%%%%%%%%%%%%%%%
	\paragraph{Wasserstein interpolation}
	%%%%%%%%%%%%%%%%%%%%%%%%%%%%%%%%%%%%%%%%%%%%%%%%%%%%%%%%%%%%%%%%%%%%%%
	
	The corners of the square are transported toward the boundary of the disk through a geometric deformation map.
	
	%%%%%%%%%%%%%%%%%%%%%%%%%%%%%%%%%%%%%%%%%%%%%%%%%%%%%%%%%%%%%%%%%%%%%%
	\paragraph{Aitchison interpolation}
	%%%%%%%%%%%%%%%%%%%%%%%%%%%%%%%%%%%%%%%%%%%%%%%%%%%%%%%%%%%%%%%%%%%%%%
	
	The intermediate densities correspond to blurry superpositions of the square and the circle.
	
	%%%%%%%%%%%%%%%%%%%%%%%%%%%%%%%%%%%%%%%%%%%%%%%%%%%%%%%%%%%%%%%%%%%%%%
	\paragraph{Hybrid interpolation}
	%%%%%%%%%%%%%%%%%%%%%%%%%%%%%%%%%%%%%%%%%%%%%%%%%%%%%%%%%%%%%%%%%%%%%%
	
	The hybrid geometry progressively rounds the square while preserving interior coherence and reducing transport artifacts.
	
	%%%%%%%%%%%%%%%%%%%%%%%%%%%%%%%%%%%%%%%%%%%%%%%%%%%%%%%%%%%%%%%%%%%%%%
	\subsection{MNIST-like Digit Interpolation}
	%%%%%%%%%%%%%%%%%%%%%%%%%%%%%%%%%%%%%%%%%%%%%%%%%%%%%%%%%%%%%%%%%%%%%%
	
	We finally consider interpolation between synthetic MNIST-like digits:
	\[
	3
	\longrightarrow
	8.
	\]
	
	\begin{figure}[H]
		\centering
		\includegraphics[width=\textwidth]{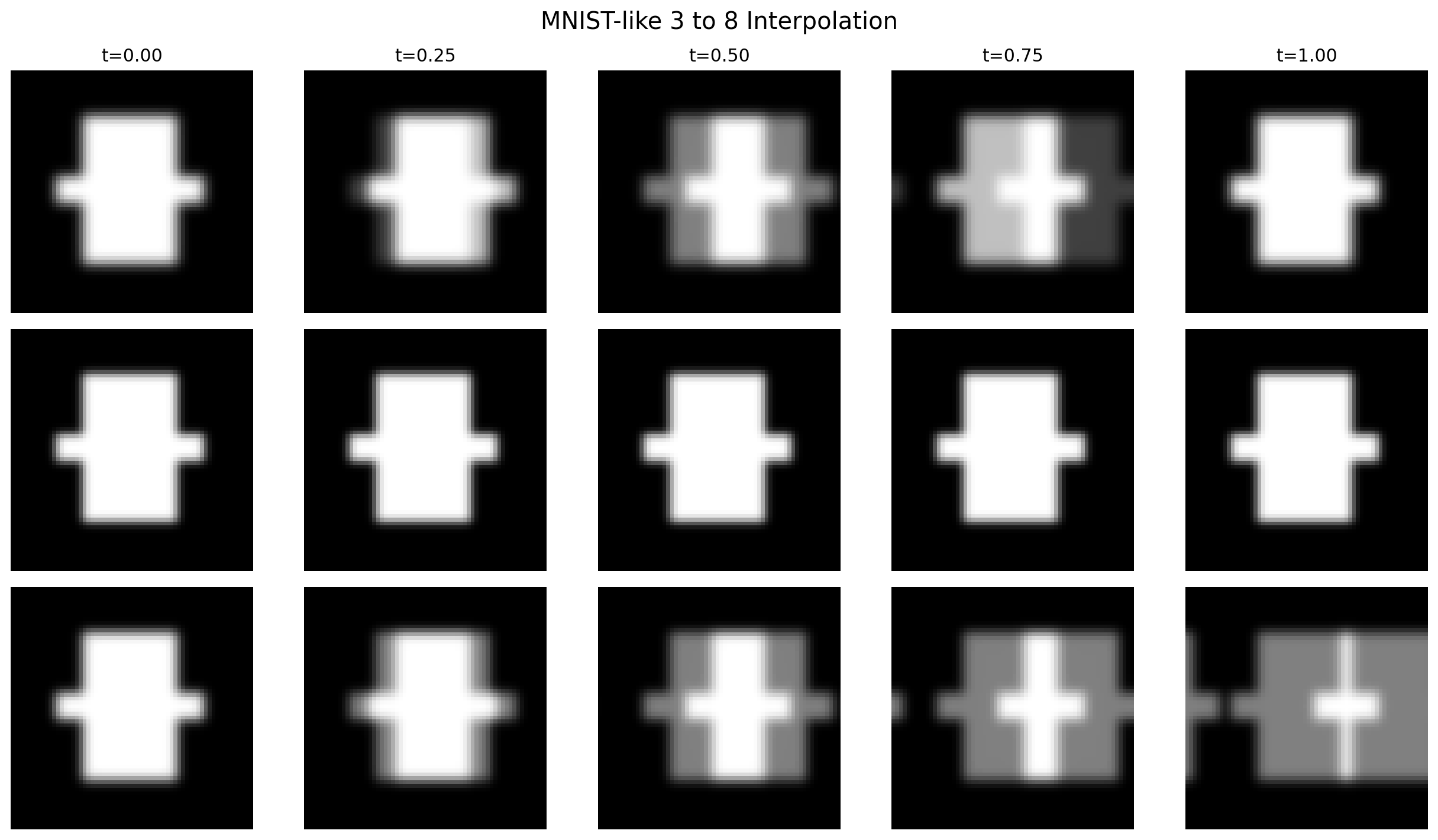}
		\caption{Interpolation between MNIST-like digits using Wasserstein, Aitchison and hybrid geometries.}
		\label{fig:mnist}
	\end{figure}
	
	%%%%%%%%%%%%%%%%%%%%%%%%%%%%%%%%%%%%%%%%%%%%%%%%%%%%%%%%%%%%%%%%%%%%%%
	\paragraph{Wasserstein interpolation}
	%%%%%%%%%%%%%%%%%%%%%%%%%%%%%%%%%%%%%%%%%%%%%%%%%%%%%%%%%%%%%%%%%%%%%%
	
	The digit deformation is mainly governed by spatial transport of strokes, which may create geometric artifacts.
	
	%%%%%%%%%%%%%%%%%%%%%%%%%%%%%%%%%%%%%%%%%%%%%%%%%%%%%%%%%%%%%%%%%%%%%%
	\paragraph{Aitchison interpolation}
	%%%%%%%%%%%%%%%%%%%%%%%%%%%%%%%%%%%%%%%%%%%%%%%%%%%%%%%%%%%%%%%%%%%%%%
	
	The interpolation produces a logarithmic superposition of the two digits and generates blurry intermediate states.
	
	%%%%%%%%%%%%%%%%%%%%%%%%%%%%%%%%%%%%%%%%%%%%%%%%%%%%%%%%%%%%%%%%%%%%%%
	\paragraph{Hybrid interpolation}
	%%%%%%%%%%%%%%%%%%%%%%%%%%%%%%%%%%%%%%%%%%%%%%%%%%%%%%%%%%%%%%%%%%%%%%
	
	The hybrid geometry simultaneously transports and recomposes the digit structure, leading to smoother and visually more realistic transformations.
	
	%%%%%%%%%%%%%%%%%%%%%%%%%%%%%%%%%%%%%%%%%%%%%%%%%%%%%%%%%%%%%%%%%%%%%%
	\subsection{Python Implementation}
	%%%%%%%%%%%%%%%%%%%%%%%%%%%%%%%%%%%%%%%%%%%%%%%%%%%%%%%%%%%%%%%%%%%%%%
	
	The experiments were implemented in Python using:
	\begin{itemize}
		\item NumPy,
		\item SciPy,
		\item POT (Python Optimal Transport),
		\item Matplotlib.
	\end{itemize}
	
	The corrected implementation includes:
	\begin{itemize}
		\item[i)] stable logarithmic normalization,
		\item[ii)] barycentric Wasserstein approximation,
		\item [iii)]diffusion-based transport regularization,
		\item [iv)]positivity-preserving renormalization.
	\end{itemize}
	
	%%%%%%%%%%%%%%%%%%%%%%%%%%%%%%%%%%%%%%%%%%%%%%%%%%%%%%%%%%%%%%%%%%%%%%
	\subsection{Discussion}
	%%%%%%%%%%%%%%%%%%%%%%%%%%%%%%%%%%%%%%%%%%%%%%%%%%%%%%%%%%%%%%%%%%%%%%
	
	The experiments demonstrate that the proposed hybrid geometry provides a meaningful compromise between:
	spatial transport and compositional interpolation.\\
	%\[
	%\text{spatial transport}
	%\qquad\text{and}\qquad
	%\text{compositional interpolation}.
	%\]
	Compared to classical Wasserstein interpolation, the hybrid geometry better captures:
	\begin{itemize}
		\item relative contrast modulation,
		\item multimodal intermediate structures,
		\item smooth geometric transitions.
	\end{itemize}
	Compared to pure Aitchison interpolation, it additionally preserves:
	\begin{itemize}
		\item spatial coherence,
		\item geometric deformation,
		\item structural consistency.
	\end{itemize}
	
	%%%%%%%%%%%%%%%%%%%%%%%%%%%%%%%%%%%%%%%%%%%%%%%%%%%%%%%%%%%%%%%%%%%%%%
	%\subsection{Future Perspectives}
	%%%%%%%%%%%%%%%%%%%%%%%%%%%%%%%%%%%%%%%%%%%%%%%%%%%%%%%%%%%%%%%%%%%%%%

	\section{Future perpectives and Open problems}
	\label{sec:perspectives}
	\subsection{Future perspectives}
	Future work includes:
	\begin{itemize}
		\item rigorous convergence analysis,
		\item hybrid Sinkhorn algorithms,
		\item variational PDE solvers,
		\item barycenter computation,
		\item applications to medical imaging,
		\item and probabilistic segmentation.
	\end{itemize}
	\subsection{Open problems}
	\begin{itemize}
		\item \textbf{Curvature bounds}: Is $(\PP_{2,\mathrm{ac}},D_\alpha)$ a $CD(K,\infty)$ space in the sense of Lott--Sturm--Villani? The interplay between Wasserstein and Aitchison might produce a curvature that depends on $\alpha$.
		\item \textbf{Rigorous EDI/EDE}: For $\lambda$-convex functionals, one can prove the Energy Dissipation Equality (EDE) under additional regularity assumptions. This remains to be done for the hybrid geometry.
		\item \textbf{Hybrid Schr\"odinger bridges}: Replace the action by an entropic regularization to obtain hybrid Schr\"odinger bridges, which would describe the most probable path under noise.
		\item \textbf{Numerical analysis}: Implement and analyze the JKO scheme for hybrid barycenters, using Sinkhorn for the Wasserstein part and explicit updates for the Aitchison part.
		\item \textbf{Extension to varying total mass}: Combine our geometry with WFR to allow both mass-conserving reactions (centered) and mass-changing reactions (non-centered).
	\end{itemize}

	\appendix

\end{document}